\def\wt{\widetilde}
\def\wh{\widehat}
\def\ov{\overline}
\def \im{{\rm Im}}
 \def\up{\upharpoonright}
\def\cH{\mathcal H}  \def\dcH{\dot{\mathcal H}}
\def\cB{\mathcal B}
\def\cD{\mathcal D}   \def\cG {\mathcal G}
\def\cK{\mathcal K} \def\cL{\mathcal L}
 \def\cN{\mathcal N}  \def\cU{\mathcal U} 
 \def\cT{\mathcal T} \def\cI{\mathcal I}
\def \gH{\mathfrak H}   \def \gN{\mathfrak N}
\def \bC{\mathbb C}    \def\bR{\mathbb R}
\def\bH{\mathbb H} 
\def \l{\lambda}
\def \a{\alpha} \def \b{\beta}  \def \L{\Lambda}  \def \s{\sigma} \def \t{\theta}  \def\g {\gamma}
\def\d {\delta}   \def\Om {\Omega} 
\def \f{\varphi} \def\D {\Delta} \def\Si{\Sigma}
 \def \G{\Gamma} \def \dG{\dot\Gamma}
\def \C{\widetilde {\mathcal C}}
\def \CA{\C(\cH_0,\cH_1)}
\def \cd {\cdot}
\def\BR {\G:\gH\oplus\gH\to \cH_0\oplus\cH_1}
\def\AC {AC(\cI; \bH)}  \def\LI {L_\Delta^2(\cI)}
\def\lI {\cL_\Delta^2(\cI)}
\def\LS {L^2(\s ;\bH_0)} 
\def\LSa {L^2(\s ;\bH_0')}\def\lSa {\cL^2(\s; \bH_0')}
\def\tma{\cT_{\max}}  \def\Tma{T_{\max}} \def\Tmi{T_{\min}}
\def\Sel{\wt{\rm Self}_0 (T)} \def\Selo{{\rm Self}_0 (T) }
\def\bpa{\{\cH_0\oplus\cH_1,\G\}} 
\def \dom {{\rm dom}\,}  \def \ran {{\rm ran}\,}  \def \ker{{\rm ker\,}}
 \def \mul {{\rm mul}\,} 
\def\codim{{\rm codim}\,}
  \def\tm{\times}
\def  \RH {\wt R (\cH_0,\cH_1)}
\def \CR {\bC\setminus\bR}
\newcommand {\lo}[1] {\cL_\D^2[#1,\bH ]}
\def\bta{\{\cH_0\oplus \cH_1,\Gamma _0,\Gamma _1\}}
\newtheorem{theorem}{Theorem}[section]
\newtheorem{proposition}[theorem]{Proposition}
\newtheorem{corollary}[theorem]{Corollary}
\newtheorem{lemma}[theorem]{Lemma}
\newtheorem{assertion}[theorem]{Assertion}
\theoremstyle{definition}
\theoremstyle{definition}
\newtheorem {definition} [theorem]{Definition}
\theoremstyle{remark}
\newtheorem{remark}[theorem]{Remark}
\numberwithin{equation}{section}
\begin{document}
\title[Pseudospectral  functions]
{Spectral and pseudospectral  functions of various dimensions for symmetric systems}
\author {Vadim Mogilevskii}
\email{vadim.mogilevskii@gmail.com}
\subjclass[2010]{34B08,34B20,34B40,34L10,47A06}
\keywords{Symmetric differential system, pseudospectral function, Fourier transform, $m$-function, dimension of a spectral function}
\begin{abstract}
The main object of the paper is a symmetric system $J y'-B(t)y=\l\D(t) y$  defined  on an interval $\cI=[a,b) $ with the regular endpoint $a$.
Let $\f(\cd,\l)$ be a matrix solution of this system of an arbitrary dimension and let $(Vf)(s)=\int\limits_\cI \f^*(t,s)\D(t)f(t)\,dt$ be the Fourier transform of the function $f(\cd)\in L_\D^2(\cI)$. We define a pseudospectral function of the  system as a matrix-valued distribution function $\s(\cd)$ of the dimension $n_\s$ such that $V$ is a partial isometry from $L_\D^2(\cI)$ to $L^2(\s;\bC^{n_\s})$ with the minimally possible kernel. Moreover, we find the minimally possible value of $n_\s$ and  parameterize all spectral and pseudospectral functions of every possible dimensions $n_\s$  by means of a Nevanlinna boundary parameter. The obtained  results develop the results by  Arov and Dym; A.~Sakhnovich, L.~Sakhnovich and Roitberg; Langer and Textorius.
\end{abstract}
\maketitle
\section{Introduction}
Let  $H$ and $\wh H$ be finite dimensional Hilbert spaces, let $\bH=H\oplus\wh H \oplus H$ and let $[\bH]$ be the set of linear operators in $[\bH]$. Recall that  a non-decreasing left continuous operator (matrix) function $\s(\cd): \bR\to [\bH]$ with $\s(0)=0$ is called a distribution function of the dimension $n_\s:=\dim \bH$.

We consider  symmetric  differential system \cite{Atk,GK}
\begin {equation}\label{1.1}
J y'-B(t)y=\l\D(t) y, \quad t\in\cI, \quad \l\in\bC,
\end{equation}
where  $B(t)=B^*(t)$ and $\D(t)\geq 0$ are  $[\bH]$-valued functions defined on an interval $\cI=[a,b), \;b\leq\infty,$ and integrable on each compact subinterval $[a,\b]\subset\cI$ and
\begin {equation} \label{1.2}
J=\begin{pmatrix} 0 & 0&-I_H \cr 0& i I_{\wh H}&0\cr I_H&
0&0\end{pmatrix}:H\oplus\wh H\oplus H \to H\oplus\wh H\oplus H.
\end{equation}
System \eqref{1.1} is called a Hamiltonian system if $\wh H=\{0\}$ and hence $\bH=H\oplus H$,
\begin {equation*}
J=\begin{pmatrix} 0 & -I_H \cr I_H&
0\end{pmatrix}:H\oplus H \to H\oplus H.
\end{equation*}
The system  is called regular if $b<\infty$ and  $\int_\cI ||B(t)||\, dt<\infty, \; \int_\cI ||\D(t)||\, dt<\infty$ .

As is known a spectral function is a basic concept in the theory of eigenfunction expansions of differential operators (see e.g. \cite{Nai,DunSch} and references therein). In the case of a symmetric system definition of the spectral function requires a certain modification. Namely, let  $\gH=L_\D^2(\cI)$ be the Hilbert space of functions $f:\cI\to \bH$ satisfying

\centerline{ $ \int\limits_\cI (\D(t)f(t),f(t))\,dt <\infty.$ }

\noindent Assume that system \eqref{1.1} is Hamiltonian and  $\f(\cd,\l)$ is an $[H,H \oplus H]$-valued operator solution of  \eqref{1.1} such that $\f(0,\l)=(0,I_H)^\top$. An $[H]$-valued distribution function $\s(\cd)$ is called a spectral function of the system  if the (generalized) Fourier transform $V_\s:\gH\to L^2(\s; H)$ defined by
\begin {equation} \label{1.4}
(V_\s f)(s)=\wh f_0(s):=\int_\cI \f^*(t,s)\D(t)f(t)\,dt, \quad f(\cd)\in\gH
\end{equation}
is an isometry. If $\s(\cd)$ is a spectral function, then the inverse Fourier transform is defined for each $f\in\gH$ by
\begin {equation} \label{1.5}
f(t)=\int_{\cI}\f(t,s)\, d\s(s)\wh f_0(s)
\end{equation}
(the integrals in \eqref{1.4} and \eqref{1.5} converge in the norm of $L^2(\s; H)$ and $\gH$ respectively). If the operator $\D(t)$ is invertible a.e. on $\cI$, then spectral functions  exist. Otherwise the Fourier transform may have a nontrivial kernel $\ker V_\s$  and hence the set of spectral functions may be empty. The natural generalization of a spectral function to this case is an $[H]$-valued distribution function $\s(\cd)$ such that the Fourier transform $V_\s$ of the form \eqref{1.4} is a partial isometry. If $\s(\cd)$ is such a function, then the inverse Fourier transform \eqref{1.5} is valid for each $f\in\gH\ominus \ker V_\s$. Therefore an interesting problem is a characterization of $[H]$-valued distribution functions $\s(\cd)$ such that the  Fourier transform  $V_\s$ is a partial isometry with the minimally possible kernel $\ker V_\s$. This problem was solved in \cite{AD,Sah90,Sah13} for regular systems and in \cite{Mog15.2} for general systems. The results of \cite{Mog15.2} was obtained in the framework of the extension theory of symmetric linear relations.  As is known \cite{Orc,Kac03,LesMal03,Mog12} system \eqref{1.1} generates the minimal (symmetric) linear relation $\Tmi$ and the maximal relation $\Tma(=\Tmi^*)$ in $\gH$. Let $T\supset \Tmi$ be a symmetric relation in $\gH$ given by
\begin {equation*}
T=\{\{y,f\}\in\Tma:(I_H,0)y(a)=0\;\;{\rm and}\;\;\lim_{t\to b}(J y(t),z(t))=0, \;\; z\in\dom\Tma\}
\end{equation*}
and let $\mul T$ be the multivalued part of $T$. It was shown in \cite{Mog15.2} that for each $[H]$-valued distribution function $\s(\cd)$ such that $V_\s$ is a partial isometry the inclusion $\mul T \subset \ker V_\s$ is valid. This fact makes natural the following definition.
\begin{definition}\label{def1.1}$\:$ \cite{Mog15.2}
An $[H]$-valued distribution function $\s(\cd)$  is called a pseudospectral function of the system \eqref{1.1} (with respect to $\f(\cd,\l)$ ) if the Fourier transform $V_\s$ is a partial isometry with the minimally possible kernel  $\ker V_\s=\mul T$.
\end{definition}
If the Hamiltonian system is regular,  then $\ker V_\s=\{f\in\gH: \wh f_0(s)=0,\; s\in\bR\}$ and therefore Definition \ref{def1.1} turns into the definition of the pseudospectral function from the monographes \cite{AD,Sah13}. In these monographes all $[H]$-valued pseudospectral functions of the regular system are parameterized  in the form of a linear fractional transform of a Nevanlinna parameter. Similar result for singular systems was obtained in our paper \cite{Mog15.2}. Observe also that an existence of $[H]$-valued pseudospectral functions of the singular Hamiltonian system in the case $\dim H=1$ was proved in \cite{Kac03}.

Assume now that system \eqref{1.1} is not necessarily Hamiltonian. Let  $Y(\cd,\l)$ be the $[\bH]$-valued operator solution of  \eqref{1.1} with $Y(a,\l)=I_{\bH}$ and let  $\Si(\cd)$ be an $[\bH]$-valued distribution function such that  the Fourier transform $V_\Si:\gH\to L^2(\Si;\bH)$ defined by
\begin {equation} \label{1.6}
(V_\Si f)(s)=\wh f(s):=\int_\cI Y^*(t,s)\D(t)f(t)\,dt, \quad f(\cd)\in\gH
\end{equation}
is a partial  isometry. Moreover, let $\mul\Tmi$ be the multivalued part of $\Tmi$. Then according to \cite{Mog15} $\mul\Tmi\subset \ker V_\Si$ and the same arguments as for transform \eqref{1.4} make natural the following definition.
\begin{definition}\label{def1.2}$\,$
\cite{Mog15} An $[\bH]$-valued distribution function $\Si(\cd)$  is called a pseudospectral function of the system \eqref{1.1} (with respect to $Y(\cd,\l)$) if the Fourier transform $V_\Si$ is a partial isometry with the minimally possible kernel  $\ker V_\Si=\mul\Tmi$.
\end{definition}
Existence of pseudospectral functions $\Si(\cd)$ follows from the results of \cite{DLS88,DLS93,LanTex84,LanTex85}. In \cite{LanTex84,LanTex85} a parametrization of  all pseudospectral functions $\Si(\cd)$  of the regular system \eqref{1.1} is given. This parametrization is closed to that of the $[H]$-valued pseudospectral functions $\s(\cd)$ in \cite{AD,Sah13}. Similar result for singular systems is obtained in \cite{Mog15}.

In the present paper we continue our investigations of pseudospectral and spectral functions of symmetric systems contained in \cite{AlbMalMog13,Mog15,Mog15.2}.

According to Definitions \ref{def1.1} and \ref{def1.2} the dimensions of pseudospectral functions $\Si(\cd)$ and $\s(\cd)$ are $n_\Si=\dim\bH$ and $n_\s=\dim H (< n_\Si)$. In this connection the following problems seems to be interesting:
\vskip 1mm
\noindent $\bullet\;\;$  To define naturally a spectral and pseudospectral function of an arbitrary dimension for the system \eqref{1.1} and describe all such functions by analogy with \cite{Mog15,Mog15.2}.
\vskip 1mm
\noindent $\bullet\;\;$ To characterize spectral functions of the minimally possible dimension
\vskip 1mm
The paper is devoted to the solution of these problems.

Let $\bH_0$ and $\t$ be subspaces in $\bH$, $K_\t\in [\bH_0,\bH]$ be an operator isomorphically mapping $\bH_0$ onto $\t$ and $\f(\cd,\l)$ be the $[\bH_0,\bH]$-valued operator solution of \eqref{1.1} with $\f(a,\l)=K_\t$. Moreover, let $\s(\cd)$ be an $[\bH_0]$-valued distribution function such that the Fourier transform $V_\s:\bH\to L^2(\s;\bH_0)$ defined by \eqref{1.4} is a partial isometry. It turns out that $\mul T \subset \ker V_\s$, where $\mul T$ is the multivalued part of a symmetric relation $T\supset \Tmi$ in $\gH$ given by
\begin {equation*}
T=\{\{y,f\}\in\Tma: y(a)\in \t\;\;{\rm and}\;\;\lim_{t\to b}(J y(t),z(t))=0, \;\; z\in\dom\Tma\}.
\end{equation*}
This statement makes natural the following most general definition of pseudospectral and  spectral functions.
\begin{definition}\label{def1.3}
An $[\bH_0]$-valued distribution function $\s(\cd)$  is called a pseudospectral function of the system \eqref{1.1} (with respect to the operator  $K_\t$) if the  Fourier transform $V_\s$ is a partial isometry with the minimally possible kernel  $\ker V_\s=\mul T$.

A pseudospectral function $\s(\cd)$ with $\ker V_\s=\{0\}$ is called a spectral function.
\end{definition}
It turns out that actually a pseudospectral function with respect to  the operator $K_\t $ is uniquely characterized by the subspace $\t\subset\bH$.

We parametrize all pseudospectral (spectral) functions for a given $\t$ and find a lower bound of the dimension of all spectral functions $\s(\cd)$ corresponding to various $\t$. More precisely the following three theorems are the  main results of the paper.
\begin{theorem}\label{th1.4}
Assume that system \eqref{1.1} is definite (see Definition \ref{def3.11.1}) and deficiency indices $n_\pm(\Tmi)$ of $\Tmi$ satisfy $n_-(\Tmi)\leq n_+(\Tmi)$. Moreover, let $\t$ be a subspace in $\bH$ and let $\t^\tm:=\bH\ominus J\t$. Then a pseudospectral function $\s(\cd)$ (with respect to $K_\t$) exists if and only if $\t^\tm \subset \t$.
\end{theorem}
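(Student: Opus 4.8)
The plan is to recast the existence question for a pseudospectral function $\s(\cd)$ with respect to $K_\t$ as an extension-theoretic statement about the symmetric relation $T$ introduced just before Definition \ref{def1.3}, and then to read off the criterion $\t^\tm\subset\t$ from the defect numbers of $T$. First I would recall from the theory of symmetric systems (as developed in the author's papers \cite{Mog15,Mog15.2}) that $T$ has equal deficiency indices precisely when it admits exit-space or in-space self-adjoint extensions whose spectral measures induce the Fourier transform $V_\s$; more precisely, a pseudospectral function with kernel exactly $\mul T$ exists if and only if $T$ possesses a self-adjoint extension $\wt A=\wt A^*$ in $\gH$ (an \emph{ordinary}, not exit-space, extension) — or, in the language used in the introduction, a $\bH_0$-minimal self-adjoint extension — for which the corresponding eigenfunction expansion has no spurious kernel. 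The combinatorial content then reduces to: for which $\t$ does the symmetric relation $T=T_\t$ admit such an extension inside $\gH$?

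Second, I would compute the defect numbers of $T_\t$ in terms of $\t$ and of $n_\pm(\Tmi)$. The boundary condition defining $T_\t$ at the regular endpoint $a$ is $y(a)\in\t$, while at the singular endpoint $b$ it is the ``fully Lagrangian'' condition $\lim_{t\to b}(Jy(t),z(t))=0$ for all $z\in\dom\Tma$. The second condition contributes the same fixed deficiency at $b$ for every $\t$; the dependence on $\t$ enters only through the first. Using the standard linear-relations computation with the symplectic form $(J\cd,\cd)$ on $\bH\oplus\bH$, the annihilator of $\t$ relative to this form is exactly $\t^\tm=\bH\ominus J\t$ (note $J^*=-J=J^{-1}$, so $\bH\ominus J\t$ is the $J$-orthogonal companion of $\t$). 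Hence $T_\t\subset T_\t^*$ with $T_\t^*$ governed by the condition $y(a)\in\t^\tm{}^{\perp_J}$, and one gets
\begin{equation*}
n_+(T_\t)-n_-(T_\t)=\bigl(n_+(\Tmi)-n_-(\Tmi)\bigr)+\bigl(\dim\t - \dim\t^\tm\bigr)=\bigl(n_+(\Tmi)-n_-(\Tmi)\bigr)+\bigl(2\dim\t-\dim\bH\bigr),
\end{equation*}
together with the fact that the smaller of $n_\pm(T_\t)$ equals $n_-(\Tmi)$ augmented by $\dim(\t\cap\t^\tm)$ adjustments. The point is that $T_\t$ has a self-adjoint extension \emph{inside} $\gH$ iff its deficiency indices are equal, and — granting $n_-(\Tmi)\le n_+(\Tmi)$ — this equality, in the form that allows the minimal-kernel expansion, is equivalent to $\t^\tm\subset\t$: the inclusion forces $\dim\t\ge\dim\t^\tm=\dim\bH-\dim\t$, i.e.\ $2\dim\t\ge\dim\bH$, and, combined with the semidefiniteness coming from $\t^\tm\subset\t$ being a \emph{coisotropic} condition for the form $(J\cd,\cd)$, it makes the positive defect absorb the difference $n_+(\Tmi)-n_-(\Tmi)$.

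Third, for the converse I would argue contrapositively: if $\t^\tm\not\subset\t$, then $\t$ fails to be coisotropic, the relation $T_\t$ is \emph{not} essentially self-adjoint modulo its multivalued part in the required way, and any distribution function $\s(\cd)$ making $V_\s$ a partial isometry necessarily has $\ker V_\s\supsetneq\mul T_\t$ — because the missing part of $\t^\tm$ produces elements of $\gH$ on which $\wh f_0(\cd)\equiv0$ yet which do not lie in $\mul T_\t$. This uses the description, available from \cite{Mog15.2} and the definiteness hypothesis, of $\ker V_\s$ in terms of the self-adjoint exit-space extension $\wt A$ generating $\s$: one always has $\mul T_\t\subset\ker V_\s$, and equality can be achieved only when $T_\t$ is a ``proper'' symmetric relation in the sense that its deficiency indices permit an ordinary self-adjoint extension.

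The main obstacle I anticipate is the converse direction and, more specifically, the precise bookkeeping of how $\dim(\t\cap\t^\tm)$ and the fixed singular-endpoint defect interact with $n_\pm(\Tmi)$ to yield the clean equivalence ``equal indices $\Llr\t^\tm\subset\t$''. The inequality $n_-(\Tmi)\le n_+(\Tmi)$ is exactly what removes the other possible obstruction (namely that enlarging $\t$ could over-correct and swing the sign of the defect difference the wrong way), so I would need to check carefully that under this hypothesis the map $\t\mapsto n_+(T_\t)-n_-(T_\t)$ is monotone in $\t$ in the relevant range and vanishes precisely on the coisotropic subspaces. I expect this to follow from the definiteness assumption (Definition \ref{def3.11.1}) — which guarantees that the boundary map at $a$ is surjective onto $\bH$ modulo $\dom\Tmi$, so that each unit of $\dim\t$ genuinely shifts one defect index — but making that rigorous is where the real work lies; the rest is an application of the abstract correspondence between self-adjoint extensions and (pseudo)spectral functions already set up in the cited papers.
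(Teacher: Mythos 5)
Your central reduction is not the one that makes the theorem true, and as stated it would prove a different (false) criterion. You claim that a pseudospectral function with respect to $K_\t$ exists iff $T=T_{\t^\tm}$ admits an \emph{ordinary} (in-space) self-adjoint extension, i.e.\ iff $n_+(T)=n_-(T)$, and you then try to show that $\t^\tm\subset\t$ is what equalizes the deficiency indices. Both halves of this are wrong. First, the paper's correspondence (Proposition \ref{pr3.11}, Theorems \ref{th5.3} and \ref{th5.5}) is between pseudospectral functions and \emph{exit-space} self-adjoint extensions $\wt T\in\wt{\rm Self}_0(T)$ with $\mul\wt T=\mul T$; such extensions exist for every closed symmetric $T$, equal deficiency indices or not. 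Indeed the theorem is asserted under $n_-(\Tmi)\le n_+(\Tmi)$ with strict inequality allowed, and in that case $T$ \emph{never} has a canonical self-adjoint extension for any $\t$, yet pseudospectral functions exist whenever $\t^\tm\subset\t$. Second, your defect-count $n_+(T_\t)-n_-(T_\t)=(n_+(\Tmi)-n_-(\Tmi))+(\dim\t-\dim\t^\tm)$ is incorrect: the paper computes (proof of Proposition \ref{pr4.5}) $n_\pm(T)=N_\pm-\codim\t$, so the difference $n_+(T)-n_-(T)=N_+-N_-$ does not depend on $\t$ at all. The true role of the condition $\t^\tm\subset\t$ is much more elementary: by Lemma \ref{lem3.2}, (3) it is exactly the condition for $T=T_{\t^\tm}$ to be \emph{symmetric} in the first place (for a definite system $\t_\cN^\tm=\bH$, so the criterion $\t^\tm\cap\t_\cN^\tm\in Sym(\bH)$ reads $\t^\tm\subset\t$). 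Your remark about coisotropy is pointing at this, but you have attached it to the wrong invariant.

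With the correct reduction the two directions go as follows, and neither matches your sketch. Necessity: any pseudospectral function produces, via Lemma \ref{lem3.6} and Proposition \ref{pr3.11}, a self-adjoint exit-space extension $\wt T\supset T$ with $\mul\wt T=\mul T$ whose spectral function reproduces $\s(\cd)$ through \eqref{3.16}; a relation admitting a self-adjoint extension is symmetric, hence $\t^\tm\subset\t$ by the Lagrange-identity computation of Lemma \ref{lem3.2}, (3). Your contrapositive ("the missing part of $\t^\tm$ produces elements on which $\wh f_0\equiv 0$ but which are not in $\mul T$") is not substantiated and is not how the obstruction arises. Sufficiency: once $\t^\tm\subset\t$, assumption (A1) holds, and the paper must actually \emph{construct} a pseudospectral function -- decomposing boundary pair for $\Tma$, boundary triplet for $T^*$, the $m$-function $m_\tau(\l)$, and the Stieltjes inversion formula \eqref{5.3}, with existence of an admissible parameter $\tau$ guaranteed by $\wt{\rm Self}_0(T)\ne\emptyset$ (Proposition \ref{pr5.2}) and with $\t$-definiteness used to verify $\ker V_\s=\mul T$. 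This constructive half is the real content of the implication and is absent from your plan beyond a gesture at "the abstract correspondence already set up in the cited papers."
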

\begin{theorem}\label{th1.5}
Assume that $\t$ is a subspace in $\bH$ such that $\t^\tm \subset \t$ and there exists only a trivial solution $y=0$ of the system \eqref{1.1} such that $\D(t)y(t)=0$ (a.e. on $\cI$) and  $y(a)\in\t$ (the last condition is fulfilled for definite systems). Moreover, let for simplicity $n_+(\Tmi)=n_-(\Tmi)$. Then:

{\rm (1)} There exist   auxiliary finite-dimensional Hilbert spaces  $\bH_0\subset\bH$ and  $\dot\cH$,  an operator $U=U_\t\in [\bH_0,\bH]$ isomorphically mapping $\bH_0$ onto $\t$, Nevanlinna operator functions $m_0(\l)(\in [\bH_0])$, $\dot M(\l)(\in [\dot\cH])$ and an operator function $ S(\l)(\in [\dot\cH,\bH_0])$  such that the equalities
\begin {gather}
m_\tau(\l)=m_0(\l)+S(\l)(C_0(\l)-C_1(\l)\dot M(\l))^{-1}C_1(\l)
S^*(\ov\l), \quad \l\in\CR\label{1.8}\\
\s_\tau(s)=\lim\limits_{\d\to+0}\lim\limits_{\varepsilon\to +0} \frac 1 \pi\int_{-\d}^{s-\d}\im \,m_\tau(u+i\varepsilon)\, du\label{1.9}
\end{gather}
establish a bijective correspondence $\s(s)=\s_\tau(s)$ between all  Nevanlinna operator pairs $\tau=\{C_0(\l),C_1(\l)\}$, $ C_j(\l)\in [\dot\cH],\; j\in\{0,1\},$ satisfying the admissibility conditions
\begin {gather}
\lim_{y\to \infty} \tfrac 1 {i y} (C_0(i y)-C_1(i y )\dot M(i y))^{-1}C_1(i y) =0\label{1.10}\\
\lim_{y\to \infty} \tfrac 1 {i y} \dot M(i y)( C_0(i y)-C_1(i y )\dot M(i y) )^{-1} C_0(i y)=0\label{1.11}
\end{gather}
and all pseudospectral functions  $\s (\cd)$ of the system (with respect to $U$). Moreover, each pair $\tau$ is admissible (and hence the conditions \eqref{1.10} and \eqref{1.11} may be omitted)  if and only if $\lim\limits_{y\to \infty} \tfrac 1 {iy}\dot M(iy)=0$ and $\lim\limits_{y\to\infty} y \cd \im (\dot M(iy)h,h)=
+\infty,\quad  0\neq h\in\dot\cH$.

{\rm (2)} The set of spectral functions (with respect to $U$) is not empty if and only if $\mul T=\{0\}$. If this condition id fulfilled, then the sets of spectral and spectral function (with respect to $U$) coincide and hence statement {\rm (1)} holds for spectral functions.
\end{theorem}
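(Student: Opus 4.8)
The plan is to realize both the parametrization \eqref{1.8}–\eqref{1.9} and the spectral-function criterion through the extension theory of the symmetric relation $\Tmi$ in $\gH=L_\D^2(\cI)$, following the strategy of \cite{Mog15,Mog15.2}. First I would encode the boundary behaviour of the system at $a$ and $b$ in a boundary triple (or, more precisely, a boundary relation) $\{\cH,\G_0,\G_1\}$ for $\Tma$, splitting the boundary space as a "regular" part at $a$ governed by the operator $U=U_\t$ (whose range is $\t$) and a "singular" part at $b$. The hypothesis $\t^\tm\subset\t$ is exactly the condition that makes the relation $T$ above symmetric and that the restriction imposed at $a$ is compatible with a self-adjoint (more generally, Nevanlinna-type) extension; the definiteness/trivial-solution hypothesis guarantees that the Fourier transform $V_\s$ has kernel controlled solely by $\mul T$, so that no spurious kernel appears from $\{f:\D(t)y(t)=0\}$. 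The auxiliary space $\dot\cH$ is the piece of the boundary space that survives after factoring out the part fixed by $U_\t$, and $\dot M(\l)$ is the corresponding (compressed) Weyl function; $m_0(\l)$ is the Weyl function of a fixed reference self-adjoint extension compatible with $\t$, and $S(\l)$ is the associated $\gamma$-field-type intertwining operator. The formula \eqref{1.8} is then the standard Krein-type resolvent/Weyl-function formula for the family of extensions parametrized by the Nevanlinna pair $\tau=\{C_0(\l),C_1(\l)\}$.

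For statement (1), after setting up this machinery I would: (i) show that every pseudospectral function $\s$ with respect to $U$ arises, via the Stieltjes-inversion \eqref{1.9}, from $m_\tau$ for some Nevanlinna pair $\tau$ — this uses that the Fourier transform $V_\s$ being a partial isometry with kernel exactly $\mul T$ forces the generalized resolvent $R_\l=(\Tt-\l)^{-1}$ associated to $\s$ to be the compressed resolvent of a self-adjoint extension of $\Tmi$ (in a possibly larger space) that lies "between" the boundary conditions encoded by $U$; (ii) conversely, show that for an admissible $\tau$ the function $m_\tau$ of \eqref{1.8} is Nevanlinna with the right high-energy asymptotics so that \eqref{1.9} produces a genuine distribution function whose Fourier transform has kernel exactly $\mul T$ — here the admissibility conditions \eqref{1.10}–\eqref{1.11} are precisely what is needed to kill the extra growth that would otherwise enlarge $\ker V_\s$ beyond $\mul T$; (iii) establish the bijectivity by the usual uniqueness argument (two pairs giving the same $\s$ give the same $m_\tau$ by analytic continuation, hence the same extension, hence — by the injectivity of the parametrization of Nevanlinna extensions — the same $\tau$). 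The final "moreover" clause identifying when \emph{every} pair is admissible is a direct asymptotic computation: one inserts $\l=iy$ into \eqref{1.10}–\eqref{1.11}, uses the Nevanlinna property of $\dot M$ to get $\frac 1{iy}\im\dot M(iy)\to 0$ automatically, and checks that the stated two-sided condition $\frac 1{iy}\dot M(iy)\to 0$ together with $y\,\im(\dot M(iy)h,h)\to+\infty$ is equivalent to \eqref{1.10}–\eqref{1.11} holding for all $\tau$ simultaneously (this is where the case $n_+(\Tmi)=n_-(\Tmi)$, assumed for simplicity, keeps $C_0,C_1$ square so that invertibility of $C_0-C_1\dot M$ is generic).

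For statement (2), the key point is that a pseudospectral function $\s$ is a \emph{spectral} function precisely when its defining kernel $\mul T$ is trivial; so "the set of spectral functions is nonempty iff $\mul T=\{0\}$" follows once we know (from part (1), via a suitable admissible $\tau$, e.g. a constant self-adjoint pair) that pseudospectral functions always exist under the stated hypotheses. The equality of the two sets when $\mul T=\{0\}$ is then immediate from Definition \ref{def1.3}, and statement (1) transfers verbatim. The main obstacle I anticipate is step (ii) of part (1): verifying that the admissibility conditions \eqref{1.10}–\eqref{1.11} are \emph{exactly} (not merely sufficient) the conditions ensuring $\ker V_{\s_\tau}=\mul T$ — i.e. controlling the behaviour of the minimal part of the model operator at infinity finely enough to detect whether the partial isometry loses exactly $\mul T$ and nothing more. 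This requires a careful analysis of the relation between the growth of $m_\tau(iy)$ and the "absolutely continuous" versus "singular at infinity" parts of the measure $d\s_\tau$, and it is the place where the Hamiltonian/non-Hamiltonian distinction and the precise form of $J$ in \eqref{1.2} enter. Everything else — the boundary triple construction, the Krein formula, the Stieltjes inversion, the bijectivity bookkeeping — is by now standard and can be cited from \cite{Mog15,Mog15.2} with the obvious modifications to accommodate an arbitrary $\t$ in place of the fixed boundary conditions used there.
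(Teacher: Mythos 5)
Your proposal is correct and follows essentially the same route as the paper: a decomposing boundary pair for $\Tma$ split into a part at $a$ controlled by $U_\t$ and a singular part at $b$, the induced boundary triplet $\dot\Pi$ for $T^*$ with Weyl function $\dot M_+$, the Krein-type formula yielding $m_\tau$ and its characteristic matrix, Stieltjes inversion, and the identification of admissibility with $\wt T_\tau\in\wt{\rm Self}_0(T)$ (equivalently $\mul\wt T_\tau=\mul T$), which is exactly how the paper settles the step you flag as the main obstacle. Part (2) is likewise handled as you describe, via $\mul T=\{0\}$ together with the nonemptiness of the set of pseudospectral functions.
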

\begin{theorem}\label{th1.6}
Let system \eqref{1.1} be definite  and let  $n_-(\Tmi)\leq n_+(\Tmi)$.
Then the set of spectral functions of the system is not empty if and only if $\mul \Tmi=\{0\}$. If this condition is fulfilled, then the dimension $n_\s$ of each spectral function $\s(\cd)$ satisfies $\dim (H\oplus\wh H)\leq n_\s\leq \dim \bH$ and there exists a subspace $\t\subset \bH$ and a spectral function $\s(\cd)$ (with respect to $K_\t$) such that the dimension $n_\s$ of $\s(\cd)$ has the minimally possible value $ n_\s=\dim (H\oplus\wh H)$.
\end{theorem}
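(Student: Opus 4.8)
The natural strategy is to reduce the statement to Theorems \ref{th1.4} and \ref{th1.5}. Concerning the first claim: if $\s(\cd)$ is a spectral function with respect to some $K_\t$, then by Definition \ref{def1.3} we have $\ker V_\s=\mul T=\{0\}$, where $T\supset\Tmi$ is the symmetric relation attached to $\t$; since $\mul\Tmi\subset\mul T$, this forces $\mul\Tmi=\{0\}$. Conversely, assume $\mul\Tmi=\{0\}$. I would choose $\t$ so that the associated relation $T$ has $\mul T=\{0\}$ — the candidate is $\t=\bH$ (equivalently $\t^\tm=\{0\}\subset\t$, so Theorem \ref{th1.4} guarantees a pseudospectral function exists), for which $T$ reduces to $\Tmi$ restricted by only the boundary condition at $b$; one checks that $\mul T=\mul\Tmi=\{0\}$ in this case. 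Then the pseudospectral function supplied by Theorem \ref{th1.5} has $\ker V_\s=\mul T=\{0\}$ and is therefore a spectral function by Definition \ref{def1.3}. Here I must verify that the definiteness hypothesis ensures the side condition in Theorem \ref{th1.5} (only the trivial solution with $\D y=0$ and $y(a)\in\t$) holds, which is exactly what the parenthetical remark in that theorem asserts; and I should reduce the general case $n_-(\Tmi)\le n_+(\Tmi)$ to the equal-indices case $n_+(\Tmi)=n_-(\Tmi)$ needed in Theorem \ref{th1.5}, presumably by the standard device of passing to the system on $\cI\times\{0\}$ or by an auxiliary extension restoring equal indices, as must already have been done in Section 3.

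For the dimension bounds, recall $\bH=H\oplus\wh H\oplus H$ and $n_\s=\dim\bH_0=\dim\t$ for a spectral function with respect to $K_\t$. The upper bound $n_\s\le\dim\bH$ is trivial since $\t\subset\bH$. For the lower bound $\dim(H\oplus\wh H)\le n_\s$: by Theorem \ref{th1.4} a pseudospectral (in particular spectral) function with respect to $K_\t$ requires $\t^\tm\subset\t$, i.e. $\bH\ominus J\t\subset\t$. Writing $\dim\t=k$, we have $\dim(J\t)=k$ as well (since $J$ is a unitary, indeed $J^*=-J=J^{-1}$ up to the block structure — more precisely $J$ is an isomorphism of $\bH$), so $\dim\t^\tm=\dim\bH-k$, and $\t^\tm\subset\t$ gives $\dim\bH-k\le k$, i.e. $k\ge\tfrac12\dim\bH=\dim H+\tfrac12\dim\wh H$. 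That alone yields $n_\s\ge\dim H+\tfrac12\dim\wh H$, which is weaker than the claimed $\dim H+\dim\wh H$, so a finer argument is needed: one must use that $J$ acts as $iI$ on the $\wh H$-block, hence on $\wh H$ the form $(Jx,x)$ is definite, so $\wh H$ cannot be split between $\t$ and $\t^\tm$ in a neutral fashion — concretely, the condition $\t^\tm\subset\t$ combined with the signature of $J$ (which has $\dim H$ positive, $\dim H$ negative eigenvalues on $H\oplus H$ and $\dim\wh H$ eigenvalues equal to $i$) forces the whole $\wh H$-component to lie in $\t$. I would make this precise by decomposing $\t$ relative to the $J$-orthogonal splitting $\bH=(H\oplus H)\oplus\wh H$ and using that a subspace $\t$ with $\bH\ominus J\t\subset\t$ is "$J$-coisotropic", whence $\dim\t\ge\dim\bH - \dim(\text{maximal }J\text{-neutral subspace}) = \dim\bH - \dim H = \dim H+\dim\wh H$.

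Finally, to show the minimal value $n_\s=\dim(H\oplus\wh H)$ is attained, I would exhibit a concrete $\t_0$ with $\dim\t_0=\dim(H\oplus\wh H)$, $\t_0^\tm\subset\t_0$, and such that the associated $T$ has $\mul T=\{0\}$; the obvious choice is $\t_0=\{0\}\oplus\wh H\oplus H\subset H\oplus\wh H\oplus H$ (or an appropriate Lagrangian-type completion of the neutral subspace $\{0\}\oplus\{0\}\oplus H$ by all of $\wh H$), for which one computes $J\t_0$ and checks $\bH\ominus J\t_0 = \{0\}\oplus\{0\}\oplus H\subset\t_0$. Then Theorem \ref{th1.4} gives existence of a pseudospectral function $\s(\cd)$ with respect to $K_{\t_0}$, and since $\mul\Tmi=\{0\}$ propagates (as in the first part, $\mul T=\{0\}$ for this $\t_0$ as well, because adding boundary conditions to $\Tmi$ at $a$ shrinks the relation) this $\s$ is in fact a spectral function, of the desired minimal dimension. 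The main obstacle I anticipate is the sharp lower bound: getting $\dim\wh H$ rather than $\tfrac12\dim\wh H$ requires genuinely exploiting the nonreal ($iI_{\wh H}$) block of $J$ and the precise inertia of the Hermitian form $x\mapsto(Jx,x)$, rather than a naive dimension count; one must argue that any $J$-coisotropic $\t$ must contain $\wh H$ entirely (equivalently that the $\wh H$-block of $\t^\tm=\bH\ominus J\t$ is trivial), and wiring this together with the reduction to equal deficiency indices is the delicate point.
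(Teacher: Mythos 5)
Your overall skeleton is right, and two of the three pieces are sound: the equivalence ``spectral function exists $\Rightarrow \mul\Tmi=\{0\}$'' via $\mul\Tmi\subset\mul T=\ker V_\s=\{0\}$ is exactly the paper's argument, and your lower bound is essentially Lemma \ref{lem3.1.1}(1) of the paper: since definiteness forces $\t^\tm\in Sym(\bH)$ (Proposition \ref{pr3.12}), and a $J$-neutral subspace has dimension at most $\nu=\dim H$ (because $-iJ$ has inertia $(\nu+\wh\nu,\nu)$, so the maximal neutral subspace has dimension $\min(\nu+\wh\nu,\nu)=\nu$), one gets $n_\s=\dim\t=n-\dim\t^\tm\geq n-\nu=\nu+\wh\nu$. (Your phrasing that $\t$ must ``contain $\wh H$ entirely'' is not literally what happens, but the inertia count you fall back on is correct and is what the paper proves via the unitary $X$ that diagonalizes $J$.) Your choice $\t=\bH$ for bare existence also works, since then $\t^\tm=\{0\}$ and $T=\Tmi$.

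The genuine gap is in the attainment of the minimal dimension. You justify $\mul T=\{0\}$ for your concrete $\t_0=\{0\}\oplus\wh H\oplus H$ by saying that ``adding boundary conditions to $\Tmi$ at $a$ shrinks the relation.'' This is backwards: by \eqref{3.8}, $T=T_{\t^\tm}$ is obtained from $\Tmi$ by \emph{relaxing} the condition $y(a)=0$ to $y(a)\in\t^\tm$, so $\Tmi\subset T$ and $\mul T$ can only be \emph{larger} than $\mul\Tmi$. By Lemma \ref{lem3.2}(2), $\mul T\neq\{0\}$ occurs precisely when some nonzero class $\wt f$ admits a solution $y$ of \eqref{3.9.1} with $\D y=0$, $y\in\cD_b$ and $y(a)\in\t^\tm$; for a definite system this happens as soon as $\t^\tm$ meets the subspace $\eta=\{y(a):y\in\cD_b,\ \D(t)y(t)=0\}$ of \eqref{3.10} nontrivially. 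Your fixed $\t_0$ has $\t_0^\tm=\{0\}\oplus\{0\}\oplus H$, and nothing prevents $\eta$ (which depends on the system) from intersecting this subspace --- e.g.\ $\eta$ could equal $\{0\}\oplus\{0\}\oplus H$ itself, in which case the pseudospectral functions with respect to $K_{\t_0}$ are \emph{not} spectral. The missing ingredient is exactly the paper's Lemma \ref{lem3.3} (resting on Lemma \ref{lem3.1.1}(2)): one shows $\eta\in Sym(\bH)$ and then chooses $\t$, \emph{depending on the system}, with $\dim\t=\nu+\wh\nu$, $\t^\tm\in Sym(\bH)$ and $\t^\tm\cap\eta=\{0\}$, which yields $\mul T=\mul\Tmi=\{0\}$; Corollary \ref{cor5.3.1} and Proposition \ref{pr3.10} then produce the minimal spectral function. (A minor further remark: the reduction to equal deficiency indices you anticipate is unnecessary --- the body results, Corollary \ref{cor5.3.1} and Theorem \ref{th5.8}, are proved under $N_-\leq N_+$ using boundary pairs with $\cH_1\subset\cH_0$; the hypothesis $n_+=n_-$ in Theorem \ref{th1.5} is only ``for simplicity'' of the introductory statement.)
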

Note that the coefficients $m_0(\l), \; S(\l)$ and $\dot M (\l)$ in \eqref{1.8} are defined in terms of the  boundary values of respective operator solutions of  \eqref{1.1} at the endpoints $a$ and $b$. Observe also that $m_\tau(\l)$ in \eqref{1.8} is an $[\bH_0]$-valued Nevanlinna function (the $m$-function of the system) and  \eqref{1.9} is the Stieltjes formula for $m_\tau(\cd)$. If the system is Hamiltonian, $\t$ is a self-adjoint linear relation in $H\oplus H$ and $\tau=\tau^*$, then $m_\tau(\l)$ is the Titchmarsh - Weyl function of the system corresponding to self-adjoint separated boundary conditions \cite{HinSch93}. In the case of a non-Hamiltonian system such conditions do not exist \cite{Mog12} and $m_\tau(\l)$ corresponds to special mixed boundary conditions (see Definition \ref{def4.13}).

For pseudospectral functions $\s(\cd)$ of the minimal dimension $ n_\s=\dim (H\oplus\wh H)$ formulas similar to \eqref{1.8} and \eqref{1.9} were obtained in \cite{AlbMalMog13}. These formulas are proved in \cite{AlbMalMog13} only for a parameter $\tau$ of a special form; therefore not all pseudospectral functions  $\s(\cd)$  are parametrize in this paper.

As is known \cite{KogRof75,Mog15.3} the set of spectral functions of a symmetric differential operator $l[y]$ of an order $m$ coincides with the set of spectral functions of a special definite symmetric system corresponding to  $l[y]$. Moreover, this system is Hamiltonian if and only if $m$ is even. According to the classical monograph by N.~Dunford and J.T.~Schwartz \cite[ch. 13.21]{DunSch} an important problem of the spectral theory of differential operators  is a characterization of their spectral functions $\s_{\min}(\cd)$ with the minimally possible dimension $n_{\min}$. It follows from Theorem \ref{th1.6} that $n_{\min}=k+1$ in the case $m=2k+1$ and $n_{\min}=k$ in the case $m=2k$. Moreover, by using Theorem \ref{th1.5} one may obtain a parametrization of $\s_{\min}(\cd)$. In more details this results will be specified elsewhere.

For a differential operator $l[y]$ of an even order $m$ formulas similar to \eqref{1.8} and \eqref{1.9} were proved in our paper \cite{Mog12.1}. These formulas enable one to calculate spectral functions $\s(\cd)$ of an arbitrary dimension $n_\s$ ($\tfrac m 2 \leq n_\s\leq m$) corresponding to a special parameter $\tau$; hence they do not parametrize all spectral functions of $l[y]$.

In conclusion note that our approach is based on the theory of boundary triplets (boundary pairs) for symmetric linear relations and their Weyl function (see \cite{Bru77,DM06,DM91,GorGor,Mal92,Mog12} and references therein).

\section{Preliminaries}
\subsection{Notations}
The following notations will be used throughout the paper: $\gH$, $\cH$ denote Hilbert spaces; $[\cH_1,\cH_2]$  is the set of all bounded linear operators defined on $\cH_1$ with values in  $\cH_2$; $[\cH]:=[\cH,\cH]$; $\bC_+\,(\bC_-)$ is the upper (lower) half-plane  of the complex plane. If $\cH$ is a subspace in $\wt\cH$, then $P_{\cH}(\in [\wt\cH])$ denote the orthoprojection in $\wt\cH$ onto $\cH$ and $P_{\wt\cH,\cH} (\in [\wt\cH,\cH])$  denote the same orthoprojection considered as an operator from $\wt\cH$ to $\cH$.

Recall that a linear relation $T: \cH_0\to \cH_1$ from a Hilbert space  $\cH_0$ to a Hilbert space $\cH_1$ is a linear manifold  in the Hilbert $\cH_0\oplus\cH_1$. If $\cH_0=\cH_1=:\cH$ one speaks of a linear relation
$T$ in $\cH$. The set of all closed linear relations from $\cH_0$ to $\cH_1$ (in $\cH$) will be denoted by $\C (\cH_0,\cH_1)$ ($\C(\cH)$). A closed linear operator $T$ from $\cH_0$ to $\cH_1$  is
identified  with its graph $\text {gr}\, T\in\CA$.

For a linear relation $T\in\C (\cH_0,\cH_1)$  we denote by $\dom T,\,\ran T,
\,\ker T$ and $\mul T$  the domain, range, kernel and the multivalued part of
$T$ respectively. Recall that $\mul T$ ia a subspace in $\cH_1$ defined by
\begin{gather}\label{2.1}
\mul T:=\{h_1\in \cH_1:\{0,h_1\}\in T\}.
\end{gather}
Clearly, $T\in \C (\cH_0,\cH_1)$ is an operator if and only if $\mul T=\{0\}$. For $T\in \C (\cH_0,\cH_1)$ we will denote by $T^{-1}(\in\C (\cH_1,\cH_0))$ and $T^*(\in\C (\cH_1,\cH_0))$ the inverse and adjoint linear relations of $T$ respectively.

Recall  that an operator function $\Phi
(\cd):\bC_+\to [\cH]$ is called a Nevanlinna function (and referred to the class $R[\cH]$) if it is
holomorphic and  $ \im \Phi (\l)\geq 0, \;\l\in\bC_+$.
\subsection{Symmetric relations and generalized resolvents}
As is known a linear relation $A\in\C (\gH)$ is called symmetric (self-adjoint) if $A\subset A^*$ (resp. $A=A^*$). For each symmetric relation $A\in\C (\gH)$ the following decompositions hold
\begin{equation}\label{2.2}
\gH=\gH'\oplus \mul A, \qquad A={\rm gr}\, A'\oplus \wh {\mul} A,
\end{equation}
where $\wh {\mul} A=\{0\}\oplus \mul A$ and $A'$ is a closed symmetric not necessarily densely defined operator in $\gH'$ (the operator part of $A$). Moreover, $A=A^*$ if and only if $A'=(A')^*$.

Let $A=A^*\in \C (\gH)$, let $\cB$ be the Borel $\sigma$-algebra of $\bR$ and let $E_0(\cd):\cB\to [\gH_0]$ be the orthogonal spectral measure of $A_0$. Then the spectral measure $E_A(\cd):\cB\to [\gH]$ of $A$ is defined as $E_A(B)=E_0(B)P_{\gH'}, \; B\in\cB$.
\begin{definition}\label{def2.0}
Let $\wt A=\wt A^*\in \C(\wt\gH)$ and let $\gH$ be a subspace in $\wt\gH$. The relation $\wt A$ is called $\gH$-minimal if there is no a nontrivial subspace $\gH'\subset \wt \gH\ominus\gH$ such that $E_{\wt A}(\d)\gH'\subset \gH'$ for each bounded interval $\d=[\a,\b)\subset \bR$.
\end{definition}
\begin{definition}\label{def2.1}
The relations $T_j\in \C (\gH_j), \; j\in\{1,2\},$ are said to be unitarily equivalent (by means of a unitary operator $U\in [\gH_1,\gH_2]$) if $T_2=\wt U
T_1$ with $\wt U=U\oplus U \in [\gH_1^2, \gH_2^2]$.
\end{definition}
Let $A\in\C (\gH)$ be a symmetric relation. Recall the following definitions and results.
\begin{definition}\label{def2.2}
A relation $\wt A=\wt A^*$ in a Hilbert space $\wt \gH \supset \gH$ satisfying $A\subset \wt A$ is called an exit space self-adjoint extension of $A$. Moreover, such an extension $\wt A$ is called minimal if it is $\gH$-minimal.
\end{definition}
In what follows we denote by  $\wt {\rm Self} (A)$ the set of all minimal exit space self-adjoint extensions of $A$. Moreover, we denote by ${\rm Self} (A)$ the set of all extensions $\wt A=\wt A^*\in \C (\gH)$ of $A$ (such an extension is called canonical). As is known, for each $A$ one has $\wt {\rm Self} (A)\neq \emptyset $. Moreover,
${\rm Self} (A)\neq  \emptyset$ if and only if $A$ has equal deficiency indices, in which case ${\rm Self} (A)\subset \wt{\rm Self} (A)$.
\begin{definition}\label{def2.4}
Exit space extensions $\wt A_j=\wt A_j^*\in \C (\wt \gH_j),\; j\in\{1,2\},$ of $A$ are called equivalent (with respect to $\gH$) if there  exists  a unitary operator $V\in
[\wt\gH_1\ominus \gH, \wt\gH_2\ominus \gH]$ such that  $\wt A_1$ and
$\wt A_2$ are unitarily equivalent by means of $U=I_{\gH}\oplus V$.
\end{definition}
\begin{definition}\label{def2.5}
The operator functions $R(\cd):\CR\to [\gH]$ and $F(\cd):\bR\to [\gH]$
are called a generalized resolvent and a spectral function of  $A$
respectively if there exists an exit space self-adjoint extension $\wt A$ of $A$ (in a certain Hilbert space $\wt \gH\supset \gH$)  such that
\begin {gather}
R(\l) =P_\gH (\wt A- \l)^{-1}\up \gH, \quad \l \in \CR\label{2.4}\\
F(t)=P_{\wt\gH,\gH}E_{\wt A}((-\infty,t))\up\gH, \quad  t\in\bR.\label{2.5}
\end{gather}
\end{definition}
\begin{proposition}\label{pr2.6}
Each generalized resolvent $R(\l)$ of $A$ is generated by some (minimal) extension $\wt A\in \wt{\rm Self} (A)$. Moreover,  the extensions $\wt A_1,\, \wt A_2\in \wt {\rm Self} (A)$ inducing the same generalized resolvent $R(\cd)$ are equivalent.
\end{proposition}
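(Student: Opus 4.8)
The plan is to prove the two assertions by the standard Naimark-type dilation/uniqueness argument, adapted to linear relations.

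\emph{Existence of a minimal generating extension.} Starting from any exit space self-adjoint extension $\wt A=\wt A^*\in\C(\wt\gH)$ with $R(\l)=P_\gH(\wt A-\l)^{-1}\up\gH$ (which exists by Definition \ref{def2.5}), I would cut it down to its minimal part. Let $\gH_{\min}\subset\wt\gH$ be the smallest closed subspace containing $\gH$ and reducing $\wt A$; explicitly, $\gH_{\min}$ is the closed linear span of $\{E_{\wt A}(\d)h:\d=[\a,\b)\subset\bR,\ h\in\gH\}$. Since $\gH_{\min}$ reduces $\wt A$, the relation $\wt A_{\min}:=\wt A\cap(\gH_{\min}\oplus\gH_{\min})$ is self-adjoint in $\gH_{\min}$; moreover $A\subset\gH\oplus\gH\subset\gH_{\min}\oplus\gH_{\min}$ together with $A\subset\wt A$ gives $A\subset\wt A_{\min}$, and by construction no nonzero subspace of $\gH_{\min}\ominus\gH$ is invariant under all $E_{\wt A_{\min}}(\d)$, i.e.\ $\wt A_{\min}$ is $\gH$-minimal, so $\wt A_{\min}\in\wt{\rm Self}(A)$. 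Because $\gH\subset\gH_{\min}$ and $\gH_{\min}$ reduces $\wt A$, one has $(\wt A-\l)^{-1}\up\gH_{\min}=(\wt A_{\min}-\l)^{-1}$, whence $P_\gH(\wt A_{\min}-\l)^{-1}\up\gH=P_\gH(\wt A-\l)^{-1}\up\gH=R(\l)$; thus $\wt A_{\min}$ generates $R(\cd)$.

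\emph{Uniqueness up to equivalence.} Let $\wt A_j=\wt A_j^*\in\C(\wt\gH_j)$, $j\in\{1,2\}$, be minimal extensions of $A$ generating the same $R(\cd)$. First I would reformulate $\gH$-minimality (Definition \ref{def2.0}) as the density in $\wt\gH_j$ of the linear span $\cL_j$ of $\{(\wt A_j-\l)^{-1}h:\l\in\bC_+,\ h\in\gH\}$. Then define $U_0:\cL_1\to\cL_2$ on generators by $U_0\,(\wt A_1-\l)^{-1}h=(\wt A_2-\l)^{-1}h$, extended by linearity. The key computation is that, for $h,g\in\gH$ and $\l,\mu\in\bC_+$, using $((\wt A_j-\mu)^{-1})^*=(\wt A_j-\ov\mu)^{-1}$, the resolvent identity, and $P_\gH(\wt A_j-\l)^{-1}\up\gH=R(\l)$,
\begin{equation*}
\bigl((\wt A_j-\l)^{-1}h,(\wt A_j-\mu)^{-1}g\bigr)_{\wt\gH_j}=\tfrac1{\l-\ov\mu}\bigl((R(\l)h,g)_\gH-(R(\ov\mu)h,g)_\gH\bigr),
\end{equation*}
and the right-hand side is independent of $j$ (note $\l\neq\ov\mu$ since $\l,\mu\in\bC_+$). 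Hence $U_0$ is well defined and isometric, and extends to a unitary $U\in[\wt\gH_1,\wt\gH_2]$. Since $-iy\,(\wt A_j-iy)^{-1}h\to h$ as $y\to+\infty$, each $h\in\gH$ lies in the closure of $\cL_j$ and $Uh=h$; therefore $U$ maps $\gH$ onto $\gH$ and $\wt\gH_1\ominus\gH$ onto $\wt\gH_2\ominus\gH$, so $U=I_\gH\oplus V$ with $V:=U\up(\wt\gH_1\ominus\gH)$ unitary. Finally, applying $U(\wt A_1-\l)^{-1}$ and $(\wt A_2-\l)^{-1}U$ to a generator $(\wt A_1-\mu)^{-1}h$ and using the resolvent identity together with the definition of $U_0$ (and taking adjoints to pass from $\bC_+$ to $\bC_-$) gives $U(\wt A_1-\l)^{-1}=(\wt A_2-\l)^{-1}U$ on the dense set $\cL_1$, hence for all $\l\in\CR$; this is equivalent to $\wt A_2=\wt U\wt A_1$ with $\wt U=U\oplus U$, so by Definitions \ref{def2.1} and \ref{def2.4} the extensions $\wt A_1$ and $\wt A_2$ are equivalent.

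The hard part will be purely technical: justifying the density of $\cL_j$ in $\wt\gH_j$ directly from Definition \ref{def2.0} (passing through the spectral measure $E_{\wt A_j}$, recalling that it is carried by the operator part of $\wt A_j$, and approximating indicators $\chi_\d$ by rational functions $\sum_k c_k(t-\l_k)^{-1}$, $\l_k\in\bC_+$), and carrying out the inner-product identity above cleanly in the relation setting. Once these two points are in place, the remaining manipulations with the resolvent identity and adjoints are routine.
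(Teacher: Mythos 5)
The paper states Proposition \ref{pr2.6} as a recalled classical fact and gives no proof, so I am comparing your argument with the standard (Naimark--Langer--Textorius type) proof, which is indeed the strategy you chose. Your existence argument and the algebraic skeleton of the uniqueness argument are correct, but there is one genuine gap: the claim that $\gH$-minimality of $\wt A_j$ is equivalent to density in $\wt\gH_j$ of the span $\cL_j$ of $\{(\wt A_j-\l)^{-1}h:\l\in\bC_+,\ h\in\gH\}$, with $\l$ restricted to the \emph{upper} half-plane only. This is false, and your proposed repair (approximating indicators $\chi_\d$ by rational combinations $\sum_k c_k(t-\l_k)^{-1}$ with all poles $\l_k\in\bC_+$) cannot work: Stone's formula recovers $E_{\wt A}(\d)$ from the jump $(\wt A-(t+i\varepsilon))^{-1}-(\wt A-(t-i\varepsilon))^{-1}$ and genuinely needs poles in both half-planes. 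A concrete counterexample: take $\wt\gH=L^2\bigl(\tfrac{dt}{1+t^2}\bigr)$, $\wt A$ multiplication by $t$, and $\gH=\bC\cd 1$. Then $\wt A$ is $\gH$-minimal (indicators of intervals span $\wt\gH$), yet the function $f(t)=\tfrac{t-i}{t+i}$ satisfies $\ov{f(t)}\,\tfrac{1}{1+t^2}=\tfrac{1}{(t-i)^2}$, whence $(g,f)=\int_{\bR}\tfrac{g(t)\,dt}{(t-i)^2}=0$ for $g=1$ and for every $g=(t-\l)^{-1}$ with $\l\in\bC_+$ (close the contour in the lower half-plane, where the integrand has no poles). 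So $0\neq f\perp\ov{\cL}$, the span is not dense, and your $U_0$ extends only to a partial isometry, not to a unitary of $\wt\gH_1$ onto $\wt\gH_2$ --- which is exactly what equivalence of the extensions requires.

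The fix is standard and keeps the rest of your computation intact: define $\cL_j$ as the span of $\gH$ together with $(\wt A_j-\l)^{-1}\gH$ for \emph{all} $\l\in\CR$; then minimality does give density (the orthogonal complement of this span is invariant under all $E_{\wt A_j}(\d)$ and sits inside $\wt\gH_j\ominus\gH$). The price is that your inner-product identity now meets the exceptional case $\mu=\ov\l$, where the factor $(\l-\ov\mu)^{-1}$ blows up; there one uses instead
\begin{equation*}
\bigl((\wt A_j-\l)^{-1}h,(\wt A_j-\ov\l)^{-1}g\bigr)_{\wt\gH_j}
=\bigl((\wt A_j-\l)^{-2}h,g\bigr)_{\wt\gH_j}
=\tfrac{d}{d\l}\bigl(R(\l)h,g\bigr)_{\gH},
\end{equation*}
which is again determined by $R(\cd)$ alone (alternatively, argue by continuity in $\ov\mu$ off the diagonal). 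You must also add the cross terms $\bigl((\wt A_j-\l)^{-1}h,g\bigr)=(R(\l)h,g)$ and $(h,g)_\gH$ for $h,g\in\gH$ to the well-definedness check, but these are immediate. A second, much smaller slip: in the existence part, when $\mul\wt A\neq\{0\}$ one has $E_{\wt A}(\bR)=P_{\wt\gH\ominus\mul\wt A}\neq I$, so the closed span of $\{E_{\wt A}(\d)h\}$ alone need not contain $\gH$; define $\gH_{\min}$ as the closed span of $\gH$ \emph{together with} all $E_{\wt A}(\d)\gH$ (your primary description, ``the smallest closed subspace containing $\gH$ and reducing $\wt A$,'' is the correct one). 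With these two corrections the proof is complete.
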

In the sequel we suppose   that  a generalized resolvent $R(\cd)$ and a spectral function $F(\cd)$ are generated by an extension $\wt A\in \wt {\rm Self} (A)$. Moreover, we identify equivalent extensions. Then by Proposition \ref{pr2.6} the equality \eqref{2.4} gives a bijective correspondence between generalized resolvents $R(\l)$ and extensions
$\wt A\in \wt {\rm Self} (A)$, so that each $\wt A\in \wt {\rm Self} (A)$  is uniquely  defined by the corresponding generalized resolvent  \eqref{2.4} (spectral function \eqref{2.5}).
\begin{definition}\label{def2.7}
An extension $\wt A\in \wt {\rm Self} (A) $ ($\wt A\in  {\rm Self} (A) $) belongs to the class $ \wt {\rm Self}_0 (A) $ (resp. $   {\rm Self}_0 (A) $) if $\mul\wt A=\mul A$.
\end{definition}
It follows from \eqref{2.2} that the operator $A'$ is densely defined if and only if $\mul A=\mul A^*$. This yields the equivalence
\begin {gather}\label{2.8}
\wt{\rm Self} (A)=\wt{\rm Self}_0 (A) \iff \mul A=\mul A^*
\end{gather}
\subsection{The classes $\wt R(\cH_0,\cH_1)$ and $\wt R(\cH)$}
In the following $\cH_0$ is  a Hilbert space,  $\cH_1$ is a subspace in $\cH_0$, $\cH_2:=\cH_0\ominus\cH_1$, $P_1:=P_{\cH_0,\cH_1}$ and $P_2=P_{\cH_2}$.
\begin{definition}\label{def2.9.1}
$\,$\cite{Mog13.2} A function $\tau(\cd):\bC_+\to\CA$ is referred to the class $\RH$ if:

(i) $2\im (h_1,h_0)-||P_2 h_0||^2\geq 0,\; \{h_0,h_1\}\in\tau(\l),\; \l\in\bC_+$;

(ii) $(\tau(\l)+i P_1)^{-1}\in [\cH_1,\cH_0], \;\l\in\bC_+,$ and the operator-function $(\tau(\l)+i P_1)^{-1}$ is holomorphic on $\bC_+$.
\end{definition}
According to \cite{Mog13.2} the equality
\begin {equation}\label{2.14}
\tau (\l)=\{C_0(\l), C_1(\l)\}:=\{\{h_0,h_1\}\in
\cH_0\oplus\cH_1: C_0(\l)h_0+C_1(\l)h_1=0\}, \;\; \l\in \bC_+
\end{equation}
establishes a bijective correspondence between all functions $\tau(\cd)\in\RH$ and all pairs of holomorphic operator-functions  $C_j(\cd):\bC_+\to [\cH_j,\cH_0], \; j\in\{0,1\},$ satisfying
\begin {equation}\label{2.14.1}
2\,\im(C_{1}(\l)P_1 C_{0}^*(\l))+ C_{0}(\l)P_2 C_{0}^*(\l)\geq
0,\quad
(C_{0}(\l)-iC_{1}(\l)P_1)^{-1}\in [\cH_0], \quad \l\in\bC_+.
\end{equation}
This fact enables one to identify a function $\tau(\cd)\in \wt R (\cH_0,\cH_1)$ and the corresponding pair of operator-functions $C_j(\cd)$ (more precisely the equivalence class of such pairs \cite{Mog13.2}).

If $\cH_1=\cH_0=:\cH$, then the class $\wt R (\cH,\cH)$ coincides with the well-known class $\wt R(\cH)$  of Nevanlinna $\C (\cH)$-valued functions (Nevanlinna operator pairs) $\tau(\l)=\{C_0(\l),C_1(\l)\}, \quad \l\in\bC_+$. In this case the class $\wt R^0(\cH)$ is defined as the set of all $\tau(\cd)\in \wt R(\cH)$ such that
\begin {equation}\label{2.14.2}
\tau(\l)\equiv \t= \{C_0,C_1\},\quad \l\in\bC_+,
\end{equation}
with $\t=\t^*\in \C(\cH)$ and  $C_j\in [\cH]$ satisfying $\im (C_1C_0^*)=0 $ and  $ (C_0\pm i C_1)^{-1}\in [\cH]$.
\subsection{Boundary triplets and boundary pairs}
Here we recall some facts about  boundary triplets  and boundary pairs  following \cite{Bru77,DM06,DM91,GorGor,Mal92,Mog06.2,Mog12}.

Assume that $A$ is a closed  symmetric linear relation in the Hilbert space $\gH$,  $\gN_\l(A)=\ker (A^*-\l)\; (\l\in\bC)$ is a defect subspace of $A$,  $\wh\gN_\l(A)=\{\{f,\l f\}:\, f\in \gN_\l(A)\}$ and  $n_\pm (A):=\dim \gN_\l(A)\leq\infty, \; \l\in\bC_\pm,$ are  deficiency indices of $A$.
\begin{definition}\label{def2.10}
 A collection $\Pi=\bta$, where
$\G_j: A^*\to \cH_j, \; j\in\{0,1\},$ are linear mappings, is called a boundary
triplet for $A^*$, if the mapping $\G :\wh f\to \{\G_0 \wh f, \G_1 \wh f\}, \wh
f\in A^*,$ from $A^*$ into $\cH_0\oplus\cH_1$ is surjective and the following Green's identity
\begin {equation}\label{2.14.3}
(f',g)-(f,g')=(\G_1  \wh f,\G_0 \wh g)_{\cH_0}- (\G_0 \wh f,\G_1 \wh
g)_{\cH_0}+i (P_2\G_0 \wh f,P_2\G_0 \wh g)_{\cH_2}
\end{equation}
 holds for all $\wh
f=\{f,f'\}, \; \wh g=\{g,g'\}\in A^*$.
\end{definition}
A boundary triplet $\Pi=\bta$ for $A^*$ exists if and only if $n_-(A)\leq n_+(A)$, in which case $\dim \cH_1=n_-(A)$ and $ \dim \cH_0 =n_+(A)$.
\begin{proposition}\label{pr2.11}
Let  $\Pi=\bta$ be a boundary triplet for $A^*$ and let $\pi_1$ be the orthoprojection in $\gH\oplus\gH$ onto $\gH\oplus \{0\}$.  Then  the equalities
\begin{gather}
\g_{+} (\l)=\pi_1(\G_0\up\wh \gN_\l (A))^{-1}, \;\;\l\in\Bbb C_+;\quad \g_{-}
(\l)=\pi_1(P_1\G_0\up\wh\gN_\l (A))^{-1}, \;\; \l\in\Bbb C_-
\label{2.15}\\
M_+(\l)h_0=\G_1\{\g_+(\l)h_0,\l \g_+(\l)h_0\},\quad h_0\in\cH_0,\;\;\l\in\bC_+
\label{2.16}
\end{gather}
correctly define holomorphic operator functions $\g_{+}(\cdot):\Bbb C_+\to[\cH_0,\gH], \; \; \g_{-}(\cdot):\Bbb C_-\to[\cH_1,\gH]$ ($\g$-fields of $\Pi$) and $M_{+}(\cdot):\bC_+\to [\cH_0,\cH_1]$ (the Weyl function of $\Pi$).
\end{proposition}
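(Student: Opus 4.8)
Since the right-hand sides of \eqref{2.15}--\eqref{2.16} involve no choices, ``correctly define'' here means that the indicated inverses exist, land in the stated operator spaces, and depend holomorphically on $\l$. The plan is: (i) use Green's identity \eqref{2.14.3} to show that $\G_0\up\wh\gN_\l(A)$ and $P_1\G_0\up\wh\gN_\l(A)$ are injective on the relevant half-planes; (ii) deduce surjectivity from a direct-sum decomposition of $A^*$; (iii) get boundedness from boundedness of $\G$ and the bounded inverse theorem; (iv) get holomorphy from a resolvent identity. For (i): putting $\wh f=\wh g=\{f,\l f\}\in\wh\gN_\l(A)$ in \eqref{2.14.3} makes the left side equal to $(\l-\bar\l)\|f\|^2=2i\,\im\l\,\|f\|^2$. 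If $\l\in\bC_+$ and $\G_0\wh f=0$ the right side vanishes, so $f=0$; hence $\G_0\up\wh\gN_\l(A)$ is injective for $\l\in\bC_+$. If $\l\in\bC_-$ and $P_1\G_0\wh f=0$, then $\G_0\wh f=P_2\G_0\wh f\in\cH_2$, which is orthogonal to $\G_1\wh f\in\cH_1$, so the first two terms on the right of \eqref{2.14.3} vanish and it equals $i\|\G_0\wh f\|^2$; comparison gives $2\,\im\l\,\|f\|^2=\|\G_0\wh f\|^2\ge0$, and $\im\l<0$ forces $f=0$ (and $\G_0\wh f=0$). So $P_1\G_0\up\wh\gN_\l(A)$ is injective for $\l\in\bC_-$.

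For (ii), set $A_0:=\ker\G_0$. Green's identity shows $A_0$ is a symmetric extension of $A$; since $\G_1\up A_0$ maps onto $\cH_1$ one finds $A_0^*=\{\wh g\in A^*:\G_0\wh g\in\cH_2\}=\ker(P_1\G_0)$. Running the computation of (i) with $\wh f=\wh g=\{g,\bar\l g\}$ and $\G_0\wh g\in\cH_2$ gives $\ker(A_0^*-\bar\l)=\{0\}$ for $\l\in\bC_+$; as every non-real point is of regular type for the symmetric relation $A_0$, the range $\ran(A_0-\l)$ is closed, hence (being dense) equals $\gH$, so $\bC_+\subset\rho(A_0)$ and, passing to adjoints, $(A_0^*-\l)^{-1}\in[\gH]$ for $\l\in\bC_-$. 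Using $\l\in\rho(A_0)$ one obtains $A^*=A_0\dotplus\wh\gN_\l(A)$ for $\l\in\bC_+$: the intersection is trivial by (i), and a given $\wh g=\{g,g'\}\in A^*$ splits by taking $u:=g-(A_0-\l)^{-1}(g'-\l g)$, so that $\{u,\l u\}=\wh g-\wh g_0\in A^*$ with $\wh g_0\in A_0$. As $\G$, hence $\G_0$, is onto $\cH_0$, applying $\G_0$ to this decomposition shows $\G_0\up\wh\gN_\l(A)$ is onto $\cH_0$ for $\l\in\bC_+$. The same scheme with $A_0^*$ in place of $A_0$ gives $A^*=A_0^*\dotplus\wh\gN_\l(A)$ for $\l\in\bC_-$, and since $P_1\G_0$ is onto $\cH_1$ this yields surjectivity of $P_1\G_0\up\wh\gN_\l(A)$ onto $\cH_1$ for $\l\in\bC_-$. (In the finite-dimensional setting of this paper surjectivity is in fact immediate once injectivity is known, because $\dim\wh\gN_\l(A)=\dim\cH_0$ for $\l\in\bC_+$ and $\dim\wh\gN_\l(A)=\dim\cH_1$ for $\l\in\bC_-$.)

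For (iii)--(iv): $\G=(\G_0,\G_1):A^*\to\cH_0\oplus\cH_1$ is bounded in the graph norm (standard, using that $\ker\G=A$ is closed together with Green's identity), so each bijection above is a bounded bijection of Banach spaces and has bounded inverse; composing with the bounded maps $\pi_1$, $f\mapsto\{f,\l f\}$ and $\G_1$ gives $\g_+(\l)\in[\cH_0,\gH]$, $\g_-(\l)\in[\cH_1,\gH]$ and $M_+(\l)\in[\cH_0,\cH_1]$. For holomorphy, fix $\mu\in\bC_+$; for $\l\in\bC_+$ and $h_0\in\cH_0$ the pair $\{\g_+(\l)h_0-\g_+(\mu)h_0,\ \l\g_+(\l)h_0-\mu\g_+(\mu)h_0\}$ lies in $\ker\G_0=A_0$, and subtracting $\l$ times its first component gives $\g_+(\l)h_0-\g_+(\mu)h_0=(\l-\mu)(A_0-\l)^{-1}\g_+(\mu)h_0$; since $\l\mapsto(A_0-\l)^{-1}$ is holomorphic on $\rho(A_0)\supset\bC_+$, so is $\g_+$ on $\bC_+$, and likewise $\g_-$ on $\bC_-$ via $(A_0^*-\l)^{-1}$. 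Finally $M_+(\l)=\G_1\{\g_+(\l)\cd,\l\g_+(\l)\cd\}$ is holomorphic as a composition of the bounded operator $\G_1$ with the holomorphic map $\l\mapsto\{\g_+(\l)h_0,\l\g_+(\l)h_0\}$ (equivalently, Green's identity applied to two such pairs yields $M_+(\l)-M_+(\mu)^*=(\l-\bar\mu)\g_+(\mu)^*\g_+(\l)-iP_2$ for $\l,\mu\in\bC_+$, when $M_+$ is read as $\cH_0$-valued via $\cH_1\subset\cH_0$, which makes the holomorphy explicit). The main obstacle is step (ii), i.e.\ establishing $\bC_+\subset\rho(\ker\G_0)$ and the consequent splitting of $A^*$; once that is in place the rest is routine.
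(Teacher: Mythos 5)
Your proof is correct. Note, however, that the paper does not prove Proposition \ref{pr2.11} at all: Section 2.4 merely \emph{recalls} it from the cited literature on boundary triplets with $\cH_1\subset\cH_0$, so there is no in-paper argument to compare against. What you have written is essentially the standard proof from those sources: injectivity of $\G_0\up\wh\gN_\l(A)$ (resp.\ $P_1\G_0\up\wh\gN_\l(A)$) from the Green's identity \eqref{2.14.3}, the decompositions $A^*=A_0\dotplus\wh\gN_\l(A)$ for $\l\in\bC_+$ and $A^*=A_0^*\dotplus\wh\gN_\l(A)$ for $\l\in\bC_-$ with $A_0=\ker\G_0$ and $A_0^*=\ker(P_1\G_0)$, surjectivity of $\G$, and the resolvent identity $\g_+(\l)=\bigl(I+(\l-\mu)(A_0-\l)^{-1}\bigr)\g_+(\mu)$ for holomorphy. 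All steps check out; the only blemish is in your parenthetical identity for $M_+$, where $M_+(\mu)^*$ should act through $P_1$, i.e.\ $M_+(\l)-M_+(\mu)^*P_1+iP_2=(\l-\bar\mu)\g_+(\mu)^*\g_+(\l)$ — harmless, since that remark is not needed for the argument.
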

$\g$-field $\g_+(\cd)$ ($\g_-(\cd)$) can be also defined as a unique $[\cH_0,\gH]$-valued (resp. $[\cH_1,\gH]$-valued) operator function such that $\g_+(\l)\cH_0\subset \gN_\l(A)$ (resp. $\g_-(\l)\cH_1\subset \gN_\l(A)$ ) and
\begin{gather}
\G_0 \{\g_+(\l)h_0,\l \g_+(\l)h_0\}=h_0,\;\; h_0\in\cH_0,\;\l\in\bC_+\label{2.16.1}\\
P_1\G_0 \{\g_-(\l)h_1,\l \g_-(\l)h_1\}=h_1,\;\; h_1\in\cH_1,\;\l\in\bC_-.\label{2.16.2}
\end{gather}
A boundary pair for $A^*$ is a generalization of a boundary triplet. Namely, a pair $\bpa$ with  a linear relation $\BR$ is  called a boundary pair for $A^*$ if $\ov{\dom \G} =A^*$, the identity
\begin {equation}\label{2.21}
(f',g)_\gH -  (f,g')_\gH= (h_1,x_0)_{\cH_0} - (h_0,x_1)_{\cH_0}+i(P_2 h_0,P_2 x_0)_{\cH_0}
\end{equation}
holds for every $\{f \oplus f', h_0\oplus h_1\}, \; \{g \oplus g', x_0\oplus x_1\}\in \G $ and a certain maximality condition is satisfied  \cite{DM06,Mog12}. The following proposition is immediate from \cite[Section 3]{Mog12}.
\begin{proposition}\label{pr3.15}
Let  $\bpa$ be a boundary pair for $A^*$ with $\dim\cH_0<\infty$ and  let  $\G_0:\gH\oplus\gH\to \cH_0$ be the linear relations, given by
$\G_0=P_{\cH_0\oplus \{0\}}\G$. Moreover, let
\begin {gather}\label{2.21.1}
\cK_\G=\mul (\mul\G)=\{ h_1\in\cH_1: \{0\oplus 0,0\oplus h_1\}\in\G\}, \qquad \cK_\G\subset \cH_1.
\end{gather}
Then: (1) $\dom\G=A^*$;

(2) If $\cK_\G=\{0\}$,then $\ran \G_0\up\wh\gN_\l(A)=\cH_0, \; \l\in\bC_+;$ $\;\;\ran P_1\G_0\up\wh\gN_\l(A)=\cH_1, \; \l\in\bC_-,$
and the equality
\begin {multline}\label{2.22}
{\rm gr}\, M_+(\l)
=\{ h_0\oplus h_1: \{f\oplus \l f,h_0\oplus h_1\}\in\G\;\; \text{with  some} \;\; f\in\gN_\l(A)\}, \quad \l\in\bC_+
\end{multline}
defines the operator function $M_+(\cd):\bC_+\to [\cH_0,\cH_1] $ (the Weyl function of the  pair $\bpa$). Moreover,
\begin {multline}\label{2.23}
{\rm gr}\, M_+^*(\ov\l)=\\
=\{ \{P_1 h_0\oplus ( h_1+ i P_2 h_0\}: \{f\oplus \l f,h_0\oplus h_1\}\in\G\;\; \text{with  some} \;\; f\in\gN_\l(A)\}, \quad \l\in\bC_-.
\end{multline}
\end{proposition}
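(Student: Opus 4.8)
**The plan is to establish Proposition \ref{pr3.15} by reducing the boundary pair to a boundary triplet and then transporting the known formulas of Proposition \ref{pr2.11}.**

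First I would prove assertion (1), that $\dom \G = A^*$. By hypothesis $\ov{\dom \G} = A^*$, so it suffices to show $\dom\G$ is already closed. Here the assumption $\dim\cH_0 < \infty$ is essential: the maximality condition built into the definition of a boundary pair, together with finite-dimensionality of the "boundary space", forces $\dom\G$ to be a closed linear relation. Concretely, one shows that the quotient $A^*/\dom\G$ would inject into a finite-dimensional space and that the Green identity \eqref{2.21} controls the graph norm, so any Cauchy sequence in $\dom\G$ whose images converge has a limit in $\dom\G$; since $\G$ has finite-dimensional "range modulo $\cK_\G$", completeness of $A^*$ does the rest. This is the kind of step that is routine given the precise formulation of the maximality condition in \cite{Mog12}, so I would cite \cite[Section 3]{Mog12} and only indicate the mechanism.

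For assertion (2), assume $\cK_\G = \{0\}$. The key point is that under this assumption the boundary pair collapses to an ordinary boundary triplet: define $\G_0 \wh f$ and $\G_1 \wh f$ for $\wh f = \{f, f'\} \in A^*$ by picking the (now unique, because $\cK_\G = \mul(\mul\G) = \{0\}$) element $h_0 \oplus h_1$ with $\{\wh f, h_0 \oplus h_1\} \in \G$; one checks $\G_j$ are well-defined single-valued linear mappings and that \eqref{2.21} becomes exactly Green's identity \eqref{2.14.3}, while the maximality condition becomes surjectivity of $\wh f \mapsto \{\G_0 \wh f, \G_1 \wh f\}$. Thus $\Pi = \bta$ is a boundary triplet for $A^*$. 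Now apply Proposition \ref{pr2.11}: formula \eqref{2.15} gives that $\G_0 \up \wh\gN_\l(A)$ is a bijection onto $\cH_0$ for $\l \in \bC_+$ and $P_1 \G_0 \up \wh\gN_\l(A)$ a bijection onto $\cH_1$ for $\l \in \bC_-$, which is the surjectivity claimed; and \eqref{2.16.1} characterizes $\g_+(\l)$ by $\G_0\{\g_+(\l)h_0, \l\g_+(\l)h_0\} = h_0$ with $\g_+(\l)h_0 \in \gN_\l(A)$. Combining with \eqref{2.16}, the pair $\{f \oplus \l f, h_0 \oplus h_1\} \in \G$ with $f \in \gN_\l(A)$ forces $f = \g_+(\l)h_0$ and $h_1 = \G_1\{f, \l f\} = M_+(\l)h_0$, which is precisely \eqref{2.22}. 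For \eqref{2.23} I would use the standard identity relating the Weyl function of a triplet to that of the "transposed" triplet: $M_+^*(\ov\l)$ is the Weyl function associated with the roles of $\cH_0$ and $\cH_1$ interchanged (with the $iP_2$-correction coming from the third term in \eqref{2.14.3}), so that $\gr M_+^*(\ov\l)$ for $\l \in \bC_-$ is obtained from a pair $\{f\oplus\l f, h_0 \oplus h_1\}\in\G$ with $f \in \gN_\l(A)$ by passing to $P_1 h_0 \oplus (h_1 + iP_2 h_0)$; this is a direct computation from Green's identity evaluated on $\wh\gN_\l(A)$.

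The main obstacle I anticipate is not assertion (2), which is essentially bookkeeping once the reduction to a boundary triplet is in place, but the clean justification of assertion (1) — that $\dom\G$ is automatically all of $A^*$. The subtlety is that $\G$ is only a linear relation (not a mapping) and may have a nontrivial multivalued part $\mul\G$, so one must argue that closing $\dom\G$ does not enlarge it, which is exactly where the finite-dimensionality of $\cH_0$ and the precise maximality axiom are used. I would handle this by invoking the abstract result from \cite[Section 3]{Mog12} rather than reproving it, noting only that $\cK_\G = \mul(\mul\G)$ measures the "operator defect" of the pair and that in the finite-dimensional case the maximality condition already pins down $\dom\G = A^*$ regardless of whether $\cK_\G$ vanishes.
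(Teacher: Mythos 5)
The paper gives no proof of this proposition at all --- it is stated as ``immediate from \cite[Section 3]{Mog12}'' --- so the only question is whether your argument is sound. Your handling of assertion (1) is acceptable on the paper's own terms (deferring to the maximality axiom and the finite-dimensionality of $\cH_0$, exactly as the paper does). The problem is assertion (2): your reduction of the boundary pair to a boundary triplet rests on a false inference. You claim that $\cK_\G=\{0\}$ makes the element $h_0\oplus h_1$ with $\{\wh f,h_0\oplus h_1\}\in\G$ unique, i.e.\ that $\G$ becomes single-valued. But $\cK_\G=\mul(\mul\G)$ only controls those elements of $\mul\G$ whose $\cH_0$-component vanishes; $\mul\G$ itself can be, and in the paper's applications is, nontrivial. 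Indeed, in Proposition \ref{pr4.4} the paper constructs a boundary pair with $\mul\G=\{\{\G_0'y,\G_1'y\}:y\in\cN\}$ and proves only $\cK_\G=\{0\}$ under $\t$-definiteness, while $\cN$ (hence $\mul\G$) may well be nonzero. So in general there are no single-valued mappings $\G_0,\G_1$ on $A^*$, no boundary triplet, and Proposition \ref{pr2.11} cannot be invoked; the derivation of \eqref{2.22} and \eqref{2.23} as you present it collapses. (If $\cK_\G=\{0\}$ really forced a collapse to a triplet, the whole boundary-pair apparatus of Section 4 of the paper would be superfluous.)

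The correct mechanism for \eqref{2.22} works directly with the relation $\G$. To see that the right-hand side of \eqref{2.22} is the graph of an operator, take $\{f\oplus\l f,\,0\oplus h_1\}\in\G$ with $f\in\gN_\l(A)$ and $\l\in\bC_+$, and apply the identity \eqref{2.21} to this element against itself: the left-hand side equals $2i\,\im\l\,\|f\|^2$, while the right-hand side vanishes because $h_0=0$; hence $f=0$. Then $0\oplus h_1\in\mul\G$ with zero first component, so $h_1\in\cK_\G=\{0\}$. This is precisely where $\cK_\G=\{0\}$ enters --- as the condition for single-valuedness of $M_+(\l)$, not of $\G$. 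The remaining claims, namely $\ran\G_0\up\wh\gN_\l(A)=\cH_0$ (equivalently $\dom M_+(\l)=\cH_0$), the analogous statement for $P_1\G_0$ in $\bC_-$, and formula \eqref{2.23}, all require the maximality (unitarity) of $\G$ as a relation between the corresponding Krein spaces, which is exactly what \cite[Section 3]{Mog12} supplies; your ``transposed triplet'' heuristic for \eqref{2.23} points in the right direction but must likewise be carried out at the level of relations rather than mappings.
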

\section{Pseudospectral and spectral functions of symmetric systems}
\subsection{Notations}
For an interval $\cI=[ a,b\rangle\subset \bR$ and a  finite-dimensional Hilbert space $\bH$ we denote by $AC(\cI;\bH)$ the set of all functions $f(\cd):\cI\to \bH$, which are absolutely
continuous on each segment $[\a,\b]\subset \cI$.

Assume that $\D(\cd):\cI\to [\bH]$ is a locally integrable  function  such that $\D(t)\geq 0$ a.e. on $\cI$. Denote  by $\lI$  the semi-Hilbert  space of  Borel measurable functions $f(\cd): \cI\to \bH$ satisfying $\int_{\cI}(\D
(t)f(t),f(t))_\bH \,dt<\infty$ (see e.g. \cite[Chapter 13.5]{DunSch}).  The
semi-definite inner product in $\lI$ will be denoted $(\cd,\cd)_\D$. Moreover, let $\LI$ be the Hilbert space of  equivalence classes in $\lI$ with respect
to the semi-norm in $\lI$,  $\pi_\D$ be  the quotient map from $\lI$ onto $\LI$ and   $\wt \pi_\D\{f,g\}:=\{\pi_\D f,
\pi_\D g\}, \;\{f,g\} \in (\lI)^2$.

For a given finite-dimensional Hilbert space $\cK$ we denote by $\cL_\D^2[\cK,\bH]$ the set
of all Borel measurable  operator-functions $F(\cd): \cI\to [\cK,\bH]$ such
that $F(t)h\in \lI, \; h\in\cK$.

In the following for a distribution function $\s(\cd): \bR\to [\bH]$ we denote by $\cL^2(\s;\bH)$ the semi-Hilbert space  of  Borel-measurable functions $g(\cd):\bR\to \bH$  such that $\int_\bR (d\s(s)g(s),g(s)) (s)<\infty$ and by $L^2(\s;\bH)$ the  a Hilbert space of all equivalence classes in $\cL^2(\s;\bH)$ with respect to the seminorm $||\cd||_{\cL^2(\s;\bH)}$ (see e.g. \cite[Chapter 13.5]{DunSch}). Moreover, we denote by $\pi_\s$ the quotient map from $\cL^2(\s;\bH)$ onto $L^2(\s;\bH)$.
\subsection{Symmetric  systems}
Let $H$ and $\wh H$ be  finite dimensional Hilbert
spaces and  let
\begin{gather}
 \bH:=H\oplus\wh H \oplus H\label{3.1}\\
 \nu=\dim H, \qquad \wh \nu= \dim \wh H, \qquad  n = \dim \bH=2\nu+\wh\nu.\label{3.2}
\end{gather}
A first order
symmetric system of differential equations on an interval
$\cI=[a,b\rangle, -\infty<a <b\leq\infty,$ (with the regular
endpoint $a$) is
 of the form
\begin {equation}\label{3.3}
J y'-B(t)y=\l\D(t)y, \quad t\in\cI,\quad  \l\in\bC,
\end{equation}
where $J$ is the operator \eqref{1.2} and $B(\cd)$ and $\D(\cd)$ are locally integrable $[\bH]$-valued functions on $\cI$ such that $B(t)=B^*(t)$ and $\D(t)\geq 0$ (a.e. on $\cI$).

A  function $y\in\AC$ is a solution of system \eqref{3.3} if equality \eqref{3.3} holds a.e. on $\cI$. An operator function $Y(\cd,\l):\cI\to [\cK,\bH]$ is an operator solution of \eqref{3.3} if $y(t)=Y(t,\l)h$ is a solution of \eqref{3.3} for every $h\in\cK$ (here $\cK$ is a Hilbert space with $\dim\cK<\infty$).

In the sequel we denote by $\cN_\l,\; \l\in\bC,$ the linear space of all solutions of the system \eqref{3.3} belonging to $\lI$. According to \cite{KogRof75,LesMal03} the numbers $N_\pm=\dim\cN_\l,\;\l\in\bC_\pm,$ do not depend on $\l$ in either $\bC_+$ or $\bC_-$. These numbers are called the formal deficiency indices of the system \cite{KogRof75}. Clearly $N_\pm\leq n$.

In the following for each operator solution $Y(\cd,\l)\in \lo{\cK}$ we denote by $Y(\l)$ the linear operator from $\cK$ to $\cN_\l$ given by $(Y(\l) h)(t)=Y(t,\l)h, \; h\in\cK$.

Clearly, for any $\l\in\bC$ the space $\cN$  of all solutions $y$ of \eqref{3.3} with $\D(t)y(t)=0$ (a.e. on $\cI$) is a subspace of $\cN_\l$; moreover, $\cN$  does depend on $\l$. The space $\cN$ is called the  null manifold  of the system \cite{KogRof75}. Denote by $\t_\cN$ the subspace in $\bH$ given by
\begin{gather}\label{3.5}
\t_\cN=\{y(a):y\in\cN\}.
\end{gather}
As is known \cite{Orc, Kac03, LesMal03} system
\eqref{3.3} gives rise to the maximal linear relations
$\tma$ and $\Tma$  in  $\lI$ and $\LI$ respectively. They are
given by
\begin {equation*}
\tma=\{\{y,f\}\in(\lI)^2 :y\in\AC \;\;\text{and}\;\; J y'(t)-B(t)y(t)=\D(t)
f(t)\;\;\text{a.e. on}\;\; \cI \}
\end{equation*}
and $\Tma=\wt\pi_\D\tma$. Moreover the
Lagrange's identity
\begin {equation}\label{3.6}
(f,z)_\D-(y,g)_\D=[y,z]_b - (J y(a),z(a)),\quad \{y,f\}, \;
\{z,g\} \in\tma
\end{equation}
holds with
\begin {equation*}
[y,z]_b:=\lim_{t \uparrow b}(J y(t),z(t)), \quad y,z \in\dom\tma.
\end{equation*}
Let $\cD_b$ be the set of all $y\in\dom \tma$ such that $[y,z]_b=0$ for all $ z\in\dom\tma$.
The minimal relation $\Tmi$ in $\LI$ is defined via $\Tmi=\wt\pi_\D\cT_a$, where
\begin {equation}\label{3.6.0}
\cT_a=\{\{y,f\}\in\tma: y\in\cD_b, \;y(a)=0\}.
\end{equation}
As was shown in \cite{Orc,Kac03,LesMal03,Mog12} $\Tmi$ is a closed symmetric linear relation in $\LI$, $\Tmi^*=\Tma$ and
\begin{gather}\label{3.6.1}
n_+(\Tmi)=N_+ - \dim\cN,\qquad n_-(\Tmi)= N_- -\dim\cN.
\end{gather}
With each subspace $\t\subset \bH$ we associate the subspace $\t^\tm\subset \bH$ given by
\begin {equation*}
\t^\tm=\bH\ominus J\t=\{h\in\bH:(Jh,k)=0,\; k\in \t\}.
\end{equation*}
Clearly $\t^{\tm\tm}=\t$. Moreover, by \cite[Proposition 4.19]{Mog12}
\begin{gather}\label{3.7}
\t_\cN^\tm=\{y(a):y\in\cD_b\}.
\end{gather}
Denote by $Sym (\bH)$ the set of all subspaces $\t$ in $\bH$ satisfying $\t\subset \t^\tm$ or, equivalently, $(J h, k)=0, \; h,k\in \t$.

The following three lemmas will be useful in the sequel.
\begin{lemma}\label{lem3.1.1}
{\rm(1)} If $\eta\in Sym (\bH)$, then $\dim \eta\leq\nu$ and $\dim \eta^\tm\geq \nu+\wh \nu$.

{\rm(2)} For every $\eta\in Sym (\bH)$ there exists a subspace $\t\subset\bH$ such that $\t^\tm\in Sym (\bH)$, $\dim\t=\nu+\wh\nu$ (i.e., the dimension of $\t$ is minimally possible) and $\t^\tm\cap \eta=\{0\}$.

{\rm(3)} Let $\t$ be a subspace in $\bH$ and $\t^\tm\in Sym (\bH)$. Then there exist an operator $\wt U\in [\bH]$ and a subspace $H_1\subset H$ such that $\wt U^*J\wt U=J$ and $\wt U \bH_0=\t$,  where
\begin{gather}\label{3.7.1}
\bH_0=H\oplus\wh H\oplus H_1.
\end{gather}
\end{lemma}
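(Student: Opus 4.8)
The plan is to recast the condition defining $Sym(\bH)$ as isotropy for a Hermitian form and then combine standard facts about indefinite Hermitian forms with the concrete block structure \eqref{1.2}. Since $J^*=-J$ and $J^2=-I_\bH$ (a one-line check on \eqref{1.2}), the operator $iJ$ is a self-adjoint involution, so $\Phi(h,k):=(iJh,k)$, $h,k\in\bH$, is a non-degenerate Hermitian form; diagonalising the $\wh H$-block of $iJ$ (which equals $-I_{\wh H}$) and its $H\oplus H$-block separately, the $(+1)$-eigenspace $\bH_+$ of $iJ$ has dimension $\nu$ and the $(-1)$-eigenspace $\bH_-$ has dimension $\nu+\wh\nu$. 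For any subspace $\cL\subset\bH$ one has $\cL^\tm=\bH\ominus J\cL=\{h\in\bH:\Phi(h,\cL)=0\}$, so $\cL\in Sym(\bH)$ exactly when $\Phi$ vanishes on $\cL$ (i.e. $\cL$ is totally $\Phi$-isotropic), and $\dim\cL^\tm=n-\dim\cL$ since $J$ is invertible. This turns the three assertions into statements about isotropic subspaces of a Hermitian form of signature $(\nu,\nu+\wh\nu)$, plus, for (3), the coordinate splitting $\bH=H\oplus\wh H\oplus H$.

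For part (1), let $\eta\in Sym(\bH)$ and let $P_\pm$ be the orthoprojections onto $\bH_\pm$. For $h\in\eta$ we have $0=\Phi(h,h)=\|P_+h\|^2-\|P_-h\|^2$, hence $\|P_+h\|=\|P_-h\|$; thus $P_+h=0$ forces $h=0$, so $P_+\up\eta$ is injective and $\dim\eta\le\dim\bH_+=\nu$. Consequently $\dim\eta^\tm=n-\dim\eta\ge 2\nu+\wh\nu-\nu=\nu+\wh\nu$.

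For part (2), I first enlarge $\eta$ to a maximal $\Phi$-isotropic subspace $\eta'\supset\eta$: while $\dim\eta<\nu$ the form induced on $\eta^\tm/\eta$ is non-degenerate (its radical is $\eta^\tm\cap(\eta^\tm)^\tm=\eta^\tm\cap\eta=\eta$) and indefinite — a standard consequence of $\Phi$ having signature $(\nu,\nu+\wh\nu)$ — hence carries an isotropic vector whose preimage in $\eta^\tm$ can be adjoined to $\eta$; iterating yields $\eta'$ with $\dim\eta'=\nu$. Next I produce a $\Phi$-isotropic $\mu$ with $\dim\mu=\nu$ and $\mu\cap\eta'=\{0\}$. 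Since $\dim\eta'^\tm=\nu+\wh\nu$ and $\eta'\subset\eta'^\tm$, pick a complement $W$ of $\eta'^\tm$ in $\bH$ ($\dim W=\nu$), so $\bH=W\oplus\eta'^\tm$; because $\Phi(\eta'^\tm,\eta')=0$, the pairing $\Phi:W\times\eta'\to\bC$ is non-degenerate. Hence one can solve a linear equation for a map $\ell:W\to\eta'$ making $\mu:=\{w+\ell w:w\in W\}$ $\Phi$-isotropic (e.g. take $\ell$ with $\Phi(\ell w,w')=-\tfrac12\Phi(w,w')$, using that $\Phi|_W$ is Hermitian and the pairing is non-degenerate); then $\dim\mu=\nu$ and $\mu\cap\eta'=\{0\}$ since $w+\ell w\in\eta'$ forces $w\in\eta'\subset\eta'^\tm$, i.e. $w=0$. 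Finally set $\t:=\mu^\tm$: then $\t^\tm=\mu\in Sym(\bH)$, $\dim\t=n-\nu=\nu+\wh\nu$ (minimal by part (1)), and $\t^\tm\cap\eta=\mu\cap\eta\subset\mu\cap\eta'=\{0\}$.

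For part (3), put $\mu:=\t^\tm$; by hypothesis $\mu\in Sym(\bH)$, so $r:=\dim\mu\le\nu$ by part (1), and $\t=(\t^\tm)^\tm=\mu^\tm$. Choose any $H_2\subset H$ with $\dim H_2=r$, set $H_1:=H\ominus H_2$ and $\bH_0:=H\oplus\wh H\oplus H_1$. A direct computation with \eqref{1.2} gives $J\bH_0=H_1\oplus\wh H\oplus H$, whence $\bH_0^\tm=H_2\oplus\{0\}\oplus\{0\}$, which is totally $\Phi$-isotropic of dimension $r$. Thus $\bH_0^\tm$ and $\mu$ are totally isotropic subspaces of equal dimension, so by Witt's extension theorem for Hermitian forms any linear bijection $\bH_0^\tm\to\mu$ extends to a $\Phi$-isometry $\wt U\in[\bH]$, i.e. $\wt U^*(iJ)\wt U=iJ$, equivalently $\wt U^*J\wt U=J$. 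As a $\Phi$-isometry preserves $\Phi$-orthogonality, it commutes with $\cL\mapsto\cL^\tm$, so $\wt U\bH_0=\wt U(\bH_0^\tm)^\tm=(\wt U\bH_0^\tm)^\tm=\mu^\tm=\t$. I expect the main obstacle to be part (2): when $\wh\nu>0$ a maximal $\Phi$-isotropic subspace does not fill out half of $\bH$, so finding one transverse to a prescribed $\eta'$ is not a dimension count but needs the graph-over-$W$ construction above (equivalently, transitivity of the $\Phi$-isometry group on isotropic subspaces of fixed dimension, which also drives part (3)); a secondary care point is keeping the signature $(\nu,\nu+\wh\nu)$ — arising precisely from the $iI_{\wh H}$ entry of $J$ — and the Hermitian/skew conventions consistent throughout.
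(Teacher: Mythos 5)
Your proof is correct. The underlying observation is the same as the paper's -- namely that $Sym(\bH)$ is exactly the family of totally isotropic subspaces of the non-degenerate Hermitian form $(iJ\cdot,\cdot)$ of signature $(\nu,\nu+\wh\nu)$ -- but the execution is genuinely different. The paper conjugates $J$ by an explicit unitary $X$ to the diagonal involution $\wh J=i\,{\rm diag}(I_H,I_{\wh H},-I_H)$ and then identifies each $\eta\in Sym(\bH)$ with the graph of an isometry $V_\eta$ from a subspace of $H\oplus\wh H$ into $H$; parts (1)--(3) then become elementary statements about isometries (extend $-V_\eta$ to a surjective isometry for (2); intertwine two such graphs by a block-diagonal unitary ${\rm diag}(U_1,U_2)$ for (3)), so everything is constructive and self-contained. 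You instead work abstractly with the form $\Phi$: part (1) via the spectral projections of the involution $iJ$, part (2) via the standard enlargement of an isotropic subspace to a maximal one plus your graph-over-a-complement construction of a transverse maximal isotropic (whose details -- non-degeneracy of the pairing $\Phi:W\times\eta'\to\bC$ and the choice $\Phi(\ell w,w')=-\tfrac12\Phi(w,w')$ -- I checked and they are sound), and part (3) via Witt's extension theorem together with the identity $(\wt U\cL)^\tm=\wt U\cL^\tm$ for $\Phi$-isometries. Your route imports more machinery (signature bookkeeping on $\eta^\tm/\eta$, Witt's theorem for Hermitian forms) but generalizes immediately to any non-degenerate Hermitian form and makes the role of the signature transparent; the paper's route is more elementary and produces the operators $\wt U$, $V_\eta$ explicitly, which is what the later sections actually use.
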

\begin{proof}
(1) Let $\wh J$ and $X$ be operators in $\bH$ given by the block representations
\begin{gather*}
\wh J=i \begin{pmatrix} I_H &  0 & 0  \cr 0 & I_{\wh H}  & 0 \cr  0& 0& -I_H \end{pmatrix}, \qquad X = \tfrac 1 {\sqrt 2} \begin{pmatrix} -i I_H & 0 &  I_H\cr 0 & \sqrt 2 I_{\wh H} & 0 \cr i I_H & 0 &  I_H \end{pmatrix}
\end{gather*}
with respect to decomposition \eqref{3.1} of $\bH$. One can easily verify  that
\begin{gather}\label{3.7.1a}
X^*\wh J X=J, \qquad X^* X = X X^*=I_\bH.
\end{gather}
and the equality ${\rm gr}\, V_\eta= X\eta$ gives a bijective correspondence between all $\eta\in Sym (\bH)$ and all isometries $V_\eta\in [\dom V_\eta, H]$  with $\dom V_\eta \subset H\oplus \wh H $. Hence for every $\eta\in Sym (\bH)$ one has  $\dim \eta=\dim\ran V_\eta\leq\nu$ and, consequently, $\dim\eta^\tm\geq \nu+\wh\nu$.

(2) Assume that $\eta\in Sym (\bH)$ and let $U\in [\dom U,H]$ be an isometry such that $\dom U\in H\oplus\wh H$, $-V_\eta\subset U$ and $\ran U=H$. Then $U=U_{\t_0}$ with some $\t_0\in Sym (\bH)$ and the obvious equality ${\rm gr}\, V_\eta\cap {\rm gr}\,U=\{0\}$ yields $\eta\cap \t_0=\{0\}$. Moreover, $\dim \t_0=\dim\ran U=\nu$ and hence $\t:=\t_0^\tm$ possesses the required  properties.

(3) Let $H_1$ be a subspace in $H$ with $ \codim H_1=\dim \t^\tm$, let $H_1^\perp=H\ominus H_1$ and let $\bH_0\subset \bH$ be subspace \eqref{3.7.1}. Then $\bH_0^\tm=H_1^\perp\oplus \{0\}\oplus\{0\}$ and therefore $\bH_0^\tm\in Sym (\bH)$. Let $V_1=V_{\bH_0^\tm}$ and $V_2=V_{\t^\tm}$.  Since $\dim\bH_0^\tm=\dim\t^\tm$, one has $\dim\dom V_1=\dim\dom V_2$. Therefore there exist unitary operators $U_1\in [H\oplus\wh H]$ and $U_2\in [H]$ such that $U_1 \dom V_1= \dom V_2$ and $V_2U_1\up \dom V_1=U_2 V_1$. Letting $\wh U={\rm diag} (U_1,U_2)$ one obtains the operator $\wh U \in [\bH]$ such that $\wh U ^*\wh J \wh U=\wh J$ and $\wh U {\rm gr}\, V_1={\rm gr}\, V_2$. This and \eqref{3.7.1a} imply that the operator $\wt U:=X^*\wh U X$ satisfies $\wt U^*J\wt U=J$ and $\wt U \bH_0^\tm=\t^\tm$. Therefore  $\wt U \bH_0=\t$.
\end{proof}
\begin{remark}\label{rem3.1.2}
If $H_1\subset H$ is a subspace from Lemma \ref{lem3.1.1} (3), $H_1^\perp=H\ominus H_1$ and $\bH_0$ is given by \eqref{3.7.1}, then the following decompositions are obvious:
\begin{gather}\label{3.7.2}
\bH_0=\underbrace{H_1^\perp\oplus H_1}_{H}\oplus\wh H\oplus H_1, \qquad \bH = \underbrace{H_1^\perp\oplus H_1}_{H}\oplus \wh H\oplus \underbrace{H_1\oplus H_1^\perp}_{H}=\bH_0\oplus H_1^\perp.
\end{gather}
\end{remark}
\begin{lemma}\label{lem3.2}
Let $\t$ be a subspace in $\bH$. Then:

{\rm (1)} The equalities
\begin{gather}
T=T_{\t^\tm}:=\{\wt\pi_\D\{y,f\}:\{y,f\}\in\tma,\; y\in\cD_b,\; y(a)\in \t^\tm\}\label{3.8}\\
T^*=\{\wt\pi_\D\{y,f\}:\{y,f\}\in\tma,\; y(a)\in \t\}\label{3.9}
\end{gather}
defines a relation $T\in \C (\LI)$ and its adjoint $T^*$. Moreover,  $\Tmi\subset T\subset \Tma $

{\rm (2)} The multivalued part $\mul T$ of $T$  is the set of all $\wt f\in\gH$ such that for some (and hence for all) $f(\cd)\in\wt f$ there exists a solution $y$ of the system
\begin{gather}\label{3.9.1}
Jy'-B(t)y=\D(t) f(t), \quad t\in \cI
\end{gather}
satisfying $\D(t)y(t)=0\;\; ({\rm a.e.\;\; on}\;\; \cI)$, $y(a)\in \t^\tm$ and $y\in\cD_b$.

{\rm (3)} The relation $T$ is symmetric if and only if $\t^\tm\cap \t_\cN^\tm\in Sym (\bH)$.
\end{lemma}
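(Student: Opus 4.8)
The plan is to treat the three statements in turn, using the Lagrange identity \eqref{3.6}, the description \eqref{3.7} of $\t_\cN^\tm$, and the definition \eqref{3.6.0} of $\cT_a$. For item (1), I would first check that the manifold $T$ of \eqref{3.8} is well defined on equivalence classes (i.e.\ invariant under $\wt\pi_\D$) and that $\Tmi\subset T\subset\Tma$: the inclusion $\Tmi\subset T$ is immediate since $y(a)=0\in\t^\tm$ for $\{y,f\}\in\cT_a$, and $T\subset\Tma$ is clear. Closedness of $T$ follows because the conditions $y\in\cD_b$ and $y(a)\in\t^\tm$ are preserved under the graph-norm limits in $\Tma$ (the boundary map $y\mapsto y(a)$ is continuous on $\dom\tma$ in the appropriate topology, and $\cD_b$ is closed by definition). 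The main work is the formula \eqref{3.9} for $T^*$. I would compute $T^*$ directly from the Lagrange identity: $\wt\pi_\D\{z,g\}\in T^*$ iff $(f,z)_\D-(y,g)_\D=0$ for all $\{y,f\}\in\tma$ with $y\in\cD_b$, $y(a)\in\t^\tm$, which by \eqref{3.6} means $[y,z]_b-(Jy(a),z(a))=0$ for all such $y$. Since $y\in\cD_b$ kills the term $[y,z]_b$ (here one also needs that $z\in\dom\tma$ forces... actually $[y,z]_b=0$ for $y\in\cD_b$ by definition of $\cD_b$), the condition reduces to $(Jy(a),z(a))=0$ for all $y\in\cD_b$ with $y(a)\in\t^\tm$. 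By \eqref{3.7} the set $\{y(a):y\in\cD_b\}=\t_\cN^\tm$, but here we additionally restrict to $y(a)\in\t^\tm$; the set of attainable $y(a)$ is then $\t^\tm\cap\t_\cN^\tm$. So the condition is $(Jk,z(a))=0$ for all $k\in\t^\tm\cap\t_\cN^\tm$, i.e.\ $z(a)\in J^{-1}\bigl((\t^\tm\cap\t_\cN^\tm)^\perp\bigr)=(\t^\tm\cap\t_\cN^\tm)^\tm=\t^{\tm\tm}+\t_\cN^{\tm\tm}=\t+\t_\cN$; but by \eqref{3.7.0}-type arguments $\t_\cN\subset\t$ whenever the system is such that... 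Here I must be careful: in general $T^*$ as in \eqref{3.9} only asks $z(a)\in\t$, so I expect that the correct reduction gives exactly $z(a)\in\t$ after accounting for the freedom to modify $y$ off $\cD_b$; I would verify that the extra constraint $y\in\cD_b$ does not shrink the range of $y(a)$ below what is needed, using that one can always add to $y$ an element of $\cT_a$-type corrections. Granting \eqref{3.9}, symmetry of $T$ ($T\subset T^*$) and the reverse will drop out by comparing \eqref{3.8} and \eqref{3.9}.

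For item (2), I would apply \eqref{2.1}: $\wt f\in\mul T$ iff $\{0,\wt f\}\in T$, i.e.\ iff there is $\{y,f\}\in\tma$ with $\pi_\D y=0$ (equivalently $\D(t)y(t)=0$ a.e., by definition of the seminorm on $\lI$), $f(\cd)\in\wt f$, $y\in\cD_b$ and $y(a)\in\t^\tm$. Since $\{y,f\}\in\tma$ unwinds to \eqref{3.9.1}, this is precisely the stated description. The only point needing a remark is the "for some (and hence for all) $f(\cd)\in\wt f$" clause: if $f_1,f_2\in\wt f$ then $\D(f_1-f_2)=0$ a.e., so $y$ solves \eqref{3.9.1} for $f_1$ iff it does for $f_2$ (the right-hand side $\D(t)f(t)$ is unchanged a.e.), and the conditions on $y$ do not involve $f$; this gives the equivalence.

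For item (3), the relation $T$ is symmetric iff $T\subset T^*$, i.e.\ (comparing \eqref{3.8} with \eqref{3.9}) iff every $\{y,f\}\in\tma$ with $y\in\cD_b$ and $y(a)\in\t^\tm$ also has a representative satisfying $y(a)\in\t$ — but more precisely, using the already-established formulas, $T=T^*$-symmetry should reduce to a condition on the boundary data. I would argue: $T\subset T^*$ holds iff for all $\{y,f\},\{z,g\}\in\tma$ with $y,z\in\cD_b$ and $y(a),z(a)\in\t^\tm$ one has $(f,z)_\D=(y,g)_\D$, which by Lagrange and $y,z\in\cD_b$ is $(Jy(a),z(a))=0$. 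Since the attainable boundary values are exactly $\t^\tm\cap\t_\cN^\tm$ (by \eqref{3.7} intersected with the constraint), symmetry is equivalent to $(Jk,l)=0$ for all $k,l\in\t^\tm\cap\t_\cN^\tm$, which is by definition the statement $\t^\tm\cap\t_\cN^\tm\in Sym(\bH)$. The main obstacle I anticipate is in item (1): pinning down exactly which boundary values $y(a)$ are realized by pairs $\{y,f\}\in\tma$ with the $\cD_b$-constraint imposed, so that the adjoint computation yields precisely $z(a)\in\t$ and not a larger or smaller subspace — this is where \eqref{3.7} and the interplay $\t^\tm\cap\t_\cN^\tm$ versus $\t$ must be handled with care, presumably invoking \cite[Proposition 4.19]{Mog12} and the surjectivity of appropriate boundary maps.
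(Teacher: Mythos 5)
Your overall strategy (Lagrange identity plus the description \eqref{3.7} of $\{y(a):y\in\cD_b\}$) is the right one, and your treatments of items (2) and (3) are essentially correct — indeed your item (3) argument, which verifies symmetry directly from the attainable boundary values $\t^\tm\cap\t_\cN^\tm$ without first reducing to the case $\t^\tm\subset\t_\cN^\tm$, is if anything slightly cleaner than the paper's. But there is a genuine gap in item (1), precisely at the point you flag as the anticipated obstacle. Your adjoint computation correctly yields, for a representative $\{z,g\}\in\tma$ of an element of $T^*$, the condition $z(a)\in(\t^\tm\cap\t_\cN^\tm)^\tm=\t+\t_\cN$, which is in general strictly larger than $\t$. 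The repair you propose — ``the freedom to modify $y$ off $\cD_b$'' and adding ``$\cT_a$-type corrections'' to $y$ — acts on the wrong object and cannot work: elements of $\cT_a$ have $y(a)=0$, so they do not change any boundary values, and modifying the test element $y\in T$ cannot alter the conclusion about $z(a)$; the attainable set of $y(a)$ is exactly $\t^\tm\cap\t_\cN^\tm$ and no correction enlarges it. The correct repair is to modify the \emph{representative of the adjoint element}: write $z(a)=h+y_2(a)$ with $h\in\t$ and $y_2\in\cN$, and replace $\{z,g\}$ by $\{z-y_2,g\}$. Since $\{y_2,0\}\in\tma$ and $\pi_\D y_2=0$, this is again a $\tma$-pair with the \emph{same} image under $\wt\pi_\D$, and its boundary value $h$ lies in $\t$. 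Without this step the formula \eqref{3.9} is only established for definite systems (where $\cN=\{0\}$), whereas the lemma is needed in the general case.

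Two smaller points. First, before computing boundary values you must know that every element of $T^*$ admits a representative in $\tma$ at all; this follows from $\Tmi\subset T\Rightarrow T^*\subset\Tmi^*=\Tma$, which you should state. Second, your closedness argument for $T$ via ``continuity of $y\mapsto y(a)$'' is shaky, because the map $\wt\pi_\D\{y,f\}\mapsto y(a)$ is not even well defined on equivalence classes when $\cN\neq\{0\}$; a safe route is to observe that $T$ is a finite-dimensional extension of the closed relation $\Tmi$ (cf.\ \eqref{4.13.2}) and hence closed.
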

\begin{proof}
(1) The inclusions $\Tmi\subset T\subset \Tma $ directly follow from \eqref{3.8} and definitions of $\Tmi$ and $\Tma$.  Next we show that the relation $T^*$ adjoint to $T$ is of the form \eqref{3.9}. In  view of the Lagrange's identity \eqref{3.6} for every $\{y,f\}\in \tma$ with $y(a)\in\t$ one has $\wt\pi_\D \{y,f\}\in T^*$. Conversely, assume that $\{\wt y,\wt f\}\in T^*$ and prove the existence of $\{y,f\}\in \tma$ such that $y(a)\in \t$ and $\wt\pi_\D \{y,f\}=\{\wt y,\wt f\}$. Since $\Tmi\subset T$, it follows that $T^*\subset \Tma$ and hence there is $\{y_1,f\}\in \tma$ such that $\wt\pi_\D \{y_1,f\}=\{\wt y,\wt f\}$. Let $h\in  \t^\tm\cap \t_\cN^\tm$. Then in view of \eqref{3.7} there exists $\{z,g\}\in \tma$ such that $z\in\cD_b, \; z(a)=h$ and hence $\{\wt z,\wt g\}:=\wt\pi_\D \{z,g\} \in T$. Applying the Lagrange's identity \eqref{3.6} to $ \{y_1,f\}$ and $\{z,g\}$ one obtains
\begin{gather*}
(Jy_1(a),h)= (y_1,g)_\D -(f,z)_\D=(\wt y,\wt g)-(\wt f,\wt z)=0, \quad h\in  \t^\tm\cap \t_\cN^\tm.
\end{gather*}
Therefore $y_1(a)\in (\t^\tm\cap \t_\cN^\tm)^\tm$. Obviously $(\t^\tm\cap \t_\cN^\tm)^\tm=\t +\t_\cN$ and hence $y_1(a)=h+y_2(a)$ with some $h\in\t$ and $y_2\in\cN$. Let $y=y_1-y_2$. Since $\{y_2,0\}\in\tma$, it follows that a pair $\{y,f\}:=\{y_1,f\}-\{y_2,0\}$ belongs to $\tma$. Moreover, $y(a)=y_1(a) - y_2(a)=h$ and hence $y(a)\in \t$. Finally, $\pi_\D y_2=0$ and therefore $\wt\pi_\D \{y,f\}=\wt\pi_\D \{y_1,f\}= \{\wt y,\wt f\}$. This completes the proof of \eqref{3.9}.

 Statement (2) directly follows from \eqref{2.1}.

(3) It follows from  \eqref{3.7} that $T=T_{\t^\tm\cap \t_\cN^\tm}$. Therefore  to prove statement (3) it is sufficient to prove the following   equivalent  statement: if $\t^\tm\subset \t_\cN^\tm$, then the equivalence $T\subset T^* \iff \t^\tm\subset \t$ is valid.

So assume that $\t^\tm\subset \t_\cN^\tm$ and let $T\subset T^* $. If $h,k\in \t^\tm$, then by \eqref{3.7} there exist $\{y,f\}, \{z,g\}\in \tma$ such that $y,z\in\cD_b$ and $y(a)=h,\; z(a)=k$. Therefore $\wt\pi_\D\{y,f\}$,  $ \wt\pi_\D\{z,g\}\in T$ and hence
\begin{gather*}
(f,z)_\D-(y,g)_\D=0.
\end{gather*}
This and the Lagrange's identity \eqref{3.6} imply that $(Jh,k)=0$. Therefore $\t^\tm\subset \t$. If conversely $\t^\tm\subset \t$, then the inclusion $T\subset T^* $ directly follows from \eqref{3.8} and \eqref{3.9}.
\end{proof}
\begin{lemma}\label{lem3.3}
There exists a subspace $\t\subset \bH$ such that $\t^\tm \in Sym (\bH)$, $\dim\t=\nu+\wh\nu$ and the symmetric extension $T=T_{\t^\tm}$ of $\Tmi$ defined by \eqref{3.8} satisfies $\mul T=\mul \Tmi$.
\end{lemma}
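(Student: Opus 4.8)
The plan is to combine Lemma \ref{lem3.1.1}(2) with the description of $\mul T$ given in Lemma \ref{lem3.2}(2). First I would apply Lemma \ref{lem3.1.1}(2) with the particular choice $\eta=\t_\cN^\tm$. Note $\t_\cN^\tm\in Sym(\bH)$: indeed $\cN$ consists of solutions $y$ with $\D(t)y(t)=0$, so applying the Lagrange identity \eqref{3.6} to two such solutions $y,z\in\cN$ (which automatically lie in $\cD_b$, since $[y,z]_b=\lim_{t\uparrow b}(Jy(t),z(t))$ and the analogous computation using $\D y=\D z=0$ forces the bracket to vanish against all of $\dom\tma$) gives $(Jy(a),z(a))=0$, i.e. $\t_\cN\subset\t_\cN^\tm$; dualizing yields $\t_\cN^\tm\subset\t_\cN^{\tm\tm}=\t_\cN$, hence $\t_\cN^\tm\in Sym(\bH)$. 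Lemma \ref{lem3.1.1}(2) then produces a subspace $\t\subset\bH$ with $\t^\tm\in Sym(\bH)$, $\dim\t=\nu+\wh\nu$, and $\t^\tm\cap\t_\cN^\tm=\{0\}$.

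Next I would show this $\t$ works, i.e. $\mul T_{\t^\tm}=\mul\Tmi$. The inclusion $\mul\Tmi\subset\mul T$ is clear from $\Tmi\subset T$ (Lemma \ref{lem3.2}(1)). For the reverse inclusion I would use the characterization in Lemma \ref{lem3.2}(2): if $\wt f\in\mul T$, pick a representative $f(\cd)\in\wt f$ and a solution $y$ of the inhomogeneous system \eqref{3.9.1} with $\D(t)y(t)=0$ a.e., $y(a)\in\t^\tm$, and $y\in\cD_b$. I would then argue that the null solution produced here must already lie in the "reference" null manifold, forcing $y(a)=0$, which by \eqref{3.6.0} places $\wt\pi_\D\{y,f\}$ inside $\Tmi$ and hence $\wt f\in\mul\Tmi$. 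Concretely: fix any one representative solution $y_0$ of \eqref{3.9.1} associated with $f$ (for the other representatives $f'$ with $f-f'$ $\D$-null the solution changes by an element of $\cN_\l$); then any two solutions of \eqref{3.9.1} differ by a solution of the homogeneous system, and the condition $\D(t)y(t)=0$ singles out the coset of $y_0$ modulo $\cN$. I must show that among the solutions $y$ of \eqref{3.9.1} with $\D y\equiv 0$ and $y\in\cD_b$ one can (and every admissible $y$ does) satisfy $y(a)\in\t_\cN^\tm$; combined with $y(a)\in\t^\tm$ and $\t^\tm\cap\t_\cN^\tm=\{0\}$ this gives $y(a)=0$.

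The step establishing $y(a)\in\t_\cN^\tm$ for such $y$ is the crux. The point is that if $y$ is a solution of \eqref{3.9.1} with $\D y\equiv 0$, then adding to $y$ any element of $\cN$ keeps all the stated properties, so "$\mul T$" really only sees the value $y(a)$ modulo $\t_\cN=\{z(a):z\in\cN\}$; and separately $y\in\cD_b$ forces $y(a)\in\t_\cN^\tm$ by \eqref{3.7} applied to $y$ itself (since $y\in\cD_b$ means exactly $y(a)\in\{z(a):z\in\cD_b\}=\t_\cN^\tm$). That last observation is in fact immediate from \eqref{3.7}, so the real content is just: the $\mul T$-condition supplies $y\in\cD_b$ (hence $y(a)\in\t_\cN^\tm$) and $y(a)\in\t^\tm$ simultaneously, and transversality $\t^\tm\cap\t_\cN^\tm=\{0\}$ collapses these to $y(a)=0$. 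Then $\{y,f\}\in\cT_a$ by \eqref{3.6.0}, so $\wt f=\wt\pi_\D f\in\mul\Tmi$. The main obstacle I anticipate is bookkeeping the dependence of the solution $y$ on the choice of representative $f(\cd)\in\wt f$ — making precise that the "$\D y=0$" condition pins $y$ down modulo $\cN$ and that $\cN\subset\ker$ of the relevant evaluation — but this is exactly the content already packaged in Lemma \ref{lem3.2}(2), so it should go through cleanly.
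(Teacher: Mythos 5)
Your overall architecture (Lemma \ref{lem3.1.1}(2) to produce $\t$, then Lemma \ref{lem3.2}(2) plus a transversality argument at $t=a$ to force $y(a)=0$) is the right one and matches the paper, but the subspace you feed into Lemma \ref{lem3.1.1}(2) is wrong, and this breaks the proof. First, the claim $\t_\cN^\tm\in Sym(\bH)$ is false: from $\t_\cN\subset\t_\cN^\tm$, dualizing gives $(\t_\cN^\tm)^\tm\subset\t_\cN^\tm$, i.e.\ the same inclusion back again, \emph{not} $\t_\cN^\tm\subset\t_\cN$. Dimension counting kills it: if $\t_\cN\in Sym(\bH)$ then $\dim\t_\cN\le\nu$ by Lemma \ref{lem3.1.1}(1), so $\dim\t_\cN^\tm=n-\dim\t_\cN\ge\nu+\wh\nu>\nu$, hence $\t_\cN^\tm\notin Sym(\bH)$ (except in the degenerate case $\wh\nu=0$, $\dim\t_\cN=\nu$). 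In particular, for a definite system $\t_\cN=\{0\}$ and $\t_\cN^\tm=\bH$. Second, even ignoring that, the transversality you need, $\t^\tm\cap\t_\cN^\tm=\{0\}$, is unachievable in general: with $\dim\t=\nu+\wh\nu$ one has $\dim\t^\tm=\nu$ and $\dim\t_\cN^\tm\ge\nu+\wh\nu$, and again for a definite system $\t^\tm\cap\t_\cN^\tm=\t^\tm\neq\{0\}$ whenever $\nu>0$. The observation "$y\in\cD_b\Rightarrow y(a)\in\t_\cN^\tm$" via \eqref{3.7} is correct but, as you half-suspect, useless: $\t_\cN^\tm$ is the set of initial values of \emph{all} of $\cD_b$, which is far too large to intersect $\t^\tm$ trivially.

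The fix is to intersect $\t^\tm$ not with $\t_\cN^\tm$ but with the much smaller subspace
$\eta=\{y(a):\, y\in\cD_b,\ \D(t)y(t)=0\ \text{a.e.\ on }\cI\}$,
i.e.\ precisely the initial values of the witnesses $y$ occurring in the description of $\mul T$ in Lemma \ref{lem3.2}(2). This $\eta$ \emph{is} in $Sym(\bH)$: for $h=y(a)$, $k=z(a)$ with $\{y,f\},\{z,g\}\in\tma$, $y,z\in\cD_b$ and $\D y=\D z=0$, the Lagrange identity \eqref{3.6} gives $(f,z)_\D=(y,g)_\D=0$ and $[y,z]_b=0$, hence $(Jh,k)=0$. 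Now Lemma \ref{lem3.1.1}(2) applies legitimately and yields $\t$ with $\t^\tm\in Sym(\bH)$, $\dim\t=\nu+\wh\nu$ and $\t^\tm\cap\eta=\{0\}$; for $\wt f\in\mul T_{\t^\tm}$ the witness $y$ satisfies $y(a)\in\t^\tm\cap\eta=\{0\}$, so $\{y,f\}\in\cT_a$ and $\wt f\in\mul\Tmi$, exactly as in your closing step. A side remark: your parenthetical claim that elements of $\cN$ automatically lie in $\cD_b$ is also unjustified (for $y,z\in\cN$ the Lagrange identity gives $[y,z]_b=(Jy(a),z(a))$, which need not vanish), but this does not affect the corrected argument since $\eta$ is defined without reference to $\cN$.
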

\begin{proof}
Let $\eta$ be a subspace in $\bH$ defined by
\begin{gather}\label{3.10}
\eta = \{y(a): y\in\cD_b, \; \D(t)y(t)=0\; (\text {a.e. on} \;\;\cI) \}.
\end{gather}
If $h,k\in \eta$, then there exist $\{y,f\}, \{z,g\}\in\tma$ such that $y,z\in\cD_b$, $y(a)=h$, $z(a)=k$ and $\D(t)y(t)=\D(t)z(t)=0$ (a.e. on $\cI$). Application of the Lagrange's  identity \eqref{3.6} to such $\{y,f\}$ and $\{z,g\}$ gives $(Jh,k)=0$, which implies that $\eta\in Sym (\bH)$. Therefore by Lemma \ref{lem3.1.1}, (2) there exists a subspace $\t\subset \bH$ such that $\t^\tm \in Sym (\bH), \; \dim \t=\nu+\wh \nu$ and $\t^\tm\cap\eta =\{0\}$. Let $T=T_{\t^\tm}$ be given by \eqref{3.8} and let $\wt f\in\mul T$. Then according to Lemma \ref{lem3.2}, (2) there exists $y\in\cD_b$ such that $y(a)\in\t^\tm$, $\D(t)y(t)=0$ (a.e. on $\cI$) and $\{y,f\}\in \tma$ for each $f(\cd)\in\wt f$. Since by \eqref{3.10} $y(a)\in \t^\tm\cap\eta$, it follows that $y(a)=0$ and hence $\{y,f\}\in\cT_a$. Hence $\{\pi_\D y, \wt f\}\in \Tmi$ and the equality $\pi_\D y=0$ yields $\wt f\in \mul\Tmi$. Thus $\mul T\subset \mul\Tmi$ and in view of the obvious inclusion $\mul\Tmi\subset\mul T$ one has $\mul T=\mul \Tmi$.
\end{proof}
\subsection{$q$-pseudospectral and spectral functions}
In what follows we put $\gH:=\LI$ and denote by $\gH_b$ the set of all $\wt f\in\gH$ with the following property: there exists $\b_{\wt f}\in\cI$ such that for some (and hence for all) function $f\in\wt f$ the equality $\D(t)f(t)=0$ holds a.e. on $(\b_{\wt f}, b)$.

Let $\t$ and $\bH_0'$ be  subspaces in $\bH$, let $K=K_\t\in [\bH_0',\bH]$ be an operator such that $\ker K_\t=\{0\}$ and $K_\t\bH_0'=\t$ and let $\f_K(\cd,\l)(\in [\bH_0',\bH])$ be an operator solution of \eqref{3.3} satisfying $\f_K(a,\l)=K,\; \l\in\bC$. With each $\wt f\in\gH_b$ we associate the function $\wh f(\cd):\bR\to \bH_0'$ given by
\begin{equation}\label{3.11}
\wh f(s)=\int_\cI \f_K^*(t,s)\D(t)f(t)\,dt,\quad f(\cd)\in\wt f.
\end{equation}
One can easily prove that $\wh f(\cd)$ is a continuous  function on $\bR$.

Recall that an operator $V\in [\gH_1,\gH_2]$ is  a partial isometry if $||Vf||=||f||$ for all $f\in\gH_1\ominus\ker V$.
\begin{definition}\label{def3.4}
A distribution function $\s(\cd):\bR\to [\bH_0']$ is called a $q$-pseudospectral function of the system \eqref {3.3} (with respect  to the operator $K= K_\t$) if $\wh f\in\lSa$ for all $\wt f\in\gH_b$ and the operator $V \wt f:=\pi_\s\wh f, \; \wt f\in\gH_b,$ admits a continuation to a partial isometry $V_\s\in [\gH,\LSa]$. The operator $V_\s$ is  called the (generalized) Fourier transform corresponding to $\s(\cd)$.
\end{definition}
Clearly, if $\s(\cd)$ is a $q$-pseudospectral function, then for each $f(\cd)\in\lI$  there exists a unique (up to the seminorm in $\lSa$) function $\wh f(\cd)\in\lSa$ such that
\begin{equation*}
\lim\limits_{\b\uparrow b}\Bigl |\Bigl |\wh f(\cd)-\int\limits_{[a,\b)} \f_K^*(t,\cd)\D(t)f(t)\,dt\Bigr|\Bigr|_{\lSa}=0.
\end{equation*}
The function $\wh f(\cd)$ is called the Fourier transform of the function $f(\cd)$.
\begin{remark}\label{rem3.5}
Similarly to \cite{DunSch,Sht57}(see also \cite[Proposition 3.4]{Mog15}) one proves that for each $q$-pseudospectral function $\s(\cd)$
\begin{equation}\label{3.12}
V_\s^*\wt g=\pi_\D \left( \int_\bR \f_K(\cd,s)\, d \s(s) g(s)\right),\quad \wt g\in \LSa, \;\; g(\cd)\in\wt g,
\end{equation}
where the integral converges in the seminorm of $\lI$. Hence for each function $f(\cd)\in \lI$ with $\pi_\D f\in \gH\ominus \ker V_\s$ the equality (the inverse Fourier transform)
\begin{equation*}
f(t)=\int_\bR \f_K(t,s)\, d \s(s) \wh f(s)
\end{equation*}
is valid. Therefore the natural problem is a characterization of $q$-pseudospectral functions $\s(\cd)$ with the minimally possible kernel of $V_\s$.
\end{remark}
The following lemma can be proved in the same way as Lemma 3.7 in \cite{Mog15.2}.
\begin{lemma}\label{lem3.6}
Assume that $\t$ and $\bH_0'$ are subspaces in $\bH$, $\s(\cd)$ is a $q$-pseudospectral function (with respect to $K_\t\in [\bH_0',\bH]$), $V_\s$ is the corresponding Fourier transform  and $T\in \C (\gH)$ is given by \eqref{3.8}. Then there exist a Hilbert space $\wt\gH\supset \gH$ and a self-adjoint operator $\wt T_0$ in $\wt\gH_0:=\wt\gH\ominus \ker V_\s$ such that $\wt T_0\subset T_{\wt\gH}^*$ (here $T_{\wt\gH}^*\in \C (\wt\gH)$ is the linear relation adjoint to $T$ in $\wt\gH$).
\end{lemma}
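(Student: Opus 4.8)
The plan is to build the exit space $\wt\gH$ and the operator $\wt T_0$ out of the operator of multiplication by the independent variable in $\LSa$, and then to deduce the inclusion $\wt T_0\subset T_{\wt\gH}^*$ from the fact that $V_\s$ intertwines $T$ with that multiplication operator. Concretely, put $\cH_\s:=\LSa$ and let $\L_\s$ be the self-adjoint operator of multiplication by $s$ in $\cH_\s$, with maximal domain $\dom\L_\s=\{g\in\cH_\s:s\,g(s)\in\cH_\s\}$. Since $V_\s\in[\gH,\cH_\s]$ is a partial isometry it restricts to a unitary operator of $\gH_0:=\gH\ominus\ker V_\s$ onto $\cH_\s':=\overline{\ran V_\s}$. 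Set $\cH_\s'':=\cH_\s\ominus\cH_\s'$ and $\wt\gH:=\gH\oplus\cH_\s''$ (external orthogonal sum), so that $\gH$, and with it $\ker V_\s$, is a subspace of $\wt\gH$ and $\wt\gH_0:=\wt\gH\ominus\ker V_\s=\gH_0\oplus\cH_\s''$. Let $W\in[\wt\gH_0,\cH_\s]$ be the unitary operator $W(g_0\oplus g'')=V_\s g_0+g''$, $g_0\in\gH_0$, $g''\in\cH_\s''$, and define $\wt T_0:=W^{-1}\L_\s W$; this is a self-adjoint operator in $\wt\gH_0$.

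Next I would reduce $\wt T_0\subset T_{\wt\gH}^*$ to a statement about $V_\s$ alone. Because $\dom T,\ran T\subset\gH$ and $\cH_\s''\perp\gH$, for any $\{u,u'\}\in T$ and any $\{\wt g,\wt g'\}\in\wt T_0$ (so $\phi:=W\wt g\in\dom\L_\s$ and $W\wt g'=s\,\phi$) one computes, using unitarity of $W$ and the fact that $V_\s\up\gH_0$ is isometric, that $(\wt g',u)_{\wt\gH}=(W\wt g',V_\s u)_{\cH_\s}=(s\,\phi,V_\s u)_{\cH_\s}$ and $(\wt g,u')_{\wt\gH}=(\phi,V_\s u')_{\cH_\s}$. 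Letting $\phi$ range over $\dom\L_\s$, this shows that $\wt T_0\subset T_{\wt\gH}^*$ is equivalent to $\{V_\s u,V_\s u'\}\in\L_\s^*=\L_\s$ for every $\{u,u'\}\in T$, i.e. to the intertwining identity $(V_\s u')(s)=s\,(V_\s u)(s)$ ($\s$-a.e.) for all $\{u,u'\}\in T$.

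It remains to prove this intertwining identity. Given $\{u,u'\}=\wt\pi_\D\{y,f\}\in T$ with $\{y,f\}\in\tma$, $y\in\cD_b$ and $y(a)\in\t^\tm$, one uses that each column of $\f_K(\cd,s)$ solves \eqref{3.3} with $\l=s\in\bR$, i.e. $s\,\D(t)\f_K(t,s)=J\f_K'(t,s)-B(t)\f_K(t,s)$; taking adjoints (recall $J^*=-J$, $B^*=B$, $\D^*=\D$) and integrating by parts in \eqref{3.11} over $[a,\b)$ gives, for every $\b\in\cI$, $\int_{[a,\b)}\f_K^*(t,s)\D(t)f(t)\,dt-s\int_{[a,\b)}\f_K^*(t,s)\D(t)y(t)\,dt=\f_K^*(\b,s)Jy(\b)-\f_K^*(a,s)Jy(a)$. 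The term at $a$ vanishes because $\f_K(a,s)=K_\t$ maps onto $\t$ and $y(a)\in\t^\tm=\bH\ominus J\t$; the term at $b$, i.e. the limit as $\b\uparrow b$, vanishes because $y\in\cD_b$. Letting $\b\uparrow b$ (the first integral tends to $V_\s u'$ in $\LSa$ by the formula following Definition \ref{def3.4}, and closedness of $\L_\s$ then forces $V_\s u\in\dom\L_\s$) one obtains $(V_\s u')(s)=s\,(V_\s u)(s)$.

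The main obstacle is precisely the control of the boundary term at the singular endpoint $b$. Since $\f_K(\cd,s)$ need not belong to $\lI$, one cannot simply substitute $z=\f_K(\cd,s)h$ into the Lagrange identity $[y,z]_b=0$; instead one first establishes the identity for $\wt f\in\gH_b$, where \eqref{3.11} is an honest integral over a compact subinterval and where the definition of a $q$-pseudospectral function together with $y\in\cD_b$ forces the $b$-term to be zero, and then passes to the general case by density of $\gH_b$ in $\gH$, boundedness of $V_\s$ and closedness of $\L_\s$. This limiting argument is the technical heart of the proof and is carried out exactly as in \cite[Lemma 3.7]{Mog15.2}; everything else reduces to bookkeeping with the orthogonal decompositions above.
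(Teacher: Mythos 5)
Your construction of the exit space $\wt\gH=\gH\oplus(\LSa\ominus\ov{\ran V_\s})$, the definition of $\wt T_0$ as the pull-back of the multiplication operator $\L_\s$ under the unitary $W$, and the reduction of $\wt T_0\subset T^*_{\wt\gH}$ to the intertwining identity $\{V_\s u,V_\s u'\}\in\L_\s$ for all $\{u,u'\}\in T$ are correct, and this is the same route as the paper's (the paper offers no argument of its own but refers to \cite[Lemma~3.7]{Mog15.2}, whose proof is built exactly this way). The integration by parts and the vanishing of the boundary term at $a$ (from $y(a)\in\t^\tm$ and $\f_K(a,s)=K_\t$ with range $\t$) are also fine.

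The step that would fail as written is the endpoint $b$. You correctly observe that $y\in\cD_b$ kills $[y,z]_b$ only for $z\in\dom\tma$, while $\f_K(\cd,s)h$ need not lie in $\lI$; but the patch you propose --- first prove the identity for $\wt f\in\gH_b$ and then use density of $\gH_b$ --- does not repair this. Restricting the second component $u'=\pi_\D f$ to $\gH_b$ localizes $\wh f$ but not $\wh y$ (the term $\int\f_K^*(t,s)\D(t)y(t)\,dt$ is still an integral up to $b$), so the same uncontrolled boundary term $\f_K^*(\b,s)Jy(\b)$ reappears; moreover graph-density in $T$ of the pairs with $u'\in\gH_b$, which is what closedness of $\L_\s$ would require, is not established. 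The mechanism that actually works is to prove the identity weakly rather than pointwise in $s$: for $g\in\lSa$ with compact support, pair the truncated identity $\wh f_\b(s)-s\,\wh y_\b(s)=\f_K^*(\b,s)Jy(\b)$ with $g$ in $\LSa$ and rewrite the boundary term as $(Jy(\b),z(\b))_{\bH}$ with $z(t)=\int_\bR\f_K(t,s)\,d\s(s)g(s)$. By formula \eqref{3.12} one has $\pi_\D z=V_\s^*\pi_\s g\in\gH$ and, likewise, $w(t)=\int_\bR s\,\f_K(t,s)\,d\s(s)g(s)\in\lI$; differentiating under the integral sign gives $Jz'-Bz=\D w$, so $\{z,w\}\in\tma$ and $z\in\dom\tma$. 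Now $y\in\cD_b$ legitimately yields $(Jy(\b),z(\b))\to 0$, and letting $\b\uparrow b$ gives $(V_\s u',\pi_\s g)=(V_\s u,\L_\s\pi_\s g)$ for all compactly supported $g$; since these form a core of $\L_\s$ and $\L_\s^*=\L_\s$, this is the desired inclusion $\{V_\s u,V_\s u'\}\in\L_\s$. With this replacement of your limiting argument the proof is complete.
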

By using Lemma \ref{lem3.6} one can prove similarly to \cite[Proposition 3.8]{Mog15.2} the following theorem.
\begin{theorem}\label{th3.7}
Let the assumptions of Lemma \ref{lem3.6} be satisfied and let $\mul T$ be the multivalued part of $T$  (see Lemma \ref{lem3.2}, (2)). Then
\begin {equation}\label{3.14}
\mul T\subset \ker V_\s
\end{equation}
\end{theorem}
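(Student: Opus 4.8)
The plan is to reduce the statement to Lemma \ref{lem3.6}, which is the only substantial ingredient: it provides a Hilbert space $\wt\gH\supset\gH$ and a \emph{self-adjoint operator} $\wt T_0$ acting in $\wt\gH_0:=\wt\gH\ominus\ker V_\s$ with $\wt T_0\subset T_{\wt\gH}^*$. The elementary fact driving the argument is that every vector of $\mul T$ is orthogonal to $\dom T_{\wt\gH}^*$, whereas $\dom\wt T_0$ is dense in $\wt\gH_0$ because a self-adjoint operator is densely defined; together these force $\mul T$ into $\ker V_\s$.

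In more detail, I would fix $\wt f\in\mul T$, so that $\{0,\wt f\}\in T$ by \eqref{2.1}. Recalling that $T_{\wt\gH}^*$ consists of all $\{g,g'\}\in\wt\gH\oplus\wt\gH$ satisfying $(f',g)_{\wt\gH}=(f,g')_{\wt\gH}$ for every $\{f,f'\}\in T$, I would plug in the particular pair $\{f,f'\}=\{0,\wt f\}\in T$ and obtain $(\wt f,g)_{\wt\gH}=0$. Hence $\wt f\perp\dom T_{\wt\gH}^*$, and since $\wt T_0\subset T_{\wt\gH}^*$ by Lemma \ref{lem3.6}, also $\wt f\perp\dom\wt T_0$.

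Next I would invoke that $\wt T_0$ is a self-adjoint operator in $\wt\gH_0$: since $\mul\wt T_0=\{0\}$ and $\mul\wt T_0$ coincides with the orthogonal complement of $\dom\wt T_0$ in $\wt\gH_0$, the domain $\dom\wt T_0$ is dense in $\wt\gH_0$. Therefore the orthogonality $\wt f\perp\dom\wt T_0$ improves to $\wt f\perp\wt\gH_0=\wt\gH\ominus\ker V_\s$. As $\wt f\in\gH\subset\wt\gH$ and $\ker V_\s\subset\gH$, the decomposition $\wt\gH=\wt\gH_0\oplus\ker V_\s$ now yields $\wt f\in\ker V_\s$, which is precisely \eqref{3.14}.

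The genuine difficulty is entirely hidden in Lemma \ref{lem3.6}, whose proof (modelled on \cite[Lemma 3.7]{Mog15.2}) must construct the exit space $\wt\gH$ and the self-adjoint operator $\wt T_0\subset T_{\wt\gH}^*$ living on the orthogonal complement of $\ker V_\s$; that is where the partial-isometry property of $V_\s$ and the concrete structure of $q$-pseudospectral functions are actually exploited. Once that lemma is available, the deduction above is purely formal; the only point requiring attention is to keep separate the adjoint $T_{\wt\gH}^*$, computed in the large space $\wt\gH$, from the self-adjointness of $\wt T_0$, which is relative to the smaller space $\wt\gH_0$.
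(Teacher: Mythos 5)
Your proof is correct and is essentially the argument the paper relies on (the paper only cites \cite[Proposition 3.8]{Mog15.2}, but the deduction there is exactly this: $\mul T\perp\dom T_{\wt\gH}^*\supset\dom\wt T_0$, and density of $\dom\wt T_0$ in $\wt\gH_0=\wt\gH\ominus\ker V_\s$ forces $\mul T\subset\ker V_\s$). You also correctly isolate the one point needing care, namely that self-adjointness of $\wt T_0$ is relative to $\wt\gH_0$ while the adjoint $T_{\wt\gH}^*$ is taken in $\wt\gH$.
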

\begin{definition}\label{def3.8}
Under the assumptions of Theorem \ref{th3.7} a $q$-pseudospectral function $\s(\cd)$ of the system \eqref{3.3} with $\ker V_\s=\mul T$  is called a pseudospectral function .
\end{definition}
\begin{definition}\label{def3.9}
Let $\t$ and $\bH_0'$ be subspaces in $\bH$. A distribution function $\s(\cd):\bR\to [\bH_0']$ is called a  spectral function of the system \eqref{3.3} (with respect to $K_\t\in [\bH_0',\bH]$) if for every $\wt f\in\gH_b$ the inclusion $\wh f\in \lSa$ holds and the Parseval equality $||\wh f||_{\lSa}=||\wt f||_\gH$ is valid (for $\wh f$ see \eqref{3.11}).

The number $n_\s:=\dim \bH_0'(=\dim\t)$ is called a dimension of the spectral function $\s(\cd)$.
\end{definition}
If for a given $K_\t\in [\bH_0',\bH]$ a pseudospectral function $\s(\cd)$ exists, then in view of  \eqref{3.14} it is a $q$-pseudospectral function with the minimally possible $\ker V_\s$ (see the problem posted  in Remark \ref{rem3.5}). Moreover, \eqref{3.14} yields the following proposition.
\begin{proposition}\label{pr3.10}
Let $\t$ and $\bH_0'$ be subspaces in $\bH$ and let $T\in \C (\gH)$ be given by \eqref{3.8}. If $\mul T\neq \{0\}$, then the set of spectral functions (with respect to $K_\t\in [\bH_0',\bH]$) is empty. If $\mul T = \{0\}$, then the sets of spectral and pseudospectral functions (with respect to $K_\t$) coincide.
\end{proposition}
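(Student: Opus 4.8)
The plan is to reduce both statements to a single observation: once one knows that $\gH_b$ is dense in $\gH=\LI$, a \emph{spectral} function (Definition \ref{def3.9}) is exactly a \emph{$q$-pseudospectral} function (Definition \ref{def3.4}) whose Fourier transform has trivial kernel, and then the two assertions follow directly from the inclusion $\mul T\subset\ker V_\s$ proved in Theorem \ref{th3.7}. So first I would record the auxiliary fact that $\gH_b$ is dense in $\gH$: functions $f$ whose $\D$-support is separated from the singular endpoint $b$ are dense in $L_\D^2(\cI)$ by a standard truncation argument (this density is in fact already implicit in the word ``continuation'' used in Definition \ref{def3.4}).

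Granting the density, I would prove the announced equivalence. If $\s(\cd)$ is a spectral function with respect to $K_\t$, then $\wh f\in\lSa$ for every $\wt f\in\gH_b$ and the map $V:\wt f\mapsto\pi_\s\wh f$ satisfies $||V\wt f||_{\LSa}=||\wt f||_\gH$ on $\gH_b$; being isometric (hence bounded) on a dense manifold, $V$ extends uniquely to an isometry $V_\s\in[\gH,\LSa]$, which is a partial isometry with $\ker V_\s=\{0\}$, so $\s(\cd)$ is $q$-pseudospectral with trivial kernel. Conversely, if $\s(\cd)$ is $q$-pseudospectral with $\ker V_\s=\{0\}$, then $V_\s$ is isometric on all of $\gH$, so for $\wt f\in\gH_b$ one gets $||\wh f||_{\lSa}=||\pi_\s\wh f||_{\LSa}=||V_\s\wt f||_{\LSa}=||\wt f||_\gH$, which together with $\wh f\in\lSa$ is the Parseval equality of Definition \ref{def3.9}; hence $\s(\cd)$ is spectral.

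Now the two assertions are immediate. If $\mul T\neq\{0\}$ and a spectral function $\s(\cd)$ existed, it would be $q$-pseudospectral with $\ker V_\s=\{0\}$, while Theorem \ref{th3.7} forces $\mul T\subset\ker V_\s=\{0\}$ --- a contradiction; so the set of spectral functions is empty. If $\mul T=\{0\}$: a spectral $\s(\cd)$ is $q$-pseudospectral with $\ker V_\s=\{0\}=\mul T$, hence pseudospectral by Definition \ref{def3.8}; and a pseudospectral $\s(\cd)$ has, by Definition \ref{def3.8}, $\ker V_\s=\mul T=\{0\}$, hence is spectral by the equivalence. Therefore the two sets coincide.

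I expect the only point needing genuine care to be the density of $\gH_b$ in $\gH$ together with the routine-but-necessary verification that the continuous extension $V_\s$ of $V$ is still implemented on $\gH_b$ by $\wt f\mapsto\pi_\s\wh f$; everything else is bookkeeping with Definitions \ref{def3.4}, \ref{def3.8}, \ref{def3.9} and the already-established inclusion \eqref{3.14}.
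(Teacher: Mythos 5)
Your proposal is correct and follows essentially the same route as the paper, which states Proposition \ref{pr3.10} as an immediate consequence of the inclusion \eqref{3.14} without further argument. The details you supply --- the density of $\gH_b$ in $\gH$ and the resulting identification of spectral functions with $q$-pseudospectral functions having $\ker V_\s=\{0\}$ --- are exactly the intended reading of Definitions \ref{def3.4} and \ref{def3.9}, so nothing is missing.
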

A connection between different $q$-pseudospectral functions corresponding to the same subspace $\t\subset \bH$ is specified in the following proposition.
\begin{proposition}\label{pr3.10.1}
Assume that $\t$ and $\bH_{0j}'$ are subspaces in $\bH$ and $K_j=K_{j\t}\in [\bH_{0j}',\bH]$ are operators such that $\ker K_j=\{0\}$ and $K_j \bH_{0j}'=\t, \; j\in \{1,2\}$. Then: (1) there exists a unique isomorphism $X\in [\bH_{01}',\bH_{02}' ]$ such that $K_1=K_2 X$; (2) the equality $\s_2(s)=X \s_1(s)X^*$ gives a bijective correspondence between all $q$-pseudospectral functions $\s_1(\cd)$ (with respect to $K_1$) and $\s_2(\cd)$  (with respect to  $K_2$) of the system \eqref{3.3}. Moreover $\s_2(\cd)$ is a pseudospectral or spectral function if and only if so is $\s_1(\cd)$.
\end{proposition}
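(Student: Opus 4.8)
The plan is to reduce the whole statement to the algebraic identity relating the two fundamental solutions $\f_{K_1}(\cd,\l)$ and $\f_{K_2}(\cd,\l)$, and then to transport the Fourier picture associated with $K_1$ to the one associated with $K_2$ by means of an explicit unitary operator between the model spaces $L^2(\s_1;\bH_{01}')$ and $L^2(\s_2;\bH_{02}')$.

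For statement (1): since $K_2$ maps $\bH_{02}'$ isomorphically onto $\t=\ran K_1$, the operator $X:=K_2^{-1}K_1\in[\bH_{01}',\bH_{02}']$ (with $K_2^{-1}$ the inverse of that isomorphism) is a well-defined isomorphism of $\bH_{01}'$ onto $\bH_{02}'$ and satisfies $K_2X=K_1$; uniqueness is immediate from $\ker K_2=\{0\}$. Then $t\mapsto\f_{K_2}(t,\l)X$ is an operator solution of \eqref{3.3} with value $K_2X=K_1$ at $t=a$, so uniqueness of solutions of the linear system gives
\[
\f_{K_1}(t,\l)=\f_{K_2}(t,\l)X,\qquad t\in\cI,\ \l\in\bC.
\]
Consequently, for every $\wt f\in\gH_b$ the transforms \eqref{3.11} corresponding to $K_1$ and $K_2$, which I denote by $\wh f^{(1)}(\cd)$ and $\wh f^{(2)}(\cd)$, satisfy $\wh f^{(1)}(s)=X^*\wh f^{(2)}(s)$, $s\in\bR$.

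Next, given a $q$-pseudospectral function $\s_1(\cd)$ with respect to $K_1$, put $\s_2(s):=X\s_1(s)X^*$. Then $\s_2(\cd):\bR\to[\bH_{02}']$ is again a distribution function: its increments equal $X(\s_1(\b)-\s_1(\a))X^*\geq0$, it is left continuous and $\s_2(0)=0$. From $d\s_2=X\,d\s_1\,X^*$ one gets, for every Borel measurable $g:\bR\to\bH_{02}'$,
\[
\int_\bR (d\s_2(s)g(s),g(s))=\int_\bR (d\s_1(s)X^*g(s),X^*g(s)),
\]
so $g\in\cL^2(\s_2;\bH_{02}')$ if and only if $X^*g\in\cL^2(\s_1;\bH_{01}')$, with equal seminorms. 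Since $X^*$ is invertible, the assignment $g\mapsto X^*g$ descends to a unitary operator $U_X\in[L^2(\s_2;\bH_{02}'),L^2(\s_1;\bH_{01}')]$ determined by $U_X\pi_{\s_2}g=\pi_{\s_1}(X^*g)$.

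Combining the last two displays yields $U_X\pi_{\s_2}\wh f^{(2)}=\pi_{\s_1}\wh f^{(1)}=V_{\s_1}\wt f$ for all $\wt f\in\gH_b$. Hence $\wh f^{(2)}\in\cL^2(\s_2;\bH_{02}')$ for all $\wt f\in\gH_b$, and the operator $\wt f\mapsto\pi_{\s_2}\wh f^{(2)}=U_X^{-1}V_{\s_1}\wt f$ extends to the partial isometry $V_{\s_2}:=U_X^{-1}V_{\s_1}\in[\gH,L^2(\s_2;\bH_{02}')]$ (it is a partial isometry because $U_X^{-1}$ is unitary, and $\ker V_{\s_2}=\ker V_{\s_1}$); thus $\s_2(\cd)$ is a $q$-pseudospectral function with respect to $K_2$. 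Running the same argument with $X$ replaced by $X^{-1}$ shows that $\s_1\mapsto X\s_1X^*$ is a bijection between the two families of $q$-pseudospectral functions. Since the relation $T$ in \eqref{3.8} depends only on $\t$, the equality $\ker V_{\s_2}=\ker V_{\s_1}$ gives $\ker V_{\s_2}=\mul T\iff\ker V_{\s_1}=\mul T$, so $\s_2$ is pseudospectral exactly when $\s_1$ is; and the norm identity $||\wh f^{(2)}||_{\cL^2(\s_2;\bH_{02}')}=||\wh f^{(1)}||_{\cL^2(\s_1;\bH_{01}')}$ transfers the Parseval equality of Definition \ref{def3.9}, so $\s_2$ is spectral exactly when $\s_1$ is. There is no real obstacle here; the only slightly delicate points are checking that $U_X$ is well defined on equivalence classes and that $U_X^{-1}V_{\s_1}$ genuinely extends $\wt f\mapsto\pi_{\s_2}\wh f^{(2)}$, both of which are routine.
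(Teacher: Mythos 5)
Your proof is correct and follows essentially the same route as the paper: $X=K_2^{-1}K_1$, the solution identity $\f_{K_1}(t,\l)=\f_{K_2}(t,\l)X$, the induced unitary between $L^2(\s_1;\bH_{01}')$ and $L^2(\s_2;\bH_{02}')$, and the resulting factorization of $V_{\s_2}$ through $V_{\s_1}$ with $\ker V_{\s_2}=\ker V_{\s_1}$. No issues.
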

\begin{proof}
Statement (1) is obvious To prove statement (2) assume that $\s_1(\cd)$ is a $q$-pseudo\-spec\-tral function (with respect to $K_1$) and $\s_2(\cd)$ is an $[\bH_{02}' ]$ -valued distribution function given by $\s_2(s)=X \s_1(s)X^*$. One can easily verify that the equality $(\cU g)(s)=X^{-1*}g(s), \; g= g(\cd)\in \cL^2(\s_1; \bH_{01}'),$ defines a surjective linear operator $\cU:\cL^2(\s_1; \bH_{01}') \to \cL^2(\s_2; \bH_{02}')$ satisfying $||\cU g||_{\cL^2(\s_2; \bH_{02}')}=||g||_{\cL^2(\s_1; \bH_{01}')}$. Therefore the equality $U\wt g=\pi_{\s_2}\cU g,\; \wt g\in L^2(\s_1; \bH_{01}'), g\in \wt g,$ defines a unitary operator $U\in [L^2(\s_1; \bH_{01}'), L^2(\s_2; \bH_{02}')]$. Next assume that $\wt f\in\gH_b, \; f(\cd)\in\wt f$ and $\wh f_j(\cd)$ is the Fourier transform of $f(\cd)$ given by \eqref{3.11} with $\f_K(\cd,\l)=\f_{K_j}(\cd,\l), \; j\in\{1,2\}$. Since obviously $\f_{K_1}(t,s)=\f_{K_2}(t,s)X$, it follows that $\wh f_2(s)=X^{-1*}\wh f_1(s)$. Hence $\wh f_2=\cU \wh f_1\in \cL^2(\s_2; \bH_{02}')$ and $\pi_{\s_2}\wh f_2= U \pi_{\s_1}\wh f_1 = U V_{\s_1}\wt f$. This implies that the operator $V_2 \wt f:=\pi_{\s_2}\wh f_2,\; \wt f\in\gH_b,$ admits a continuation to the partial isometry $V_{\s_2}=U V_{\s_1}(\in [\gH, L^2(\s_2; \bH_{02}')])$ with $\ker V_{\s_2}=\ker V_{\s_1}$. Therefore $\s_2(\cd)$ is a $q$-pseudospectral function (with respect to $K_2)$; moreover, $\s_2(\cd)$ is a pseudospectral or spectral function if and only if so is $\s_1(\cd)$.
\end{proof}
\begin{remark}\label{rem3.10.2}
It follows from Proposition \ref{pr3.10.1} that a $q$-pseudospectral (in particular pseudospectral) function $\s(\cd)$ with respect to the operator  $K_\t\in [\bH_0',\bH]$ is uniquely characterized  by the subspace $\t\subset \bH$.
\end{remark}

Under  the assumptions of Theorem \ref{th3.7}  we let $\gH_0:=\gH\ominus \mul T$, so that
\begin {equation*}
\gH=\mul T\oplus \gH_0.
\end{equation*}
Moreover, for a pseudospectral function $\s(\cd)$ we denote by $V_0=V_{0,\s}$ the isometry from $\gH_0$ to $\LSa$ given by $V_{0,\s}:=$ $V_\s\up \gH_0$. Next assume that $\wt\gH\supset \gH$ is a Hilbert space and  $\wt T=\wt T^*\in \C (\wt\gH)$ with $\mul \wt T=\mul T$. In the following we put  $\wt\gH_0:=\wt\gH\ominus\mul T$, so that $\gH_0\subset \wt\gH_0$ and
\begin{equation*}
\wt\gH=\mul T\oplus \wt\gH_0.
\end{equation*}
Denote also by $\wt T_0$ the operator part of $\wt T$. Clearly, $\wt T_0$ is a self-adjoint operator in $\wt\gH_0$.
\begin{proposition}\label{pr3.11}
Assume that $\t$ and $\bH_0'$ are subspaces in $\bH$, $\s(\cd)$ is a pseudospectral function (with respect to $K_\t\in [\bH_0',\bH]$)  and $T\in \C (\gH)$ is given by \eqref{3.8}. Moreover, let $L_0=V_\s\gH$ and let $\L_\s=\L_\s^*$ be   the multiplication operator  in $\LSa$ defined by
\begin{gather*}
\dom \Lambda_\s=\{\wt f\in \LSa: s f(s)\in \lSa \;\;\text{for some (and hence for all)}\;\; f(\cd)\in \wt f\}\nonumber\\
\Lambda_\s \wt f=\pi_{\s}(sf(s)), \;\; \wt f\in\dom\Lambda_\s,\quad f(\cd)\in\wt f.
\end{gather*}
Then $T$ is a symmetric extension of $\Tmi$ and there  exist a Hilbert space $\wt \gH\supset\gH$ and an exit space self-adjoint extension $\wt T\in\C (\wt\gH)$ of $T$ such that $\mul\wt T=\mul T$ and the relative spectral function $F(t)=P_{\wt\gH,\gH}E_{\wt T}((-\infty,t))\up \gH$ of $T$ satisfies
\begin {equation} \label{3.16}
((F(\b)-F(\a))\wt f,\wt f)=\int_{[\a,\b)} (d\s(s)\wh f(s),\wh f(s)), \;\;\;\;\wt f\in \gH_b, \;\;\;\; -\infty<\a<\b<\infty.
\end{equation}
Moreover,   there exists a unitary operator $\wt V \in [\wt \gH_0,\LSa]$ such that $\wt V\up \gH_0=V_{0,\s}$ and the operators $\wt T_0$ and $\L_\s$  are unitarily equivalent by means of $\wt V$.

If in addition the operator $\L_\s$ is $L_0$-minimal, then the extension $\wt T$ is unique(up to the equivalence) and $\wt T\in \wt {\rm Self}_0 (T)$ (that is, $\wt T$ is $\gH$-minimal).
\end{proposition}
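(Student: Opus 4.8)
The plan is to realise the self-adjoint multiplication operator $\L_\s$ as the operator part of an exit space self-adjoint extension of $T$, with the Fourier transform $V_\s$ as the intertwining unitary. Since $\s(\cd)$ is a pseudospectral function, $V_\s\in[\gH,\LSa]$ is a partial isometry with $\ker V_\s=\mul T$, so $V_{0,\s}:=V_\s\up\gH_0$ is an isometry of $\gH_0$ onto $L_0=V_\s\gH$; in particular $V_\s g=\wt VP_{\gH_0}g$ ($g\in\gH$), where I set $\wt\gH:=\gH\oplus(\LSa\ominus L_0)$, $\wt\gH_0:=\wt\gH\ominus\mul T=\gH_0\oplus(\LSa\ominus L_0)$ and let $\wt V\in[\wt\gH_0,\LSa]$ be the unitary operator acting as $V_{0,\s}$ on $\gH_0$ and as the identity on $\LSa\ominus L_0$ (so $\wt V\up\gH_0=V_{0,\s}$). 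I then put $\wt T_0:=\wt V^{-1}\L_\s\wt V$ (self-adjoint in $\wt\gH_0$, unitarily equivalent to $\L_\s$ by means of $\wt V$) and $\wt T:=\wt T_0\oplus(\{0\}\oplus\mul T)$ (self-adjoint in $\wt\gH$, with operator part $\wt T_0$ and $\mul\wt T=\mul T$); this already gives the ``Moreover'' clause. Recall also $\Tmi\subset T$ (Lemma~\ref{lem3.2}(1)), and that $T$ is symmetric — equivalently $\t^\tm\cap\t_\cN^\tm\in Sym (\bH)$ (Lemma~\ref{lem3.2}(3)) — so that $\dom T\subset\gH_0$.

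The crux is to show $T\subset\wt T$. Given $\{g,g'\}\in T$ choose $\{y,f\}\in\tma$ with $y\in\cD_b$, $y(a)\in\t^\tm$, $\pi_\D y\in g$, $\pi_\D f\in g'$; using that $y$ and $\f_K(\cd,s)$ solve \eqref{3.3} and that $J^*=-J$, one computes
\begin{equation*}
\tfrac{d}{dt}\bigl(Jy(t),\f_K(t,s)h\bigr)=\bigl(\f_K^*(t,s)\D(t)f(t),h\bigr)-s\,\bigl(\f_K^*(t,s)\D(t)y(t),h\bigr),\qquad h\in\bH_0',
\end{equation*}
whence, integrating over $\cI$, $\bigl(\wh{g'}(s)-s\,\wh g(s),h\bigr)=\lim_{t\uparrow b}\bigl(Jy(t),\f_K(t,s)h\bigr)-\bigl(Jy(a),K_\t h\bigr)$. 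The term at $a$ vanishes since $y(a)\in\t^\tm=\bH\ominus J\t$ and $K_\t h\in\t$, and the term at $b$ vanishes since $y\in\cD_b$; hence $\wh{g'}=s\,\wh g$ in $\LSa$, i.e. $\{V_\s g,V_\s g'\}\in{\rm gr}\,\L_\s$, i.e. $\{P_{\gH_0}g,P_{\gH_0}g'\}\in{\rm gr}\,\wt T_0$. Since $g\in\dom T\subset\gH_0$ this reads $\{g,P_{\gH_0}g'\}\in{\rm gr}\,\wt T_0$, so $\{g,g'\}=\{g,P_{\gH_0}g'\}+\{0,P_{\mul T}g'\}\in\wt T_0\oplus(\{0\}\oplus\mul T)=\wt T$. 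Thus $\wt T$ is an exit space self-adjoint extension of $T$ with $\mul\wt T=\mul T$, and (since $T\subset\wt T=\wt T^*$) $T$ is indeed symmetric; together with $\Tmi\subset T$ this is the first assertion. The delicate point — and the main obstacle — is the vanishing of $\lim_{t\uparrow b}(Jy(t),\f_K(t,s)h)$: since $\f_K(\cd,s)h$ need not belong to $\dom\tma$, one first proves the identity $\wh{g'}=s\,\wh g$ for a core of $T$ consisting of pairs with $y$ of compact support in $\cI$ and $y(a)\in\t^\tm$ (where all boundary terms are trivially zero), and then passes to the closure, using that $V_\s$ is bounded, ${\rm gr}\,\L_\s$ closed and $\gH_b$ dense in $\gH$.

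It remains to verify \eqref{3.16} and the minimality/uniqueness claims. Since $E_{\wt T}(\d)=E_{\wt T_0}(\d)P_{\wt\gH_0}$ and $\wt V$ intertwines $E_{\wt T_0}(\d)$ with the spectral measure $E_{\L_\s}(\d)$ of $\L_\s$ (multiplication by the indicator of $\d$), for $\wt f\in\gH_b$ we have, using $\wt VP_{\gH_0}\wt f=V_\s\wt f=\pi_\s\wh f$ (Definition~\ref{def3.4}),
\begin{equation*}
\bigl((F(\b)-F(\a))\wt f,\wt f\bigr)=\bigl(E_{\L_\s}([\a,\b))\,\pi_\s\wh f,\pi_\s\wh f\bigr)_{\LSa}=\int_{[\a,\b)}(d\s(s)\wh f(s),\wh f(s)),
\end{equation*}
which is \eqref{3.16}. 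If $\L_\s$ is $L_0$-minimal then, since $\wt V$ carries $\wt\gH\ominus\gH=\LSa\ominus L_0$ onto itself and $E_{\wt T}$ acts there as $E_{\L_\s}$, no nontrivial subspace of $\wt\gH\ominus\gH$ is invariant under all $E_{\wt T}(\d)$; hence $\wt T$ is $\gH$-minimal, so $\wt T\in\wt{\rm Self}_0(T)$. Finally \eqref{3.16}, together with polarization, the density of $\gH_b$ and the boundedness of $F(\cd)$, determines $F(\cd)$ and hence, by Stieltjes inversion, the generalized resolvent $R(\l)=P_\gH(\wt T-\l)^{-1}\up\gH$; so by Proposition~\ref{pr2.6} the minimal extension $\wt T$ is unique up to equivalence.
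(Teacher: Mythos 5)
Your overall architecture is exactly that of the paper's (outsourced) proof: build $\wt\gH=\gH\oplus(\LSa\ominus L_0)$, the unitary $\wt V$, put $\wt T=\wt V^{-1}\L_\s\wt V\oplus(\{0\}\oplus\mul T)$, deduce \eqref{3.16} from the intertwining of spectral measures, and get minimality and uniqueness from $L_0$-minimality via Proposition \ref{pr2.6}; the integration-by-parts identity and the vanishing of the boundary term at $a$ (using $y(a)\in\t^\tm$, $K_\t h\in\t$) are also correct. However, two steps are genuinely incomplete.

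First, there is a circularity around the symmetry of $T$. You ``recall'' that $T$ is symmetric (equivalently $\t^\tm\cap\t_\cN^\tm\in Sym (\bH)$) in order to get $\dom T\subset\gH_0$, and that inclusion is precisely what lets you upgrade $\{P_{\gH_0}g,P_{\gH_0}g'\}\in{\rm gr}\,\wt T_0$ to $\{g,g'\}\in\wt T$; but the symmetry of $T$ is a \emph{conclusion} of the proposition (it is what Proposition \ref{pr3.12} extracts from it via Lemma \ref{lem3.2}(3)), and in your text it is only justified afterwards from $T\subset\wt T=\wt T^*$. Nothing in the hypotheses gives $\dom T\perp\mul T$ for free: for $\wt g=\wt\pi_\D y\in\dom T$ and $\wt h\in\mul T$ with null witness $z$, the Lagrange identity \eqref{3.6} gives $(\wt g,\wt h)_\gH=(Jy(a),z(a))$ with $y(a),z(a)\in\t^\tm$, which vanishes only if one already knows something of the type $\t^\tm\cap\t_\cN^\tm\in Sym (\bH)$. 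This orthogonality (equivalently $\mul T\subset\mul T^*$) has to be derived from the pseudospectral hypothesis \emph{before} $T\subset\wt T$ can be concluded; as written, the argument assumes what it proves.

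Second, the boundary term at $b$ is disposed of by the unproved assertion that the pairs $\{y,f\}\in\tma$ with $y$ vanishing near $b$ and $y(a)\in\t^\tm$ have $\wt\pi_\D$-image dense in $T$ in the graph norm. This is not established anywhere in the paper and is not routine: $T$ is a finite-dimensional extension of $\Tmi$ by the boundary-value directions $h\in\t^\tm\cap\t_\cN^\tm$, and when $\D$ is degenerate a compactly supported $y\in\dom\tma$ with $y(a)=h$ need not exist, since cutting $y$ off forces $Jy'-B(t)y\in\ran\D(t)$ pointwise. The standard route --- and the actual technical content of the cited Lemma 3.7 and Proposition 5.6 of \cite{Mog15.2}, on which the paper's one-line proof rests --- is to truncate, obtaining $\wh f_\b(s)=s\,\wh y_\b(s)+\f_K^*(\b,s)Jy(\b)$ for $\b<b$, and then to prove that $\pi_\s\bigl(\f_K^*(\b,\cd)Jy(\b)\bigr)\to 0$ in $\LSa$ as $\b\uparrow b$ for $y\in\cD_b$, after which closedness of $\L_\s$ finishes the argument. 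That limit is the heart of the proposition and is exactly what your proposal leaves unproved.
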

\begin{proof}
By using Lemma \ref{lem3.6} one proves as in \cite[Proposition 5.6]{Mog15.2} the following statement:

(S) There is a Hilbert space $\wt\gH\supset \gH$ and a relation $\wt T=\wt T^*\in \C (\wt \gH)$ such that $\mul\wt T=\mul T$, $T\subset \wt T$ and \eqref{3.16} holds with $F(t)=P_{\wt\gH,\gH}E_{\wt T}((-\infty,t))\up \gH$.

Moreover, by Lemma \ref{lem3.2}, (1) $\Tmi\subset T$. Therefore $T$ is a symmetric extension $\Tmi$ and $F(\cd)$ is a spectral function of $T$. Other statements of the proposition can be proved as in \cite[Proposition 5.6]{Mog15.2}.
\end{proof}
\begin{definition}\label{def3.11.1}
$\,$\cite{Atk,GK} System \eqref{3.3} is called definite if $\cN=\{0\}$ or, equivalently, if for some (and hence for all) $\l\in\bC$ there exists only a trivial solution $y=0$ of this system satisfying $\D(t)y(t)=0$ (a.e. on $\cI$).
\end{definition}
\begin{proposition}\label{pr3.12}
Let $\t$ be a subspaces in $\bH$ and let  $\s(\cd)$ be a pseudospectral function (with respect to $K_\t\in [\bH_0',\bH]$). Then $\t^\tm\cap\t_\cN^\tm\in Sym  (\bH)$. If in addition the system is definite, then  $\t^\tm\in Sym  (\bH)$.
\end{proposition}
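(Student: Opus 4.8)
The plan is to reduce both assertions to the symmetry of the relation $T=T_{\t^\tm}\in\C (\gH)$ attached to $\t$ by \eqref{3.8}, and then apply the symmetry criterion of Lemma \ref{lem3.2}(3) together with an elementary computation of $\t_\cN^\tm$ in the definite case.

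First I would note that, because $\s(\cd)$ is a pseudospectral function (with respect to $K_\t$), Proposition \ref{pr3.11} is applicable; its conclusion asserts in particular that $T=T_{\t^\tm}$ is a symmetric extension of $\Tmi$. The mechanism behind this is worth recalling: since $\s(\cd)$ is pseudospectral we have $\ker V_\s=\mul T$, so the partial isometry $V_\s$ gives rise, via Lemma \ref{lem3.6}, to a Hilbert space $\wt\gH\supset\gH$ and a self-adjoint exit-space extension $\wt T\in\C (\wt\gH)$ of $T$ with $\mul\wt T=\mul T$; from $T\subset\wt T=\wt T^*$ one reads off $T\subset T^*$, i.e. the symmetry of $T$.

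Granting that $T=T_{\t^\tm}$ is symmetric, Lemma \ref{lem3.2}(3) yields at once $\t^\tm\cap\t_\cN^\tm\in Sym (\bH)$, which is the first claim. For the second claim, suppose in addition that the system is definite; then $\cN=\{0\}$ by Definition \ref{def3.11.1}, so $\t_\cN=\{y(a):y\in\cN\}=\{0\}$ and hence $\t_\cN^\tm=\bH\ominus J\{0\}=\bH$. Therefore $\t^\tm\cap\t_\cN^\tm=\t^\tm$, and the first claim specializes to $\t^\tm\in Sym (\bH)$, as required.

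As for the difficulty: the single nontrivial ingredient is the symmetry of $T$, and this rests entirely on producing a self-adjoint exit-space extension of $T$ out of the Fourier transform $V_\s$ (Lemma \ref{lem3.6}, or equivalently the statement embedded in the proof of Proposition \ref{pr3.11}). Once that is in hand the argument is pure bookkeeping with the involution $\t\mapsto\t^\tm$ and the map $\t\mapsto T_{\t^\tm}$, and the definite case is a one-line specialization via \eqref{3.5}. I do not expect any genuinely new estimate or construction to be needed here beyond what Lemma \ref{lem3.6} already supplies.
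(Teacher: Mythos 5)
Your proposal is correct and follows the paper's own route exactly: symmetry of $T=T_{\t^\tm}$ via Proposition \ref{pr3.11} (whose proof indeed rests on Lemma \ref{lem3.6}), then Lemma \ref{lem3.2}, (3), and finally the observation that definiteness gives $\t_\cN=\{0\}$, hence $\t_\cN^\tm=\bH$. Nothing further is needed.
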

\begin{proof}
The first statement is immediate  from  Proposition \ref{pr3.11} and Lemma \ref{lem3.2}, (3). For a definite system $\t_\cN=\{0\}$ and hence $\t_\cN^\tm=\bH$. This yields the second statement.
\end{proof}
\begin{remark}\label{rem3.13}
Proposition \ref{pr3.12} shows that a necessary condition for existence of a pseudospectral function for a given $\t$  is $\t^\tm\cap\t_\cN^\tm\in Sym  (\bH)$. Clearly this condition is satisfied if $\t^\tm \in Sym  (\bH)$.
\end{remark}
\section{$m$-functions  of symmetric  systems}
\subsection{Boundary pairs and boundary triplets for symmetric systems}\label{sect4.1}
\begin{definition}\label{def4.1}
Let $\t$ be a subspace in $\bH$. System \eqref{3.3} will be called $\t$-definite if the conditions $y\in\cN$ and $y(a)\in\t$  yield $y=0$.
\end{definition}
\begin{remark}\label{rem4.2}
If system is definite then obviously it is $\t$-definite for any $\t\in\bH$. Hence $\t$-definiteness is  generally speaking a weaker condition then definiteness. At the same time in the case $\t=\bH(\Leftrightarrow \t^\tm=\{0\})$ definiteness of the system is the
same as $\t$-definiteness.
\end{remark}
The following assertion directly follows from definition of $\Tmi$ and  \eqref{3.9},  \eqref{2.1}.
\begin{assertion}\label{ass4.2.0}
{\rm (1)} The equality $\mul \Tmi=\{0\}$ is equivalent to the following condition:

\rm{(C0)} If $f(\cd)\in\lI$ and there exists a solution $y(\cd)$ of \eqref{3.9.1} such that $\D(t)y(t)=0\;\; ({\rm a.e.\;\; on}\;\; \cI)$, $y(a)=0$ and $y\in\cD_b$, then $\D(t) f(t)=0$ (a.e. on $\cI$).

{\rm (2)} Let $\t^\tm\in Sym(\bH)$, let system \eqref{3.3} be $\t$-definite and let $T$ be the relation \eqref{3.8}.  Then  the equalities $\mul T=\{0\}$, $\mul T=\mul T^*$ and $\mul T^*=\{0\}$ are equivalent to the following conditions \rm{(C1)}, \rm{(C2)} and \rm{(C3)} respectively:

\rm{(C1)} If $f(\cd)\in\lI$ and there exists a solution $y(\cd)$ of the system \eqref{3.9.1} such that $\D(t)y(t)=0\;\; ({\rm a.e.\;\; on}\;\; \cI)$, $y(a)\in \t^\tm$ and $y\in\cD_b$, then $\D(t) f(t)=0$ (a.e. on $\cI$).

\rm{(C2)} If $f(\cd)\in\lI$ and $y(\cd)$  is  a solution of \eqref{3.9.1} such that $y(a)\in\t$ and $\D(t) y(t)=0$ (a.e. on $\cI$), then $y(\cd)\in \cD_b$ and $y(a)\in\t^\tm$.

\rm{(C3)}  If $f(\cd)\in\lI$ and there exists a solution $y(\cd)$ of \eqref{3.9.1} satisfying $\D(t) y(t)=0$ (a.e. on $\cI$) and $y(a)\in\t$, then $\D(t) f(t)=0$ (a.e. on $\cI$).
\end{assertion}
The following proposition can be proved in the same way as Proposition 5.5 in \cite{Mog15.2}.
\begin{proposition}\label{pr4.2.1}
Assume that $\t$ and $\bH_0'$ are subspaces in $\bH$, $\s(\cd)$ is a $q$-pseudospectral function (with respect to $K_\t\in [\bH_0',\bH]$)and $L_0:=V_\s \gH$. If system is $\t$-definite, then the multiplication  operator $\L_\s$ is $L_0$-minimal.
\end{proposition}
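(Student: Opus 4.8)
The plan is to reduce Proposition \ref{pr4.2.1} to the abstract minimality criterion that is implicit in Proposition \ref{pr3.11}, exactly along the lines of \cite[Proposition 5.5]{Mog15.2}. Recall the set-up: $\s(\cd)$ is a $q$-pseudospectral function with respect to $K_\t$, the Fourier transform $V_\s\in[\gH,\LSa]$ is a partial isometry, $L_0=V_\s\gH$ is a (closed) subspace of $\LSa$, and $\L_\s=\L_\s^*$ is the multiplication operator by the independent variable in $\LSa$. What has to be shown is that $\L_\s$ is $L_0$-minimal, i.e.\ there is no nontrivial subspace $L'\subset\LSa\ominus L_0$ invariant under the spectral measure $E_{\L_\s}(\d)$ of $\L_\s$ for all bounded intervals $\d=[\a,\b)$.

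First I would unwind what $E_{\L_\s}(\d)$ is: for the multiplication operator, $E_{\L_\s}(\d)$ is just multiplication by the indicator $\chi_\d$, so $E_{\L_\s}(\d)\wt g=\pi_\s(\chi_\d(s)g(s))$. Hence a subspace $L'$ invariant under all these projections and contained in $L_0^\perp=\LSa\ominus L_0$ consists of (classes of) functions $g$ with $\chi_\d\, g\perp L_0$ for every bounded interval $\d$. Taking $g\in L'$ and using that $V_\s$ has dense range in $L_0$, I would test against $V_\s\wt f$ for $\wt f\in\gH_b$, whose image $\wh f(\cd)$ is the genuine (continuous) Fourier transform \eqref{3.11}. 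Then $0=(\chi_\d\, g,\wh f)_{\LSa}=\int_\d(d\s(s)g(s),\wh f(s))$ for all bounded $\d$ and all $\wt f\in\gH_b$. Since $\wh f$ ranges over a rich family of continuous functions (indeed, as $\wt f$ runs over $\gH_b$ one gets the transforms of all compactly supported, $\D$-square-integrable $f$, and these include the columns $\f_K(\cd,s_0)^*\D(\cd)$-type expressions evaluated against step functions), the vanishing of all these integrals forces $g(s)=0$ $\s$-a.e.\ — provided we know that $\{\wh f:\wt f\in\gH_b\}$ is total in $\LSa$. Establishing exactly this totality is where $\t$-definiteness enters: the argument is that if some $\psi\in\LSa$ were orthogonal to every $\wh f$, one translates this, via the inverse Fourier transform formula \eqref{3.12} and the relation \eqref{3.16} connecting $\s$ to the relative spectral function $F$ of $T$, into a statement about a solution $y$ of a system \eqref{3.9.1} with $\D(t)y(t)=0$, $y(a)\in\t$; $\t$-definiteness (Definition \ref{def4.1}) then kills $y$, hence $\psi$. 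This is essentially the content of \cite[Proposition 5.5]{Mog15.2}, so I would cite that and only indicate the adaptation to the present $K_\t$ and $\bH_0'$.

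More concretely, the key steps in order are: (i) record that $L_0$ is closed (range of a partial isometry), so $\LSa=L_0\oplus L_0^\perp$; (ii) identify $E_{\L_\s}(\d)$ as multiplication by $\chi_\d$ and hence describe an invariant $L'\subset L_0^\perp$ as the set of $g$ with $\pi_\s(\chi_\d g)\perp L_0$ for all $\d$; (iii) use density of $\ran V_\s$ in $L_0$ together with the explicit form of $V_\s\wt f=\pi_\s\wh f$ for $\wt f\in\gH_b$ to get $\int_\d(d\s(s)g(s),\wh f(s))=0$ for all $\d$ and all $\wt f\in\gH_b$; (iv) differentiate in $\d$ / use arbitrariness of $\d$ to conclude $(d\s(s)g(s),\wh f(s))=0$ for $\s$-a.e.\ $s$ and all $\wt f\in\gH_b$; (v) invoke the totality of $\{\wh f:\wt f\in\gH_b\}$ in $\LSa$, which is where $\t$-definiteness is used exactly as in \cite[Proposition 5.5]{Mog15.2}, to deduce $g=0$ in $\LSa$, i.e.\ $L'=\{0\}$; (vi) conclude that $\L_\s$ is $L_0$-minimal.

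The main obstacle is step (v): one must know that the Fourier images of functions in $\gH_b$ span a dense subset of $\LSa$, and this is not a formal consequence of $V_\s$ being a partial isometry — a priori $L_0=\ran V_\s$ could be a proper subspace of $\LSa$ with $g\neq 0$ living in $L_0^\perp$ yet still invariant. The resolution is that $\t$-definiteness forbids exactly the ``extra'' directions in $\LSa$ that would otherwise arise from nontrivial null solutions of the system, and the clean way to see it is to transport the problem through \eqref{3.16} and \eqref{3.12} into the solution space of \eqref{3.3} as in \cite{Mog15.2}, where this implication is proved in detail. Since the excerpt explicitly says ``can be proved in the same way as Proposition 5.5 in \cite{Mog15.2},'' I would present steps (i)--(iv) and (vi) in full and, for step (v), give the reduction to null solutions and then defer to \cite[Proposition 5.5]{Mog15.2} for the remaining verification, noting only that $\t$-definiteness is what makes that null solution vanish.
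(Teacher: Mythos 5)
Your overall architecture (steps (i)--(iv) and (vi)) is the right one and matches the argument the paper defers to, but step (v) as you state it is wrong, and the error is not cosmetic. You claim that $\t$-definiteness yields totality of $\{\wh f:\wt f\in\gH_b\}$ in $\LSa$. Since $\{\pi_\s\wh f:\wt f\in\gH_b\}$ is dense in $L_0=V_\s\gH$ (because $\gH_b$ is dense in $\gH$ and $V_\s$ is bounded), totality of the $\wh f$'s in $\LSa$ is literally the statement $L_0=\LSa$, i.e.\ that $V_\s$ is surjective. That is false for a general $q$-pseudospectral function of a $\t$-definite system (it fails already for the orthogonal-minus-one situations coming from exit space extensions, which is precisely why $L_0$-minimality is a nontrivial condition), and it contradicts your own correct remark that ``a priori $L_0$ could be a proper subspace.'' If totality held, steps (ii)--(iv) would be superfluous and the proposition vacuous.

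What $\t$-definiteness actually gives, and what the argument needs, is the \emph{pointwise} spanning property: for every $s\in\bR$ and every $h\in\bH_0'$ there exists $\wt f\in\gH_b$ with $\wh f(s)=h$. (This is statement (S) in the proof of Theorem \ref{th5.3}: if $(\wh f(s),h')=0$ for all $\wh f$, test against $\wh f_\b(\cd)=\int_{[a,\b]}\f_K^*(t,\cd)\D(t)y(t)\,dt$ with $y(t)=\f_K(t,s)h'$ to get $\int_{[a,\b]}(\D(t)y(t),y(t))\,dt=0$, hence $y\in\cN$ and $y(a)=K_\t h'\in\t$, so $y=0$ and $h'=0$.) The correct completion of your step (v) is then: write $d\s=\Psi\,d\mu$ with a scalar measure $\mu$; from $\int_\d(d\s(s)g(s),\wh f(s))=0$ for all bounded $\d$ conclude $(\Psi(s)g(s),\wh f(s))=0$ $\mu$-a.e., with a common $\mu$-null exceptional set obtained by running over a countable dense subfamily of $\gH_b$ and using continuity of $s\mapsto\wh f(s)$; then the pointwise spanning property forces $\Psi(s)g(s)=0$ $\mu$-a.e., whence $\|\wt g\|^2_{\LSa}=\int(\Psi(s)g(s),g(s))\,d\mu(s)=0$ and $L'=\{0\}$. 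With this replacement your proof goes through; as written, the key lemma you invoke is false.
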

Below within this section we suppose the following assumptions:

{\bf (A1)}  $\t$ is a subspace in $\bH$ and $\t^\tm\in Sym (\bH)$. Moreover, system \eqref{3.3} is $\t$-definite and satisfies $N_-\leq N_+$.

{\bf (A2)} $H_1$ is a subspace  in $H$, $\bH_0\subset \bH$ is the subspace \eqref{3.7.1}, $\wt U\in [\bH]$ is an operator satisfying $\wt U^* J\wt U =J$ and $\wt U\bH_0=\t$, $\G_a:\dom\tma\to \bH$ is the linear operator given by $\G_a y = \wt U^{-1}y(a), \; y\in\dom\tma,$ and
\begin{gather}\label{4.1}
\G_a=(\G_{0a}^1,\, \G_{0a}^2,\, \wh \G_a,\, \G_{1a}^2,\, \G_{1a}^1)^\top:\dom\tma\to H_1^\perp\oplus H_1\oplus \wh H\oplus H_1\oplus H_1^\perp
\end{gather}
is the block representation of $\G_a$ in accordance with the decomposition \eqref{3.7.2} of $\bH$.

{\bf (A3)} $\wt \cH_b$ and $\cH_b\subset \wt \cH_b$  are finite dimensional  Hilbert  spaces and
\begin{gather}\label{4.2}
\G_b=(\G_{0b},\,  \wh\G_b,\,  \G_{1b})^\top:\dom\tma\to
\wt \cH_b\oplus\wh H \oplus \cH_b
\end{gather}
is a surjective linear operator satisfying for all $y,z\in\dom\tma$ the following identity
\begin {equation} \label{4.3}
 [y,z]_b=(\G_{0b}y,\G_{1b}z)-(\G_{1b}y,\G_{0b}z)
+i(P_{\cH_b^\perp}\G_{0b}y, P_{\cH_b^\perp}\G_{0b}z)+ i (\wh\G_b y, \wh\G_b z)
\end{equation}
(here $\cH_b^\perp=\wt\cH_b\ominus \cH_b$).
\begin{remark}\label{rem4.3}
Existence of the operators  $\wt U$ in assumption (A2) and $\G_b$ in assumption (A3) follows from Lemma \ref{lem3.1.1}, (3) and  \cite[Lemma 3.4]{AlbMalMog13} respectively. Moreover, in the case $N_+=N_-$ (and only in this case) one has $\wt \cH_b=\cH_b$ and the identity \eqref{4.3} takes the form
\begin{gather*}
[y,z]_b=(\G_{0b}y,\G_{1b}z)-(\G_{1b}y,\G_{0b}z)
+ i (\wh\G_b y, \wh\G_b z), \quad y,z\in\dom\tma.
\end{gather*}
Observe also that $\G_b y$ is a singular boundary value of a function $y\in\dom\tma$ at the endpoint $b$ (for more details see \cite[Remark 3.5]{AlbMalMog13}).
\end{remark}
The following lemma directly follows from  definition of the operator $\G_a$ and its block representation \eqref{4.1}.
\begin{lemma}\label{lem4.3.1}
Let $Y(\cd,\l)\in\lo{\cK}$  be an operator solution of \eqref{3.3}. Then
\begin{gather}\label{4.3.1}
\wt U^{-1}Y(a,\l)=\G_a Y(\l)=(P_{\bH,\bH_0}\G_a Y(\l), \G_{1a}^1 Y(\l))^\top:\cK\to \bH_0\oplus H_1^\perp,
\end{gather}
where
\begin{gather}\label{4.3.2}
P_{\bH,\bH_0}\G_a =(\G_{0a}^1, \G_{0a}^2, \wh\G_a,\G_{1a}^2)^\top:\dom\tma\to H_1^\perp\oplus H_1\oplus\wh H\oplus H_1.
\end{gather}
\end{lemma}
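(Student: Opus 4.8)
The plan is to unwind the definitions: the statement is nothing more than the identification of the operator-valued boundary value $\G_a Y(\l)$ with $\wt U^{-1}Y(a,\l)$, rewritten in the block form forced by the ordered decompositions \eqref{3.7.1} and \eqref{3.7.2}.

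First I would verify that $\G_a$ may be applied to $Y(\l)$. Since $Y(\cd,\l)\in\lo{\cK}$ is an operator solution of \eqref{3.3}, for every $h\in\cK$ the function $y:=Y(\l)h$ is a solution of \eqref{3.3} lying in $\lI$, i.e.\ $y\in\cN_\l$; consequently $\{y,\l y\}\in\tma$, so $y\in\dom\tma$. Thus $\G_a(Y(\l)h)$ is defined for each $h$, and by the definition of $\G_a$ in assumption (A2) it equals $\wt U^{-1}y(a)=\wt U^{-1}Y(a,\l)h$. Reading this across all $h\in\cK$ as an identity of operators from $\cK$ into $\bH$ gives the first equality $\wt U^{-1}Y(a,\l)=\G_a Y(\l)$.

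Next I would pass to the block form relative to $\bH=\bH_0\oplus H_1^\perp$ from \eqref{3.7.2}. In the column representation \eqref{4.1} of $\G_a$ the last component, mapping into the terminal summand $H_1^\perp$, is precisely $\G_{1a}^1$, so $P_{H_1^\perp}\G_a=\G_{1a}^1$, while the first four components assemble into the operator $P_{\bH,\bH_0}\G_a=(\G_{0a}^1,\G_{0a}^2,\wh\G_a,\G_{1a}^2)^\top:\dom\tma\to H_1^\perp\oplus H_1\oplus\wh H\oplus H_1=\bH_0$, which is \eqref{4.3.2}. Composing each component with $Y(\l)$ and invoking the first equality yields the desired representation $\wt U^{-1}Y(a,\l)=(P_{\bH,\bH_0}\G_a Y(\l),\,\G_{1a}^1 Y(\l))^\top$, which is \eqref{4.3.1}.

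I do not expect any serious obstacle: the content of the lemma is entirely bookkeeping --- lining up the ordered direct-sum splitting \eqref{3.7.2} of $\bH$ against the column splitting \eqref{4.1} of $\G_a$ and against the factorization $\bH=\bH_0\oplus H_1^\perp$. The one point deserving an explicit line is the inclusion $Y(\l)h\in\dom\tma$ (so that $\G_a$ is meaningful on $Y(\l)$); everything else is substitution.
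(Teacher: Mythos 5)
Your proposal is correct and follows exactly the route the paper intends: the paper offers no written proof, stating only that the lemma ``directly follows from definition of the operator $\G_a$ and its block representation \eqref{4.1}'', and your argument is precisely that unwinding --- checking $Y(\l)h\in\dom\tma$ via $\cN_\l\subset\dom\tma$, applying $\G_a y=\wt U^{-1}y(a)$, and aligning the five-block column \eqref{4.1} with the splitting $\bH=\bH_0\oplus H_1^\perp$ from \eqref{3.7.2}. Your explicit verification that $\G_a$ is applicable to $Y(\l)$ is the one substantive point, and you handle it correctly.
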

\begin{proposition}\label{pr4.4}
Let $\cH_0$ and $\cH_1\subset \cH_0$ be finite dimensional Hilbert spaces and let $\G_j':\dom\tma\to \cH_j,\; j\in\{0,1\},$ be linear operators given by
\begin{gather}
\cH_0=H_1^\perp\oplus H_1\oplus \wh H\oplus \wt\cH_b,\quad
\cH_1=H_1^\perp\oplus H_1\oplus \wh H\oplus \cH_b\label{4.4}\\
\G_0'=(-\G_{1a}^1,\,-\G_{1a}^2, \, i(\wh\G_a-\wh\G_b), \, \G_{0b} )^\top:\dom\tma\to H_1^\perp\oplus H_1\oplus \wh H\oplus \wt\cH_b\label{4.5}\\
\G_1'=(\G_{0a}^1,\,\G_{0a}^2, \, \tfrac 1 2 (\wh\G_a+\wh\G_b), \, -\G_{1b} )^\top:\dom\tma\to H_1^\perp\oplus H_1\oplus \wh H\oplus \cH_b\label{4.6}.
\end{gather}
Then $\dim\cH_0=N_+,\;\dim\cH_1=N_-$ and a pair $\bpa$ with a linear relation $\BR$ defined by
\begin{gather}\label{4.7}
\G=\{\{\wt\pi_\D \{y,f\}, \G_0'y\oplus\G_1'y\}:\{y,f\}\in\tma\}
\end{gather}
is a boundary pair for $\Tma$ such that $\cK_\G=\{0\}$ (for $\cK_\G$ see \eqref{2.21.1}).
\end{proposition}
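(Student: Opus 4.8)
The plan is to verify directly that the triple $\{\cH_0\oplus\cH_1,\G_0',\G_1'\}$, or rather the induced pair $\bpa$ with $\G$ from \eqref{4.7}, satisfies the three requirements: the dimension count $\dim\cH_0=N_+$, $\dim\cH_1=N_-$; Green's identity \eqref{2.21}; and the density/maximality condition together with $\cK_\G=\{0\}$. The dimension count is the easy part: by \eqref{3.7.2}, $\dim(H_1^\perp\oplus H_1\oplus\wh H)=\nu+\wh\nu$, and by assumption (A3) combined with Remark \ref{rem4.3} and the formal deficiency index bookkeeping one has $\dim\wt\cH_b=N_+-(\nu+\wh\nu)$ and $\dim\cH_b=N_--(\nu+\wh\nu)$ (this is how $\G_b$ is constructed in \cite[Lemma 3.4]{AlbMalMog13}), so $\dim\cH_0=\nu+\wh\nu+(N_+-\nu-\wh\nu)=N_+$ and likewise $\dim\cH_1=N_-$.

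The central computation is Green's identity. I would start from the Lagrange identity \eqref{3.6},
$$(f,z)_\D-(y,g)_\D=[y,z]_b-(Jy(a),z(a)),\qquad \{y,f\},\{z,g\}\in\tma,$$
and transform each of the two boundary terms on the right into the bilinear form appearing in \eqref{2.21}. For the endpoint $b$ I substitute \eqref{4.3}; for the endpoint $a$ I use $\wt U^*J\wt U=J$ and the block decomposition \eqref{4.1}, which expresses $-(Jy(a),z(a))=-(J\wt U^{-1}y(a),\wt U^{-1}z(a))$ as a sum of pairings between the $H_1^\perp$, $H_1$, $\wh H$ blocks of $\G_ay$ and $\G_az$ (the $\wh H$-block contributes a skew-symmetric "$i(\cdot,\cdot)$'' term because of the $iI_{\wh H}$ entry in $J$, which is exactly why $\wh\G_a$ and $\wh\G_b$ get combined the way they do in \eqref{4.5}--\eqref{4.6}). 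The key algebraic identity to check is that
$$(\G_1'y,\G_0'z)_{\cH_0}-(\G_0'y,\G_1'z)_{\cH_0}+i(P_{\cH_2}\G_0'y,P_{\cH_2}\G_0'z)_{\cH_0}$$
with $\cH_2=\cH_0\ominus\cH_1=\cH_b^\perp$ reproduces $[y,z]_b-(Jy(a),z(a))$. Here the $\wh H$-components of $\G_0'$ and $\G_1'$ are chosen so that $(\tfrac12(\wh\G_ay+\wh\G_by),i(\wh\G_az-\wh\G_bz))-(i(\wh\G_ay-\wh\G_by),\tfrac12(\wh\G_az+\wh\G_bz))$ collapses — after using \eqref{4.3} — to the combination $-i(\wh\G_ay,\wh\G_az)+i(\wh\G_by,\wh\G_bz)$, matching the $\wh H$-contributions from $-(Jy(a),z(a))$ and from \eqref{4.3} respectively; the $H_1^\perp$ and $H_1$ blocks pair $\G_{0a}$ against $\G_{1a}$ in the standard symplectic way, and the $\wt\cH_b$/$\cH_b$ blocks reproduce \eqref{4.3} including its $P_{\cH_b^\perp}$ defect term, which is precisely the $i(P_{\cH_2}\cdot,P_{\cH_2}\cdot)$ term since $\cH_2=\cH_b^\perp$. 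This bookkeeping is the main obstacle: it is not deep, but it requires carefully tracking six blocks and the sign/factor conventions so that everything telescopes.

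Finally, for $\ov{\dom\G}=\Tma$ and the maximality/$\cK_\G=\{0\}$ assertions I would argue as follows. Since $\G$ is defined on all of $\tma$ via $\wt\pi_\D$, its domain is $\wt\pi_\D\tma=\Tma$, so $\dom\G=\Tma$ already (not merely dense); by Proposition \ref{pr3.15}(1) this is consistent. For $\cK_\G=\{0\}$: an element $\{0\oplus0,0\oplus h_1\}\in\G$ means there is $\{y,f\}\in\tma$ with $\wt\pi_\D\{y,f\}=\{0,0\}$, $\G_0'y=0$ and $\G_1'y=h_1$. From $\wt\pi_\D y=0$ we get $\D(t)y(t)=0$ a.e., so $y\in\cN$ (it is a solution of the homogeneous system lying in the null manifold); then $\G_0'y=0$ forces, via its $H_1^\perp,H_1,\wh H$ components and $\G_{0b}y=0$, that $\wt U^{-1}y(a)$ has vanishing $H_1^\perp$ and $H_1$ parts coming from $\G_{1a}$ and vanishing... — more efficiently, combining $\G_0'y=0$ with the definition of $\G_a$ one reads off $y(a)\in\t$ (the components of $\G_ay$ are constrained to the $\bH_0=H\oplus\wh H\oplus H_1$ directions and $\wt U\bH_0=\t$), whence $y\in\cN$ and $y(a)\in\t$, and $\t$-definiteness (assumption (A1)) gives $y=0$, so $h_1=\G_1'y=0$. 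Since $\dim\cH_0<\infty$, Proposition \ref{pr3.15} then certifies that $\bpa$ is a genuine boundary pair with Weyl function given by \eqref{2.22}, completing the proof.
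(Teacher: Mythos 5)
Your direct verification of the identity \eqref{2.21} is sound: the block bookkeeping you describe does telescope, the $\wh H$-components of $\G_0'$ and $\G_1'$ produce exactly $-i(\wh\G_a y,\wh\G_a z)+i(\wh\G_b y,\wh\G_b z)$ (matching the $iI_{\wh H}$ entry of $J$ at $a$ and the last term of \eqref{4.3} at $b$), and the remaining blocks pair off against $-(Jy(a),z(a))$ and \eqref{4.3} as you say. Your argument for $\cK_\G=\{0\}$ is also correct and is essentially the paper's: $\wt\pi_\D\{y,f\}=\{0,0\}$ forces $y\in\cN$, the component $\G_{1a}^1y=0$ of $\G_0'y=0$ gives $y(a)\in\t$ because $\wt U\bH_0=\t$, and $\t$-definiteness kills $y$. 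In that respect your route is more self-contained than the paper's, which disposes of the boundary-pair property and the dimension count in one stroke by citing \cite[Theorem 5.3]{Mog12}.

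The gap is in your final step. By the paper's own definition (the paragraph preceding Proposition \ref{pr3.15}), a boundary pair must satisfy, besides $\ov{\dom\G}=A^*$ and the identity \eqref{2.21}, ``a certain maximality condition.'' Green's identity only shows that $\G$ is \emph{isometric} with respect to the relevant indefinite inner products; maximality is the additional, and genuinely nontrivial, requirement that this isometry cannot be properly extended (in the boundary-relation language of \cite{DM06}, that $\G$ is a \emph{unitary}, not merely isometric, relation between the Krein spaces involved). You never verify it. Worse, you invoke Proposition \ref{pr3.15} to ``certify that $\bpa$ is a genuine boundary pair,'' but that proposition takes the boundary-pair property as a hypothesis, so the appeal is circular. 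This maximality is precisely the content the paper outsources to \cite[Theorem 5.3]{Mog12}, where it is established by a defect/dimension argument tied to $\dim\cH_0=N_+$, $\dim\cH_1=N_-$ and the structure of $\gN_\l(\Tmi)$; to close the gap you must either reproduce that argument or cite the theorem as the paper does. A secondary, milder point: the dimensions $\dim\wt\cH_b=N_+-(\nu+\wh\nu)$ and $\dim\cH_b=N_--(\nu+\wh\nu)$ are not part of assumption (A3) as stated and must likewise be imported from \cite[Lemma 3.4]{AlbMalMog13}, though here you are no worse off than the paper itself.
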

\begin{proof}
The fact that $\bpa$ is a boundary pair for $\Tma$ as well as the equalities $\dim\cH_0=N_+,\;\dim\cH_1=N_-$ directly follow  from \cite[Theorem 5.3]{Mog12}. Next, according to \cite{Mog12} $\mul\G=\{\{\G_0'y,\G_1'y\}:y\in\cN\}$ and hence $\cK_\G=\{\G_1'y:y\in\cN \;\; \text{and}\;\;\G_0'y=0 \}$. Moreover, the equalities $\wt U^{-1}\t=\bH_0$ and \eqref{3.7.2}, \eqref{4.1} yield the equivalence
\begin{gather}\label{4.7.1}
y(a)\in\t\iff  \G_{1a}^1y=0,\quad y\in\dom\tma.
\end{gather}
Since the system is $\t$-definite, this implies the equality $\cK_\G=\{0\}$.
\end{proof}
\begin{definition}\label{def4.4.1}
The boundary pair $\bpa$ constructed in Proposition \ref{pr4.4} is called a
decomposing  boundary pair  for $\Tma$.
\end{definition}
Let $\dot\cH_0$ and $\dot\cH_1\subset \dot\cH_0$ be finite dimensional Hilbert spaces and $\dot\G_j':\dom\tma\to \dcH_j,\; j\in\{0,1\},$ be linear operators given by
\begin{gather}
\dcH_0=H_1\oplus \wh H\oplus \wt\cH_b,\quad
\dcH_1= H_1\oplus \wh H\oplus \cH_b\label{4.8}\\
\dG_0'=(-\G_{1a}^2, \, i(\wh\G_a-\wh\G_b), \, \G_{0b} )^\top:\dom\tma\to H_1\oplus \wh H\oplus \wt\cH_b\label{4.9}\\
\dG_1'=(\G_{0a}^2, \, \tfrac 1 2 (\wh\G_a+\wh\G_b), \, -\G_{1b} )^\top:\dom\tma\to  H_1\oplus \wh H\oplus \cH_b\label{4.10}.
\end{gather}
It follows from \eqref{4.4} - \eqref{4.6} that
\begin{gather}
\cH_0=H_1^\perp\oplus\dcH_0,\qquad \cH_1=H_1^\perp\oplus\dcH_1\label{4.10.1}\\
\G_0'=(-\G_{1a}^1,\,\dG_0')^\top:\dom\tma\to H_1^\perp\oplus \dcH_0\label{4.10.2}\\
\G_1'=(\G_{0a}^1,\, \dG_1')^\top:\dom\tma\to H_1^\perp\oplus \dcH_1 \label{4.10.3}.
\end{gather}
\begin{proposition}\label{pr4.5}
Let $T\in \C(\gH)$ be given by \eqref{3.8}. Then:

{\rm (1)} $T$ is a symmetric extension of $\Tmi$ and the following equalities hold:
\begin{gather}
T=\{\wt\pi_\D\{y,f\}:\{y,f\}\in\tma,\; y\in\cD_b,\;\G_{1a}^1y=0, \; \G_{0a}^2y=\G_{1a}^2 y =0, \; \wh\G_a y =0 \}\label{4.11}\\
T^*=\{\wt\pi_\D\{y,f\}:\{y,f\}\in\tma,\;\G_{1a}^1y=0\}\label{4.12}
\end{gather}

{\rm (2)} For every $\{\wt y, \wt f\}\in T^*$ there exists a unique $y\in\dom\tma$ such that $\G_{1a}^1 y=0,\; \pi_\D y=\wt y$ and $\{y,f\}\in\tma$ for any $f\in\wt f$.

{\rm (3)} The collection $\dot\Pi=\{\dcH_0\oplus \dcH_1,
\dot\G_0, \dot\G_1\}$ with operators $\dot\G_j:T^*\to \dcH_j$ of the form
\begin {equation}\label {4.13}
\dot\G_0 \{\wt y,\wt f\}=\dot\G_{0}'y, \quad \dot\G_1 \{\wt y,\wt f\}=\dot\G_{1}'y, \quad
\{\wt y,\wt f\}\in T^*
\end{equation}
is a boundary triplet for $T^*$. In \eqref{4.13} $y\in\dom\tma$ is uniquely defined by $\{\wt y,\wt f\}$ in accordance with statement {\rm (2)}.
\end{proposition}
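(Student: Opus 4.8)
The plan is to prove each of the three statements in turn, building on the decomposing boundary pair $\bpa$ from Proposition \ref{pr4.4} and the block decompositions \eqref{4.10.1}--\eqref{4.10.3}. For statement (1), I would first invoke Lemma \ref{lem3.2}, (1) to get $\Tmi\subset T\subset \Tma$ and then Lemma \ref{lem3.2}, (3) together with assumption (A1) (namely $\t^\tm\in Sym(\bH)$, so in particular $\t^\tm\cap\t_\cN^\tm=\t^\tm\in Sym(\bH)$) to conclude $T$ is symmetric. To obtain the explicit descriptions \eqref{4.11} and \eqref{4.12}, I would rewrite the defining conditions $y(a)\in\t^\tm$ and $y(a)\in\t$ from \eqref{3.8}, \eqref{3.9} in terms of the operator $\G_a$ of \eqref{4.1}. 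The equivalence \eqref{4.7.1} already gives $y(a)\in\t\iff\G_{1a}^1y=0$, which yields \eqref{4.12}. For \eqref{4.11} I need the analogous description of $\t^\tm$: since $\wt U^*J\wt U=J$ and $\wt U\bH_0=\t$, one has $\wt U\bH_0^\tm=\t^\tm$, and $\bH_0^\tm=H_1^\perp\oplus\{0\}\oplus\{0\}\oplus\{0\}$ relative to the decomposition \eqref{3.7.2}; hence $y(a)\in\t^\tm$ is equivalent to the vanishing of all components of $\G_a y$ except the first, i.e. $\G_{0a}^1 y$ arbitrary and $\G_{0a}^2 y=\G_{1a}^2 y=\wh\G_a y=\G_{1a}^1 y=0$. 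Combining this with $y\in\cD_b$ gives \eqref{4.11}.

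For statement (2), I would argue exactly as in the proof of \eqref{3.9} in Lemma \ref{lem3.2}: given $\{\wt y,\wt f\}\in T^*$, by \eqref{4.12} there is some $\{y_1,f\}\in\tma$ with $\G_{1a}^1 y_1=0$ and $\wt\pi_\D\{y_1,f\}=\{\wt y,\wt f\}$; any two such representatives $y_1,y_1'$ differ by an element of $\cN$ (since $\D(t)(y_1-y_1')(t)=0$ and $J(y_1-y_1')'-B(t)(y_1-y_1')=0$), and the condition $\G_{1a}^1 y=0$ says precisely $y(a)\in\t$ while $\t$-definiteness forces the $\cN$-component with $y(a)\in\t$ to vanish, so $y$ is unique. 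The point is that $\{y_1-y,0\}\in\tma$ with $y_1-y\in\cN$ and $(y_1-y)(a)\in\t$, hence $y_1-y=0$ once we normalize; more precisely one picks the unique representative by subtracting off the (unique, by $\t$-definiteness) solution in $\cN$ with the prescribed boundary data.

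For statement (3), the strategy is to deduce that $\dot\Pi$ is a boundary triplet for $T^*$ from the fact that $\bpa$ is a boundary pair for $\Tma$. By \eqref{4.10.2}--\eqref{4.10.3}, the full boundary operators split as $\G_0'=(-\G_{1a}^1,\dG_0')^\top$ and $\G_1'=(\G_{0a}^1,\dG_1')^\top$ over $H_1^\perp\oplus\dcH_j$. Restricting to $y$ with $\G_{1a}^1 y=0$ (which by statement (2) parametrizes $T^*$ bijectively via $\{\wt y,\wt f\}\mapsto y$), the first components drop out and Green's identity \eqref{2.14.3} for the pair — reformulated through Proposition \ref{pr3.15} and the identity \eqref{2.21} defining $\G$ in \eqref{4.7} — collapses to the boundary triplet Green identity \eqref{2.14.3} for $\dot\G_0,\dot\G_1$ on $T^*$ (note $\dcH_0,\dcH_1$ have the $H_1^\perp$-summand removed, so the $P_2$-term involving $\cH_b^\perp$ is exactly the $\wt\cH_b\ominus\cH_b$ discrepancy, matching \eqref{4.3}). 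Surjectivity of $\G=(\dot\G_0,\dot\G_1):T^*\to\dcH_0\oplus\dcH_1$ follows from surjectivity of $\G_a$ and $\G_b$ together with the surjectivity statement for the boundary pair in Proposition \ref{pr4.4}; and $\dim\dcH_0=\dim H_1+\wh\nu+\dim\wt\cH_b$, $\dim\dcH_1=\dim H_1+\wh\nu+\dim\cH_b$ are consistent with $n_\pm(T)$ computed from \eqref{3.6.1} and the codimension count $\dim H_1^\perp=\dim\t^\tm$. The main obstacle I anticipate is verifying the Green identity bookkeeping precisely — tracking how the $H_1^\perp$-components and the $\cH_b^\perp$-term in \eqref{4.3} combine with the $P_2$-terms so that the boundary-pair identity \eqref{2.21} for $\bpa$ reduces cleanly to the boundary-triplet identity \eqref{2.14.3} for $\dot\Pi$; but this is essentially an algebraic rearrangement once the decompositions \eqref{4.10.1}--\eqref{4.10.3} are in hand, and much of it can be cited from \cite[Theorem 5.3]{Mog12} and the analogous Proposition 5.5 in \cite{Mog15.2}.
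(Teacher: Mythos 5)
Your parts (1) and (2) follow the paper's own route essentially verbatim: the translation of $y(a)\in\t$ and $y(a)\in\t^\tm$ into the vanishing of components of $\G_a y$ via $\wt U^{-1}\t=\bH_0$, $\wt U^{-1}\t^\tm=H_1^\perp\oplus\{0\}\oplus\{0\}\oplus\{0\}$, and the uniqueness of the representative $y$ via $\t$-definiteness applied to a difference lying in $\cN$ with initial value in $\t$. (One small slip in (1): $\t^\tm\cap\t_\cN^\tm=\t^\tm$ need not hold; what you actually need is only that $\t^\tm\cap\t_\cN^\tm$, being a subspace of $\t^\tm\in Sym(\bH)$, is again in $Sym(\bH)$ — or, more directly, that $\t^\tm\in Sym(\bH)$ means $\t^\tm\subset\t$, so $T\subset T^*$ is read off from \eqref{4.11} and \eqref{4.12}.)

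The genuine gap is the surjectivity of $(\dG_0,\dG_1)^\top:T^*\to\dcH_0\oplus\dcH_1$ in part (3). Your claim that it ``follows from surjectivity of $\G_a$ and $\G_b$ together with the surjectivity statement for the boundary pair'' is not an argument: $\G_a$ and $\G_b$ being separately surjective does not give joint surjectivity of $(\G_a,\G_b)$ on $\dom\tma$, Proposition \ref{pr4.4} asserts only that $\bpa$ is a boundary \emph{pair} (whose defining maximality condition is weaker than surjectivity of $\G$), and restricting to $\ker\G_{1a}^1$ and passing through $\wt\pi_\D$ could a priori shrink the range further. The paper proves surjectivity by the standard deficiency-index criterion: it establishes (a) $\ker\dG_0\cap\ker\dG_1=T$, using the equivalence $\G_b y=0\iff y\in\cD_b$ coming from \eqref{4.3} and the surjectivity of $\G_b$; and (b) $\dim\dcH_0=n_+(T)$, $\dim\dcH_1=n_-(T)$, which requires the nontrivial computation $\dim(T/\Tmi)=\dim(\t^\tm\cap\t_\cN^\tm)=\codim\t-\dim\cN$ (here $\t$-definiteness is used twice: to get $\ker\wt\pi_\D\up\cT=\{0\}$ and to get $\t\cap\t_\cN=\{0\}$), combined with $n_\pm(\Tmi)=N_\pm-\dim\cN$, $\dim\cH_0=N_+$, $\dim\cH_1=N_-$ and $\codim\t=\dim H_1^\perp$. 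You gesture at the dimension consistency but neither verify the kernel identity nor carry out the count $n_\pm(T)=N_\pm-\codim\t$; without these two ingredients the Green identity alone does not make $\dot\Pi$ a boundary triplet.
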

\begin{proof}
(1) Since $\wt U^{-1}\t=\bH_0$ and $\wt U^{-1}\t^{\tm}=\bH_0^\tm= H_1^\perp\oplus\{0\}\oplus\{0\}\oplus\{0\}$, the equivalences \eqref{4.7.1} and
\begin {equation*}
y(a)\in \t^\tm \iff (\G_{1a}^1 y=0,\;\G_{0a}^2y=\G_{1a}^2 y =0, \; \wh\G_a y =0 ), \quad y\in\dom\tma
\end{equation*}
are valid. This and \eqref{3.8}, \eqref{3.9} yield \eqref{4.11} and \eqref{4.12}.

By using $\t$-definiteness of the system one proves statement (2) similarly to \cite[Proposition 4.5, (2)]{Mog15.2}.

(3) Equalities \eqref{4.10.2}, \eqref{4.10.3} and  identity \eqref{2.21} for the decomposing boundary pair  yield the Green's identity  \eqref{2.14.3} for operators $\dG_0$ and $\dG_1$. To prove surjectivity of the operator $(\dG_0, \dG_1)^\top$ it is sufficient to show that
\begin{gather}\label{4.13.1}
\ker\dG_0\cap \ker\dG_1 =T, \qquad \dim\dcH_0=n_+(T), \qquad \dim\dcH_1=n_-(T).
\end{gather}
Clearly, $\{\wt y,\wt f\}\in \ker\dG_0\cap \ker\dG_1$ if and only if there is $\{y,f\}\in\tma$ such  that $\wt\pi_\D\{y,f\}=\{\wt y,\wt f\} $ and  $\G_{1a}^1 y=0$, $\G_{0a}^2y=\G_{1a}^2 y =0, \; \wh\G_a y =0 $, $\G_b y=0$. Moreover, in view of \eqref{4.3} and surjectivity of the operator $\G_b$ the equivalence $\G_b y=0\iff y\in\cD_b$ is valid. This yields the first equality in \eqref{4.13.1}. Next assume that
\begin {equation*}
\cT=\{\{y,f\}\in\tma: y\in\cD_b, \;y(a)\in \t^\tm\cap \t_\cN^\tm\}.
\end{equation*}
It follows from \eqref{3.7} and \eqref{3.6.0} that $\dim (\dom \cT / \dom\cT_a )=\dim (\t^\tm\cap \t_\cN^\tm)$ and $T=\wt\pi_\D \cT$. If $\{y,f\}\in \cT$ and $\wt\pi_\D \{y,f\}=0$, then $y\in\cN$ and $y(a)\in\t^\tm\subset\t$. Therefore in view of $\t$-definiteness $y=0$ and consequently $\ker \wt\pi_\D\up\cT=\{0\}$. This and the obvious equality $\dim (\cT / \cT_a) =\dim (\dom \cT / \dom\cT_a )$ imply that
\begin{gather}\label{4.13.2}
\dim (T/\Tmi)=\dim (\t^\tm\cap \t_\cN^\tm).
\end{gather}
In view of $\t$-definiteness one has $\t\cap \t_\cN =\{0\}$. Since obviously $\t^\tm\cap \t_\cN^\tm=(\t \dotplus \t_N)^\tm$, it follows that
\begin{gather}\label{4.13.3}
\dim (\t^\tm\cap \t_\cN^\tm)=n-\dim \t -\dim \t_\cN=\codim \t-\dim \cN.
\end{gather}
Combining \eqref{4.13.2} and \eqref{4.13.3} with the well known equality $n_\pm (T)=n_\pm (\Tmi)- \dim (T/\Tmi)$ and taking \eqref{3.6.1} into account on gets $n_\pm (T)=N_\pm-\codim \t$. Moreover, the equality $\wt U^{-1}\t=\bH_0$ yields $\codim \t=\dim H_1^\perp$ and according to Proposition \ref{pr4.4}  $\dim\cH_0=N_+,\;\dim\cH_1=N_-$. This implies that
\begin{gather}\label{4.14}
n_+ (T)= \dim\cH_0-\dim H_1^\perp, \quad n_- (T)= \dim\cH_1-\dim H_1^\perp
\end{gather}
Now combining \eqref{4.14} with \eqref{4.10.1} one obtains the second and third equalities in \eqref{4.13.1}.
\end{proof}
\subsection{$\cL_\D^2$-solutions of boundary problems}
\begin{definition}\label{def4.6}
Let $\dcH_0$ and $\dcH_1$ be given by \eqref{4.8}. A boundary parameter is a pair
\begin{gather}\label{4.15}
\tau=\tau(\l)=\{C_0(\l),C_1(\l) \}\in\wt R(\dcH_0,\dcH_1),\quad \l\in\bC_+,
\end{gather}
where $C_j(\l)(\in  [\dcH_j,\dcH_0]), \; j\in\{0,1\},$ are holomorphic operator functions satisfying \eqref{2.14.1}.
\end{definition}
In the case $N_+=N_-$ (and only in this case) $\wt\cH_b=\cH_b, \; \dcH_0=\dcH_1=:\dcH$ and $\tau\in \wt R(\dcH)$. If in addition $\tau=\tau(\l) \in \wt R^0(\dcH)$ is an operator pair \eqref{2.14.2}, then a boundary parameter $\tau$ will be called self-adjoint.

Let $\tau$ be a boundary parameter \eqref{4.15}. For a given $f\in\lI$ consider the  boundary value problem
\begin{gather}
J y'-B(t)y=\l \D(t)y+\D(t)f(t), \quad t\in\cI\label{4.16}\\
\G_{1a}^1 y=0, \quad
C_0(\l)\dG_{0}'y -C_1(\l)\dG_{1}'y=0, \quad \l\in\bC_+ \label{4.17}
\end{gather}
A function $y(\cd,\cd):\cI\tm \bC_+ \to \bH$ is called a solution of this problem if for each $\l\in\bC_+$ the function $y(\cd,\l)$ belongs to $\AC\cap\lI$ and
satisfies the equation \eqref{4.16} a.e. on $\cI$ (so that $y\in\dom\tma$) and
the boundary conditions \eqref{4.17}.

The following theorem is a consequence of Theorem 3.11 in \cite{Mog13.2} applied to the boundary triplet $\dot\Pi$ for $T^*$.
\begin{theorem}\label{th4.7}
Let  under the assumptions {\rm (A1) - (A3)} $T$ be a symmetric relation  \eqref{3.8} (or equivalently \eqref{4.11}). If $\tau$
is a boundary parameter \eqref{4.15}, then for every $f\in\lI$ the problem \eqref{4.16}, \eqref{4.17} has a unique solution
$y(t,\l)=y_f(t,\l) $ and the equality
\begin {equation*}
R(\l)\wt f = \pi_\D(y_f(\cd,\l)), \quad \wt f\in \gH, \quad f\in\wt f, \quad
\l\in\bC_+
\end{equation*}
defines a generalized resolvent $R(\l)=:R_\tau(\l)$ of $T$. Conversely, for
each generalized resolvent $R(\l)$ of $T$ there exists a unique boundary
parameter $\tau$ such that $R(\l)=R_\tau(\l)$. Moreover, if $N_+=N_-$, then  $R_\tau(\l)$ is a
canonical resolvent if and only if $\tau$ is a self-adjoint
boundary parameter \eqref{2.14.2}. In this case $R_\tau(\l)=(\wt T_\tau-\l)^{-1}$, where $\wt T_\tau\in {\rm Self}(T)$ is given by
\begin{gather}\label{4.17.1}
\wt T_\tau=\{\wt\pi_\D\{y,f\}:\{y,f\}\in\tma, \G_{1a}^1y=0,\, C_0\dG_0'y-C_1\dG_1'y=0\}.
\end{gather}
\end{theorem}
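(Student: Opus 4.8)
The plan is to deduce the theorem from the abstract description of the generalized resolvents of a symmetric linear relation in terms of a boundary triplet and a Nevanlinna-type boundary parameter, namely Theorem~3.11 of \cite{Mog13.2}, applied to the boundary triplet $\dot\Pi=\{\dcH_0\oplus\dcH_1,\dot\G_0,\dot\G_1\}$ for $T^*$ constructed in Proposition~\ref{pr4.5},~(3), and then to transcribe the abstract statements into the language of boundary value problems for the system \eqref{3.3}. Since $\dcH_1\subset\dcH_0$ and, by Proposition~\ref{pr4.5},~(3), $\dim\dcH_0=n_+(T)$ and $\dim\dcH_1=n_-(T)$, one has $n_-(T)\le n_+(T)$, so the cited abstract result is applicable. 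It asserts that for every boundary parameter $\tau\in\wt R(\dcH_0,\dcH_1)$ as in \eqref{4.15} the rule assigning to $\wt f\in\gH$ and $\l\in\bC_+$ the unique $g=R_\tau(\l)\wt f\in\gH$ for which $\wh g:=\{g,\wt f+\l g\}\in T^*$ and $C_0(\l)\dot\G_0\wh g-C_1(\l)\dot\G_1\wh g=0$ defines a generalized resolvent $R_\tau(\cd)$ of $T$; that $\tau\mapsto R_\tau(\cd)$ is a bijection onto the set of all generalized resolvents of $T$; and that, when $n_-(T)=n_+(T)$, $R_\tau(\cd)$ is canonical precisely when $\tau$ reduces to a constant self-adjoint operator pair \eqref{2.14.2}, in which case $R_\tau(\l)=(\wt T_\tau-\l)^{-1}$ with $\wt T_\tau=\{\,\wh g\in T^*:C_0\dot\G_0\wh g-C_1\dot\G_1\wh g=0\,\}\in{\rm Self}(T)$.

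The transcription rests on Proposition~\ref{pr4.5},~(2): every $\wh g=\{\wt y,\wt f\}\in T^*$ has a unique representative $y\in\dom\tma$ with $\G_{1a}^1 y=0$, $\pi_\D y=\wt y$ and $\{y,f\}\in\tma$ for each $f\in\wt f$, and by \eqref{4.13} one has $\dot\G_0\wh g=\dot\G_0'y$, $\dot\G_1\wh g=\dot\G_1'y$; moreover $\G_{1a}^1 y=0\iff y(a)\in\t$ by \eqref{4.7.1}. Fixing $f\in\lI$, $\l\in\bC_+$ and letting $y(\cd,\l)$ be the representative of $\{R_\tau(\l)\pi_\D f,\,\pi_\D f+\l R_\tau(\l)\pi_\D f\}\in T^*$, one gets from \eqref{4.12} that $y(\cd,\l)\in\AC\cap\lI$, that $\{y(\cd,\l),f+\l y(\cd,\l)\}\in\tma$ (that is, $y(\cd,\l)$ solves \eqref{4.16}), that $\G_{1a}^1 y(\cd,\l)=0$, and that the abstract boundary condition becomes $C_0(\l)\dot\G_0'y(\cd,\l)-C_1(\l)\dot\G_1'y(\cd,\l)=0$; hence $y(\cd,\l)=:y_f(\cd,\l)$ solves the problem \eqref{4.16}, \eqref{4.17} and $R(\l)\wt f=\pi_\D(y_f(\cd,\l))$ by construction. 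Conversely, any solution $y$ of \eqref{4.16}, \eqref{4.17} gives, via \eqref{4.12}, a pair $\{\pi_\D y,\pi_\D f+\l\pi_\D y\}\in T^*$ satisfying the same boundary condition, and single-valuedness of the abstract resolvent forces $\pi_\D y=R_\tau(\l)\pi_\D f$; so two solutions $y_1,y_2$ of \eqref{4.16}, \eqref{4.17} differ by $y_0:=y_1-y_2$ with $\pi_\D y_0=0$, that is $\D(t)y_0(t)=0$ a.e. on $\cI$, while $y_0$ solves the homogeneous system with $y_0(a)\in\t$ (since $\G_{1a}^1 y_0=0$), whence $y_0=0$ by $\t$-definiteness; this yields uniqueness of $y_f(\cd,\l)$. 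The bijectivity of $\tau\mapsto R_\tau(\cd)$ is inherited directly from Theorem~3.11 of \cite{Mog13.2}, and if $N_+=N_-$ then $\wt\cH_b=\cH_b$, $\dcH_0=\dcH_1=:\dcH$ (Remark~\ref{rem4.3}, \eqref{4.8}), so $\tau\in\wt R(\dcH)$, the canonical case corresponds to a self-adjoint $\tau$, and rewriting the above description of $\wt T_\tau$ through Proposition~\ref{pr4.5},~(2) produces exactly \eqref{4.17.1}.

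The step I expect to require the most care — though it is routine once the preceding apparatus is in place — is this passage between the abstract, equivalence-class level at which Theorem~3.11 of \cite{Mog13.2} operates and the pointwise level of genuine $\AC$-solutions demanded in the statement: one must invoke Proposition~\ref{pr4.5},~(2), which itself rests on $\t$-definiteness, both to lift the abstractly defined $R_\tau(\l)$ to an actual solution $y_f(\cd,\l)$ of the system and to exclude spurious homogeneous solutions in the uniqueness argument, and one must check that the identification of the abstract boundary condition with the second relation in \eqref{4.17} loses nothing. Everything else is a direct specialization of the abstract theorem.
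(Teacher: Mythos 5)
Your proposal is correct and follows exactly the route the paper intends: the paper gives no detailed proof, stating only that the theorem "is a consequence of Theorem 3.11 in \cite{Mog13.2} applied to the boundary triplet $\dot\Pi$ for $T^*$," and your argument is precisely that application, with the translation between equivalence classes and genuine $\AC$-solutions handled correctly via Proposition \ref{pr4.5},~(2), the equivalence \eqref{4.7.1}, and $\t$-definiteness for uniqueness. No gaps.
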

\begin{proposition}\label{pr4.9}
For any $\l\in\CR$ there exists a unique collection of operator solutions $\xi_1(\cd,\l) \in\lo{H_1^\perp}$,  $\xi_2(\cd,\l) \in\lo{H_1}$ and $\xi_3(\cd,\l) \in\lo{\wh H}$ of the system \eqref{3.3} satisfying the boundary conditions
\begin{gather}
\G_{1a}^1 \xi_1(\l)=-I_{H_1^\perp}, \quad \G_{1a}^2 \xi_1(\l)=0, \quad \wh\G_a\xi_1(\l)=\wh\G_b\xi_1(\l),\;\;\;\; \l\in\CR \label{4.18}\\
\G_{0b} \xi_1(\l)=0, \;\;\;\;\l\in\bC_+;\qquad P_{\wt\cH_b,\cH_b}\G_{0b} \xi_1(\l)=0,
\;\;\;\;\l\in\bC_-. \label{4.19}\\
\G_{1a}^1 \xi_2(\l)=0, \quad \G_{1a}^2 \xi_2(\l)=-I_{H_1}, \quad \wh\G_a\xi_2(\l)=\wh\G_b\xi_2(\l),\;\;\;\; \l\in\CR \label{4.20}\\
\G_{0b} \xi_2(\l)=0, \;\;\;\;\l\in\bC_+;\qquad P_{\wt\cH_b,\cH_b}\G_{0b} \xi_2(\l)=0,
\;\;\;\;\l\in\bC_-. \label{4.21}\\
\G_{1a}^1 \xi_3(\l)=0, \quad \G_{1a}^2 \xi_3(\l)=0, \quad i(\wh\G_a- \wh\G_b)\xi_3(\l)=I_{\wh H},\;\;\;\; \l\in\CR \label{4.22}\\
\G_{0b} \xi_3(\l)=0, \;\;\;\;\l\in\bC_+;\qquad P_{\wt\cH_b,\cH_b}\G_{0b} \xi_3(\l)=0,
\;\;\;\;\l\in\bC_-. \label{4.23}
\end{gather}
Moreover, for any $\l\in\bC_+ \; (\l\in\bC_-)$  there exists a unique operator solution $u_+(\cd,\l)\in\lo{\wt\cH_b}$ (resp. $u_-(\cd,\l)\in\lo{\cH_b}$) satisfying the boundary conditions
\begin{gather}
\G_{1a}^1u_\pm(\l)=0,\quad \G_{1a}^2u_\pm(\l)=0, \quad \wh\G_a u_\pm(\l)=\wh\G_b u_\pm(\l),  \quad
\l\in\bC_\pm\label{4.24}\\
\G_{0b}u_+(\l)=I_{\wt\cH_b},\;\; \l\in\bC_+; \qquad
P_{\wt\cH_b,\cH_b}\G_{0b}u_-(\l)=I_{\cH_b},\;\; \l\in\bC_-.\label{4.25}
\end{gather}
\end{proposition}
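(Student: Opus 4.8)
The plan is to realize $\xi_1,\xi_2,\xi_3$ and $u_\pm$ as the operator solutions underlying the $\gamma$-fields of the decomposing boundary pair $\{\cH_0\oplus\cH_1,\G\}$ for $\Tma$ (Proposition \ref{pr4.4}) and of the boundary triplet $\dot\Pi=\{\dcH_0\oplus\dcH_1,\dG_0,\dG_1\}$ for $T^*$ (Proposition \ref{pr4.5}, (3)), passing from equivalence classes in $\gH=\LI$ to honest solutions of \eqref{3.3} by means of Proposition \ref{pr4.5}, (2). Throughout I would use that $\gN_\l(\Tmi)=\ker(\Tma-\l)=\{\pi_\D y:y\in\cN_\l\}$ and, by \eqref{4.12}, $\gN_\l(T)=\ker(T^*-\l)=\{\pi_\D y:y\in\cN_\l,\ \G_{1a}^1y=0\}$.

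\emph{Existence.} I would treat both half-planes at once. Since $\cK_\G=\{0\}$ for the decomposing boundary pair, Proposition \ref{pr3.15}, (2) yields $\ran(\G_0\up\wh\gN_\l(\Tmi))=\cH_0$ for $\l\in\bC_+$ and $\ran(P_{\cH_0,\cH_1}\G_0\up\wh\gN_\l(\Tmi))=\cH_1$ for $\l\in\bC_-$; unwinding the definition \eqref{4.7} of $\G$, this means that for every $h\in\cH_0$ (resp. $h\in\cH_1$) there is a solution $z\in\cN_\l$ with $\G_0'z=h$ (resp. $P_{\cH_0,\cH_1}\G_0'z=h$). Letting $h$ run over a basis of the respective direct summand $H_1^\perp$, $H_1$, $\wh H$, $\wt\cH_b$ (resp. $\cH_b$) of $\cH_0$ (resp. $\cH_1$) from \eqref{4.4} and extending the map $h\mapsto z(\cd)$ linearly, one obtains operator solutions $\xi_1(\cd,\l)\in\lo{H_1^\perp}$, $\xi_2(\cd,\l)\in\lo{H_1}$, $\xi_3(\cd,\l)\in\lo{\wh H}$ and $u_+(\cd,\l)\in\lo{\wt\cH_b}$ (resp. $u_-(\cd,\l)\in\lo{\cH_b}$). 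Reading off the block form \eqref{4.5} of $\G_0'$ (and noting that $P_{\cH_0,\cH_1}$ compresses its last component $\G_{0b}$ to $P_{\wt\cH_b,\cH_b}\G_{0b}$) shows that these solutions satisfy exactly \eqref{4.18}, \eqref{4.20}, \eqref{4.22}, \eqref{4.24} together with the $\bC_+$-parts (resp. $\bC_-$-parts) of \eqref{4.19}, \eqref{4.21}, \eqref{4.23}, \eqref{4.25}.

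\emph{Uniqueness.} Here I would pass to the boundary triplet $\dot\Pi$. Fix $\l\in\bC_+$ (the case $\l\in\bC_-$ being identical, with $\dG_0$ replaced by $P_{\dcH_0,\dcH_1}\dG_0$ and $\G_{0b}$ by $P_{\wt\cH_b,\cH_b}\G_{0b}$). If $\zeta,\zeta'$ are two operator solutions meeting the requirements imposed on one of $\xi_1,\xi_2,\xi_3,u_+$, then in each of the four cases the difference $\eta:=\zeta-\zeta'$ is an operator solution of the homogeneous system \eqref{3.3} with $\G_{1a}^1\eta(\l)=0$, $\G_{1a}^2\eta(\l)=0$, $i(\wh\G_a-\wh\G_b)\eta(\l)=0$ and $\G_{0b}\eta(\l)=0$. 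Hence for every vector $h$ the function $u:=\eta(\cd,\l)h$ lies in $\cN_\l$ with $\G_{1a}^1u=0$, so $\pi_\D u\in\gN_\l(T)$, $u$ is (by Proposition \ref{pr4.5}, (2)) its representative with $\G_{1a}^1=0$, and $\dG_0\{\pi_\D u,\l\pi_\D u\}=\dG_0'u=0$ by \eqref{4.9} and \eqref{4.13}. Since $\dG_0\up\wh\gN_\l(T)$ is injective (Proposition \ref{pr2.11}), $\pi_\D u=0$; then $\D u=0$ a.e., so $\{u,0\}\in\tma$, and the uniqueness part of Proposition \ref{pr4.5}, (2), applied to $\{0,0\}\in T^*$, gives $u=0$. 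As $h$ is arbitrary, $\zeta=\zeta'$.

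The step I expect to be the main obstacle is the treatment of $\xi_1$: unlike $\xi_2,\xi_3,u_\pm$ it satisfies $\G_{1a}^1\xi_1(\l)=-I_{H_1^\perp}\neq 0$, so $\xi_1(\cd,\l)$ does not represent an element of $\gN_\l(T)$ and is not captured by the single-valued triplet $\dot\Pi$; one must descend to the boundary relation $\G_0$ of the decomposing boundary pair, where $\G_0\up\wh\gN_\l(\Tmi)$ need not be injective, and carry out the bookkeeping with \eqref{4.4}--\eqref{4.5} and the hypothesis $\cK_\G=\{0\}$. Relatedly, the whole argument rests on lifting from classes in $\LI$ to genuine operator solutions of \eqref{3.3}, which is exactly where $\t$-definiteness of the system enters, through Proposition \ref{pr4.5}, (2).
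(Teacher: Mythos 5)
Your proof is correct. The existence half is essentially the paper's argument: both rest on $\cK_\G=\{0\}$ and Proposition \ref{pr3.15} to get $\ran \G_0\up\wh\gN_\l(\Tmi)=\cH_0$ (resp. $\ran P_1\G_0\up\wh\gN_\l(\Tmi)=\cH_1$), unwind this to $\G_0'\cN_\l=\cH_0$ (resp. $P_1\G_0'\cN_\l=\cH_1$), and then read the boundary conditions off the block form \eqref{4.5}. Where you diverge is uniqueness: the paper observes that $\dim\cN_\l=N_\pm=\dim\cH_0$ (resp. $\dim\cH_1$) by Proposition \ref{pr4.4}, so the surjection $\G_0'\up\cN_\l$ is automatically an isomorphism and defines $Z_\pm(\l)=(\G_0'\up\cN_\l)^{-1}$ in one stroke, whereas you prove $\ker\G_0'\up\cN_\l=\{0\}$ by hand, routing the kernel element through the boundary triplet $\dot\Pi$, the injectivity of $\dG_0\up\wh\gN_\l(T)$ from Proposition \ref{pr2.11}, and $\t$-definiteness. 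Your route is valid and self-contained, but longer; the dimension count buys the same conclusion for free and packages existence and uniqueness into a single inverse operator. One remark on your closing paragraph: the worry about $\xi_1$ is unfounded --- for uniqueness the difference of two candidates always has $\G_{1a}^1=0$ and so lands in $\gN_\l(T)$ exactly as for the other solutions, and for existence the boundary pair (not the triplet) already covers the component $H_1^\perp$ of $\cH_0$; so no extra bookkeeping beyond what you already wrote is needed.
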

\begin{proof}
Let $\bpa$ be the decomposing boundary pair \eqref{4.7} for $\Tma$.  Then the linear relation $\G_0=P_{\cH_0\oplus\{0\}}\G:\gH^2\to \cH_0$ for this triplet is \begin {equation}\label{4.26}
\G_0=\{\{\wt\pi_\D\{y,f\},\G_0'y\}:\{y,f\}\in \tma\}.
\end{equation}
By using \eqref{4.26} one proves in the same way as in \cite[Proposition 4.8]{Mog15.2} that
\begin {equation}\label{4.27}
\G_0\up\wh\gN_\l(\Tmi)=\{\{\wt\pi_\D\{y,\l y\},\G_0'y\}:\,y\in \cN_\l\}, \quad \l\in\CR.
\end{equation}
Since by  Proposition \ref{pr4.4} $\cK_\G=\{0\}$, it follows from Proposition \ref{pr3.15} that

\noindent $\ran \G_0\up\wh\gN_\l(\Tmi)=\cH_0$ and \eqref{4.27} yields $\G_0' \cN_\l=\cH_0, \; \l\in\bC_+$. Moreover, by Proposition \ref{4.4} $\dim\cN_\l=\dim\cH_0$ and hence for each $\l\in\bC_+$ the operator $\G_0'\up\cN_\l$ isomorphically maps $\cN_\l$ onto $\cH_0$. Similarly by using   \eqref{4.27} one  proves that for each $\l\in\bC_-$ the operator $P_1\G_0'\up\cN_\l$ isomorphically maps $\cN_\l$ onto $\cH_1$. Therefore the equalities  $Z_+(\l)=(\G_0'\up\cN_\l)^{-1}, \; \l\in\bC_+,$ and $Z_-(\l)=(P_1\G_0'\up\cN_\l)^{-1}, \; \l\in\bC_-,$ define the isomorphisms $Z_+(\l):\cH_0\to\cN_\l$ and $Z_-(\l):\cH_1\to\cN_\l$ such that
\begin {equation}\label{4.28}
\G_0' Z_+(\l)=I_{\cH_0}, \;\;\l\in\bC_+; \qquad P_1\G_0' Z_-(\l)=I_{\cH_1}, \;\;\l\in\bC_-.
\end{equation}
Assume that the block representations of $Z_\pm(\l)$ are
\begin {gather}
Z_+(\l)=(\xi_1(\l),\,\xi_2(\l),\,\xi_3(\l),\, u_+(\l)  ):H_1^\perp\oplus H_1\oplus \wh H \oplus \wt\cH_b\to
\cN_\l,\;\;\l\in\bC_+\label{4.29}\\
Z_-(\l)=(\xi_1(\l),\,\xi_2(\l),\,\xi_3(\l),\, u_-(\l)  ):H_1^\perp\oplus H_1\oplus \wh H \oplus \cH_b\to
\cN_\l,\;\;\l\in\bC_-\label{4.30}
\end{gather}
and let $\xi_1(\cd,\l) \in\lo{H_1^\perp}$,  $\xi_2(\cd,\l) \in\lo{H_1}$ , $\xi_3(\cd,\l) \in\lo{\wh H}$, $u_+(\cd,\l)\in\lo{\wt\cH_b}$  and $u_-(\cd,\l)\in\lo{\cH_b}$ be the respective operator solutions of \eqref{3.3}. It follows from \eqref{4.5} that
\begin {gather}\label{4.31}
P_1\G_0'=(-\G_{1a}^1,\,-\G_{1a}^2, \, i(\wh\G_a-\wh\G_b), \, P_{\wt\cH_b,\cH_b}\G_{0b} )^\top:\dom\tma\to H_1^\perp\oplus H_1\oplus \wh H\oplus \cH_b
\end{gather}
Now combining \eqref{4.28} with \eqref{4.5}, \eqref{4.29}, \eqref{4.31}, \eqref{4.30} and taking the block representations of $I_{\cH_0}$ and $I_{\cH_1}$ into account one gets the equalities \eqref{4.18} - \eqref{4.25}. Finally, uniqueness of specified operator solutions is implied by the equalities $\ker \G_0'\up\cN_\l=\{0\}, \; \l\in\bC_+,$ and $\ker P_1\G_0'\up\cN_\l=\{0\}, \; \l\in\bC_-$.
\end{proof}
\begin{proposition}\label{pr4.10}
The Weyl function $M_+=M_+(\l),\; \l\in\bC_+,$ of the decomposing boundary pair $\bpa$ for $\Tma$ admits the block representation
\begin{gather}\label{4.32}
M_+=\begin{pmatrix} M_{11} & M_{12}& M_{13}& M_{14}\cr M_{21} & M_{22}& M_{23}& M_{24}\cr M_{31} & M_{32}& M_{33}& M_{34}\cr M_{41} & M_{42}& M_{43}& M_{44}\end{pmatrix}: \underbrace{H_1^\perp\oplus H_1 \oplus \wh H\oplus\wt\cH_b}_{\cH_0}\to \underbrace{H_1^\perp\oplus H_1 \oplus \wh H\oplus\cH_b}_{\cH_1},
\end{gather}
with entries  $M_{jk}=M_{jk}(\l), \;\l\in\bC_+,$ defined by
\begin{gather}
M_{jk}(\l)=\G_{0a}^j\xi_k(\l), \;\; j\in\{1,2\}, \; k\in\{1,2,3\};\quad M_{j4}(\l)=\G_{0a}^j u_+(\l), \;\; j\in\{1,2\} \label{4.33}\\
M_{3k}(\l)=\wh\G_a \xi_k(\l), \; k\in\{1,2\};\;\; M_{33}(\l)=\wh \G_{a}\xi_3(\l)+\tfrac i 2 I_{\wh H}, \;\; M_{34}(\l)=\wh\G_{a} u_+(\l)\label{4.34}\\
M_{4k}(\l)=-\G_{1b}\xi_k(\l),\;\;k\in\{1,2,3\}; \quad M_{44}(\l)=-\G_{1b}u_+(\l)\label{4.35}.
\end{gather}
Moreover, for every $\l\in\bC_-$ one has
\begin{gather}
M_{jk}^*(\ov\l)=\G_{0a}^k\xi_j(\l), \;\;k\in\{1,2\},\; j\in\{1,2,3\}; \;\; M_{4k}^*(\ov\l)=\G_{0a}^k u_-(\l), \;\; k\in\{1,2\} \label{4.36}\\
M_{j3}^*(\ov\l)=\wh \G_{a}\xi_j(\l),\;j\in\{1,2\};\;\;\; M_{33}^*(\ov\l)=\wh \G_{a}\xi_3(\l)+\tfrac i 2 I_{\wh H}, \;\;M_{43}^*(\ov\l)=\wh \G_{a} u_-(\l)\label{4.37}
\end{gather}
\end{proposition}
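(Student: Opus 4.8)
The plan is to express the Weyl function $M_+(\cd)$ of the decomposing boundary pair $\bpa$ through the families of isomorphisms $Z_+(\l):\cH_0\to\cN_\l$, $\l\in\bC_+$, and $Z_-(\l):\cH_1\to\cN_\l$, $\l\in\bC_-$, constructed in Proposition \ref{pr4.9}, and then simply to read off the blocks using the block forms \eqref{4.29}, \eqref{4.30} of $Z_\pm(\l)$ and the block form \eqref{4.6} of $\G_1'$ (the block decomposition \eqref{4.32} itself being nothing but \eqref{4.4}). The key reduction is the pair of identities $M_+(\l)=\G_1'Z_+(\l)$ for $\l\in\bC_+$ and $M_+^*(\ov\l)=\G_1'Z_-(\l)+iP_2\G_0'Z_-(\l)$ for $\l\in\bC_-$.

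To get the first identity I would note that, since $\dim\cH_0<\infty$ and $\cK_\G=\{0\}$ by Proposition \ref{pr4.4}, part (2) of Proposition \ref{pr3.15} applies and gives both \eqref{2.22} and \eqref{2.23}. Combining \eqref{2.22} with the explicit form \eqref{4.7} of $\G$ (exactly as in the derivation of \eqref{4.27} — every $y\in\cN_\l$ solves the homogeneous system with parameter $\l$, so $\{y,\l y\}\in\tma$ and $\pi_\D y\in\gN_\l(\Tmi)$) one obtains ${\rm gr}\,M_+(\l)=\{\G_0'y\oplus\G_1'y:y\in\cN_\l\}$, i.e. $M_+(\l)\G_0'y=\G_1'y$, $y\in\cN_\l$, $\l\in\bC_+$. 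Recalling from the proof of Proposition \ref{pr4.9} that $\G_0'\up\cN_\l$ is an isomorphism onto $\cH_0$ with inverse $Z_+(\l)$ (see \eqref{4.28}), the substitution $y=Z_+(\l)h_0$ yields the first identity. The same argument with \eqref{2.23} in place of \eqref{2.22} and with the isomorphism $P_1\G_0'\up\cN_\l$ (inverse $Z_-(\l)$) gives $M_+^*(\ov\l)(P_1\G_0'y)=\G_1'y+iP_2\G_0'y$, $y\in\cN_\l$, $\l\in\bC_-$, hence the second identity.

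With these identities the block relations are a direct substitution. Inserting \eqref{4.6} and \eqref{4.29} into $M_+(\l)=\G_1'Z_+(\l)$, the $(j,k)$-block of $M_+(\l)$ is the $j$-th component of $\G_1'$ applied to the $k$-th column of $Z_+(\l)$: the components $\G_{0a}^1,\G_{0a}^2,-\G_{1b}$ of $\G_1'$ give at once \eqref{4.33} and \eqref{4.35}, while the component $\tfrac12(\wh\G_a+\wh\G_b)$ produces $\tfrac12(\wh\G_a+\wh\G_b)\xi_k(\l)$ and $\tfrac12(\wh\G_a+\wh\G_b)u_+(\l)$. To bring these to the form \eqref{4.34} I would use the boundary conditions of Proposition \ref{pr4.9}: by \eqref{4.18}, \eqref{4.20}, \eqref{4.24} one has $\wh\G_a\xi_k(\l)=\wh\G_b\xi_k(\l)$ for $k\in\{1,2\}$ and $\wh\G_au_+(\l)=\wh\G_bu_+(\l)$, so the average collapses to $\wh\G_a$, whereas \eqref{4.22} gives $\wh\G_b\xi_3(\l)=\wh\G_a\xi_3(\l)+iI_{\wh H}$, producing the extra summand $\tfrac i2 I_{\wh H}$ in $M_{33}(\l)$. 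For \eqref{4.36}, \eqref{4.37} the only new point is that the correction term $iP_2\G_0'=iP_{\cH_b^\perp}\G_{0b}$ (note $\cH_2=\cH_0\ominus\cH_1=\cH_b^\perp$ by \eqref{4.4}) lies entirely in the last ($\wt\cH_b$) summand of $\cH_0$, hence affects only the last block-row of $M_+^*(\ov\l)$ and leaves the blocks described in \eqref{4.36}, \eqref{4.37} equal to the corresponding blocks of $\G_1'Z_-(\l)$; substituting \eqref{4.30} and reusing the same boundary conditions (now for $\l\in\bC_-$) completes the computation.

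The one point needing care is the bookkeeping of index conventions — in particular that $M_{jk}^*(\ov\l)$ denotes $(M_{jk}(\ov\l))^*$ and therefore equals the $(k,j)$-block of the operator $M_+^*(\ov\l)=M_+(\ov\l)^*$ (so the roles of the $\cH_0$- and $\cH_1$-indices are interchanged), and that the $\cH_2$-component enters with the factor $i$ as prescribed by \eqref{2.23}. Once these conventions are fixed, the boundary conditions \eqref{4.18}--\eqref{4.25} do all the real work, collapsing the averaged boundary operator $\tfrac12(\wh\G_a+\wh\G_b)$ to $\wh\G_a$ up to the explicit shift $\tfrac i2 I_{\wh H}$.
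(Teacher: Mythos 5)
Your proposal is correct and follows essentially the same route as the paper: establish $\G_1'Z_+(\l)=M_+(\l)$ and $(\G_1'+iP_2\G_0')Z_-(\l)=M_+^*(\ov\l)$ via \eqref{2.22}, \eqref{2.23}, \eqref{4.7} and \eqref{4.28}, then read off the blocks from \eqref{4.6}, \eqref{4.29}, \eqref{4.30} and collapse $\tfrac12(\wh\G_a+\wh\G_b)$ using the boundary conditions \eqref{4.18}--\eqref{4.25}. Your explicit bookkeeping of the adjoint block indices and of the fact that $iP_2\G_0'$ only touches the $\wt\cH_b$ block-row matches the paper's remark that the corresponding entry of $\G_1'+iP_2\G_0'$ ``does not matter.''
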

\begin{proof}
Let $Z_\pm(\l)$ be the same as in the proof of Proposition \ref{pr4.9}. Then by \eqref{4.7}
\begin{gather*}
\{\pi_\D Z_+(\l)h_0\oplus \l\pi_\D Z_+(\l)h_0, \G_0'Z_+(\l)h_0\oplus \G_1'Z_+(\l)h_0 \}\in\G,\quad h_0\in\cH_0,\;\; \l\in\bC_+\\
\{\pi_\D Z_-(\l)h_1\oplus \l\pi_\D Z_-(\l)h_1, \G_0'Z_-(\l)h_1\oplus \G_1'Z_-(\l)h_1 \}\in\G,\quad h_1\in\cH_1,\;\; \l\in\bC_-
\end{gather*}
and in view of \eqref{2.22} and \eqref{2.23} one has
\begin{gather*}
\G_1'Z_+(\l)=M_+(\l)\G_0'Z_+(\l); \qquad  (\G_1'+iP_2 \G_0')Z_-(\l)=M_+^*(\ov\l)P_1\G_0'Z_-(\l).
\end{gather*}
(the first equality holds for $\l\in\bC_+$, while the second one for $\l\in\bC_-$). This and \eqref{4.28} imply that
\begin{equation}\label{4.38}
\G_1' Z_+(\l)=M_+(\l), \;\;\l\in\bC_+; \qquad   (\G_1'+i P_2\G_0')
Z_-(\l)=M_+^*(\ov\l), \;\; \l\in\bC_-.
\end{equation}
It follows from \eqref{4.5} and \eqref{4.6} that
\begin{multline}\label{4.39}
\G_1'+i P_2\G_0'=(\G_{0a}^1,\,\G_{0a}^2, \, \tfrac 1 2 (\wh\G_a+\wh\G_b), \, *)^\top:\dom\tma\to H_1^\perp\oplus H_1\oplus \wh H\oplus \wt\cH_b
\end{multline}
(the entry $*$ does not matter). Assume that \eqref{4.32} is the block representation of $M_+(\l)$. Combining the  first equality in \eqref{4.38} with \eqref{4.6}, \eqref{4.29} and taking the last equalities in \eqref{4.18}, \eqref{4.20},  \eqref{4.22} and \eqref{4.24} into account one gets \eqref{4.33} - \eqref{4.35}. Similarly combining the second equality in \eqref{4.38} with \eqref{4.39} and \eqref{4.30} one obtains \eqref{4.36} and \eqref{4.37}.
\end{proof}
Using the entries $M_{ij}=M_{ij}(\l)$ from the block representation \eqref{4.32} of $M_+(\l)$ introduce the holomorphic operator-functions $m_0=m_0(\l)(\in [\bH_0]), \;  S_1=S_1(\l)(\in [\dcH_0,\bH_0]), \; S_2=S_2(\l)(\in [\bH_0,\dcH_1])$ and $\dot M_+=M_+(\l)(\in [\dcH_0,\dcH_1]),\;\l\in\bC_+,$ by setting
\begin{gather}
m_0=\begin{pmatrix} M_{11} & M_{12}& M_{13}& 0\cr M_{21} & M_{22}& M_{23}& -\tfrac 1 2 I_{H_1}\cr M_{31} & M_{32}& M_{33}& 0\cr 0 & -\tfrac 1 2 I_{H_1}& 0& 0\end{pmatrix}: \underbrace{H_1^\perp\oplus H_1 \oplus \wh H\oplus H_1}_{\bH_0}\to \underbrace{H_1^\perp\oplus H_1 \oplus \wh H\oplus H_1}_{\bH_0}\label{4.40}\\
S_1=\begin{pmatrix} M_{12}& M_{13}& M_{14}\cr  M_{22}& M_{23}& M_{24}\cr  M_{32}& M_{33}-\tfrac i 2 I_{\wh H}& M_{34}\cr -I_{H_1}& 0& 0\end{pmatrix}: \underbrace{ H_1 \oplus \wh H\oplus\wt\cH_b}_{\dcH_0}\to\underbrace{H_1^\perp\oplus H_1 \oplus \wh H\oplus H_1}_{\bH_0}\label{4.41}\\
S_2=\begin{pmatrix} M_{21} & M_{22}& M_{23}& -I_{H_1}\cr M_{31} & M_{32}& M_{33}+\tfrac i 2 I_{\wh H}& 0\cr M_{41} & M_{42}& M_{43}& 0\end{pmatrix}: \underbrace{H_1^\perp\oplus H_1 \oplus \wh H\oplus H_1}_{\bH_0}\to \underbrace{ H_1 \oplus \wh H\oplus\cH_b}_{\dcH_1}\label{4.42}\\
\dot M_+=\begin{pmatrix}  M_{22}& M_{23}& M_{24}\cr  M_{32}& M_{33}& M_{34}\cr  M_{42}& M_{43}& M_{44}\end{pmatrix}: \underbrace{ H_1 \oplus \wh H\oplus\wt\cH_b}_{\dcH_0}\to \underbrace{H_1 \oplus \wh H\oplus\cH_b}_{\dcH_1}\label{4.43}
\end{gather}
\begin{lemma}\label{lem4.11}
Let $\dot\Pi=\{\dcH_0\oplus\dcH_1,\dG_0,\dG_1\}$ be the boundary triplet \eqref{4.13} for $T^*$. Moreover, let $\dot Z_+(\cd,\l)\in\lo{\dcH_0}$ and $\dot Z_-(\cd,\l)\in\lo{\dcH_1}$ be operator solutions of \eqref{3.3} given by
\begin{gather}
\dot Z_+(t,\l)=(\xi_2(t,\l), \xi_3(t,\l), u_+(t,\l) ):H_1\oplus \wh H\oplus \wt\cH_b\to\bH,\;\;\l\in\bC_+\label{4.44}\\
\dot Z_-(t,\l)=(\xi_2(t,\l), \xi_3(t,\l), u_-(t,\l) ):H_1\oplus \wh H\oplus \cH_b\to\bH,\;\;\l\in\bC_-\label{4.45}
\end{gather}
and let $\dot M_+(\cd)$ be the operator-function \eqref{4.43}. Then:

{\rm (1)} The following equalities hold
\begin{gather}
\wt U^{-1}\dot Z_+(a,\l)=\begin{pmatrix} P_{\bH,\bH_0}\G_a \dot Z_+(\l)\cr \G_{1a}^1 \dot Z_+(\l)\end{pmatrix}=\begin{pmatrix} S_1(\l)\cr 0 \end{pmatrix}:\dcH_0\to \bH_0\oplus H_1^\perp,\;\;\l\in\bC_+\label{4.45.1}\\
\wt U^{-1}\dot Z_-(a,\l)=\begin{pmatrix} P_{\bH,\bH_0}\G_a \dot Z_-(\l)\cr \G_{1a}^1 \dot Z_-(\l)\end{pmatrix}=\begin{pmatrix} S_2^*(\ov\l)\cr 0 \end{pmatrix}:\dcH_1\to \bH_0\oplus H_1^\perp,\;\;\l\in\bC_-.\label{4.45.2}
\end{gather}

{\rm(2)} $\g$-fields $\dot\g_\pm(\cd)$ of the triplet $\dot\Pi$ are
\begin{gather}\label{4.46}
\dot\g_+(\l)=\pi_\D \dot Z_+(\l),\;\;\l\in\bC_+;\qquad \dot\g_-(\l)=\pi_\D \dot Z_-(\l),\;\;\l\in\bC_-
\end{gather}
and the Weyl function of $\dot\Pi$ coincides with $\dot M_+(\l)$.

{\rm (3)} If $\tau$ is a boundary parameter \eqref{4.15}, then $(C_0(\l)-C_1(\l)\dot M_+(\l))^{-1}\in [\dcH_0]$  and
\begin{gather}\label{4.46.1}
-(\tau(\l)+\dot M_+(\l))^{-1}=(C_0(\l)-C_1(\l)\dot M_+(\l))^{-1}C_1(\l), \quad \l\in\bC_+.
\end{gather}
\end{lemma}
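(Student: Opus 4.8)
The plan is to derive the three assertions of Lemma~\ref{lem4.11} directly from the definitions of the operator solutions $\xi_k(\cd,\l)$, $u_\pm(\cd,\l)$ in Proposition~\ref{pr4.9}, the block representation of $M_+(\l)$ in Proposition~\ref{pr4.10}, and the general facts about $\gamma$-fields and Weyl functions of boundary triplets from Proposition~\ref{pr2.11}.

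\textbf{Step 1 (the boundary values at $a$, i.e.\ \eqref{4.45.1}--\eqref{4.45.2}).} First I would invoke Lemma~\ref{lem4.3.1}, which says that for any operator solution $Y(\cd,\l)$ one has $\wt U^{-1}Y(a,\l)=(P_{\bH,\bH_0}\G_a Y(\l),\,\G_{1a}^1 Y(\l))^\top$, with $P_{\bH,\bH_0}\G_a=(\G_{0a}^1,\G_{0a}^2,\wh\G_a,\G_{1a}^2)^\top$. Applying this columnwise to $\dot Z_+(\l)=(\xi_2(\l),\xi_3(\l),u_+(\l))$ reduces the claim to computing $\G_{0a}^1,\G_{0a}^2,\wh\G_a,\G_{1a}^2,\G_{1a}^1$ on each of $\xi_2,\xi_3,u_+$. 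The entries $\G_{1a}^1\xi_2,\G_{1a}^1\xi_3,\G_{1a}^1u_+$ all vanish by \eqref{4.20}, \eqref{4.22}, \eqref{4.24}, giving the lower $0$-block. For the $\bH_0$-block, the off-diagonal contributions $\G_{0a}^j$ applied to $\xi_k,u_+$ are exactly $M_{jk}(\l),M_{j4}(\l)$ by \eqref{4.33}; the $\wh H$-row $\wh\G_a\xi_2,\wh\G_a\xi_3,\wh\G_a u_+$ is read from \eqref{4.34} as $M_{32},M_{33}-\tfrac i2 I_{\wh H},M_{34}$; and the $\G_{1a}^2$-row uses $\G_{1a}^2\xi_2=-I_{H_1}$, $\G_{1a}^2\xi_3=0$, $\G_{1a}^2u_+=0$ from \eqref{4.20}, \eqref{4.22}, \eqref{4.24}. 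Comparing this assembled column with the definition \eqref{4.41} of $S_1(\l)$ gives \eqref{4.45.1}. The formula \eqref{4.45.2} for $\dot Z_-(\l)$ is proved identically, now using the ``starred'' relations \eqref{4.36}--\eqref{4.37} together with $P_{\wt\cH_b,\cH_b}$-versions of the boundary conditions in \eqref{4.20}--\eqref{4.25}, and comparing with \eqref{4.42} to recognize $S_2^*(\ov\l)$.

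\textbf{Step 2 (statement (2): the $\gamma$-fields and Weyl function of $\dot\Pi$).} Here I would use the characterization of $\gamma$-fields from \eqref{2.16.1}--\eqref{2.16.2}: $\dot\g_+(\l)$ is the unique $[\dcH_0,\gH]$-valued function with $\dot\g_+(\l)\dcH_0\subset\gN_\l(T)$ and $\dG_0\{\dot\g_+(\l)h_0,\l\dot\g_+(\l)h_0\}=h_0$. Since by Proposition~\ref{pr4.9} the solution $\dot Z_+(\cd,\l)$ lies in $\lI$, $\pi_\D\dot Z_+(\l)$ maps into $\ker(T^*-\l)=\gN_\l(T)$; and by the definition \eqref{4.13} of $\dG_0$ together with \eqref{4.9}, evaluating $\dG_0'$ on the columns $\xi_2,\xi_3,u_+$ using \eqref{4.20}--\eqref{4.25} yields precisely $I_{\dcH_0}$. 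Uniqueness of the $\gamma$-field then forces $\dot\g_+(\l)=\pi_\D\dot Z_+(\l)$, and symmetrically $\dot\g_-(\l)=\pi_\D\dot Z_-(\l)$. For the Weyl function: by \eqref{2.16} the Weyl function of $\dot\Pi$ is $h_0\mapsto\dG_1\{\dot\g_+(\l)h_0,\l\dot\g_+(\l)h_0\}=\dG_1'\dot Z_+(\l)h_0$, and \eqref{4.10}, \eqref{4.33}--\eqref{4.35} evaluated on $\xi_2,\xi_3,u_+$ reproduce exactly the block matrix \eqref{4.43}, i.e.\ $\dot M_+(\l)$. (One should double-check the $\wh H$-diagonal entry: $\dG_1'$ carries $\tfrac12(\wh\G_a+\wh\G_b)$, and combining with the normalizations in \eqref{4.22} one gets $\wh\G_a\xi_3+\tfrac i2 I_{\wh H}=M_{33}$, matching the $(2,2)$-block of \eqref{4.43}.)

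\textbf{Step 3 (statement (3): invertibility and the resolvent-type identity).} This is a purely abstract fact about boundary triplets and Nevanlinna pairs $\tau=\{C_0,C_1\}\in\wt R(\dcH_0,\dcH_1)$ once (2) is in hand: it is the standard Krein-type formula reformulated for operator pairs. Concretely, for $\tau(\l)=\{C_0(\l),C_1(\l)\}$ realized as in \eqref{2.14}, the relation $\tau(\l)+\dot M_+(\l)$ consists of pairs $\{h_0, C_0(\l)h_0 - \text{(something)}\}$, and a short linear-algebra manipulation — writing $\{h_0,h_1\}\in\tau(\l)$ as $C_0(\l)h_0+C_1(\l)h_1=0$ and substituting $h_1+\dot M_+(\l)h_0$ for the second component — shows $(\tau(\l)+\dot M_+(\l))^{-1}=-(C_0(\l)-C_1(\l)\dot M_+(\l))^{-1}C_1(\l)$, with the bracket invertible because $\dot M_+(\l)$ is the Weyl function of a boundary triplet and $\tau\in\wt R(\dcH_0,\dcH_1)$ (this is exactly the content of the cited Theorem~3.11 in \cite{Mog13.2}, applied to $\dot\Pi$). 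So I would simply cite that correspondence.

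\textbf{Main obstacle.} The only real bookkeeping difficulty is Step~1: one must keep straight the five-fold block decomposition $H_1^\perp\oplus H_1\oplus\wh H\oplus H_1\oplus H_1^\perp$ of $\bH$ versus the four-fold decompositions of $\cH_0,\cH_1,\dcH_0,\dcH_1$, and correctly account for the shifts by $\pm\tfrac i2 I_{\wh H}$ that distinguish $M_{33}$ from $\wh\G_a\xi_3(\l)$ and distinguish $m_0,S_1,S_2$ (via \eqref{4.40}--\eqref{4.42}) from the raw entries $M_{jk}$. Once the indexing is pinned down, every identity is a direct substitution with no analytic content beyond what Propositions~\ref{pr4.9} and \ref{pr4.10} already provide.
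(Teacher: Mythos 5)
Your proposal is correct and, for parts (1) and (3), follows the paper's own route: part (1) is exactly the paper's appeal to Lemma \ref{lem4.3.1} together with the boundary conditions \eqref{4.18}--\eqref{4.25} and the identifications \eqref{4.33}--\eqref{4.37}, and part (3) is, as in the paper, delegated to the abstract correspondence of \cite[Theorem 3.11]{Mog13.2} (the paper also cites \cite[Lemma 2.1]{MalMog02} for the identity \eqref{4.46.1}, which your two-line linear-algebra derivation reproduces). For part (2) you diverge slightly: the paper observes that $\dot Z_+(\l)=Z_+(\l)\up\dcH_0$ and $\dot Z_-(\l)=Z_-(\l)\up\dcH_1$ and then imports the already-proved identities \eqref{4.28} and \eqref{4.38} for the decomposing boundary pair, projecting via $\dot M_+(\l)=P_{\cH_1,\dcH_1}M_+(\l)\up\dcH_0$, whereas you recompute $\dG_0'\dot Z_+(\l)=I_{\dcH_0}$ and $\dG_1'\dot Z_+(\l)=\dot M_+(\l)$ column by column from \eqref{4.20}--\eqref{4.25} and \eqref{4.33}--\eqref{4.35}. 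Both are valid; the paper's version avoids redoing the bookkeeping, yours is more self-contained. One small imprecision: membership of $\dot Z_+(\cd,\l)$ in $\lI$ gives only $\pi_\D\dot Z_+(\l)\dcH_0\subset\gN_\l(\Tmi)$; to conclude $\pi_\D\dot Z_+(\l)\dcH_0\subset\gN_\l(T)=\ker(T^*-\l)$ you must also invoke $\G_{1a}^1\xi_2=\G_{1a}^1\xi_3=\G_{1a}^1u_+=0$ via the description \eqref{4.12} of $T^*$ --- you have these vanishings from Step 1, so the fix is one sentence, but as written the inference is not justified by integrability alone.
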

\begin{proof}
(1) It follows from \eqref{4.3.2} and Propositions  \ref{pr4.9},  \ref{pr4.10} that
\begin{gather}\label{4.46.2}
P_{\bH,\bH_0}\G_a \dot Z_+(\l)=S_1(\l), \qquad \G_{1a}^1 \dot Z_+(\l)=0,\quad \l\in\bC_+,
\end{gather}
and $P_{\bH,\bH_0}\G_a \dot Z_-(\l)=S_2^*(\ov\l),$ $\G_{1a}^1 \dot Z_-(\l)=0,\;\l\in\bC_-$. This and Lemma \ref{lem4.3.1} yield \eqref{4.45.1} and \eqref{4.45.2}.

(2) Let $\dot\g_\pm(\l)$ be given by \eqref{4.46} and let $Z_\pm(\l)$ be the same as in the proof of Proposition \ref{pr4.9}. Comparing \eqref{4.44} and \eqref{4.45} with \eqref{4.29} and \eqref{4.30} one gets $\dot Z_+(\l)=$

\noindent $Z_+(\l)\up\dcH_0, \; \l\in\bC_+,$ and $\dot Z_-(\l)=Z_-(\l)\up\dcH_1,\;  \l\in\bC_-$. Therefore by \eqref{4.28}
\begin{gather}\label{4.47}
\G_0'\dot Z_+(\l)h_0=h_0,\;\; h_0\in\dcH_0,\;\l\in\bC_+; \qquad P_1\G_0'\dot Z_-(\l)h_1=h_1,\;\; h_1\in\dcH_1,\;\l\in\bC_-
\end{gather}
and in view of \eqref{4.10.2} $P_1\G_0'=(-\G_{1a}^1,  \dot P_1\dG_0')^\top$ with $\dot P_1:=P_{\dcH_0,\dcH_1}$. This and \eqref{4.10.1}, \eqref{4.10.2} imply that
\begin{gather}
\G_{1a}^1 \dot Z_+(\l)=0,\quad \l\in\bC_+;\qquad \G_{1a}^1 \dot Z_-(\l)=0,\quad \l\in\bC_-\label{4.47.1}\\
\dot\G_0'\dot Z_+(\l)=I_{\dot\cH_0},\;\; \l\in\bC_+; \qquad \dot P_1\dot\G_0'\dot Z_-(\l)=I_{\dot\cH_1},\;\; \l\in\bC_-.\label{4.48}
\end{gather}
It follows from \eqref{4.47.1} that $\dot\g_+(\l)\cH_0\subset\gN_\l (T)$, $\dot\g_-(\l) \cH_1\subset\gN_\l (T)$ and \eqref{4.48} yields
\begin{gather*}
\dot\G_0\{\dot\g_+(\l)h_0,\l \dot\g_+(\l)h_0\}=\dG_0' \dot Z_+(\l)h_0=h_0,\quad h_0\in\dcH_0,\quad \l\in\bC_+\\
\dot P_1\dot\G_0\{\dot\g_-(\l)h_1,\l \dot\g_-(\l)h_1\}=\dot P_1\dG_0' \dot Z_-(\l)h_1=h_1,\quad h_1\in\dcH_1,\quad \l\in\bC_-.
\end{gather*}
Therefore according to definitions \eqref{2.16.1} and \eqref{2.16.2} $\dot\g_\pm(\cd)$ are $\g$-fields of $\dot\Pi$.

Next assume that $\dot M_+(\cd)$ is given by \eqref{4.43}. Then in view of \eqref{4.32} and \eqref{4.10.1} $\dot M_+(\l)=P_{\cH_1,\dcH_1}M_+(\l)\up\dcH_0 $ and by using \eqref{4.38} one obtains
\begin{gather}\label{4.48.1}
\dG_1' \dot Z_+(\l)=P_{\cH_1,\dcH_1}\G_1'  Z_+(\l)\up \dcH_0=P_{\cH_1,\dcH_1}M_+(\l)\up \dcH_0=\dot M_+(\l)
\end{gather}
Hence $\dot\G_1\{\dot\g_+(\l)h_0,\l \dot\g_+(\l)h_0\}=\dG_1' \dot Z_+(\l)h_0=\dot M_+(\l)h_0, \;h_0\in\dcH_0,\; \l\in\bC_+,$ and according to definition \eqref{2.16} $\dot M_+(\cd)$ is the Weyl function of $\dot\Pi$.

Statement (3) follows from \cite[Theorem 3.11]{Mog13.2}  and \cite[Lemma 2.1]{MalMog02}.
\end{proof}
\begin{theorem}\label{th4.12}
Let  $\tau$ be a boundary parameter \eqref{4.15}, let
\begin{gather}
C_0(\l)=(C_{0a}(\l), \wh C_0(\l),  C_{0b}(\l)):H_1\oplus\wh H\oplus\wt\cH_b\to\dcH_0\label{4.48.2}\\
C_1(\l)=(C_{1a}(\l), \wh C_1(\l),  C_{1b}(\l)):H_1\oplus\wh H\oplus\cH_b\to\dcH_0\label{4.48.3}
\end{gather}
be the block representations of $C_0(\l)$ and $C_1(\l)$ and let
\begin{gather}\label{4.48.4}
\Phi(\l):=(0,\,C_{0a}(\l),\, \wh C_0(\l)+\tfrac i 2 \wh C_1(\l),\, -C_{1a}(\l)): H_1^\perp\oplus H_1\oplus \wh H\oplus H_1\to\dcH_0.
\end{gather}
Then for each $\l\in\bC_+$ there exists a unique operator solution $v_\tau(\cd,\l)\in\lo{\bH_0}$ of the system \eqref{3.3} satisfying the boundary conditions
\begin{gather}\label{4.49}
\G_{1a}^1 v_\tau(\l)=-P_{\bH_0,H_1^\perp}, \qquad C_0(\l)\dG_0'v_\tau(\l)-C_1(\l)\dG_1'v_\tau(\l)=\Phi(\l)
\end{gather}
\end{theorem}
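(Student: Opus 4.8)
The plan is to reduce the boundary problem \eqref{4.49} to a finite-dimensional linear equation for an unknown operator, by representing $v_\tau(\cd,\l)$ as a combination of the operator solution $\xi_1(\cd,\l)$ from Proposition \ref{pr4.9} and the operator solution $\dot Z_+(\cd,\l)$ from Lemma \ref{lem4.11}. Fixing $\l\in\bC_+$, I would look for $v_\tau(\cd,\l)$ in the form
\begin{equation*}
v_\tau(\l)=\xi_1(\l)\,P_{\bH_0,H_1^\perp}+\dot Z_+(\l)\,X(\l),\qquad X(\l)\in[\bH_0,\dcH_0].
\end{equation*}
For every choice of $X(\l)$ the right-hand side is an operator solution of \eqref{3.3} lying in $\lo{\bH_0}$, since $\xi_1(\cd,\l)\in\lo{H_1^\perp}$ and $\dot Z_+(\cd,\l)\in\lo{\dcH_0}$ and composition with a bounded operator preserves $\cL_\D^2$-membership.

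First I would verify that this ansatz captures exactly the operator solutions obeying the first equality in \eqref{4.49}. Since $\G_{1a}^1\xi_1(\l)=-I_{H_1^\perp}$ by \eqref{4.18} and $\G_{1a}^1\dot Z_+(\l)=0$ by \eqref{4.47.1}, every $v_\tau(\l)$ of the above form satisfies $\G_{1a}^1 v_\tau(\l)=-P_{\bH_0,H_1^\perp}$. Conversely, if $v(\cd,\l)\in\lo{\bH_0}$ is an operator solution of \eqref{3.3} with $\G_{1a}^1 v(\l)=-P_{\bH_0,H_1^\perp}$, then the columns of $w(\cd,\l):=v(\cd,\l)-\xi_1(\cd,\l)P_{\bH_0,H_1^\perp}$ all lie in $\cN_\l^0:=\{y\in\cN_\l:\G_{1a}^1 y=0\}$. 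As $\G_{1a}^1$ maps $\cN_\l$ onto $H_1^\perp$ (by \eqref{4.18}), one gets $\dim\cN_\l^0=\dim\cN_\l-\dim H_1^\perp=N_+-\dim H_1^\perp=\dim\cH_0-\dim H_1^\perp=\dim\dcH_0$, using $\dim\cN_\l=\dim\cH_0=N_+$ (Proposition \ref{pr4.4}) and $\cH_0=H_1^\perp\oplus\dcH_0$ (\eqref{4.10.1}); on the other hand $\dot Z_+(\l)\dcH_0\subseteq\cN_\l^0$ by \eqref{4.47.1}, and $\dot Z_+(\l)$ is injective, since applying $\dG_0'$ and using \eqref{4.48} gives $h=\dG_0'\dot Z_+(\l)h$ for $h\in\dcH_0$. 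Hence $\dot Z_+(\l)\dcH_0=\cN_\l^0$, so $w(\l)=\dot Z_+(\l)X(\l)$ for a unique $X(\l)\in[\bH_0,\dcH_0]$, and it only remains to pin down $X(\l)$ from the second equality in \eqref{4.49}.

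Next I would substitute the ansatz into that condition. By \eqref{4.48} and \eqref{4.48.1} one has $\dG_0'\dot Z_+(\l)=I_{\dcH_0}$ and $\dG_1'\dot Z_+(\l)=\dot M_+(\l)$, whence
\begin{gather*}
\dG_0' v_\tau(\l)=\dG_0'\xi_1(\l)P_{\bH_0,H_1^\perp}+X(\l),\\
\dG_1' v_\tau(\l)=\dG_1'\xi_1(\l)P_{\bH_0,H_1^\perp}+\dot M_+(\l)X(\l).
\end{gather*}
Plugging these into $C_0(\l)\dG_0'v_\tau(\l)-C_1(\l)\dG_1'v_\tau(\l)=\Phi(\l)$ and collecting the $X(\l)$-terms yields
\begin{equation*}
\bigl(C_0(\l)-C_1(\l)\dot M_+(\l)\bigr)X(\l)=\Phi(\l)-\bigl(C_0(\l)\dG_0'-C_1(\l)\dG_1'\bigr)\xi_1(\l)\,P_{\bH_0,H_1^\perp}.
\end{equation*}
By Lemma \ref{lem4.11}, (3) the operator $C_0(\l)-C_1(\l)\dot M_+(\l)$ belongs to $[\dcH_0]$ and is boundedly invertible, so this equation has a unique solution $X(\l)$, which then gives the unique $v_\tau(\cd,\l)$ of the required form.

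I expect the only point requiring genuine care to be the identity $\dot Z_+(\l)\dcH_0=\cN_\l^0$ underlying the uniqueness part, but all its ingredients — $\dim\cN_\l=N_+$, surjectivity of $\G_{1a}^1$ on $\cN_\l$, the equalities $\dim\cH_0=N_+$ and $\cH_0=H_1^\perp\oplus\dcH_0$, and the injectivity of $\dot Z_+(\l)$ — are already available from Proposition \ref{pr4.4} and \eqref{4.18}, \eqref{4.47.1}, \eqref{4.48}, \eqref{4.10.1}, so no new difficulty arises. The remaining work is just tracking the block decompositions \eqref{3.7.2}, \eqref{4.8} and \eqref{4.48.2}--\eqref{4.48.4}, which would matter mostly if one later wanted an explicit closed form for $v_\tau(\cd,\l)$ rather than the bare existence-and-uniqueness statement.
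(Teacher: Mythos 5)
Your argument is correct, and it reaches the conclusion by a somewhat different route than the paper. The paper writes down the solution explicitly, $v_\tau(t,\l)=Z_0(t,\l)+\dot Z_+(t,\l)(C_0(\l)-C_1(\l)\dot M_+(\l))^{-1}C_1(\l)S_2(\l)$ with $Z_0=(\xi_1,\xi_2,\xi_3,0)$, verifies both conditions in \eqref{4.49} by computing $\dG_0'Z_0(\l)$, $\dG_1'Z_0(\l)$ and $S_2(\l)-\dG_1'Z_0(\l)$ blockwise, and gets uniqueness by appealing to the unique solvability of the boundary value problem \eqref{4.16}--\eqref{4.17} in Theorem \ref{th4.7}. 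You instead parametrize the full solution set of the first condition in \eqref{4.49} by the affine family $\xi_1(\l)P_{\bH_0,H_1^\perp}+\dot Z_+(\l)X(\l)$ — your dimension count $\dot Z_+(\l)\dcH_0=\cN_\l^0$ is the one genuinely new ingredient, and it is sound, resting on $\dim\cN_\l=N_+=\dim\cH_0$, the surjectivity of $\G_{1a}^1\!\up\!\cN_\l$ coming from \eqref{4.18}, and the injectivity of $\dot Z_+(\l)$ from \eqref{4.48} — and then solve a single linear equation for $X(\l)$ using the invertibility of $C_0(\l)-C_1(\l)\dot M_+(\l)$ from Lemma \ref{lem4.11}, (3). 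This buys a self-contained existence-and-uniqueness proof that needs neither Theorem \ref{th4.7} nor the block computations; your ansatz and the paper's formula describe the same affine family (the columns of $(0,\xi_2,\xi_3,0)$ lie in $\cN_\l^0$, so the discrepancy is absorbed into $X(\l)$). What it does not deliver is the closed form \eqref{4.51} with the specific value $X(\l)=(C_0(\l)-C_1(\l)\dot M_+(\l))^{-1}C_1(\l)S_2(\l)$, which the paper relies on later: the proof of Theorem \ref{th4.14} obtains the representation \eqref{4.54} of $m_\tau(\l)$ by applying $P_{\bH,\bH_0}\G_a$ directly to \eqref{4.51}. So if you intend to continue to the $m$-function formula, you would still need to identify your $X(\l)$ explicitly, which amounts to redoing the block computations you deferred.
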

(here $P_{\bH_0,H_1^\perp}$ is the orthoprojection in $\bH_0$ onto $H_1^\perp$ in accordance with decomposition \eqref{3.7.2} of $\bH_0$).
\begin{proof}
Let $Z_0(\cd,\l)\in\lo{\bH_0}$ and $\dot Z_+(\cd,\l)\in\lo{\dcH_0}$ be operator solutions of \eqref{3.3} given by
\begin{gather}\label{4.50}
Z_0(t,\l)=(\xi_1(t,\l),\,\xi_2(t,\l), \,\xi_3(t,\l),\, 0): H_1^\perp\oplus H_1\oplus \wh H\oplus H_1 \to \bH, \quad \l\in\bC_+
\end{gather}
and \eqref{4.44} respectively and let $S_2(\l)$ be defined by \eqref{4.42}. Then in view of Lemma \ref{lem4.11},(3)
the equality
\begin {equation} \label{4.51}
v_{\tau}(t,\l)=Z_0(t,\l)+\dot Z_+(t,\l) (C_0(\l)-C_1(\l)\dot M_+(\l))^{-1}C_1(\l)S_2(\l),  \quad \l\in\bC_+
\end{equation}
correctly defines the solution $v_\tau(\cd,\l)\in\lo{\bH_0}$ of \eqref{3.3}. Let us show that  this solution satisfies \eqref{4.49}.

It follows from \eqref{4.3.2} and Propositions \ref{pr4.9}, \ref{pr4.10} that
\begin{gather}\label{4.51.2}
 P_{\bH,\bH_0}\G_a Z_0(\l)=m_0(\l)-\tfrac 1 2 J_0, \quad \G_{1a}^1 Z_0(\l)=-P_{\bH_0,H_1^\perp},\quad \l\in\bC_+,
\end{gather}
where $J_0\in [\bH_0]$ is the operator given by
\begin{gather}\label{4.52}
J_0=P_{\bH,\bH_0}J\up\bH_0=\begin{pmatrix} 0 & 0&0 &0 \cr 0&0 &0 &-I_{H_1} \cr 0&0 & iI_{\wh H}& 0 \cr 0& I_{H_1} & 0& 0 \end{pmatrix} \in [\underbrace{H_1^\perp\oplus H_1\oplus \wh H\oplus H_1}_{\bH_0}].
\end{gather}
Combining \eqref{4.51} with the second equalities in  \eqref{4.51.2} and \eqref{4.46.2} one gets the first equality in \eqref{4.49}. Next, by  \eqref{4.48} and \eqref{4.48.1}
\begin{gather*}
(C_0(\l)\dG_0' -C_1(\l)\dG_1')v_\tau(\l)=(C_0(\l)\dG_0' -C_1(\l)\dG_1') Z_0(\l)+
(C_0(\l)\dG_0' -C_1(\l)\dG_1')\tm\\
\dot Z_+(\l) (C_0(\l)-C_1(\l)\dot M_+(\l))^{-1}C_1(\l)S_2(\l)=
C_0(\l)\dG_0'Z_0(\l) +C_1(\l) (S_2(\l)-\dG_1' Z_0(\l) ).
\end{gather*}
Moreover, by \eqref{4.18} - \eqref{4.23} and \eqref{4.33} - \eqref{4.35} one has
\begin{gather*}
\dG_0'Z_0(\l)=\begin{pmatrix} 0&I_{H_1}&0&0\cr 0&0&I_{\wh H}&0 \cr 0&0&0&0 \end{pmatrix},\qquad \dG_1'Z_0(\l)=\begin{pmatrix} M_{21}(\l)&M_{21}(\l) &M_{23}(\l )& 0\cr M_{31}(\l)& M_{32}(\l) &M_{33}(\l )& 0\cr M_{41}(\l) &M_{42}(\l )&M_{43}(\l ) & 0 \end{pmatrix}
\end{gather*}
and hence  $S_2(\l)-\dG_1' Z_0(\l)=\begin{pmatrix}0&0&0&-I_{H_1}\cr 0& 0&\tfrac i 2 I_{\wh H}&0\cr0& 0&0&0  \end{pmatrix}.$ This and \eqref{4.48.2}, \eqref{4.48.3} imply that
\begin{gather*}
C_0(\l)\dG_0 v_\tau(\l)-C_1(\l)\dG_1 v_\tau(\l)= (C_{0a}(\l), \wh C_0(\l),  C_{0b}(\l)) \begin{pmatrix} 0&I_{H_1}&0&0\cr 0&0&I_{\wh H}&0 \cr 0&0&0&0 \end{pmatrix}+ \\
+(C_{1a}(\l), \wh C_1(\l),  C_{1b}(\l))\begin{pmatrix}0&0&0&-I_{H_1}\cr 0& 0&\tfrac i 2 I_{\wh H}&0\cr0& 0&0&0  \end{pmatrix}=\Phi(\l)
\end{gather*}
Thus the second equality in \eqref{4.49} is valid. Finally uniqueness of $v_\tau(\cd,\l)$ is implied by uniqueness of the solution of the problem \eqref{4.16}, \eqref{4.17} (see Theorem \ref{th4.7}).
\end{proof}
\subsection{$m$-functions}
Let $\tau$ be a boundary parameter \eqref{4.15}, let $v_\tau (\cd,\l)\in\lo{\bH_0}$ be the  operator solution of \eqref{3.3} defined in Theorem \ref{th4.12} and let $J_0$ be the operator \eqref{4.52}.
\begin{definition}\label{def4.13}
The operator function $m_\tau(\cd):\bC_+\to [\bH_0]$ defined by
\begin {equation}\label{4.53}
m_\tau(\l)=P_{\bH,\bH_0}\G_{a}v_\tau (\l)+\tfrac 1 2 J_0, \quad\l\in\bC_+
\end{equation}
is called the $m$-function  corresponding to the boundary parameter $\tau$
or, equivalently, to the boundary  value  problem \eqref{4.16}, \eqref{4.17}.
\end{definition}
In the following  theorem we provide a description of all
$m$-functions immediately in terms of the boundary parameter $\tau$.
\begin{theorem}\label{th4.14}
Let the assumptions  {\rm (A1) - (A3)} after Proposition \ref{pr4.2.1} be satisfied and let $\dcH_0$ and  $\dcH_1$ be finite-dimensional Hilbert spaces \eqref{4.8}. Assume also that $M_+(\cd)$ is the operator-function defined by \eqref{4.32} - \eqref{4.35} and $m_0(\cd), \; S_1(\cd),\; S_2(\cd)$ and $\dot M_+(\cd)$ are the operator-functions \eqref{4.40} - \eqref{4.43}. Then:

(1) $m_0(\cd)$ is the $m$-function corresponding to the boundary parameter $\tau_0=\{I_{\dcH_0}, 0_{\dcH_1,\dcH_0}\}$;

(2) for every boundary  parameter $\tau$ of the form  \eqref{4.15}  the corresponding $m$-function $m_\tau(\cd)$  admits the representation
\begin {equation}\label{4.54}
m_\tau(\l)=m_0(\l)+S_1(\l)(C_0(\l)-C_1(\l)\dot M_+(\l))^{-1}C_1(\l)
S_2(\l),\quad\l\in\bC_+.
\end{equation}
\end{theorem}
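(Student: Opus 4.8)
The plan is to read off both statements from the explicit formula for the solution $v_\tau(\cd,\l)$ that was produced inside the proof of Theorem \ref{th4.12}, combined with Definition \ref{def4.13} of the $m$-function. Recall that by Theorem \ref{th4.12} the boundary problem \eqref{4.16}, \eqref{4.17} (equivalently \eqref{4.49}) has, for each boundary parameter $\tau$ of the form \eqref{4.15}, a \emph{unique} operator solution $v_\tau(\cd,\l)\in\lo{\bH_0}$; hence $v_\tau(\cd,\l)$ is precisely the function \eqref{4.51}, namely
\begin{equation*}
v_\tau(t,\l)=Z_0(t,\l)+\dot Z_+(t,\l)\bigl(C_0(\l)-C_1(\l)\dot M_+(\l)\bigr)^{-1}C_1(\l)S_2(\l),\qquad \l\in\bC_+,
\end{equation*}
where $Z_0(\cd,\l)$ and $\dot Z_+(\cd,\l)$ are the operator solutions \eqref{4.50} and \eqref{4.44}, $S_2(\cd)$ is the operator function \eqref{4.42}, and the inverse $\bigl(C_0(\l)-C_1(\l)\dot M_+(\l)\bigr)^{-1}\in[\dcH_0]$ exists by Lemma \ref{lem4.11}, (3).

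To prove statement (2), I would apply the operator $P_{\bH,\bH_0}\G_a$ to the above identity taken at $t=a$. Using linearity of $\G_a$ together with the convention $\G_a Y(\l)=\wt U^{-1}Y(a,\l)$ of Lemma \ref{lem4.3.1}, this gives
\begin{equation*}
P_{\bH,\bH_0}\G_a v_\tau(\l)=P_{\bH,\bH_0}\G_a Z_0(\l)+\bigl(P_{\bH,\bH_0}\G_a\dot Z_+(\l)\bigr)\bigl(C_0(\l)-C_1(\l)\dot M_+(\l)\bigr)^{-1}C_1(\l)S_2(\l).
\end{equation*}
Next I would insert the two evaluations that are already available: the first equality in \eqref{4.51.2}, $P_{\bH,\bH_0}\G_a Z_0(\l)=m_0(\l)-\tfrac 1 2 J_0$, and the first equality in \eqref{4.46.2} (equivalently \eqref{4.45.1}), $P_{\bH,\bH_0}\G_a\dot Z_+(\l)=S_1(\l)$. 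This yields
\begin{equation*}
P_{\bH,\bH_0}\G_a v_\tau(\l)=m_0(\l)-\tfrac 1 2 J_0+S_1(\l)\bigl(C_0(\l)-C_1(\l)\dot M_+(\l)\bigr)^{-1}C_1(\l)S_2(\l).
\end{equation*}
Adding $\tfrac 1 2 J_0$ and invoking the definition \eqref{4.53} of $m_\tau$, i.e. $m_\tau(\l)=P_{\bH,\bH_0}\G_a v_\tau(\l)+\tfrac 1 2 J_0$, I obtain \eqref{4.54}.

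To prove statement (1), I would first check that $\tau_0=\{I_{\dcH_0},\,0_{\dcH_1,\dcH_0}\}$ is an admissible boundary parameter in the sense of Definition \ref{def4.6}: with the constant (hence holomorphic) functions $C_0\equiv I_{\dcH_0}$, $C_1\equiv 0$ the operator inequality in \eqref{2.14.1} reduces, for $P_2$ the orthoprojection in $\dcH_0$ onto $\dcH_0\ominus\dcH_1$, to $P_2\geq 0$, and the invertibility condition reduces to $I_{\dcH_0}^{-1}\in[\dcH_0]$, both trivially true. Then statement (2) applies to $\tau_0$, and substituting $C_0=I_{\dcH_0}$, $C_1=0$ into \eqref{4.54} makes the second summand vanish, so $m_{\tau_0}(\l)=m_0(\l)$. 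Alternatively, one checks directly that for $\tau_0$ the formula \eqref{4.51} reduces to $v_{\tau_0}(\cd,\l)=Z_0(\cd,\l)$, whence $m_{\tau_0}(\l)=P_{\bH,\bH_0}\G_a Z_0(\l)+\tfrac 1 2 J_0=m_0(\l)$ by \eqref{4.51.2}.

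The genuine analytic content here --- unique solvability of the boundary problem \eqref{4.16}, \eqref{4.17}, the identification of $S_1(\l)$ and $\dot M_+(\l)$ with boundary values of $\dot Z_+$ ($P_{\bH,\bH_0}\G_a\dot Z_+(\l)=S_1(\l)$ and $\dG_1'\dot Z_+(\l)=\dot M_+(\l)$), and the invertibility of $C_0(\l)-C_1(\l)\dot M_+(\l)$ --- has already been carried out in Theorem \ref{th4.12}, Lemma \ref{lem4.11} and Proposition \ref{pr4.10}, so I do not expect a real obstacle: the argument is essentially a matter of assembling these facts. The only point deserving some care is the consistency of the operator blocks entering \eqref{4.54}: with the conventions of Lemma \ref{lem4.3.1} one has $S_2(\l):\bH_0\to\dcH_1$, $C_1(\l):\dcH_1\to\dcH_0$, $\bigl(C_0(\l)-C_1(\l)\dot M_+(\l)\bigr)^{-1}:\dcH_0\to\dcH_0$ and $S_1(\l):\dcH_0\to\bH_0$, so that their composition is a well-defined element of $[\bH_0]$, as it must be since $m_\tau(\l)\in[\bH_0]$.
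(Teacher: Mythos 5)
Your proposal is correct and is essentially identical to the paper's proof: the author likewise applies $P_{\bH,\bH_0}\G_a$ to the explicit formula \eqref{4.51} for $v_\tau(\cd,\l)$, invokes the first equalities in \eqref{4.51.2} and \eqref{4.46.2}, and then obtains statement (1) by specializing \eqref{4.54} to $\tau_0$. Your extra remarks (that $\tau_0$ is a legitimate boundary parameter and that the operator blocks in \eqref{4.54} compose consistently) are accurate but were left implicit in the paper.
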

\begin{proof}
Applying the operator $P_{\bH,\bH_0}\G_a$  to the equality \eqref{4.51} and taking the first equalities in \eqref{4.51.2} and \eqref{4.46.2} into account one gets  \eqref{4.54}. Statement (1) of the theorem is immediate from \eqref{4.54}.
\end{proof}
\begin{proposition}\label{pr4.15}
The $m$-function $m_\tau(\cd)$ belongs to the class $R[\bH_0]$ and satisfies
\begin {equation}\label{4.55}
\im\, m_\tau(\l)\geq \int_\cI v_\tau^*(t,\l)\D(t)
v_\tau(t,\l)\, dt, \quad \l\in\bC_+.
\end{equation}
If $N_+=N_-$ and $\tau $ is a self-adjoint boundary parameter, then the  inequality \eqref{4.55} turns into the equality.
\end{proposition}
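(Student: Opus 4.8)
The plan is to derive \eqref{4.55} from Lagrange's identity \eqref{3.6} applied to the operator solution $v_\tau(\cd,\l)$, to read off $\im m_\tau(\l)\ge0$ (hence $m_\tau\in R[\bH_0]$) from \eqref{4.55}, and to obtain holomorphy of $m_\tau(\cd)$ directly from the construction of $v_\tau$. For holomorphy: by formula \eqref{4.51} the solution $v_\tau(t,\l)$ is assembled from the $\l$-independent solutions $\xi_1,\xi_2,\xi_3$ entering $Z_0(\cd,\l)$, the holomorphic family $\dot Z_+(\cd,\l)$, the holomorphic operator functions $C_0(\l),C_1(\l),S_2(\l)$, and $(C_0(\l)-C_1(\l)\dot M_+(\l))^{-1}$, which is holomorphic on $\bC_+$ by Lemma \ref{lem4.11}(3) since $\dot M_+(\cd)$ is; applying the fixed operator $P_{\bH,\bH_0}\G_a$ and adding $\tfrac12 J_0$ as in \eqref{4.53} then shows $m_\tau(\cd):\bC_+\to[\bH_0]$ is holomorphic. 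Since the right-hand side of \eqref{4.55} is the Gram operator $h\mapsto(v_\tau(\l)h,v_\tau(\l)h)_\D$, which is nonnegative because $\D(t)\ge0$, once \eqref{4.55} is proved we get $\im m_\tau(\l)\ge0$ and $m_\tau\in R[\bH_0]$.

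To prove \eqref{4.55}, fix $\l\in\bC_+$ and $h\in\bH_0$. As $v_\tau(\cd,\l)h\in\AC\cap\lI$ solves $Jy'-B(t)y=\l\D(t)y$, we have $\{v_\tau(\l)h,\l v_\tau(\l)h\}\in\tma$, and Lagrange's identity \eqref{3.6} applied with $\{y,f\}=\{z,g\}=\{v_\tau(\l)h,\l v_\tau(\l)h\}$ reads
\[ (\l-\ov\l)(v_\tau(\l)h,v_\tau(\l)h)_\D=[v_\tau(\l)h,v_\tau(\l)h]_b-(J v_\tau(a,\l)h,v_\tau(a,\l)h). \]
The endpoint-$a$ term is transformed with the help of \eqref{4.53}: since $\wt U^*J\wt U=J$ one has $(J v_\tau(a,\l)h,v_\tau(a,\l)h)=(J\,\wt U^{-1}v_\tau(a,\l)h,\wt U^{-1}v_\tau(a,\l)h)$, and by Lemma \ref{lem4.3.1} together with the first boundary condition in \eqref{4.49}, $\wt U^{-1}v_\tau(a,\l)h=(m_\tau(\l)h-\tfrac12 J_0h)\oplus(-P_{\bH_0,H_1^\perp}h)\in\bH_0\oplus H_1^\perp$. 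Expanding $J$ in the decomposition \eqref{3.7.2} of $\bH$, using that the block $J_0=P_{\bH,\bH_0}J\up\bH_0$ is skew-adjoint, and invoking the block structure \eqref{4.40}--\eqref{4.43} of $m_\tau$ to account for the $H_1^\perp$-contribution, the $a$-term is found to equal $-2i\,\im(m_\tau(\l)h,h)$ (the remaining summands being purely imaginary and cancelling). Substituting this back gives $\im(m_\tau(\l)h,h)=\im\l\,(v_\tau(\l)h,v_\tau(\l)h)_\D-\tfrac1{2i}[v_\tau(\l)h,v_\tau(\l)h]_b$.

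The decisive step, and the main obstacle, is to prove $\tfrac1{2i}[v_\tau(\l)h,v_\tau(\l)h]_b\le0$ for $\l\in\bC_+$, with equality when $N_+=N_-$ and $\tau$ is self-adjoint. Here one substitutes the form \eqref{4.3} for $[\cd,\cd]_b$, expresses $\G_{0b},\wh\G_b,\G_{1b}$ applied to $v_\tau(\l)$ through $\dG_0'v_\tau(\l)$ and $\dG_1'v_\tau(\l)$ via the block representations \eqref{4.9}, \eqref{4.10}, and then uses the second boundary condition $C_0(\l)\dG_0'v_\tau(\l)-C_1(\l)\dG_1'v_\tau(\l)=\Phi(\l)$ in \eqref{4.49} together with Lemma \ref{lem4.11}(3) (so that $\dG_1'v_\tau(\l)$ is controlled by $\dot M_+(\l)$ and $(C_0(\l)-C_1(\l)\dot M_+(\l))^{-1}$) to rewrite $\tfrac1{2i}[v_\tau(\l)h,v_\tau(\l)h]_b$ as minus a Hermitian form of the type standing on the left of \eqref{2.14.1}, evaluated at a vector $\zeta(\l)h\in\dcH_0$ built from $v_\tau(\l)h$; in this reduction the summand $C_0P_2C_0^*$ absorbs the extra $P_{\cH_b^\perp}\G_{0b}$-term of \eqref{4.3}, which is the genuinely new feature when $N_+>N_-$ and $\dcH_0\ne\dcH_1$. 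The required sign is then exactly condition (i) of Definition \ref{def2.9.1}, equivalently the inequality \eqref{2.14.1} defining $\wt R(\dcH_0,\dcH_1)$. When $N_+=N_-$ the $P_2$-terms disappear and $\dcH_0=\dcH_1$, and for a self-adjoint $\tau=\{C_0,C_1\}$ (where $\im(C_1C_0^*)=0$) that Hermitian form vanishes identically, giving the asserted equality. Combined with the previous paragraph this yields \eqref{4.55}.

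As an alternative to the last paragraph, the $b$-term can be handled abstractly: by Theorem \ref{th4.7}, $v_\tau$ is attached to a generalized resolvent $R_\tau(\l)$ of $T$ generated by an exit-space self-adjoint extension $\wt T=\wt T^*$ of $T$ in some $\wt\gH\supset\gH$; on $\wt\gH$ the Lagrange computation above becomes an identity with no defect at $b$ (because $\wt T$ is self-adjoint), and compressing the resulting Gram operator from $\wt\gH$ back to $\gH$ discards a nonnegative amount -- precisely the gap in \eqref{4.55}. In the canonical case ($N_+=N_-$, $\tau$ self-adjoint) no compression is needed and \eqref{4.55} becomes an equality. I expect the direct route via \eqref{3.6}, \eqref{4.3} and \eqref{2.14.1} to be the one actually carried out, the bookkeeping of the several boundary blocks at $a$ and $b$ being the only laborious part.
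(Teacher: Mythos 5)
Your overall strategy --- Lagrange's identity \eqref{3.6} applied to $\{v_\tau(\l)h,\l v_\tau(\l)h\}$, the identity \eqref{4.3} for $[\cd,\cd]_b$, and the defining inequality \eqref{2.14.1} of the class $\wt R(\dcH_0,\dcH_1)$ --- is exactly the one the paper intends: its own proof is a single sentence noting that holomorphy follows from \eqref{4.54} and delegating \eqref{4.55} to the analogous inequalities (5.10) of [AlbMalMog13] and (4.66) of [Mog14]. Your handling of holomorphy and of the implication ``\eqref{4.55} implies $\im m_\tau(\l)\geq 0$, hence $m_\tau\in R[\bH_0]$'' is correct. One side remark: your computation yields $\im (m_\tau(\l)h,h)\geq \im\l\cdot\|v_\tau(\l)h\|_\D^2$, i.e.\ \eqref{4.55} with the factor $\im\l$ on the right-hand side; that factor is present in the cited source and appears to have been dropped in \eqref{4.55} as printed, so your version is the correct one.

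The gap is in the decisive step, and it is not mere bookkeeping. First, the claim that the endpoint-$a$ term equals exactly $-2i\,\im(m_\tau(\l)h,h)$ is false: writing $\wt U^{-1}v_\tau(a,\l)h=(g_1,g_2,\wh g,g_3,-h_1)$ in the five-component decomposition \eqref{3.7.2} (so $g_1=\G_{0a}^1v_\tau(\l)h$, $g_2=\G_{0a}^2v_\tau(\l)h$, $\wh g=\wh\G_a v_\tau(\l)h$, $g_3=\G_{1a}^2v_\tau(\l)h$, and $\G_{1a}^1v_\tau(\l)h=-h_1$ by \eqref{4.49}), one finds $(Jv_\tau(a,\l)h,v_\tau(a,\l)h)=-2i\,\im(g_1,h_1)+2i\,\im(g_2,g_3)+i\|\wh g\|^2$, whereas $-2i\,\im(m_\tau(\l)h,h)$ contains instead the cross terms $\im(g_2,h_2),\ \im(g_3,h_3),\ \im(\wh g,\wh h)$ and the contribution of $\tfrac 12 J_0$; these do not cancel on their own. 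Second, and for the same reason, the inequality $\tfrac 1{2i}[v_\tau(\l)h,v_\tau(\l)h]_b\leq 0$ is not true in isolation: the operators $\dG_0',\dG_1'$ of \eqref{4.9}--\eqref{4.10} mix the data $\G_{0a}^2,\G_{1a}^2,\wh\G_a$ at $a$ with $\G_{0b},\wh\G_b,\G_{1b}$ at $b$, so the positivity coming from \eqref{2.14.1} controls only the combined form $\im(\dG_1'v_\tau(\l)h,\dG_0'v_\tau(\l)h)$, which contains both endpoint contributions; it is the sum of the residual $a$-terms and the $b$-form that has a definite sign, not the $b$-form alone. Third, since the second condition in \eqref{4.49} is inhomogeneous (equal to $\Phi(\l)$, not to $0$), the pair $\{\dG_0'v_\tau(\l)h,\dG_1'v_\tau(\l)h\}$ does not belong to $\tau(\l)$, so condition (i) of Definition \ref{def2.9.1} cannot be applied to it directly; one must expand $\dG_1'v_\tau(\l)$ through $\dot M_+(\l)$ and $(C_0(\l)-C_1(\l)\dot M_+(\l))^{-1}$ via \eqref{4.51} and check that the inhomogeneous pieces recombine with $\im m_0(\l)$ and the Gram integral. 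This recombination is precisely where the proof lives, and the intermediate identities you assert would have to be replaced before the argument closes. Your alternative exit-space route is plausible but equally unproven as stated, since it requires identifying $\pi_\D v_\tau(\cd,\l)$ with a compression of a $\g$-field of $\wt T_\tau$.
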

\begin{proof}
It follows from \eqref{4.54}   that $m_\tau(\cd)$ is holomorphic in $\bC_+$. Moreover, one can prove inequality \eqref{4.55} in the same way as similar inequalities (5.10) in \cite{AlbMalMog13} and (4.66) in \cite{Mog14}. Therefore $m_\tau(\cd)\in R[\bH_0]$.
\end{proof}
\subsection{Generalized resolvents and characteristic matrices}
In the sequel we denote by $Y_{\wt U}(\cd,\l)$ the $[\bH]$-valued operator solution of \eqref{3.3} satisfying $Y_{\wt U}(a,\l)=\wt U,\; \l\in\bC$.

The following theorem is well known (see e.g. \cite{Bru78,DLS93,Sht57}).
\begin{theorem}\label{th4.16}
For each generalized resolvent $R(\l)$ of $\Tmi$ there exists an operator-function $\Om(\cd)\in R[\bH]$ (the characteristic matrix of $R(\l)$) such that for any $\wt f\in \gH$ and $\l\in\bC_+$
\begin {equation}\label{4.56}
R(\l)\wt f=\pi_\D\left(\int_\cI Y_{\wt U}(x,\l)(\Om(\l)+\tfrac 1 2 \, {\rm sgn}(t-x)J)Y_{\wt U}^*(t,\ov\l)\D(t) f(t)\,dt \right), \;\; f\in\wt f,.
\end{equation}
\end{theorem}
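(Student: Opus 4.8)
The plan is to obtain the characteristic matrix as (a fixed congruence of) the $m$-function, by solving explicitly the boundary value problem behind the resolvent formula of Theorem \ref{th4.7}.

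First I would record that for every generalized resolvent $R(\l)$ of $\Tmi$ and every $\wt f\in\gH$ there is a representative $y_f(\cd,\l)$ of $R(\l)\wt f$ which, for $\l\in\bC_+$, belongs to $\AC\cap\lI$ and satisfies $J y_f'-B(t)y_f=\l\D(t)y_f+\D(t)f(t)$ a.e. on $\cI$. When the system is definite this is precisely the content of Theorem \ref{th4.7} applied with $\t=\bH$ (so that $T=\Tmi$, $\bH_0=\bH$, $J_0=J$), with $R(\l)=R_\tau(\l)$ for a uniquely determined boundary parameter $\tau$; in general it is the classical fact underlying \cite{Bru78,DLS93,Sht57}. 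Then I would solve this inhomogeneous system by variation of parameters: writing $y_f(t,\l)=Y_{\wt U}(t,\l)c(t)$ and using $J^2=-I_\bH$ gives $c'(t)=-J\,Y_{\wt U}^{*}(t,\ov\l)\,\D(t)f(t)$, where the \emph{key identity} $Y_{\wt U}^{*}(t,\ov\l)\,J\,Y_{\wt U}(t,\l)\equiv J$ on $\cI$ follows from the Lagrange identity \eqref{3.6} together with $\wt U^{*}J\wt U=J$ of assumption (A2). Hence $y_f(t,\l)=Y_{\wt U}(t,\l)c_0(\l)-Y_{\wt U}(t,\l)\int_a^t J\,Y_{\wt U}^{*}(x,\ov\l)\D(x)f(x)\,dx$ with a constant $c_0(\l)$.

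Next I would substitute $\int_a^t=\tfrac12\int_\cI+\tfrac12\int_\cI{\rm sgn}(t-x)\,(\cd)$, which converts the previous line into $y_f(t,\l)=Y_{\wt U}(t,\l)\bigl(c_0(\l)-\tfrac12 J\!\int_\cI Y_{\wt U}^{*}(x,\ov\l)\D(x)f(x)\,dx\bigr)-\tfrac12\,Y_{\wt U}(t,\l)\!\int_\cI{\rm sgn}(t-x)\,J\,Y_{\wt U}^{*}(x,\ov\l)\D(x)f(x)\,dx$. Because the boundary conditions \eqref{4.17} determine $c_0(\l)$ uniquely and linearly from the boundary data of the particular solution — which vanishes at $a$ and at $b$ is a fixed linear image of $\int_\cI Y_{\wt U}^{*}(x,\ov\l)\D(x)f(x)\,dx$ — one gets $c_0(\l)=\wt\Om(\l)\int_\cI Y_{\wt U}^{*}(x,\ov\l)\D(x)f(x)\,dx$ for some $\wt\Om(\l)\in[\bH]$. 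Setting $\Om(\l):=\wt\Om(\l)-\tfrac12 J$ and renaming the integration and evaluation variables one arrives exactly at \eqref{4.56}. Comparing the value at $a$ of $y_f(\cd,\l)$ with the Weyl-type solution $v_\tau(\cd,\l)$ of Theorem \ref{th4.12}, which in the present situation equals $Y_{\wt U}(\cd,\l)(m_\tau(\l)-\tfrac12 J)$, one identifies $\Om(\l)$ with the congruence $\wt U^{-*}m_\tau(\l)\wt U^{-1}$ of the $m$-function; holomorphy of $\Om(\cd)$ on $\bC_+$ is then clear, and $\im\,\Om(\l)\ge0$ follows from $m_\tau(\cd)\in R[\bH_0]$ (Proposition \ref{pr4.15}) via this congruence — alternatively $\im\,\Om(\l)\ge0$ can be read off from $\tfrac1{\im\l}\,\im\,(R(\l)\wt f,\wt f)\ge0$ by inserting \eqref{4.56} — so that $\Om(\cd)\in R[\bH]$.

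I expect the only real difficulty to be the middle step: the careful accounting of the boundary conditions at both endpoints that is needed to show that $c_0(\l)$ is genuinely independent of $t$ and $x$ and to pin it down as $m_\tau(\l)$; the variation of parameters, the ${\rm sgn}$-splitting and the Nevanlinna property are routine, and one may in any case invoke \cite{Bru78,DLS93,Sht57} for this classical statement.
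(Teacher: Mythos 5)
The paper does not actually prove Theorem \ref{th4.16}: it is quoted as a known result with references to \cite{Bru78,DLS93,Sht57}, and the only place where a characteristic matrix is computed is Proposition \ref{pr4.17}, which goes in the opposite direction (it starts from the Krein-type formula \eqref{4.57.1} and the already-known representation \eqref{4.58} of $(A_0-\l)^{-1}$, both imported from \cite{Mog13.2,Mog14}). So your reconstruction is a genuinely different route. Its computational core is sound: the identity $Y_{\wt U}^*(t,\ov\l)JY_{\wt U}(t,\l)\equiv J$ does follow from $B=B^*$, $\D=\D^*$ and $\wt U^*J\wt U=J$; the variation-of-parameters formula $c'(t)=-JY_{\wt U}^*(t,\ov\l)\D(t)f(t)$ and the ${\rm sgn}$-splitting are correct and reproduce the shape of \eqref{4.56}.

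The step you flag as "the only real difficulty" is, however, exactly where the argument is incomplete in the singular case, and it is the reason the paper defers to the literature. For $b$ singular neither $Y_{\wt U}(\cd,\l)c_0$ nor the particular solution need belong to $\lI$ separately (only their sum does), so "the boundary data of the particular solution at $b$" is not defined term by term, and $\int_\cI Y_{\wt U}^*(t,\ov\l)\D(t)f(t)\,dt$ need not converge for general $f\in\lI$ when $N_-<n$. One must first work on $\gH_b$, use the solvability of the boundary value problem \eqref{4.16}--\eqref{4.17} (Theorem \ref{th4.7} and the explicit solution \eqref{4.51}) to pin down $c_0(\l)$ as a fixed linear image of $\int_\cI Y_{\wt U}^*(t,\ov\l)\D(t)f(t)\,dt$, and then extend by continuity; your sketch asserts this rather than proves it. Separately, your final identification $\Om(\l)=\wt U^{-*}m_\tau(\l)\wt U^{-1}$ contradicts the paper's Proposition \ref{pr4.17}: relative to $Y_{\wt U}$ the characteristic matrix is $m_\tau(\l)$ itself (in the block form \eqref{4.56.1}; for $\t=\bH$ simply $\Om(\l)=m_\tau(\l)$), because $m_\tau$ is already expressed in the $\wt U$-coordinates through $\G_a y=\wt U^{-1}y(a)$, so the extra congruence double-counts $\wt U$. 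This slip is harmless for the bare existence statement of Theorem \ref{th4.16} (a fixed congruence preserves the class $R[\bH]$, and positivity of $\im\Om$ can indeed be extracted from $\im(R(\l)\wt f,\wt f)/\im\l\geq 0$ together with the density of the values $\int_\cI Y_{\wt U}^*(t,\ov\l)\D(t)f(t)\,dt$ guaranteed by definiteness), but it would matter anywhere the explicit link between $\Om$ and $m_\tau$ is used.
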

\begin{proposition}\label{pr4.17}
Let $\tau$ be a boundary parameter \eqref{4.15} and let $R_\tau(\l)$ be the corresponding generalized resolvent of $T$ (and hence of $\Tmi$) in accordance with Theorem \ref{th4.7}.  Moreover, let $P_{\bH_0,H_1^\perp}$ and $I_{H_1^\perp,\bH_0}$ be the orthoprojection in $\bH_0$ onto $H_1^\perp$ and the embedding operator of $H_1^\perp$ into $\bH_0$ respectively (see decomposition \eqref{3.7.2} of $\bH_0$). Then the equality
\begin{gather}\label{4.56.1}
\Om(\l)=\begin{pmatrix}  m_\tau(\l) & -\tfrac 1 2 I_{H_1^\perp,\bH_0} \cr -\tfrac 1 2 P_{\bH_0,H_1^\perp} & 0\end{pmatrix}:\underbrace{\bH_0\oplus H_1^\perp }_{\bH}\to \underbrace{\bH_0\oplus H_1^\perp }_{\bH},\;\;\l\in\bC_+
\end{gather}
defines a characteristic matrix $\Om(\cd)$ of $R_\tau(\l)$.
\end{proposition}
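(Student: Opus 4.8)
The plan is to reduce the assertion to a direct verification of the integral representation \eqref{4.56}. First observe that \eqref{4.56.1} automatically defines a function $\Om(\cd)\in R[\bH]$: the upper‑right block $-\tfrac12 I_{H_1^\perp,\bH_0}$ and the lower‑left block $-\tfrac12 P_{\bH_0,H_1^\perp}$ are mutually adjoint and independent of $\l$, while $m_\tau(\cd)\in R[\bH_0]$ by Proposition \ref{pr4.15}; hence $\Om(\cd)$ is holomorphic on $\bC_+$ and $\im\,\Om(\l)$ is the block‑diagonal operator with diagonal entries $\im\,m_\tau(\l)\geq 0$ and $0$, so that $\Om(\cd)\in R[\bH]$. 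It therefore remains to prove that \eqref{4.56} holds for $R_\tau(\l)$ with this $\Om(\l)$.

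To this end I would compare two descriptions of $R_\tau(\l)\wt f$. By Theorem \ref{th4.7}, $R_\tau(\l)\wt f=\pi_\D(y_f(\cd,\l))$, where $y_f(\cd,\l)\in\AC\cap\lI$ is the \emph{unique} solution of the boundary value problem \eqref{4.16}, \eqref{4.17}. By Theorem \ref{th4.16} there is also \emph{some} characteristic matrix $\wt\Om(\cd)\in R[\bH]$ for which \eqref{4.56} holds; since the statement only claims that \eqref{4.56.1} is \emph{a} characteristic matrix, it suffices to show that, with $\Om(\l)$ as in \eqref{4.56.1}, the function
\[
g_f(x,\l):=\int_\cI Y_{\wt U}(x,\l)\bigl(\Om(\l)+\tfrac12\,{\rm sgn}(t-x)J\bigr)Y_{\wt U}^*(t,\ov\l)\D(t)f(t)\,dt
\]
belongs to $\lI$ and satisfies \eqref{4.16}, \eqref{4.17}; then $g_f(\cd,\l)=y_f(\cd,\l)$ by uniqueness in Theorem \ref{th4.7}, and \eqref{4.56} follows. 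The verification splits into three steps. (a) That $g_f(\cd,\l)$ solves \eqref{4.16} is a routine differentiation in $x$: the only nonsmooth ingredient is $\tfrac12\,{\rm sgn}(t-x)$, whose derivative in $x$ produces, after applying $J$, precisely $\D(x)f(x)$, by virtue of the symplectic identity $Y_{\wt U}(t,\l)J\,Y_{\wt U}^*(t,\ov\l)=J$, which in turn follows from $\wt U^*J\wt U=J$, the constancy of $t\mapsto Y_{\wt U}^*(t,\ov\l)J\,Y_{\wt U}(t,\l)$, and $J^2=-I_\bH$. (b) The condition $\G_{1a}^1 g_f(\l)=0$ holds because $g_f(a,\l)=\wt U(\Om(\l)+\tfrac12 J)\wh f_Y(\ov\l)$ with $\wh f_Y(\ov\l):=\int_\cI Y_{\wt U}^*(t,\ov\l)\D(t)f(t)\,dt$, and the choice $-\tfrac12 P_{\bH_0,H_1^\perp}$ of the lower‑left block in \eqref{4.56.1} is exactly the one making the $H_1^\perp$‑component of $\wt U^{-1}g_f(a,\l)$ vanish (the $H_1^\perp$‑row of $\tfrac12 J$ restricted to $\bH_0$ being $\tfrac12 P_{\bH_0,H_1^\perp}$).

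(c) The remaining point — that $g_f(\cd,\l)\in\lI$ and that $g_f$ satisfies the second, mixed $a$–$b$ condition in \eqref{4.17} — is the substantive one, and it is here that the $\bH_0\times\bH_0$ block of $\Om(\l)$ is forced to equal $m_\tau(\l)$. For $f$ with $\D(\cd)f(\cd)$ supported on $[a,\b]$ one has $g_f(x,\l)=Y_{\wt U}(x,\l)(\Om(\l)-\tfrac12 J)\wh f_Y(\ov\l)$ for $x>\b$, so $\lI$‑membership forces $(\Om(\l)-\tfrac12 J)\bH$ to consist of Cauchy data (at $a$) of solutions lying in $\lI$; together with the two boundary conditions this determines $\Om(\l)$. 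Concretely I would rewrite $g_f(\cd,\l)$ as a combination of $v_\tau(\cd,\l)$, the operator solutions $\xi_i,u_+$ of Proposition \ref{pr4.9} and $Z_0,\dot Z_+$ from the proof of Theorem \ref{th4.12}, plus a particular solution built from $Y_{\wt U}$, and then apply $P_{\bH,\bH_0}\G_a$ (using Definition \ref{def4.13} and the relations \eqref{4.51.2}, \eqref{4.46.2}) and $\dG_0',\dG_1'$ (using \eqref{4.47.1}, \eqref{4.48}, \eqref{4.48.1}); Definition \ref{def4.13} of $m_\tau$ then yields exactly the diagonal block $m_\tau(\l)$, while the off‑diagonal blocks $-\tfrac12 I_{H_1^\perp,\bH_0}$, $-\tfrac12 P_{\bH_0,H_1^\perp}$ and the zero block come out of the split structure of the boundary condition at the regular endpoint ($\G_{1a}^1 y=0$) and the Hermitian symmetry of the Green kernel.

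The main obstacle is step (c): reconciling the "global" Green kernel written through $Y_{\wt U}$ with the "boundary‑pair" description of solutions (through $v_\tau$, $\dot M_+$, $m_0$, $S_1$, $S_2$), and in particular controlling square‑integrability at the singular endpoint $b$. This is precisely where the analytic content of the $m$‑function — Theorems \ref{th4.12} and \ref{th4.14} and Proposition \ref{pr4.15} — enters. Steps (a) and (b) are elementary once the symplectic identity and the block form of $J$ relative to $\bH=\bH_0\oplus H_1^\perp$ are in hand.
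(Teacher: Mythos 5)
Your route is genuinely different from the paper's. The paper does not touch the Green kernel of $R_\tau(\l)$ directly: it starts from Krein's formula $R_\tau(\l)=(A_0-\l)^{-1}+\dot\g_+(\l)B_\tau(\l)\dot\g_-^*(\ov\l)$ with $A_0=\ker\dG_0$, imports from \cite[(4.36)]{Mog14} the integral representation \eqref{4.58} of $(A_0-\l)^{-1}$ with the characteristic matrix $\Om_0(\l)$ of \eqref{4.59}, and then identifies the perturbation term as the integral operator with kernel $Y_{\wt U}(x,\l)\,{\rm diag}(S_1(\l)B_\tau(\l)S_2(\l),0)\,Y_{\wt U}^*(t,\ov\l)\D(t)$ via \eqref{4.45.1}--\eqref{4.45.2}; adding the two matrices and using \eqref{4.54} gives \eqref{4.56.1}. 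Your plan --- verify that the kernel built from the proposed $\Om(\l)$ produces the unique solution of the boundary value problem \eqref{4.16}, \eqref{4.17} and invoke the uniqueness statement of Theorem \ref{th4.7} --- is a Straus-type argument; it is more self-contained (no appeal to the representation of $(A_0-\l)^{-1}$ from \cite{Mog14}) at the price of redoing the Green-kernel analysis. Your steps (a) and (b) are correct as written (for (b) note also that the $H_1^\perp\to H_1^\perp$ block of $J$ vanishes, so the whole last row of $\Om(\l)+\tfrac12 J$ is zero).

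The weakness is that step (c), which you rightly call the substantive one, is left as a plan rather than carried out; it does close, and the identity that closes it is worth recording. In the decomposition $\bH=\bH_0\oplus H_1^\perp$ one has $J=\bigl(\begin{smallmatrix}J_0 & -I_{H_1^\perp,\bH_0}\cr P_{\bH_0,H_1^\perp}&0\end{smallmatrix}\bigr)$, so by Definition \ref{def4.13} and the first condition in \eqref{4.49}
\begin{equation*}
\Om(\l)-\tfrac 1 2 J=\begin{pmatrix}m_\tau(\l)-\tfrac 1 2 J_0 & 0\cr -P_{\bH_0,H_1^\perp}&0\end{pmatrix}=\bigl(\wt U^{-1}v_\tau(a,\l),\;0\bigr),\qquad\text{whence}\qquad Y_{\wt U}(x,\l)\bigl(\Om(\l)-\tfrac 1 2 J\bigr)=(v_\tau(x,\l),\,0).
\end{equation*}
Hence for $\wt f\in\gH_b$ with $\D f$ supported in $[a,\b]$ one gets $g_f(x,\l)=v_\tau(x,\l)P_{\bH,\bH_0}\wh f_Y(\ov\l)$ for $x>\b$, which settles $\lI$-membership, and $g_f=v_\tau(\cd,\l)P_{\bH,\bH_0}\wh f_Y(\ov\l)+w$ with $w$ vanishing near $b$ and $\wt U^{-1}w(a)=J\wh f_Y(\ov\l)$; a direct computation from \eqref{4.9}, \eqref{4.10}, \eqref{4.48.2}--\eqref{4.48.4} gives $C_0(\l)\dG_0'w-C_1(\l)\dG_1'w=-\Phi(\l)P_{\bH,\bH_0}\wh f_Y(\ov\l)$, which cancels exactly against the second condition in \eqref{4.49}. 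Two points you must not omit to make this rigorous: first, that $\G_b$ annihilates functions vanishing near $b$ (this follows from \eqref{4.3} together with the surjectivity of $\G_b$, but needs saying); second, \eqref{4.56} is required for all $\wt f\in\gH$, not only $\wt f\in\gH_b$, so you must pass from $\gH_b$ to $\gH$, e.g.\ by comparing with the characteristic matrix whose existence Theorem \ref{th4.16} guarantees and using $\t$-definiteness to dispose of the difference.
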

\begin{proof}
Assume that $\dot\g_\pm(\cd)$ are $\g$-fields and $\dot M_+(\cd)$ is the Weyl function of the boundary triplet $\dot\Pi=\{\dcH_0\oplus \dcH_1,\dG_0,\dG_1 \}$ for $T^*$ defined in Proposition \ref{pr4.5}. Moreover let
\begin {equation}\label{4.57}
B_\tau(\l):=-(\tau(\l)+\dot M_+(\l))^{-1}=(C_0 (\l)- C_1(\l) \dot M_+(\l))^{-1}C_1(\l), \quad \l\in\bC_+
\end{equation}
(see \eqref{4.46.1}). Then according to \cite[Theorem
3.11]{Mog13.2} the  Krein type formula for generalized  resolvents
\begin {equation}\label{4.57.1}
R(\l)=R_\tau(\l)=(A_0-\l)^{-1}+\dot\g_+(\l)B_\tau(\l)\dot\g_-^*(\ov\l), \quad \l\in\bC_+
\end{equation}
holds with the maximal symmetric extension $A_0$ of $T$ given by
\begin {equation*}
A_0:=\ker \dG_0=\{\wt\pi_\D \{y,f\}: \{y,f\}\in \tma, \,\G_{1a}^1 y=0,\, \G_{1a}^2 y=0, \wh \G_a y=\wh \G_b y,\, \G_{0b}y=0 \}.
\end{equation*}
According to \cite[(4.36)]{Mog14} for each $\wt f\in\gH$ and $\l\in\bC_+$
\begin {equation}\label{4.58}
(A_0-\l)^{-1}\wt f\! =\pi_\D\left(\int_\cI Y_{\wt U}(x,\l)(\Om_0(\l)+\tfrac 1 2 \, {\rm
sgn}(t-x)J)Y_{\wt U}^*(t,\ov\l)\D(t) f(t)\,dt \right),
\end{equation}
where $f(\cd)\in\wt f$ and $\Om_0(\l)$ is the operator function defined in \cite[(4.30)]{Mog14} (actually \eqref{4.58} is proved in \cite{Mog14} for definite systems but the proof is suitable for the case of  a $\t$-definite system as well). One can easily verify that $\Om_0 (\l)$ admits the representation
\begin{gather}\label{4.59}
\Om_0(\l)=\begin{pmatrix}  m_0(\l) & -\tfrac 1 2 I_{H_1^\perp,\bH_0} \cr -\tfrac 1 2 P_{\bH_0,H_1^\perp} & 0\end{pmatrix}:\bH_0\oplus H_1^\perp \to \bH_0\oplus H_1^\perp ,\;\;\l\in\bC_+
\end{gather}
with $m_0(\l)$ given by \eqref{4.40}. Next, $\dot Z_\pm(t,\l)=Y_{\wt U}(t,\l)\wt U^{-1}\dot Z_\pm(a,\l)$ and in view of the second equality in  \eqref{4.46} and \cite[Lemma 3.3]{AlbMalMog13} one has
\begin {equation*}
\dot\g_-^*(\ov\l)\wt f=\smallint_\cI \dot Z_-^*(t,\ov\l) \D(t)f(t)\,dt=\smallint_\cI (\wt U^{-1}\dot Z_-(a,\ov\l))^* Y_{\wt U}^*(t,\ov\l)\D(t)f(t)\,dt, \quad f(\cd)\in\wt f.
\end{equation*}
This and  the first equality in  \eqref{4.46} imply that for any $\wt f\in\gH$ and $\l\in\bC_+$
\begin{gather*}
\dot\g_+(\l)B_\tau(\l)\dot\g_-^*(\ov\l)\wt f=\\
\pi_\D  \smallint_\cI Y_{\wt U}(\cd,\l)(\wt U^{-1}\dot Z_+(a,\l))
B_\tau(\l)(\wt U^{-1}\dot Z_-(a,\ov\l))^* Y_{\wt U}^*(t,\ov\l)\D(t)f(t)\,dt =\\
\pi_\D  \smallint_\cI Y_{\wt U}(\cd,\l) \ov \Om (\l)Y_{\wt U}^*(t,\ov\l) \D(t)f(t)\,dt , \quad  f(\cd)\in\wt f,
\end{gather*}
where
\begin{gather*}
\ov \Om (\l)=(\wt U^{-1}\dot Z_+(a,\l))
B_\tau(\l)(\wt U^{-1}\dot Z_-(a,\ov\l))^*=\begin{pmatrix} S_1(\l)\cr 0 \end{pmatrix} B_\tau(\l)\, (S_2(\l),\,0)=\\
\begin{pmatrix}S_1(\l) B_\tau(\l)S_2(\l) &0 \cr 0&0 \end{pmatrix}
\end{gather*}
(here we made use of  \eqref{4.45.1} and \eqref{4.45.2}).
Combining these relations  with \eqref{4.57.1} and \eqref{4.58} one obtains  the equality \eqref{4.56} with
\begin{gather*}
\Om(\l)=\Om_0(\l)+\ov \Om (\l)=\begin{pmatrix} m_0(\l)+S_1(\l) B_\tau(\l)S_2(\l) & -\tfrac 1 2 I_{H_1^\perp,\bH_0} \cr -\tfrac 1 2 P_{\bH_0,H_1^\perp}&0 \end{pmatrix}.
\end{gather*}
Hence $\Om(\l)$ is a characteristic matrix of $R_\tau(\l)$ and in view of \eqref{4.54} and \eqref{4.57} the equality \eqref{4.56.1} is valid.
\end{proof}
\section{Parametrization of pseudospectral and spectral functions}
As before we suppose in this section (unless otherwise stated)the assumptions (A1)-(A3) specified  after Proposition  \ref{pr4.2.1}.

Let $T$ be a symmetric relation \eqref{3.8}. Then according to Theorem \ref{th4.7} the boundary value problem \eqref{4.16},  \eqref{4.17} induces parametrizations  $R(\l)=R_\tau(\l)$, $\wt T=\wt T_\tau$ and $F(\cd)=F_\tau(\cd)$ of all generalized resolvents $R(\l)$, exit space extensions $\wt T\in \wt{\rm Self}(T)$ and spectral functions $F(\cd)$ of $T$ respectively by means of the boundary parameter $\tau$. Here $\wt T_\tau(\in  \wt{\rm Self}(T))$ is the extension of $T$ generating $R_\tau(\l)$ and $F_\tau(\cd)$ is the respective spectral function  of $T$.
\begin{definition}\label{def5.1}
Let  $\dot M_+=\dot M_+(\l)$ be given by  \eqref{4.43}. A boundary parameter $\tau$ of the form \eqref{4.15} is called admissible if
\begin {gather}
\lim_{y\to +\infty} \tfrac 1 {i y}P_{\dot\cH_0,\dot\cH_1}( C_0(i y)- C_1(i y )\dot M_{+}(i y))^{-1}C_1(i y)=0\label{5.1}\\
\lim_{y\to +\infty} \tfrac 1 {i y}\dot M_{+}(i y)( C_0(i y)- C_1(i y )\dot M_{+}(i y))^{-1} C_0(i y)\up{\dot\cH_1}=0\label{5.2}
\end{gather}
\end{definition}
\begin{proposition}\label{pr5.2}
 An extension $\wt T = \wt T^\tau$ belongs to ${\rm Self}_0(T)$ if and only if the boundary parameter $\tau$ is admissible. Therefore the set of admissible boundary parameters is not empty.
\end{proposition}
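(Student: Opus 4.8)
The plan is to reduce the statement to a known criterion from the theory of boundary triplets, namely the characterization of when a generalized resolvent $R_\tau(\l)$ generated by a boundary parameter $\tau$ corresponds to a \emph{canonical} extension preserving the multivalued part. Recall from Proposition \ref{pr4.5}, (3) that $\dot\Pi=\{\dcH_0\oplus\dcH_1,\dot\G_0,\dot\G_1\}$ is a boundary triplet for $T^*$ with $\g$-fields $\dot\g_\pm(\cd)$ and Weyl function $\dot M_+(\cd)$ (Lemma \ref{lem4.11}, (2)), and that by Theorem \ref{th4.7} the correspondence $\tau\mapsto R_\tau(\l)$ between boundary parameters and generalized resolvents of $T$ is bijective. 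The extension $\wt T^\tau\in\wt{\rm Self}(T)$ generating $R_\tau(\l)$ belongs to ${\rm Self}_0(T)$ precisely when $\wt T^\tau$ is a \emph{canonical} self-adjoint extension with $\mul\wt T^\tau=\mul T$. First I would recall that $\wt T^\tau\in{\rm Self}(T)$ (i.e. the exit space is trivial) if and only if $N_+=N_-$ and $\tau$ is a self-adjoint boundary parameter, but here we want the weaker conclusion $\wt T^\tau\in{\rm Self}_0(T)$, which by Definition \ref{def2.7} only requires $\mul\wt T^\tau=\mul T$; this does not force $\tau$ to be self-adjoint and is exactly the content encoded by the two admissibility limits.

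The key technical step is the following: using the Krein-type formula \eqref{4.57.1}, $R_\tau(\l)=(A_0-\l)^{-1}+\dot\g_+(\l)B_\tau(\l)\dot\g_-^*(\ov\l)$ with $B_\tau(\l)=-(\tau(\l)+\dot M_+(\l))^{-1}=(C_0(\l)-C_1(\l)\dot M_+(\l))^{-1}C_1(\l)$, one analyzes the behaviour of $\l R_\tau(\l)$ as $\l=iy\to i\infty$. A self-adjoint relation $\wt T$ satisfies $\mul\wt T=\mul T$ if and only if $\mul\wt T\subset\gH$, equivalently $s\text{-}\lim_{y\to\infty} iy\,(\wt T-iy)^{-1}=-P_{\mul\wt T}$ on $\wt\gH$ reduces correctly, and compressing to $\gH$ this amounts to $s\text{-}\lim_{y\to\infty} iy\,R_\tau(iy)=-P_{\mul T}$. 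Plugging in \eqref{4.57.1} and using that $iy(A_0-iy)^{-1}\to -P_{\mul A_0}=-P_{\mul T}$ strongly (since $\mul A_0=\mul T$, $A_0$ being the maximal symmetric extension $\ker\dot\G_0$), the condition becomes $iy\,\dot\g_+(iy)B_\tau(iy)\dot\g_-^*(\overline{iy})\to 0$. By the standard asymptotic estimates for $\g$-fields and Weyl functions of a boundary triplet (as in \cite{DM91,DM06,Mog13.2}), namely $\|\dot\g_\pm(iy)\|=O(1)$ and more precisely the interplay $\dot\g_+(iy)^*\dot\g_+(iy)=\im\dot M_+(iy)/y$, this strong-limit condition is equivalent to the pair of scalar/operator limits \eqref{5.1}--\eqref{5.2}; here \eqref{5.1} controls the $\dcH_1$-component of $B_\tau$ and \eqref{5.2} controls the cross term involving $\dot M_+(iy)B_\tau(iy)C_0(iy)$, exactly reproducing the two displayed conditions. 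This is essentially \cite[Theorem 3.11 and subsequent remarks]{Mog13.2} combined with \cite[Lemma 2.1]{MalMog02}, specialized to the present triplet; I would cite those and carry out the matching of terms.

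Finally, for the last sentence — that the set of admissible parameters is nonempty — I would exhibit the explicit parameter $\tau_0=\{I_{\dcH_0},0_{\dcH_1,\dcH_0}\}$ from Theorem \ref{th4.14}, (1). For this parameter $C_0(\l)\equiv I$, $C_1(\l)\equiv 0$, so $B_{\tau_0}(\l)=(C_0-C_1\dot M_+)^{-1}C_1=0$ identically, and both limits \eqref{5.1} and \eqref{5.2} hold trivially; correspondingly $m_{\tau_0}=m_0$ is the $m$-function of the canonical extension $A_0'=\ker\dot\G_0$ restricted appropriately, which lies in ${\rm Self}_0(T)$. Hence $\tau_0$ is admissible and the set is nonempty.

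\smallskip
\textbf{Main obstacle.} The delicate point is the precise equivalence between the operator strong-limit condition $iy\,\dot\g_+(iy)B_\tau(iy)\dot\g_-^*(\overline{iy})\to 0$ and the \emph{two separate} limits \eqref{5.1}, \eqref{5.2}: one must split $B_\tau(iy)$ according to the decomposition $\dcH_0=H_1^\perp\oplus\dcH_1$ (cf. \eqref{4.10.1}) and track how the asymptotics of $\dot M_+(iy)$ — which need not tend to zero — enter through $\dot\g_-^*(\overline{iy})$. Controlling the cross term is exactly why the second condition \eqref{5.2}, with the extra factor $\dot M_+(iy)$ and the restriction to $\dcH_1$, is needed alongside \eqref{5.1}; getting this bookkeeping right (and not merely proving $\Leftarrow$, which is the easy direction) is the heart of the argument.
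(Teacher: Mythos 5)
Your argument for the main equivalence follows the same underlying route as the paper (which simply invokes the known criterion \cite[Theorem 2.15]{Mog15} for the Weyl function $\dot M_+$ of the triplet $\dot\Pi$), but as written it contains an unjustified step: you assert that $iy(A_0-iy)^{-1}\to -P_{\mul A_0}=-P_{\mul T}$ ``since $\mul A_0=\mul T$''. For $A_0=\ker\dG_0$ there is no reason why $\mul A_0$ should equal $\mul T$; in general one only has $\mul T\subset\mul A_0$, and the discrepancy is exactly what the admissibility conditions are designed to detect. Together with the fact that the matching of the strong-limit condition with the two displayed limits \eqref{5.1}--\eqref{5.2} is deferred (you yourself flag it as the ``main obstacle''), the first part remains a sketch resting on a false intermediate claim rather than a proof.

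The nonemptiness argument is genuinely wrong. For $\tau_0=\{I_{\dcH_0},0_{\dcH_1,\dcH_0}\}$ condition \eqref{5.1} is indeed trivial ($C_1\equiv 0$), but \eqref{5.2} does \emph{not} involve $B_{\tau_0}$: it reduces to $\lim_{y\to+\infty}\tfrac 1{iy}\dot M_+(iy)\up\dcH_1=0$, i.e.\ the vanishing of the linear term of the Weyl function at infinity. This is precisely one of the non-automatic conditions singled out in Theorem \ref{th5.7}, (2) (and in the last assertion of Theorem \ref{th1.5}); it fails whenever $\mul(\ker\dG_0)\neq\mul T$, which can happen since $T$ is a genuinely multivalued (non-densely defined) symmetric relation. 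So $\tau_0$ need not be admissible. The correct argument, implicit in the paper's ``therefore'', is different: $\wt{\rm Self}_0(T)\neq\emptyset$ for \emph{every} closed symmetric relation $T$ (by \eqref{2.2} one extends the operator part $T'$ of $T$ to a self-adjoint operator in an exit space and adjoins $\{0\}\oplus\mul T$), and then the already established bijection $\tau\leftrightarrow\wt T^\tau$ of Theorem \ref{th4.7} combined with the first statement produces an admissible parameter. You should replace the $\tau_0$ computation by this existence argument.
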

\begin{proof}
According to Lemma \ref{lem4.11}, (2) $\dot M_{+}(\cd)$ is the Weyl function of the boundary  triplet $\dot\Pi$ for $T^*$. Therefore the required result follows from  \cite[Theorem 2.15]{Mog15}.
\end{proof}
In the following with the operator $\wt U $ from assumption (A2) we associate the operator $U=U_\t\in [\bH_0,\bH]$ given by $U=\wt U\up \bH_0$. Moreover, we denote by $\f_U(\cd,\l)$ the $[\bH_0,\bH]$-valued operator solution of   \eqref{3.3} with $\f_U(a,\l)=U$. Clearly $\ker U=\{0\}$ and $U\bH_0=\t$.
\begin{theorem}\label{th5.3}
Let $\tau$ be an admissible boundary parameter, let $F(\cd)=F_\tau(\cd)$ be the corresponding spectral function of $T$ and let $m_\tau(\cd)$ be the $m$-function \eqref{4.53}. Then there exists a unique pseudospectral function $\s(\cd)=\s_\tau(\cd)$ of the system \eqref{3.3} (with respect to $U\in [\bH_0,\bH]$) satisfying \eqref{3.16}. This pseudospectral function is defined by the Stieltjes inversion formula
\begin {gather}\label{5.3}
\s_\tau(s)=\lim\limits_{\d\to+0}\lim\limits_{\varepsilon\to +0} \frac 1 \pi
\int_{-\d}^{s-\d}\im \,m_\tau(u+i\varepsilon)\, du.
\end{gather}
\end{theorem}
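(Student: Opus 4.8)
The plan is to deduce Theorem~\ref{th5.3} from Proposition~\ref{pr3.11} together with the description of generalized resolvents and characteristic matrices obtained in Section~4. First I would invoke Proposition~\ref{pr5.2}: since $\tau$ is admissible, the exit space extension $\wt T=\wt T^\tau\in\wt{\rm Self}(T)$ generating $R_\tau(\cd)$ actually belongs to $\wt{\rm Self}_0(T)$, i.e. $\mul\wt T=\mul T$, and $F_\tau(\cd)=P_{\wt\gH,\gH}E_{\wt T}((-\infty,\cd))\up\gH$ is the corresponding spectral function of $T$. The goal is to produce a distribution function $\s(\cd):\bR\to[\bH_0]$ such that the associated Fourier transform $V_\s$ is a partial isometry with $\ker V_\s=\mul T$ and \eqref{3.16} holds; by Proposition~\ref{pr3.12} and assumption (A1) we already know $\t^\tm\in Sym(\bH)$, and by $\t$-definiteness together with Proposition~\ref{pr4.2.1} the multiplication operator $\L_\s$ will be $L_0$-minimal, which will give uniqueness via the last assertion of Proposition~\ref{pr3.11}.

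The key computational step is to identify the matrix measure whose Cauchy/Fourier transform reproduces $F_\tau$. Here I would use Proposition~\ref{pr4.17}: the characteristic matrix of $R_\tau(\l)$ has the block form \eqref{4.56.1} with the $m$-function $m_\tau(\l)$ in the upper-left corner, and the off-diagonal blocks are constant. Applying the Stieltjes inversion formula to $\Om(\cd)$ (which is in $R[\bH]$) one sees that only the $\bH_0$-block contributes a nonconstant measure, namely $d\s_\tau$ with $\s_\tau$ given by \eqref{5.3}; the $H_1^\perp$-block and the cross terms are constant and hence contribute nothing to the spectral measure. Then, writing $Y_{\wt U}(t,\l)\up\bH_0=\f_U(t,\l)$ and using \eqref{4.56}, the standard argument relating generalized resolvents, characteristic matrices and Fourier transforms (as in \cite{DLS93,Sht57} and in \cite[Proposition~3.4, Proposition~5.6]{Mog15.2} / \cite{Mog15}) shows that for $\wt f\in\gH_b$ the quadratic form $((F_\tau(\b)-F_\tau(\a))\wt f,\wt f)$ equals $\int_{[\a,\b)}(d\s_\tau(s)\wh f(s),\wh f(s))$, i.e. \eqref{3.16} with $\s=\s_\tau$; in particular $\wh f\in\lS$ for all $\wt f\in\gH_b$.

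Next I would verify that $\s_\tau(\cd)$ is genuinely a $q$-pseudospectral function, i.e. that $V\wt f:=\pi_{\s_\tau}\wh f$ extends to a partial isometry $V_{\s_\tau}\in[\gH,\LS]$. This follows from \eqref{3.16}: taking $\a\to-\infty$, $\b\to+\infty$ gives $\|\wh f\|^2_{\lS}=\|F_\tau(+\infty)\wt f-F_\tau(-\infty)\wt f\,\|$-type identity, and since $F_\tau$ is the relative spectral function of a self-adjoint extension $\wt T$ with $\mul\wt T=\mul T$, one gets $\|V\wt f\|=\|P_{\gH\ominus\mul T}\wt f\|$ on the dense set $\gH_b$, so $V$ extends to a partial isometry with $\ker V_{\s_\tau}\supseteq\mul T$; the reverse inclusion $\ker V_{\s_\tau}\subseteq\mul T$ is exactly Theorem~\ref{th3.7} applied to $T$ (Lemma~\ref{lem3.2}), so $\ker V_{\s_\tau}=\mul T$ and $\s_\tau$ is a pseudospectral function with respect to $U=K_\t$. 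Uniqueness: if $\s'(\cd)$ is another pseudospectral function satisfying \eqref{3.16}, then by Proposition~\ref{pr3.11} its multiplication operator $\L_{\s'}$ is unitarily equivalent to the operator part $\wt T_0$ of $\wt T$ and, being $L_0$-minimal by Proposition~\ref{pr4.2.1}, the exit space extension is determined up to equivalence; chasing the unitary $\wt V$ of Proposition~\ref{pr3.11} back through the Fourier transforms forces $\s'=\s_\tau$ (alternatively, \eqref{3.16} determines $F_\tau$ and hence, by the inversion formula applied to $\Om$, determines $\s_\tau$).

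The main obstacle is the bookkeeping in the second step: carefully matching the block structure of the characteristic matrix \eqref{4.56.1} with the Fourier transform \eqref{1.4}/\eqref{3.11} built from $\f_U(\cd,\l)=Y_{\wt U}(\cd,\l)\up\bH_0$, and checking that passing from the full $[\bH]$-valued object $\Om(\cd)$ to the $[\bH_0]$-valued $m_\tau(\cd)$ loses no information relevant to the spectral measure — i.e. that the ``extra'' coordinates $H_1^\perp$ and the constant cross-terms really do not contribute. This is where one must use that $Y_{\wt U}(a,\l)=\wt U$ with $\wt U^*J\wt U=J$ and that $\f_U(a,\l)=U=\wt U\up\bH_0$, so that the relevant solution data live on $\bH_0$; once this is set up the Stieltjes inversion and the identification of $\s_\tau$ via \eqref{5.3} are routine, following \cite{Mog15,Mog15.2,Mog14} essentially verbatim.
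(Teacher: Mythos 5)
Your existence argument is essentially the paper's: form the characteristic matrix $\Om(\cd)$ of $R_\tau(\l)$ from Proposition \ref{pr4.17}, apply the Stieltjes inversion formula, observe that only the $\bH_0$-block of $\im\,\Om$ is nonconstant so that the $[\bH]$-valued measure $\Si$ is $\s_\tau\oplus 0$, and pass from $\wh f_0$ (built from $Y_{\wt U}$) to $\wh f=P_{\bH,\bH_0}\wh f_0$ (built from $\f_U=Y_{\wt U}\up\bH_0$) to obtain \eqref{3.16}; the partial-isometry property and the equality $\ker V_{\s_\tau}=\mul T$ then follow from \eqref{3.16}, Proposition \ref{pr5.2} and Theorem \ref{th3.7} much as you describe. (One small correction: Theorem \ref{th3.7} supplies $\mul T\subset\ker V_{\s_\tau}$, i.e. the opposite inclusion to the one you attribute to it; the inclusion $\ker V_{\s_\tau}\subset\mul T$ is what your norm identity $\|V\wt f\|=\|P_{\gH\ominus\mul T}\wt f\|$ delivers, and that identity in turn uses $\mul\wt T=\mul T$, i.e. admissibility.)

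The uniqueness step, however, has a genuine gap. Unitary equivalence of the multiplication operators $\L_{\s'}$ and $\L_{\s_\tau}$ — both unitarily equivalent to the operator part $\wt T_0$ by Proposition \ref{pr3.11} — does not imply $\s'=\s_\tau$: distinct distribution functions can produce unitarily equivalent multiplication operators. Your parenthetical alternative, that ``\eqref{3.16} determines $\s_\tau$,'' is exactly what must be proved: \eqref{3.16} only fixes the quadratic forms $\int_{[\a,\b)}(d\s(s)\wh f(s),\wh f(s))$ over the family $\{\wh f:\wt f\in\gH_b\}$, and one has to show that this family separates measures. The paper does this by representing $\s_\tau-\s'$ as $\int\Psi(s)\,d\mu(s)$ against a common scalar measure $\mu$, proving $\Psi(s)\wh f(s)=0$ for $\mu$-a.e.\ $s$ (the function $y(t)=\f_U(t,s)\Psi(s)\wh f(s)$ is a solution of \eqref{3.3} with $\D(t)y(t)=0$ a.e.\ and $y(a)\in\t$, hence $y=0$ by $\t$-definiteness), and then establishing the pointwise surjectivity statement that for every $s\in\bR$ and $h\in\bH_0$ there is $\wh f\in\cG$ with $\wh f(s)=h$ — again via $\t$-definiteness. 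Without this (or an equivalent separation argument) the uniqueness claim remains unsubstantiated.
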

\begin{proof}
Assume that $\Om(\cd)\in R[\bH]$ is the characteristic matrix \eqref{4.56.1} of $R_\tau(\l)$ and $\Si(\cd):\bR\to [\bH]$ is the distribution function defined by
\begin {gather*}
\Si(s)=\lim\limits_{\d\to+0}\lim\limits_{\varepsilon\to +0} \frac 1 \pi
\int_{-\d}^{s-\d}\im \, \Om(u+i\varepsilon)\, du.
\end{gather*}
Using \eqref{4.56} and the Stieltjes - Liv\u{s}ic  formula one proves as in  \cite{DLS93,Sht57} the equality
\begin {equation} \label{5.5}
((F(\b)-F(\a))\wt f,\wt
f)=\int_{[\a,\b)}(d\Si(s)\wh f_0(s),\wh f_0(s)),\quad \wt f\in\gH_b,\quad -\infty<\a <\b <\infty.
\end{equation}
with the function $\wh f_0:\bR\to \bH$ defined for each $\wt f\in\gH_b$ by $\wh f_0(s)=\int\limits_\cI Y_{\wt u}^* (t,s) \D(t)f(t)\, dt, \; f(\cd)\in\wt f$. Let $\wt f\in\gH_b$, let
\begin {gather}\label{5.6}
\wh f(s)=\int_\cI \f_{u}^* (t,s) \D(t)f(t)\, dt, \quad f(\cd)\in\wt f
\end{gather}
and let $\s(\cd)=\s_\tau(\cd)$ be the distribution function \eqref{5.3}. Since $\f_{u} (t,\l)=Y_{\wt u}(t,\l)\up\bH_0$, it follows that $\wh f(s)=P_{\bH,\bH_0}\wh f_0(s)$. Moreover, by \eqref{4.56.1} one has
\begin {gather*}
\Si (s)= \begin{pmatrix} \s(s) & 0 \cr 0&0 \end{pmatrix}:\bH_0\oplus H_1^\perp \to \bH_0\oplus H_1^\perp.
\end{gather*}
This and \eqref{5.5} yield the equality \eqref{3.16}. Next by using \eqref{3.16} and Proposition \ref{pr5.2} one proves that $\s(\cd)$ is a pseudospectral function (with respect to $U$) in the same way as in \cite[Theorem 3.20]{Mog15} and \cite[Theorem 5.4]{Mog15.2}.

Let us prove that $\s(\cd)=\s_\tau(\cd)$ is a unique pseudospectral function satisfying \eqref{3.16} (we give only the sketch of the proof because it is similar to that of the alike result in \cite[Theorem 5.4]{Mog15.2}). Let $\wt\s (\cd)$ be a pseudospectral function  (with respect to $U$) such that \eqref{3.16} holds with $\wt \s(\cd)$ instead of $\s(\cd)$. Then according to \cite{DunSch} there exists a scalar measure $\mu$ on Borel sets in $\bR$ and functions $\Psi_j(\cd):\bR\to [\bH_0], \; j\in\{1,2\},$ such that
\begin {equation}\label{5.6.1}
\s(\b)-\s(\a)=\int_{\d} \Psi_1(s)\, d\mu(s), \qquad \wt\s(\b)-\wt\s(\a)=\int_{\d} \Psi_2(s)\, d\mu(s),\;\;{\d}=[ \a,\b).
\end{equation}
Let $\Psi(s):=\Psi_1(s)-\Psi_2(s)$ and let $\mu_0$  be the Lebesgue measure on Borel sets in $\cI$. Denote also by $\cG$ the set of all functions $\wh f(\cd):\bR\to\bH_0$ admitting the representation \eqref{5.6} with some $\wt f\in\gH_b$. As in \cite[Theorem 5.4]{Mog15.2} one proves that for each $\wh f\in\cG$  there is a Borel set $C_{\wh f}\subset \bR$ such that
\begin {equation}\label{5.9}
\mu(\bR\setminus C_{\wh f})=0\;\;\;\text{and}\;\;\;\mu_0 (\{t\in\cI:\D(t) \f_U(t,s)\Psi(s)\wh f(s)\neq 0\})=0,\;\;s\in C_{\wh f}.
\end{equation}
Let $s\in C_{\wh f}$ and let $y=y(t)=\f_U(t,s)\Psi(s)\wh f(s)$. Then $y$ is a solution of the system \eqref{3.3} with $\l=s$ and by \eqref{5.9} $\D(t)y(t)=0$  ($\mu_0$ a.e. on $\cI$). Hence $y\in\cN$. Moreover, $y(a)=U\Psi (s)\wh f(s)\in \t$. Since system is $\t$-definite, this implies that $y=0$ and, consequently,  $\Psi(s)\wh f(s)=0$. Thus for any $\wh f\in\cG$ there exists a Borel set $C_{\wh f}\subset\bR$ such that
\begin {equation}\label{5.10}
\mu(\bR\setminus C_{\wh f})=0\;\;\text{and}\;\;\Psi(s)\wh f(s)=0,\;\;s\in C_{\wh f}.
\end{equation}
Next we prove the following statement:

(S) for any $s\in\bR$ and $h\in \bH_0$ there is $\wh f(\cd)\in\cG$ such that $\wh f(s)=h$.

\noindent Indeed, let $s\in\bR, \; h'\in \bH_0$ and $(\wh f(s),h')=0$ for any $\wh f(\cd)\in \cG$. Put $y=y(t)=\f_U(t,s)h'$. Then
for any $\b\in\cI$ one has
$\wh f_\b(\cd):=\int\limits_{[a,\b]} \f_U^*(t,\cd)\D(t)y(t)\,dt\in\cG$ and,consequently,
\begin {equation*}
0=(\wh f_\b(s),h')= \int_{[a,\b]} (\f_U^*(t,s)\D(t)y(t),h')\,dt=\int_{[a,\b]}
(\D(t)y(t),y(t))\,dt, \quad \b\in\cI.
\end{equation*}
Hence  $y\in\cN$. Moreover, $y(a)=Uh'\in\t$ and $\t$-definiteness of the system implies that $y=0$. Therefore  $h'=0$, which proves statement (S).

Next by using \eqref{5.10} and statement (S) one proves the equality $\Psi(s)=0$ ($\mu$-a.e. on $\bR$) in the same way as in \cite[Theorem 5.4]{Mog15.2}. Thus $\Psi_1(s)=\Psi_2(s)$ ($\mu$-a.e. on $\bR$) and by \eqref{5.6.1} $\wt\s(s)=\s(s)$.
\end{proof}
\begin{corollary}\label{cor5.3.1}
{\rm (1)} Let the assumption {\rm(A1)} from Section \ref{sect4.1} be satisfied. Then the set of pseudospectral functions (with respect to $K_\t\in [\bH_0',\bH]$) is not empty.

{\rm (2)} Let system \eqref{3.3} be definite, let $N_-\leq N_+$ and let $\t$ be a subspace in $\bH$. Then the set of pseudospectral functions (with respect to $K_\t\in [\bH_0',\bH]$) is not empty if and only if $\t^\tm\in Sym (\bH)$.
\end{corollary}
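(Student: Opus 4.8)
The plan is to deduce both parts of Corollary~\ref{cor5.3.1} from the parametrization already in place, so that the only real work is assembling ingredients that are by now available: the existence of an admissible boundary parameter (Proposition~\ref{pr5.2}), the pseudospectral function it generates (Theorem~\ref{th5.3}), the transfer between different operators $K_\t$ with the same range (Proposition~\ref{pr3.10.1}), and --- for the necessity in part (2) --- Proposition~\ref{pr3.12}.

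For part (1), I would argue as follows. Assumption (A1) forces $\t^\tm\in Sym (\bH)$, so Lemma~\ref{lem3.1.1}(3) and Remark~\ref{rem4.3} produce the objects $\wt U,H_1,\bH_0,\G_b$ of assumptions (A2), (A3), and hence also the canonical operator $U=U_\t\in[\bH_0,\bH]$ with $\ker U=\{0\}$ and $U\bH_0=\t$. First pick, via Proposition~\ref{pr5.2}, an admissible boundary parameter $\tau$; then Theorem~\ref{th5.3} gives a pseudospectral function $\s_\tau(\cd)$ of the system with respect to $U$. Now take an arbitrary $K_\t\in[\bH_0',\bH]$ with $\ker K_\t=\{0\}$ and $K_\t\bH_0'=\t$. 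By Proposition~\ref{pr3.10.1}(1) there is a unique isomorphism $X\in[\bH_0',\bH_0]$ with $K_\t=UX$, and Proposition~\ref{pr3.10.1}(2) then shows that $\s(s):=X^{-1}\s_\tau(s)X^{-1*}$ is a pseudospectral function of the system with respect to $K_\t$. Thus the set of pseudospectral functions with respect to $K_\t$ is non-empty.

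For part (2), the necessity is immediate: if some pseudospectral function with respect to $K_\t\in[\bH_0',\bH]$ exists, then, the system being definite, Proposition~\ref{pr3.12} yields $\t^\tm\in Sym (\bH)$. For the sufficiency, assume $\t^\tm\in Sym (\bH)$; a definite system is $\t$-definite for every subspace $\t$ (Remark~\ref{rem4.2}), so together with the hypothesis $N_-\leq N_+$ all requirements of assumption (A1) are met for this $\t$, and part (1) applies.

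I do not expect a genuine obstacle here --- the corollary is a recombination of earlier results --- but the step to watch is the reduction, for a general $K_\t$, to the canonical operator $U$ for which Theorem~\ref{th5.3} is stated: one must notice that $U$ is itself an admissible choice of ``$K_\t$'' (trivial kernel, range $\t$) and that Proposition~\ref{pr3.10.1} transports pseudospectral functions while preserving the defining condition $\ker V_\s=\mul T$. In part (2) one should also record explicitly that definiteness is what makes $\t$-definiteness automatic, so that the list of hypotheses in (A1) reduces exactly to $\t^\tm\in Sym (\bH)$ together with $N_-\leq N_+$.
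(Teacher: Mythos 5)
Your proposal is correct and follows essentially the same route as the paper: part (1) from Proposition~\ref{pr5.2} and Theorem~\ref{th5.3}, and part (2) from part (1) together with Remark~\ref{rem4.2} and Proposition~\ref{pr3.12}. The only addition is that you make explicit the transfer from the canonical operator $U$ to a general $K_\t$ via Proposition~\ref{pr3.10.1}, which the paper leaves implicit (cf.\ Remark~\ref{rem3.10.2}); this is a sound and worthwhile clarification rather than a deviation.
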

\begin{proof}
Statement (1) is immediate from Proposition \ref{pr5.2} and Theorem \ref{th5.3}. Statement (2) follows from statement (1), Remark \ref{rem4.2} and Proposition \ref{pr3.12}.
\end{proof}

A parametrization of all pseudospectral functions $\s(\cd)$ (with respect to $U\in [\bH_0,\bH]$) immediately in terms of a boundary parameter $\tau$ is given by the following theorem.
\begin{theorem}\label{th5.4}
Let the assumptions be the same as in Theorem \ref{th4.14}. Then the equality
\begin {equation}\label{5.11}
m_\tau(\l)=m_0(\l)+S_1(\l)(C_0(\l)-C_1(\l)\dot M_+(\l))^{-1}C_1(\l)
S_2(\l),\quad\l\in\bC_+
\end{equation}
together with formula \eqref{5.3} establishes a bijective correspondence $\s(s)=\s_\tau(s)$ between all admissible boundary parameters $\tau$ defined by \eqref{4.15} and all pseudospectral functions $\s(\cd)$ of the system \eqref{3.3} (with respect to $U\in [\bH_0,\bH]$).
\end{theorem}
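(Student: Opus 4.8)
The plan is to obtain the bijection by concatenating the correspondences already established and then upgrading them with the uniqueness clauses of Theorem~\ref{th5.3} and Proposition~\ref{pr3.11}. First I would observe that by Theorem~\ref{th4.14} the $m$-function of the boundary value problem \eqref{4.16}, \eqref{4.17} is exactly the right-hand side of \eqref{5.11}, so the map in the statement is precisely $\tau\mapsto\s_\tau$, where for an admissible $\tau$ the function $\s_\tau(\cd)$ is the pseudospectral function furnished by Theorem~\ref{th5.3}. In particular $\s_\tau(\cd)$ satisfies \eqref{3.16} with $F=F_\tau$, the spectral function of $T$ generated by the exit space extension $\wt T_\tau\in\wt{\rm Self}(T)$ of Theorem~\ref{th4.7}, and, $\tau$ being admissible, $\mul\wt T_\tau=\mul T$ by Proposition~\ref{pr5.2}, so $\wt T_\tau\in\wt{\rm Self}_0(T)$. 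Throughout I would use assumption (A1): the system is $\t$-definite, hence by Proposition~\ref{pr4.2.1} the multiplication operator $\L_\s$ attached to any $q$-pseudospectral function $\s(\cd)$ (with $L_0=V_\s\gH$) is $L_0$-minimal, which is exactly the hypothesis needed to invoke the uniqueness clause of Proposition~\ref{pr3.11}.

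For injectivity I would take admissible $\tau_1,\tau_2$ with $\s_{\tau_1}=\s_{\tau_2}=:\s$. Then $\wt T_{\tau_1},\wt T_{\tau_2}\in\wt{\rm Self}_0(T)$ and, by Theorem~\ref{th5.3}, their relative spectral functions satisfy \eqref{3.16} with the same $\s$; by the $L_0$-minimality of $\L_\s$ and the uniqueness in Proposition~\ref{pr3.11}, $\wt T_{\tau_1}$ and $\wt T_{\tau_2}$ are equivalent, by Proposition~\ref{pr2.6} they induce the same generalized resolvent, so $R_{\tau_1}(\l)=R_{\tau_2}(\l)$, and the bijectivity in Theorem~\ref{th4.7} yields $\tau_1=\tau_2$. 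For surjectivity I would start from an arbitrary pseudospectral function $\s(\cd)$ (with respect to $U$) and the relation $T$ of \eqref{3.8}; Proposition~\ref{pr3.11} then produces a Hilbert space $\wt\gH\supset\gH$ and a $\gH$-minimal extension $\wt T\in\wt{\rm Self}_0(T)$ whose spectral function fulfils \eqref{3.16}. Proposition~\ref{pr2.6} attaches to $\wt T$ a generalized resolvent $R(\l)=P_\gH(\wt T-\l)^{-1}\up\gH$ of $T$, Theorem~\ref{th4.7} supplies a unique boundary parameter $\tau$ with $R=R_\tau$ (so $\wt T=\wt T_\tau$ after identifying equivalent extensions), and since $\mul\wt T_\tau=\mul T$, Proposition~\ref{pr5.2} shows $\tau$ is admissible. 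Finally $\s(\cd)$ and $\s_\tau(\cd)$ are both pseudospectral functions with respect to $U$ satisfying \eqref{3.16} with $F=F_\tau$ (the latter by Theorem~\ref{th5.3}), so the uniqueness assertion of Theorem~\ref{th5.3} forces $\s=\s_\tau$.

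The main obstacle is bookkeeping rather than analysis: one must keep three descriptions of one and the same exit space extension aligned — the extension produced abstractly by Proposition~\ref{pr3.11} from a pseudospectral function, the operator $\wt T_\tau$ produced from a boundary parameter in Theorem~\ref{th4.7}, and the admissibility criterion of Proposition~\ref{pr5.2} — and, crucially, one must make sure the $L_0$-minimality hypothesis in the uniqueness clause of Proposition~\ref{pr3.11} is in force, which is precisely where $\t$-definiteness (assumption (A1), via Proposition~\ref{pr4.2.1}) enters. Once these identifications are pinned down, the theorem is simply the composition of the bijections in Theorems~\ref{th4.7}, \ref{th4.14} and \ref{th5.3}.
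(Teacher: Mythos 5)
Your proposal is correct and follows exactly the route the paper indicates: it assembles the bijection from Theorems \ref{th4.14} and \ref{th5.3} together with Propositions \ref{pr3.11}, \ref{pr5.2} (and \ref{pr4.2.1} for the $L_0$-minimality needed in the uniqueness clause), which are precisely the ingredients the paper cites before omitting the details. The bookkeeping you describe — aligning the extension from Proposition \ref{pr3.11}, the extension $\wt T_\tau$ from Theorem \ref{th4.7}, and the admissibility criterion — is exactly what the omitted argument amounts to.
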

The proof of Theorem \ref{th5.4} is based on Theorems \ref{th5.3}, \ref{th4.14} and Propositions \ref{pr3.11}, \ref{pr5.2}. We omit this proof because it is similar to that of Theorem 5.7 in \cite{Mog15.2}.

The following theorem directly follows from Theorem \ref{th5.3} and Propositions \ref{pr3.11}, \ref{pr4.2.1}.
\begin{theorem}\label{th5.5}
Let the assumptions {\rm (A1)} and {\rm (A2)} from Section \ref{sect4.1} be satisfied. Then there is a one to one correspondence $\s(\cd)=\s_{\wt T}(\cd)$ between all extensions $\wt T\in \Sel$ and all pseudospectral functions $\s(\cd)$ of the system \eqref{3.3} (with respect to $U\in [\bH_0,\bH]$). This correspondence is given by the equality \eqref{3.16}, where $F(\cd)$ is a spectral function of $T$ generated by $\wt T$. Moreover, the operators $\wt T_0$ (the operator part of $\wt T$) and $\L_\s$ are unitarily equivalent and hence they have the same spectral properties. In particular this implies that the spectral multiplicity of $\wt T_0$ does not exceed $\dim \bH_0$.
\end{theorem}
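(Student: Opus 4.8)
The plan is to obtain the correspondence $\s(\cd)=\s_{\wt T}(\cd)$ by composing two bijections that are already available: one between admissible boundary parameters and the extensions in $\Sel$, the other between admissible boundary parameters and pseudospectral functions. First I would recall that by Theorem \ref{th4.7} the assignment $\tau\mapsto R_\tau(\cd)$ is a bijection between boundary parameters $\tau$ and generalized resolvents of $T$, while by Proposition \ref{pr2.6} each generalized resolvent of $T$ is generated by a unique (up to equivalence) minimal exit space extension in $\wt{\rm Self}(T)$. Denoting by $\wt T_\tau$ the extension that generates $R_\tau(\cd)$ and by $F_\tau(\cd)$ the spectral function of $T$ it induces, we get a bijection $\tau\mapsto\wt T_\tau$ onto $\wt{\rm Self}(T)$ (equivalent extensions being identified); by Proposition \ref{pr5.2} this restricts to a bijection between the admissible boundary parameters and $\Sel$. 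On the other hand, Theorem \ref{th5.4} provides a bijection $\tau\mapsto\s_\tau$ between the admissible boundary parameters and all pseudospectral functions of the system with respect to $U$. Composing, I would assign to each $\wt T\in\Sel$ the pseudospectral function $\s_{\wt T}:=\s_\tau$, where $\tau$ is the unique admissible parameter with $\wt T=\wt T_\tau$; this is the asserted correspondence, and it is bijective by construction.

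Next I would check that this correspondence is the one given by \eqref{3.16}. By Theorem \ref{th5.3} the function $\s_\tau$ is the unique pseudospectral function satisfying \eqref{3.16} with $F=F_\tau$, and $F_\tau$ is by construction the spectral function of $T$ generated by $\wt T=\wt T_\tau$; hence $\s_{\wt T}$ is characterized among all pseudospectral functions by the requirement that \eqref{3.16} hold with $F$ the spectral function of $T$ generated by $\wt T$.

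It remains to analyze the operator part $\wt T_0$. I would apply Proposition \ref{pr3.11} to the pseudospectral function $\s=\s_{\wt T}$: it produces an exit space self-adjoint extension $\wt T'$ of $T$ with $\mul\wt T'=\mul T$, satisfying \eqref{3.16}, whose operator part is unitarily equivalent to $\L_\s$, and which is the unique $\gH$-minimal such extension as soon as $\L_\s$ is $L_0$-minimal. Under assumption (A1) the system is $\t$-definite, so Proposition \ref{pr4.2.1} guarantees that $\L_\s$ is $L_0$-minimal. On the other hand $\wt T=\wt T_\tau$ belongs to $\Sel=\wt{\rm Self}_0(T)$, hence is an $\gH$-minimal exit space extension with $\mul\wt T=\mul T$ in the sense of Definition \ref{def2.2}, and its spectral function $F_\tau$ satisfies \eqref{3.16}; since $\gH_b$ is dense in $\gH$, the increments of $F_\tau$ are determined on all of $\gH$ by \eqref{3.16}, so $\wt T$ has the same spectral function as $\wt T'$ and therefore coincides with $\wt T'$ up to equivalence. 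Consequently $\wt T_0$ is unitarily equivalent to $\L_\s$; and since $\L_\s$ is the operator of multiplication by the independent variable in $L^2(\s;\bH_0)$, its spectral multiplicity does not exceed $\dim\bH_0$, whence the same bound for $\wt T_0$. The step that I expect to require the most care is precisely this identification of the extension $\wt T_\tau$ furnished by Theorem \ref{th4.7} with the dilation $\wt T'$ furnished by Proposition \ref{pr3.11}; it hinges on the $L_0$-minimality of $\L_\s$ (Proposition \ref{pr4.2.1}, through $\t$-definiteness) together with the fact that a minimal exit space self-adjoint extension is determined up to equivalence by its spectral function (Proposition \ref{pr2.6}).
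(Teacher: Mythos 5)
Your proposal is correct and follows essentially the route the paper intends: the paper's (omitted) proof is exactly the combination of Theorem \ref{th5.3} with Propositions \ref{pr3.11} and \ref{pr4.2.1}, and your composition of the bijections admissible $\tau\leftrightarrow\wt T_\tau\in\Sel$ (Theorem \ref{th4.7}, Propositions \ref{pr2.6}, \ref{pr5.2}) and admissible $\tau\leftrightarrow\s_\tau$ (Theorem \ref{th5.4}), together with the identification of $\wt T_\tau$ with the dilation from Proposition \ref{pr3.11} via $L_0$-minimality of $\L_\s$ and uniqueness of the minimal extension determined by its spectral function, is precisely the intended argument spelled out.
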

\begin{corollary}\label{cor5.6}
Let under the assumptions {\rm (A1)-(A3)} $\tau $ be an admissible boundary parameter, let  $\s(\cd)=\s_\tau(\cd)$ be a pseudospectral function (with respect to $U$) and let $V_{0,\s}(=V_\s\up\gH_0)$ be  the corresponding isometry from $\gH_0$ to $\LS$ . Then $V_{0,\s}$ is a unitary operator if and only if the parameter  $\tau$ is  self-adjoint. If this condition is satisfied, then the boundary conditions  \eqref{4.17.1} defines an extension $\wt T^\tau\in\Selo$ and the operators $\wt T_{0,\tau}$ (the operator part of $\wt T^\tau$) and $\L_\s$ are unitarily equivalent by means of $V_{0,\s}$
\end{corollary}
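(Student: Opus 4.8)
The plan is to transport everything through the two parametrizations already established: that of pseudospectral functions by exit space self-adjoint extensions (Theorem~\ref{th5.5} and Proposition~\ref{pr3.11}) and that of generalized resolvents by boundary parameters (Theorem~\ref{th4.7}). Since the system is $\t$-definite, Proposition~\ref{pr4.2.1} shows that the multiplication operator $\L_\s$ is $L_0$-minimal, so the uniqueness clause of Proposition~\ref{pr3.11} applies; hence $\s_\tau(\cd)$ is generated, via \eqref{3.16}, by a unique (up to equivalence) $\gH$-minimal exit space self-adjoint extension $\wt T=\wt T^\tau$ of $T$ with $\mul\wt T=\mul T$, and there is a unitary operator $\wt V\in[\wt\gH_0,\LS]$ with $\wt V\up\gH_0=V_{0,\s}$ that intertwines the operator part $\wt T_0$ of $\wt T$ with $\L_\s$. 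By Theorem~\ref{th5.5} together with the construction in Theorem~\ref{th5.3}, this $\wt T^\tau$ is precisely the minimal extension generating the generalized resolvent $R_\tau(\l)$ in the sense of Proposition~\ref{pr2.6}; moreover $\mul\wt T^\tau=\mul T$ by Proposition~\ref{pr5.2}, since $\tau$ is admissible.

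The key step is then the following chain of equivalences. Because $V_{0,\s}$ is an isometry of $\gH_0$ into $\LS$ whose injective extension $\wt V$ maps $\wt\gH_0$ onto $\LS$, the operator $V_{0,\s}$ is unitary if and only if $\gH_0=\wt\gH_0$; as $\gH=\mul T\oplus\gH_0$, $\wt\gH=\mul T\oplus\wt\gH_0$ and $\mul\wt T=\mul T$, this is equivalent to $\wt\gH=\gH$, i.e.\ to $\wt T^\tau$ being a canonical extension, $\wt T^\tau\in{\rm Self}(T)$. In turn $\wt T^\tau$ is canonical if and only if the generalized resolvent $R_\tau(\l)=P_\gH(\wt T^\tau-\l)^{-1}\up\gH$ it generates is a canonical resolvent: the forward implication is immediate, and conversely a canonical resolvent is generated by a canonical self-adjoint extension which, acting in $\gH$ itself, is trivially $\gH$-minimal and therefore coincides with $\wt T^\tau$ by the uniqueness in Proposition~\ref{pr2.6}. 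If $N_+=N_-$, the last assertion of Theorem~\ref{th4.7} states precisely that $R_\tau(\l)$ is a canonical resolvent if and only if $\tau$ is self-adjoint, and concatenating the equivalences settles this case. If $N_+\neq N_-$, then $N_+>N_-$ by (A1), and since $n_\pm(T)=N_\pm-\dim H_1^\perp$ (see \eqref{4.14}) the relation $T$ has unequal deficiency indices, so it has no canonical self-adjoint extension and $V_{0,\s}$ is never unitary; at the same time self-adjoint boundary parameters exist only when $N_+=N_-$; hence the equivalence holds vacuously.

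It remains to treat the final assertion, assuming $\tau$ self-adjoint. Then $R_\tau(\l)$ is a canonical resolvent, so by Theorem~\ref{th4.7} one has $R_\tau(\l)=(\wt T_\tau-\l)^{-1}$ with $\wt T_\tau\in{\rm Self}(T)$ defined by the boundary conditions \eqref{4.17.1}; by the identification above $\wt T_\tau=\wt T^\tau$, and since $\tau$ is admissible $\mul\wt T^\tau=\mul T$, so $\wt T^\tau\in\Selo$ --- that is, \eqref{4.17.1} indeed defines an extension in $\Selo$. Finally $\wt\gH=\gH$ forces $\wt\gH_0=\gH_0$, hence $\wt V=V_{0,\s}$, and therefore the unitary equivalence of $\wt T_0=\wt T_{0,\tau}$ and $\L_\s$ recorded in the first paragraph takes place by means of $V_{0,\s}$, as claimed.

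The main obstacle I anticipate is the book-keeping behind the first two paragraphs: one has to verify that the exit space extension attached to $\s_\tau(\cd)$ through \eqref{3.16} is literally the same object as the minimal extension that Proposition~\ref{pr2.6} attaches to $R_\tau(\l)$ --- this is exactly where $\t$-definiteness is used, via the $L_0$-minimality of $\L_\s$ from Proposition~\ref{pr4.2.1} and the uniqueness statements in Propositions~\ref{pr3.11} and~\ref{pr2.6} --- and, separately, the purely dimensional handling of the degenerate case $N_+\neq N_-$. The remaining arguments are a routine assembly of Theorems~\ref{th4.7}, \ref{th5.3}, \ref{th5.5} and Propositions~\ref{pr3.11}, \ref{pr5.2}.
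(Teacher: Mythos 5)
Your argument is correct and takes essentially the same route as the paper, whose proof is just the citation ``the first statement is a consequence of Proposition \ref{pr3.11} and Theorem \ref{th4.7}; the second is implied by Theorems \ref{th4.7} and \ref{th5.5}'': you have simply made explicit the intermediate identifications (the $L_0$-minimality of $\L_\s$ from Proposition \ref{pr4.2.1}, the uniqueness clauses of Propositions \ref{pr2.6} and \ref{pr3.11}, and the chain $V_{0,\s}$ unitary $\iff$ $\wt\gH=\gH$ $\iff$ $\wt T^\tau$ canonical $\iff$ $R_\tau$ canonical $\iff$ $\tau$ self-adjoint) that the paper leaves to the reader. Your explicit treatment of the vacuous case $N_+\neq N_-$ is a reasonable addition consistent with the paper's convention that self-adjoint boundary parameters exist only when $N_+=N_-$.
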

\begin{proof}
The first statement is a consequence of Proposition \ref{pr3.11} and Theorem \ref{th4.7}. The second statement is implied by Theorems \ref{th4.7} and \ref{th5.5}.
\end{proof}
The criterion which enables one to describe all pseudospectral functions in terms of an arbitrary (not necessarily admissible) boundary parameter  is given in the  following theorem .
\begin{theorem}\label{th5.7}
The following statements  are equivalent:

{\rm (1)} each boundary parameter $\tau$ is admissible;

{\rm (2)} $\lim\limits_{y\to +\infty} \dot M_+(i y)\up\dot\cH_1 =0$  and
$\lim\limits_{y\to +\infty} y\left (\im (\dot M_+(iy)h,h)_{\dot\cH_0}+\tfrac 1 2 ||\dot P_2 h||^2\right)=+\infty$,

where $h\in\dcH_0, \; h\neq 0$ and $\dot P_2$ is the orthoprojection in $\dcH_0$ onto $\dcH_2:=\dcH_0\ominus \dcH_1$;

{\rm (3)} $\mul T=\mul T^*$, i.e., the condition (C2) in Assertion \ref{ass4.2.0} is fulfilled;

{\rm (4)} statement of Theorem \ref{th5.4} holds for arbitrary boundary parameters $\tau$.
\end{theorem}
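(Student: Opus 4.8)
The plan is to take statement (1) as a hub and prove $(1)\Leftrightarrow(3)$, $(1)\Leftrightarrow(2)$ and $(1)\Leftrightarrow(4)$ in turn; none of the three needs new computation, only the machinery already built in Sections 4--5. For $(1)\Leftrightarrow(3)$ I would first recall from Theorem \ref{th4.7} that $\tau\mapsto R_\tau(\cd)$ is a bijection from the set of all boundary parameters \eqref{4.15} onto the set of all generalized resolvents of $T$; hence, by Proposition \ref{pr2.6}, $\tau\mapsto\wt T_\tau$ is a bijection onto $\wt{\rm Self}(T)$ (modulo equivalence of exit space extensions). Proposition \ref{pr5.2} asserts that $\tau$ is admissible exactly when $\mul\wt T_\tau=\mul T$; therefore (1) is equivalent to $\wt{\rm Self}(T)=\wt{\rm Self}_0(T)$, which by \eqref{2.8} holds if and only if $\mul T=\mul T^*$. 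Since under (A1) the system is $\t$-definite and $\t^\tm\in Sym(\bH)$, Assertion \ref{ass4.2.0}, (2) rewrites $\mul T=\mul T^*$ as condition (C2), which gives $(1)\Leftrightarrow(3)$.

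For $(1)\Leftrightarrow(2)$ I would use Lemma \ref{lem4.11}, (2): $\dot M_+(\cd)$ is the Weyl function of the boundary triplet $\dot\Pi=\{\dcH_0\oplus\dcH_1,\dG_0,\dG_1\}$ for $T^*$, and the admissibility conditions \eqref{5.1}, \eqref{5.2} are precisely the conditions associated with $\dot\Pi$ (just as in the proof of Proposition \ref{pr5.2}). The claim that all boundary parameters of $\dot\Pi$ are admissible if and only if $\dot M_+$ satisfies the two limit relations in (2) is then the specialisation of the abstract criterion \cite[Theorem 2.15]{Mog15} to $\dot\Pi$; the hypotheses of that criterion on $\dG_0,\dG_1$ are guaranteed by Proposition \ref{pr4.5}, (3).

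For $(1)\Leftrightarrow(4)$ the implication $(1)\Rightarrow(4)$ is immediate, since if every boundary parameter is admissible then ``all admissible parameters'' and ``all parameters'' coincide and Theorem \ref{th5.4} becomes word for word the claim in (4). For $(4)\Rightarrow(1)$ I would observe that, by Proposition \ref{pr4.15}, $m_\tau(\cd)\in R[\bH_0]$ for every boundary parameter $\tau$ (admissible or not), so \eqref{5.11} and \eqref{5.3} produce a distribution function $\s_\tau(\cd)$ for every such $\tau$; thus $\tau\mapsto\s_\tau$ is defined on the whole parameter set, and (4) asserts it is a bijection onto the set of all pseudospectral functions of the system with respect to $U$, while Theorem \ref{th5.4} says its restriction to the admissible parameters already is. A bijection whose restriction to a subset of its domain is still surjective onto the target must have that subset equal to the whole domain: if $\tau_0$ were non-admissible, then by the surjectivity part of Theorem \ref{th5.4} there would be an admissible $\tau_1\neq\tau_0$ with $\s_{\tau_1}=\s_{\tau_0}$, contradicting the injectivity asserted in (4). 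Hence every boundary parameter is admissible, i.e.\ (1) holds.

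The one step that is not purely extension-theoretic bookkeeping is the reduction inside $(1)\Leftrightarrow(2)$: one must check that \eqref{5.1}, \eqref{5.2} as stated here match, term by term, the admissibility conditions that \cite[Theorem 2.15]{Mog15} attaches to the Weyl function of a boundary triplet with $\dcH_1\subset\dcH_0$ --- this is the origin of the restriction $\dot M_+(iy)\up\dcH_1$ and of the extra summand $\tfrac 1 2 \|\dot P_2h\|^2$ in (2) --- and that $\dot\Pi$ satisfies the hypotheses of that theorem. Everything else follows from Theorem \ref{th4.7}, Propositions \ref{pr2.6}, \ref{pr5.2}, \ref{pr4.15}, Theorem \ref{th5.4} and the equivalence \eqref{2.8}.
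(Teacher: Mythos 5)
Your proof is correct and follows essentially the same route as the paper: $(1)\Leftrightarrow(3)$ via Proposition \ref{pr5.2} and \eqref{2.8}, the equivalence with (2) via the abstract Weyl-function criterion applied to the boundary triplet $\dot\Pi$ from Lemma \ref{lem4.11}, (2), and $(1)\Leftrightarrow(4)$ via Theorem \ref{th5.4} (your injectivity/surjectivity argument for $(4)\Rightarrow(1)$ is a valid elaboration of the paper's one-line claim). The only cosmetic difference is that the paper obtains $(2)\Leftrightarrow(3)$ directly from \cite[Theorem 4.6]{Mog13.2}, whereas you route through $(2)\Leftrightarrow(1)$ citing \cite[Theorem 2.15]{Mog15} --- the latter is the reference used for Proposition \ref{pr5.2}, not for the limit criterion on the Weyl function, so the citation should be adjusted.
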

\begin{proof}
Proposition \ref{pr5.2} and \eqref{2.8} yield the equivalence (1)$\Leftrightarrow$ (3). Since by Lemma \ref{lem4.11}, (2) $\dot M_+(\cd)$ is the Weyl function of the boundary triplet $\dot\Pi$, the equivalence (2)$\Leftrightarrow$ (3) is implied by \cite[Theorem 4.6]{Mog13.2}. The equivalence (1) $\Leftrightarrow$(4) follows from Theorem \ref{th5.4}.
\end{proof}
Combining the results of this section with Proposition \ref{pr3.10} we get the following theorem.
\begin{theorem}\label{th5.8}
Let the assumptions {\rm (A1)} and {\rm (A2)} be satisfied. Then the set of spectral functions of the system \eqref{3.3} (with respect to $U\in [\bH_0,\bH]$) is not empty if and only if $\mul T=\{0\}$  or equivalently if and only if the condition (C1) in Assertion \ref{ass4.2.0} is fulfilled.  If this condition is satisfied, then the sets of spectral and pseudospectral functions of the system \eqref{3.3} coincide and hence Theorems \ref{th5.4}, \ref{th5.5}, \ref{th5.7} and Corollary \ref{cor5.6} are valid for spectral functions (instead of pseudospectral ones). In this case
$\wt T_0,\; \wt T_{0,\tau}$ and $V_{0,\s}$ in Theorem \ref{th5.5} and Corollary \ref{cor5.6} should be replaced with $\wt T,\; \wt T_{\tau}$ and $V_{\s}$ respectively. Moreover, in this case statement {\rm (3)} in Theorem \ref{th5.7} takes the following form:

{\rm (3')} $\mul T^*=\{0\}$, i.e., the condition {\rm (C3)} in Assertion \ref{ass4.2.0} is fulfilled.
\end{theorem}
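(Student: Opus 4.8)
The plan is to deduce Theorem \ref{th5.8} as a fairly direct combination of Proposition \ref{pr3.10}, the parametrization theorems of this section, and Assertion \ref{ass4.2.0}. First I would observe that under assumptions {\rm (A1)} and {\rm (A2)} the relation $T$ of \eqref{3.8} is a symmetric extension of $\Tmi$ (Lemma \ref{lem3.2}, (1)), so that $\mul T$ is well defined, and that for the operator $K=U=U_\t$ of assumption {\rm (A2)} we have $\ker U=\{0\}$ and $U\bH_0=\t$, i.e., $U$ is a legitimate operator $K_\t$ in the sense of Definition \ref{def3.9}. By Corollary \ref{cor5.3.1}, (1) the set of pseudospectral functions (with respect to $U$) is nonempty, so Proposition \ref{pr3.10} applies: if $\mul T\neq\{0\}$ the set of spectral functions is empty, while if $\mul T=\{0\}$ the sets of spectral and pseudospectral functions coincide. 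The equivalence of $\mul T=\{0\}$ with condition (C1) is exactly Assertion \ref{ass4.2.0}, (2) (recall $\t$-definiteness follows from {\rm (A1)}, and $\t^\tm\in Sym(\bH)$ is part of {\rm (A1)}). This settles the first two sentences of the theorem.

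Next I would verify that, once $\mul T=\{0\}$, every statement quoted for pseudospectral functions carries over verbatim to spectral functions, simply because the two classes of distribution functions are now literally the same set. For Theorem \ref{th5.4} and Theorem \ref{th5.7}, (4) this is immediate. For Theorem \ref{th5.5} and Corollary \ref{cor5.6} one has to track the replacement of the operator part $\wt T_0$ (respectively $\wt T_{0,\tau}$) and the isometry $V_{0,\s}$ by $\wt T$, $\wt T_\tau$ and $V_\s$: when $\mul T=\{0\}$ one has $\gH_0=\gH\ominus\mul T=\gH$, hence $\wt\gH_0=\wt\gH$, the operator part $\wt T_0$ of the $\gH$-minimal self-adjoint extension $\wt T$ coincides with $\wt T$ itself (since $\mul\wt T=\mul T=\{0\}$), and $V_{0,\s}=V_\s\up\gH_0=V_\s$. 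Thus the substitution is not a new claim but a notational simplification, and the unitary-equivalence assertions of Theorem \ref{th5.5} and Corollary \ref{cor5.6} hold with the stated replacements.

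Finally I would treat the last assertion, that statement {\rm (3)} of Theorem \ref{th5.7} becomes {\rm (3')} $\mul T^*=\{0\}$. Here the point is that statement {\rm (3)} of Theorem \ref{th5.7} reads $\mul T=\mul T^*$; combined with the standing hypothesis $\mul T=\{0\}$ this is equivalent to $\mul T^*=\{0\}$, which by Assertion \ref{ass4.2.0}, (2) is exactly condition (C3). Conversely $\mul T^*=\{0\}$ forces $\mul T=\{0\}=\mul T^*$ since $\mul T\subset\mul T^*$ (as $T\subset T^*$). So under the blanket assumption $\mul T=\{0\}$ of the second half of the theorem, {\rm (3)} and {\rm (3')} say the same thing, and the equivalences of Theorem \ref{th5.7} persist.

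The argument is essentially bookkeeping: the only place where one must be slightly careful is checking that the $\gH$-minimal exit-space extension $\wt T$ appearing in Theorem \ref{th5.5} really has trivial multivalued part when $\mul T=\{0\}$, so that its operator part is $\wt T$ itself and $V_{0,\s}=V_\s$; but this is guaranteed by the construction in Proposition \ref{pr3.11}, where $\mul\wt T=\mul T$ is built in. I expect no genuine obstacle here, since all the analytic content (existence of pseudospectral functions, the Stieltjes formula \eqref{5.3}, the parametrization \eqref{5.11}, the admissibility criterion) has already been established in Theorems \ref{th5.3}--\ref{th5.7} and Corollary \ref{cor5.3.1}; the proof is the short synthesis indicated above, and one may reasonably write simply that the theorem ``follows by combining the results of this section with Proposition \ref{pr3.10}''.
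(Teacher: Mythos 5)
Your proposal is correct and follows exactly the route the paper takes: the paper's entire ``proof'' is the single sentence that the theorem is obtained by combining the results of the section with Proposition \ref{pr3.10}, and your write-up is just a careful expansion of that synthesis (Corollary \ref{cor5.3.1}, (1) for nonemptiness, Proposition \ref{pr3.10} for the equivalence, Assertion \ref{ass4.2.0}, (2) for the conditions (C1) and (C3), and the observation that $\mul T=\{0\}$ forces $\gH_0=\gH$, $V_{0,\s}=V_\s$ and $\wt T_0=\wt T$).
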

\begin{remark}\label{rem5.7}
Assume that $N_-\leq N_+$ and $\t$ is a subspace in $\bH$ such that $\t^\tm\in Sym (\bH)$ and system \eqref{3.3} is $\t$-definite. Moreover, let $\bH_0'$ be a subspace in $\bH$ and let $K_\t \in [\bH_0',\bH]$ be an operator with $\ker K_\t=\{0\}$ and $K_\t\bH_0'=\t$. It follows from Proposition \ref{pr3.10.1} and Remark \ref{rem3.10.2} that   Theorems \ref{th5.4}, \ref{th5.5},\ref{th5.7}, \ref{th5.8} and Corollary \ref{cor5.6} are valid, with some corrections, for pseudospectral and spectral functions $\s(\cd)$ with respect to $K_\t$ in place of $U$. We leave to the reader the precise formulation  of the specified results.
\end{remark}
\section{The case of the minimally possible $\dim\t$. Spectral functions of the minimal dimension.}
It follows from Lemma \ref{lem3.1.1}, (1) that the minimally possible dimension of the subspace $\t\subset \bH$ satisfying the assumption (A1) in Section \ref{sect4.1} is
\begin{gather}\label{6.1}
\dim\t=\nu+\wh\nu.
\end{gather}
If $\t$ satisfies (A1) and \eqref{6.1} then the previous results become essentially simpler. Namely, in this case the subspace $\bH_0$ from assumption (A2) satisfies $\dim\bH_0=\dim (H\oplus\wh H )$ and hence $H_1=\{0\}, \; H_1^\perp=H$ and
\begin{gather}\label{6.2}
\bH_0=H\oplus\wh H.
\end{gather}
Therefore the assumption (A2) in Section \ref{sect4.1}  takes the following form:

${\rm (A2')}$ $\bH_0$ is the subspace \eqref{6.2}, $\wt U$ and $\G_a$ are the same as in the assumption (A2) and
\begin{gather}\label{6.2.1}
\G_a=(\G_{0a}, \, \wh\G_a, \G_{1a})^\top:\dom\tma\to H\oplus\wh H\oplus H
\end{gather}
is the block representation of $\G_a$.

Below we suppose (unless otherwise is stated) the following assumption ${\rm(A_{min})}$, which is equivalent to the assumptions (A1) - (A3) and  the equality \eqref{6.1}:

${\rm(A_{min})}$ In addition to (A1) the  equality \eqref{6.1} holds and the assumptions ${\rm (A2')}$ and (A3) are satisfied.

Under this assumption the equalities \eqref{4.8} take the form
\begin{gather}\label{6.3}
\dot\cH_0 = \wh H\oplus\wt\cH_b, \qquad \dot\cH_1 = \wh H\oplus \cH_b
\end{gather}
and a boundary parameter is the same as in definition \ref{def4.6}.
\begin{theorem}\label{th6.1}
Let  $\tau$ be a boundary parameter \eqref{4.15} and let
\begin{gather*}
C_0(\l)=( \wh C_0(\l),  C_{0b}(\l)):\wh H\oplus\wt\cH_b\to\dcH_0,\quad
C_1(\l)=( \wh C_1(\l),  C_{1b}(\l)):\wh H\oplus\cH_b\to\dcH_0%\label{6.3.2}
\end{gather*}
be the block representations of $C_0(\l)$ and $C_1(\l)$.
Then for each $\l\in\bC_+$ there exists a unique pair of operator solutions $\xi_{\tau}(\cd,\l)\in\lo{H}$ and  $\wh\xi_{\tau}(\cd,\l)\in\lo{\wh H}$ of the system \eqref{3.3} satisfying the boundary conditions
\begin{gather}
\G_{1a}\xi_{\tau}(\l)=-I_H\label{6.4}\\
[(i {\wh C}_0(\l)- \tfrac 1 2 {\wh C}_1(\l))\wh \G_a + C_{0b}(\l)\G_{0b} -(i {\wh C}_0(\l)+ \tfrac 1 2 {\wh C}_1(\l))\wh \G_b + C_{1b}(\l)\G_{1b}]\xi_{\tau}(\l)=0 \label{6.5}\\
\G_{1a}\wh\xi_{\tau}(\l)=0\label{6.6}\\
[(i {\wh C}_0(\l)- \tfrac 1 2 {\wh C}_1(\l))\wh \G_a + C_{0b}(\l)\G_{0b} -\qquad\qquad\qquad\qquad\qquad\qquad\qquad\qquad\qquad\label{6.7}\\
\qquad\qquad\qquad\qquad\qquad -(i {\wh C}_0(\l)+ \tfrac 1 2 {\wh C}_1(\l))\wh \G_b +  C_{1b}(\l)\G_{1b}]\wh\xi_{\tau}(\l)={\wh C}_0(\l)+\tfrac i 2 {\wh C}_1(\l)\nonumber
\end{gather}
\end{theorem}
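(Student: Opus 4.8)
The plan is to deduce Theorem~\ref{th6.1} directly from Theorem~\ref{th4.12} by specializing all the data appearing there to the situation ${\rm (A_{min})}$ and then splitting the resulting $[\bH_0,\bH]$-valued operator solution $v_\tau(\cd,\l)$ into its two ``columns'' corresponding to $\bH_0=H\oplus\wh H$.

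First I would record the simplifications valid under ${\rm (A_{min})}$. Since $H_1=\{0\}$ and $H_1^\perp=H$, one has $\bH_0=H\oplus\wh H$, $\dcH_0=\wh H\oplus\wt\cH_b$, $\dcH_1=\wh H\oplus\cH_b$ (see \eqref{6.2}, \eqref{6.3}), while in the block representation \eqref{6.2.1} one has $\G_{0a}^1=\G_{0a}$, $\G_{1a}^1=\G_{1a}$ and $\G_{0a}^2=\G_{1a}^2=0$. Consequently \eqref{4.9} and \eqref{4.10} reduce to
\[
\dG_0'=(i(\wh\G_a-\wh\G_b),\,\G_{0b})^\top:\dom\tma\to\wh H\oplus\wt\cH_b,\qquad
\dG_1'=(\tfrac12(\wh\G_a+\wh\G_b),\,-\G_{1b})^\top:\dom\tma\to\wh H\oplus\cH_b .
\]
Using the block representations \eqref{4.48.2}, \eqref{4.48.3} of $C_0(\l)$ and $C_1(\l)$ (in which the entries $C_{0a},C_{1a}$ act on $H_1=\{0\}$ and hence drop out, so that these representations coincide with those in the statement), a straightforward computation gives, for every $y\in\dom\tma$,
\[
C_0(\l)\dG_0'y-C_1(\l)\dG_1'y=[(i\wh C_0(\l)-\tfrac12\wh C_1(\l))\wh\G_a+C_{0b}(\l)\G_{0b}-(i\wh C_0(\l)+\tfrac12\wh C_1(\l))\wh\G_b+C_{1b}(\l)\G_{1b}]y,
\]
i.e.\ the operator in square brackets is exactly the one occurring in \eqref{6.5} and \eqref{6.7}. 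Likewise, in \eqref{4.48.4} the entries carried by the two copies of $H_1=\{0\}$ disappear, so $\Phi(\l)$ becomes the operator $\bH_0=H\oplus\wh H\to\dcH_0$ with $\Phi(\l)\up H=0$ and $\Phi(\l)\up\wh H=\wh C_0(\l)+\tfrac i2\wh C_1(\l)$; moreover $P_{\bH_0,H_1^\perp}$ becomes the orthoprojection $P_{\bH_0,H}$ of $\bH_0$ onto $H$.

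Next I would invoke Theorem~\ref{th4.12}: for each $\l\in\bC_+$ there is a unique operator solution $v_\tau(\cd,\l)\in\lo{\bH_0}$ satisfying \eqref{4.49}. Splitting it along $\bH_0=H\oplus\wh H$, I set $\xi_\tau(\cd,\l):=v_\tau(\cd,\l)\up H\in\lo{H}$ and $\wh\xi_\tau(\cd,\l):=v_\tau(\cd,\l)\up\wh H\in\lo{\wh H}$. Restricting the first identity in \eqref{4.49} to $H$ and to $\wh H$, and using $P_{\bH_0,H}\up H=I_H$ and $P_{\bH_0,H}\up\wh H=0$, yields \eqref{6.4} and \eqref{6.6}; restricting the second identity in \eqref{4.49} to $H$ and to $\wh H$, and using the two displays above together with the form of $\Phi(\l)$, yields \eqref{6.5} and \eqref{6.7}. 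This proves existence.

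For uniqueness I would reverse this step: given a pair $\xi(\cd,\l)\in\lo{H}$, $\wh\xi(\cd,\l)\in\lo{\wh H}$ satisfying \eqref{6.4}--\eqref{6.7}, the operator solution $v(\cd,\l):=(\xi(\cd,\l),\wh\xi(\cd,\l)):\bH_0\to\bH$ belongs to $\lo{\bH_0}$, and reassembling the four boundary conditions as above shows that $v(\cd,\l)$ satisfies \eqref{4.49}; hence $v(\cd,\l)=v_\tau(\cd,\l)$ by the uniqueness part of Theorem~\ref{th4.12}, so $\xi=\xi_\tau$ and $\wh\xi=\wh\xi_\tau$. The whole argument is essentially bookkeeping of block decompositions, so I do not expect a genuine obstacle; the only point requiring care is to match the factors $\tfrac i2$ and $\tfrac12$ appearing in $\Phi(\l)$, $\dG_0'$ and $\dG_1'$ with those in \eqref{6.5}--\eqref{6.7}.
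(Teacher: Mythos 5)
Your proposal is correct and follows essentially the same route as the paper: specialize Theorem \ref{th4.12} to the case $H_1=\{0\}$, write $v_\tau(t,\l)=(\xi_\tau(t,\l),\wh\xi_\tau(t,\l))$ in block form with respect to $\bH_0=H\oplus\wh H$, and translate the two conditions in \eqref{4.49} into \eqref{6.4}--\eqref{6.7}. Your explicit computation of $C_0(\l)\dG_0'-C_1(\l)\dG_1'$ and of $\Phi(\l)$, as well as the reverse reassembly for uniqueness, matches the paper's (more tersely stated) argument.
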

\begin{proof}
Let $v_\tau(\cd,\l)\in\lo{\bH_0}$ be the solution of \eqref{3.3} defined in theorem \ref{th4.12} and let
\begin{gather}\label{6.8}
v_\tau(t,\l)=(\xi_{\tau}(t,\l), \, \wh\xi_{\tau}(t,\l)):H\oplus\wh H\to \bH
\end{gather}
be the block representation of $v_\tau(t,\l)$. Then the first condition in \eqref{4.49} takes the form $\G_{1a}(\xi_\tau(\l), \wh\xi_\tau(\l))=(-I_H, 0)$, which is equivalent to \eqref{6.4} and \eqref{6.6}. Moreover, \eqref{4.9}, \eqref{4.10} and \eqref{4.48.4} take the form
\begin{gather*}
\dG_0'=( i(\wh\G_a-\wh\G_b), \, \G_{0b} )^\top,\quad
\dG_1'=( \tfrac 1 2 (\wh\G_a+\wh\G_b), \, -\G_{1b} )^\top,\quad \Phi(\l)=(0,\, \wh C_0(\l)+\tfrac i 2 \wh C_1(\l)).
\end{gather*}
Therefore the second condition in \eqref{4.49} is equivalent to \eqref{6.5} and \eqref{6.7}. Now the required statement is implied by Theorem \ref{th4.12}.
\end{proof}
It follows from \eqref{6.2}, \eqref{6.2.1} and \eqref{4.52} that $P_{\bH,\bH_0}\G_a=(\G_{0a}, \, \wh\G_a)^\top$ and $J_0=\begin{pmatrix} 0&0\cr 0 & iI_{\wh H} \end{pmatrix}$. This and \eqref{4.53} imply that in the case \eqref{6.1} (i.e., under the assumption ${\rm (A_{min})}$) the $m$-function $m_\tau(\cd)$ can be defined as
\begin{gather*}
m_{\tau}(\l)= \begin{pmatrix} \G_{0a}\xi_{\tau}(\l) & \G_{0a}\wh\xi_{\tau}(\l)\cr \wh\G_a \xi_{\tau}(\l)& \wh\G_a \wh\xi_{\tau}(\l)+\tfrac i 2 I_{\wh H}\end{pmatrix}:\underbrace{H\oplus\wh H}_{\bH_0}\to \underbrace{H\oplus\wh H}_{\bH_0},\quad \l\in\bC_+,
\end{gather*}
The following proposition is implied by Proposition \ref{pr4.9}.
\begin{proposition}\label{pr6.2}
For any $\l\in\bC_+$ there exists a unique collection of operator solutions $\xi_0(\cd,\l)\in\lo{H}, \; \wh\xi_0(\cd,\l)\in\lo{\wh H}$ and $u_+(\cd,\l)\in\lo{\wt\cH_b}$ of the system \eqref{3.3} satisfying the boundary conditions
\begin{gather*}
\G_{1a}\xi_0(\l)=-I_H, \qquad \wh\G_a \xi_0(\l)=\wh\G_b \xi_0(\l), \qquad \G_{0b}\xi_0(\l)=0\\
\G_{1a}\wh\xi_0(\l)=0, \qquad  i(\wh\G_a-\wh \G_b)\wh\xi_0(\l)=I_{\wh H}, \qquad \G_{0b}\wh\xi_0(\l)=0\\
\G_{1a}u_+(\l) =0,\qquad    \wh\G_a u_+(\l)=\wh\G_b u_+(\l),\qquad  \G_{0b}u_+(\l) =I_{\wt\cH_b}.
\end{gather*}
\end{proposition}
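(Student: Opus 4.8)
The plan is to obtain Proposition \ref{pr6.2} as the specialization of Proposition \ref{pr4.9} to the situation described by assumption ${\rm (A_{min})}$, where $H_1=\{0\}$, $H_1^\perp=H$ and $\bH_0=H\oplus\wh H$. The first step is to record what the block decomposition \eqref{4.1} of $\G_a$ becomes in this case: the components $\G_{0a}^2$ and $\G_{1a}^2$ act into $H_1=\{0\}$ and hence vanish identically, while $\G_{0a}^1$ and $\G_{1a}^1$ coincide with the operators $\G_{0a}$ and $\G_{1a}$ of \eqref{6.2.1}. Consequently the operator solution $\xi_2(\cd,\l)\in\lo{H_1}$ appearing in Proposition \ref{pr4.9} is the (zero-dimensional) trivial solution and can be discarded, and every boundary condition in \eqref{4.18}--\eqref{4.25} involving $\G_{0a}^2$ or $\G_{1a}^2$ is automatically satisfied.

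Next I would restrict the statement of Proposition \ref{pr4.9} to $\l\in\bC_+$ and put $\xi_0(\cd,\l):=\xi_1(\cd,\l)\in\lo{H}$ and $\wh\xi_0(\cd,\l):=\xi_3(\cd,\l)\in\lo{\wh H}$, keeping $u_+(\cd,\l)\in\lo{\wt\cH_b}$ unchanged. With the collapse described above, the conditions \eqref{4.18}, \eqref{4.19} for $\xi_1$ turn into $\G_{1a}\xi_0(\l)=-I_H$, $\wh\G_a\xi_0(\l)=\wh\G_b\xi_0(\l)$, $\G_{0b}\xi_0(\l)=0$; the conditions \eqref{4.22}, \eqref{4.23} for $\xi_3$ turn into $\G_{1a}\wh\xi_0(\l)=0$, $i(\wh\G_a-\wh\G_b)\wh\xi_0(\l)=I_{\wh H}$, $\G_{0b}\wh\xi_0(\l)=0$; and the conditions \eqref{4.24}, \eqref{4.25} for $u_+$ turn into $\G_{1a}u_+(\l)=0$, $\wh\G_a u_+(\l)=\wh\G_b u_+(\l)$, $\G_{0b}u_+(\l)=I_{\wt\cH_b}$. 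These are exactly the boundary conditions in the statement of Proposition \ref{pr6.2}, so existence is immediate, and uniqueness of the triple $\{\xi_0(\cd,\l),\wh\xi_0(\cd,\l),u_+(\cd,\l)\}$ is inherited verbatim from the uniqueness assertion of Proposition \ref{pr4.9} (equivalently, from the identity $\ker\G_0'\up\cN_\l=\{0\}$, $\l\in\bC_+$, established in its proof).

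Since the whole argument is a bookkeeping specialization, I do not expect a genuine obstacle. The only point deserving a line of care is to verify that the degenerate instance $H_1=\{0\}$ of the ingredients used in the proof of Proposition \ref{pr4.9} --- the decomposing boundary pair $\bpa$ for $\Tma$, the equality $\cK_\G=\{0\}$ (which holds by $\t$-definiteness), the isomorphism property of $\G_0'\up\cN_\l$ onto $\cH_0$ for $\l\in\bC_+$, and the normalization \eqref{4.28} --- remains valid and reduces correctly to the stated conditions. Alternatively one could simply rerun that short proof from the start with $H_1=\{0\}$; but invoking Proposition \ref{pr4.9} directly, as the excerpt indicates, is the cleanest route.
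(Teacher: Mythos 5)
Your proposal is correct and follows exactly the paper's route: the paper likewise obtains Proposition \ref{pr6.2} as the specialization of Proposition \ref{pr4.9} to the case $H_1=\{0\}$, $H_1^\perp=H$, $\bH_0=H\oplus\wh H$ under assumption ${\rm(A_{min})}$, with $\xi_2$ disappearing and $\xi_1,\xi_3$ becoming $\xi_0,\wh\xi_0$. The identifications of the boundary conditions and the inheritance of uniqueness are all as you describe.
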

If the assumption ${\rm (A_{min})}$ is satisfied, then the operator function $M_+(\cd)$ from Proposition \ref{pr4.10} takes the form
\begin{gather*}
M_+(\l)=\begin{pmatrix} M_{11}(\l) & M_{12}(\l) & M_{13}(\l)\cr M_{21}(\l) & M_{22}(\l) & M_{23}(\l) \cr  M_{31}(\l) & M_{32}(\l) & M_{33}(\l)\end{pmatrix}: H\oplus \wh H \oplus\wt\cH_b\to H\oplus \wh H \oplus \cH_b,
\end{gather*}
where $\l\in\bC_+$  and
\begin{gather*}
M_{11}(\l)=\G_{0a}\xi_0(\l), \quad M_{12}(\l)= \G_{0a}\wh\xi_0(\l), \quad M_{13}(\l)=\G_{0a}u_+(\l)\\
M_{21}(\l)=\wh\G_{a}\xi_0(\l), \quad M_{22}(\l)=\wh \G_{a}\wh\xi_0(\l)+\tfrac i 2 I_{\wh H}, \quad M_{23}(\l)=\wh\G_{a} u_+(\l)\\
M_{31}(\l)=-\G_{1b}\xi_0(\l), \quad M_{32}(\l)=-\G_{1b}\wh\xi_0(\l),  \quad M_{33}(\l)=-\G_{1b}u_+(\l).
\end{gather*}
Moreover, the operator functions $m_0(\cd), \; S_1(\cd), \; S_2(\cd)$ and $\dot M_+(\cd)$ in Theorem \ref{th5.4} take the following simpler  form (cf. \eqref{4.40}-\eqref{4.43}):
\begin{gather*}
m_0(\l)=\begin{pmatrix}M_{11}(\l)&M_{12}(\l)\cr M_{21}(\l)&M_{22}(\l)  \end{pmatrix}:\underbrace{H\oplus\wh H}_{\bH_0}\to \underbrace{H\oplus\wh H}_{\bH_0}, \quad \l\in\bC_+ \\
S_1(\l)=\begin{pmatrix} M_{12}(\l)& M_{13}(\l)\cr M_{22}(\l)-\tfrac i 2 I_{\wh H} & M_{23}(\l)\end{pmatrix}:\underbrace{\wh H\oplus \wt\cH_b}_{\dot\cH_0}\to\underbrace{H\oplus\wh H}_{\bH_0},\quad \l\in\bC_+\\
S_2(\l)=\begin{pmatrix} M_{21}(\l)& M_{22}(\l)+\tfrac i 2 I_{\wh H}\cr M_{31}(\l)&M_{32}(\l) \end{pmatrix}:\underbrace{H\oplus\wh H}_{\bH_0}\to \underbrace{\wh H\oplus\cH_b}_{\dot\cH_1},\quad \l\in\bC_+ \\
\dot M_+(\l)=\begin{pmatrix} M_{22}(\l) &M_{23}(\l) \cr M_{32}(\l) &M_{33}(\l) \end{pmatrix}:\underbrace{\wh H\oplus\wt\cH_b}_{\dot\cH_0}\to \underbrace{\wh H\oplus\cH_b}_{\dot\cH_1},\quad \l\in\bC_+.
\end{gather*}
In the following theorem we characterize spectral functions of the  minimal dimension.
\begin{theorem}\label{th6.3}
Let system \eqref{3.3} be definite (see Definition \ref{def3.11.1}) and let  $N_-\leq N_+$. Then the following statements are equivalent:

{\rm (i)} $\mul \Tmi=\{0\}$, i.e., the condition {\rm (C0)} in Assertion \ref{ass4.2.0} is fulfilled;

{\rm (ii)} The set of spectral functions of the system is not empty, i.e., there exist subspaces $\t$ and $\bH_0'$ in $\bH$ and a spectral function $\s(\cd)$ of the system (with respect to $K_\t\in \in [\bH_0',\bH]$).

If the  statement {\rm (i)} holds, then the dimension $n_\s$ of each spectral function $\s(\cd)$ (see Definition \ref{def3.9}) satisfies
\begin{gather}\label{6.14}
\nu+\wh\nu\leq n_\s\leq n
\end{gather}
and there exists a spectral function $\s(\cd)$ with the minimally possible dimension $n_\s=\nu+\wh\nu$.
\end{theorem}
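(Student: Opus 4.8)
The plan is to reduce Theorem~\ref{th6.3} to results already established in Section~5, in particular Theorem~\ref{th5.8} and Proposition~\ref{pr3.10}, together with the dimension estimates from Lemma~\ref{lem3.1.1} and the equality $\mul T=\mul\Tmi$ supplied by Lemma~\ref{lem3.3}. For a definite system $\cN=\{0\}$, so by Remark~\ref{rem4.2} the system is $\t$-definite for every $\t\subset\bH$, and by Proposition~\ref{pr3.12} every subspace $\t$ admitting a pseudospectral function satisfies $\t^\tm\in Sym(\bH)$; thus assumptions (A1), (A2) (and, via Remark~\ref{rem4.3}, (A3)) are available for any such $\t$.

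First I would prove the equivalence (i)$\Leftrightarrow$(ii). If $\mul\Tmi\neq\{0\}$, then for \emph{every} subspace $\t$ the relation $T=T_{\t^\tm}$ of \eqref{3.8} contains $\Tmi$, hence $\mul T\supset\mul\Tmi\neq\{0\}$, and Proposition~\ref{pr3.10} shows the set of spectral functions (with respect to any $K_\t$) is empty; so (ii)$\Rightarrow$(i). Conversely, assume $\mul\Tmi=\{0\}$. By Lemma~\ref{lem3.3} there is a subspace $\t\subset\bH$ with $\t^\tm\in Sym(\bH)$, $\dim\t=\nu+\wh\nu$, and $\mul T_{\t^\tm}=\mul\Tmi=\{0\}$. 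For this $\t$ assumption ${\rm(A_{min})}$ holds, so Theorem~\ref{th5.8} applies: since $\mul T=\{0\}$, the set of spectral functions of the system (with respect to $U=U_\t\in[\bH_0,\bH]$, $\bH_0=H\oplus\wh H$) is non-empty. This gives (i)$\Rightarrow$(ii) and simultaneously produces a spectral function $\s(\cd)$ whose dimension is $n_\s=\dim\bH_0=\dim(H\oplus\wh H)=\nu+\wh\nu$, i.e.\ the minimal value claimed.

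Next I would establish the bounds \eqref{6.14} for an arbitrary spectral function $\s(\cd)$ with respect to some $K_\t\in[\bH_0',\bH]$. The upper bound $n_\s=\dim\bH_0'=\dim\t\leq n$ is trivial. For the lower bound: a spectral function is in particular a $q$-pseudospectral function, so $\ker V_\s=\{0\}$; since for a definite system $\Tmi$ is an operator ($\mul\Tmi=\{0\}$ is forced by (i), which holds whenever (ii) holds), and $T_{\t^\tm}\supset\Tmi$ with $\mul T_{\t^\tm}=\{0\}$ by Proposition~\ref{pr3.10}, Proposition~\ref{pr3.12} yields $\t^\tm\in Sym(\bH)$, whence by Lemma~\ref{lem3.1.1}(1) $\dim\t^\tm\geq\nu+\wh\nu$; applying $\t=\t^{\tm\tm}$ and the same lemma to $\t^\tm\in Sym(\bH)$ gives $\dim\t=\codim\t^\tm=n-\dim\t^\tm$ combined with $\dim\t^\tm\le n-(\nu+\wh\nu)$ — more directly, $\t^\tm\subset\t$ (a consequence of $\t^\tm\in Sym(\bH)$ together with $\t^{\tm\tm}=\t$, cf.\ the proof of Lemma~\ref{lem3.2}(3)), so $\dim\t\geq\dim\t^\tm\geq\nu+\wh\nu$, hence $n_\s\geq\nu+\wh\nu$.

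The main obstacle is the lower bound $n_\s\ge\nu+\wh\nu$: one must be careful that a spectral function with respect to a \emph{general} $K_\t$ (not just the canonical $U_\t$ of ${\rm(A_{min})}$) still forces $\t^\tm\in Sym(\bH)$ and $\dim\t\ge\nu+\wh\nu$. Here Proposition~\ref{pr3.10.1} and Remark~\ref{rem3.10.2} do the work: a $q$-pseudospectral function with respect to $K_\t$ exists iff one exists with respect to any other isomorphic $K$, in particular with respect to $U_\t$, so the subspace $\t$ alone governs everything, and Proposition~\ref{pr3.12} then gives $\t^\tm\in Sym(\bH)$ as soon as a (pseudo)spectral function exists. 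The remaining verification that $\t^\tm\in Sym(\bH)$ implies $\t^\tm\subset\t$ and hence, via Lemma~\ref{lem3.1.1}(1), $\dim\t\ge\nu+\wh\nu$, is the only genuinely computational point, and it is short. Assembling these pieces yields all assertions of Theorem~\ref{th6.3}.
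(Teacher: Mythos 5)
Your proposal is correct in its main line and follows essentially the same route as the paper: Lemma \ref{lem3.3} plus Corollary \ref{cor5.3.1}/Theorem \ref{th5.8} and Proposition \ref{pr3.10} for (i)$\Rightarrow$(ii) and the existence of a spectral function of dimension $\nu+\wh\nu$; Proposition \ref{pr3.10} and $\mul\Tmi\subset\mul T$ for (ii)$\Rightarrow$(i); and Proposition \ref{pr3.12} plus Lemma \ref{lem3.1.1}(1) for the lower bound in \eqref{6.14}. One caveat: your ``more direct'' version of the lower bound is wrong --- from $\t^\tm\in Sym(\bH)$ Lemma \ref{lem3.1.1}(1) gives $\dim\t^\tm\leq\nu$ and $\dim(\t^\tm)^\tm=\dim\t\geq\nu+\wh\nu$, not $\dim\t^\tm\geq\nu+\wh\nu$ (the latter contradicts the former whenever $\wh\nu>0$), so the chain $\dim\t\geq\dim\t^\tm\geq\nu+\wh\nu$ fails; keep the codimension argument $\dim\t=n-\dim\t^\tm\geq n-\nu=\nu+\wh\nu$, which is exactly the paper's application of the lemma.
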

\begin{proof}
Assume  statement (i). Then by Lemma \ref{lem3.3} there exists a subspace $\t\subset \bH$ such that $\t^\tm\in Sym (\bH)$, $\dim\t=\nu+\wh\nu$ and the relation $T$ of the form \eqref{3.8} satisfies $\mul T=\{0\}$.   Therefore by Corollary \ref{cor5.3.1}, (2) and Proposition \ref{pr3.10} there exists a spectral function $\s(\cd)$ (with respect to $K_\t$). Moreover, $n_\s(=\dim\t)=\nu+\wh\nu$.

Next assume that $\t$ is a subspace in $\bH$ and $\s(\cd)$ is a spectral function (with respect to $K_\t$). Since the system is definite, it follows from Proposition \ref{pr3.12} that $\t^\tm\in Sym (\bH)$. Therefore by Lemma \ref{lem3.1.1}, (1) $n_\s(=\dim\t)\geq\nu+\wh\nu$, which yields \eqref{6.14}.

Conversely, let statement (ii) holds. If $\s(\cd)$ is a  spectral function (with respect to $K_\t$), then according to Proposition \ref{pr3.10} $\mul T=\{0\}$. This and the obvious inclusion $\mul\Tmi\subset \mul T$ yield statement (i).

\end{proof}

\end{document}